\def\qed{\hfill \mbox{\rule{0.5em}{0.5em}}}
\newcommand{\be}{\begin{equation}}
\newcommand{\ee}{\end{equation}}
\newcommand{\bes}{\begin{equation*}}
\newcommand{\ees}{\end{equation*}}
\newcommand{\ba}{\begin{aligned}}
\newcommand{\ea}{\end{aligned}}
\newcommand{\bi}{\begin{itemize}}
\newcommand{\ei}{\end{itemize}}
\newcommand{\NN}{\mathbb{N}}
\newcommand{\EE}{\mathbb{E}}
\newcommand{\PP}{\mathbb{P}}
\newcommand{\RR}{\mathbb{R}}
\newcommand{\Var}{{\rm Var}}
\numberwithin{equation}{section}
\newtheorem{theorem}{Theorem}
\newtheorem{proposition}{Proposition}
\newtheorem{lemma}{Lemma}
\newtheorem{remark}{Remark}[section]
\title{Large Deviations of Cancer Recurrence Timing}
\author{Pranav Hanagal, Kevin Leder, Zicheng Wang}
\begin{document}
\maketitle
\begin{abstract}
We study large deviation events in the timing of disease recurrence. In particular, we are interested in modeling cancer treatment failure due to mutation induced drug resistance. We first present a two-type branching process model of this phenomenon, where an initial population of cells that are sensitive to therapy can produce mutants that are resistant to the therapy. In this model we investigate two random times, the recurrence time and the crossover time. Recurrence time is defined as the first time that the population size of mutant cells exceeds a given proportion of the initial population size of drug-sensitive cells. Crossover time is defined as the first time that the resistant cell population dominates the total population. We establish convergence in probability results for both recurrence and crossover time. We then develop expressions for the large deviations rate of early recurrence and early crossover events. We characterize how the large deviation rates and rate functions depend on the initial size of the mutant cell population.
We finally look at the large deviations rate of early recurrence conditioned on the number of mutant clones present at recurrence in the special case of deterministically decaying sensitive population. We find that if recurrence occurs before the predicted law of large numbers limit then there will likely be an increase in the number of clones present at recurrence time.\\
{\bf Keywords:} Cancer evolution; Large deviation principle; Branching processes.\\
\end{abstract}

\section{Introduction}\label{introduction}

In the past two decades, targeted therapies have been developed and applied to treat many types of cancers (chronic myeloid leukemia, non-small cell lung cancer, etc). Many of these therapies lead to a substantial decline in tumor burden. However, mutation-induced drug resistance often occurs and results in cancer recurrence.
Therefore, it is of particular importance to understand the evolutionary dynamics of cancer recurrence. Currently, little is known about the scenarios where cancers recur abnormally earlier than expected. In this work, we develop a stylized model to analyze these rare events and provide information on the number of clones of mutants conditioned on early recurrence.

Mathematical modeling can help in the understanding of complicated cellular dynamics. A large number
of mathematical models have been proposed and analyzed to study the dynamics
of bacterial and cancer cell populations and the evolution of drug resistance. Luria and Delbr\"{u}ck \cite{Luria1943} built a simple model (LD model) in which sensitive and resistant bacteria grow deterministically, while mutation occurs randomly. They derived the distribution of the number of resistant bacteria, and formulated a function to estimate the mutation rate. Iwasa, Nowak and Michor \cite{Michor2006} analyzed a stochastic LD model (with some deterministic approximations) and obtained the probability of resistance, the mean number of mutants, and the distribution of the number of mutants when the cell population reached a detection size. Kessler and Levine \cite{Levine2013} studied the fully stochastic LD model and derived the probability distribution for the number of mutants when the cell population reached a large size (large population size limit). Keller and Antal \cite{Keller2015} studied the LD model with stochastic growth of the resistant population. They obtained an exact formula for the generating function of the number of mutants when the number of drug-sensitive cells reach a given threshold. Cheek and Antal \cite{Cheek2018} investigated the fully stochastic LD model and obtained valuable results for the  ``mutation times, clone sizes and number of mutants" in different settings. All of these works provide valuable insights into how mutations accumulate during tumor expansion (prior to treatment), while our work focuses on what happens during treatment.



Our work is based on a stream of literature studying multi-type branching processes. Durrett \cite{Durrett} summarized a lot of important results concerning two stochastic times: the time of the first type k mutation, and the time of the first type k mutation that founds a family line that does not go extinct. Such results can be applied to cancer treatment, such as finding a ``window of opportunity for screening." Jagers and co-authors \cite{JKS2007a,JKS2007b} obtained results for the ``path to extinction" in subcritical branching processes, including convergence of finite dimensional distributions and its extension to general inter-arrival times. In this work, we will utilize the same time scale as considered in \cite{JKS2007a,JKS2007b}. Foo and Leder \cite{JK2013} used a branching process model to analyze the dynamics of escape from extinction. They studied the time at which the total cell population begins to rebound (the population stops declining and starts increasing) as well as the crossover time (the first time at which the total cell population is dominated by the mutant cell population). 
Foo, Leder and Zhu \cite{JKJ2014} extended the study of escape times to include random mutational fitness advantage, which means that each mutation results in the creation of a supercritical birth-death process with random birth rate. The authors further established a functional central limit theorem for a scaled version of the mutant cell process.  Lastly Antal and Avanzini \cite{avanzini2019cancer} investigated time to disease recurrence due to possibly occult metastasis in a branching process model of cancer.

In this work we develop a number of results concerning the recurrence and crossover times. We examine the dynamics between drug-sensitive cells and mutant cells during treatment. We also incorporate the initial size of mutant population prior to treatment which haven't been studied in \cite{JK2013} and \cite{JKJ2014}. Clinical studies have shown that drug resistant mutations can be present prior to treatment \cite{Oh2011}, and it is therefore important to incorporate the initial size of mutant population.We focus on the particular rare event of early recurrence such that recurrence happens $y$ units of time earlier than the deterministic limit. We formulate the prediction of recurrence time in Section \ref{model}, and establish a convergence in probability result in Theorem \ref{cip_recurrence}. In Theorem \ref{recurrence_LD}, we obtain the large deviations rates and rate functions of early recurrence with respect to different sizes of initial mutant population. In particular, we examine three possible cases: (1) resistance is driven by mutants created during treatment, (2) resistance is driven by both pre-existing and acquired mutants, and (3) resistance is driven by pre-existing mutants. We observe that in case (1) and (2), rate functions depend on the mutation parameter $\mu$, while it doesn't in case (3). Moreover, in all three cases, the decay rate increases in $y$. In Section \ref{results_crossover}, the large deviations results have also been established for the crossover time.
We find that the large deviations rates are similar for the two random times, but the large deviations rate for crossover time is more sensitive to the decay rate of the sensitive cell population.
In Section \ref{LD_condition_sec}, we study
the rare event of early recurrence conditioned on a fixed number of mutant clones at $y$ units of time before the deterministic limit. We then use this result to find the number of clones which minimizes the decay rate, i.e., the number of clones that is most likely to lead to an early recurrence. We observe that this optimal number of clones increases in $y$, which indicates that early recurrence leads to a larger number of mutant clones present at recurrence.


The remainder of this work is organized as follows. In Section \ref{model}, we describe our model and present important results from previous works. In Section \ref{results}, we state the main results and their implications. In Section \ref{proof}, we provide proofs of our main results.

\section{Model and Previous Results}\label{model}

In this section, we introduce the multi-type branching process model, as well as the recurrence time and crossover time. Our model is similar to the models studied in \cite{JK2013} and \cite{JKJ2014}.

Consider a subcritical birth-death process $\left(Z_0^n\left(t\right)\right)_{t\ge 0}$ with birth rate $r_0$, death rate $d_0$ and net growth rate $\lambda_0 = r_0-d_0 < 0$. $Z_0^n$ represents the drug-sensitive wild-type population with $Z_0^n\left(0\right)=n$. For our convenience, we will additionally define $r=-\lambda_0$. Consider a supercritical birth-death process $\left(Z_1^n\left(t\right)\right)_{t\ge 0}$ with birth rate $r_1$, death rate $d_1$ and net growth rate $\lambda_1 = r_1-d_1 > 0$. $Z_1^n$ represents the drug-resistant mutants that preexist treatment (type-$1$ mutants) with $Z_1^n\left(0\right)=X\left(n\right)$. Drug-resistant mutants are also generated during treatment at time $t$ at rate $Z_0^n\left(t\right)\mu n^{-\alpha}$ for $\alpha \in \left(0,1\right)$, and each of these mutations results in the creation of a clone which is modeled as a supercritical birth-death process with birth rate $r_1$, death rate $d_1$ and net growth rate $\lambda_1 = r_1-d_1 > 0$. We denote this population (type-$2$ mutants) by $\left(Z_2^n\left(t\right)\right)_{t\ge 0}$. Then $Z_2^n$ is a supercritical branching process with immigration. To clarify the model, we note that mutants modeled by $Z_1^n$ and $Z_2^n$ behave exactly the same. They are only different in terms of their ancestors.
For each $n\geq 1$, the processes $(Z_0^n,Z_1^n,Z_2^n)$ are defined on a common probability space $(\Omega_n, \mathcal{F}^n,\PP_n)$. For ease of notation we will write $\PP$ instead of $\PP_n$.

To simplify notation, we define for $i=0,1,2$ the functions
\begin{equation*}
z_i^n\left(t\right)=\EE Z_i^n\left(t\right).
\end{equation*}
If we use the time scale $t_n=\frac{1}{r}\text{log} \left(n\right)$, then the following results are known from \cite{JK2013}, \cite{JKJ2014}, and \cite{bp}:
\begin{align*}
& z_0^n\left(u t_n\right)=ne^{-rut_n}=n^{1-u},\\
& z_1^n\left(u t_n\right)=X\left(n\right)e^{\lambda_1 ut_n}=X\left(n\right)n^{\frac{\lambda_1}{r}u},\\
& z_2^n\left(u t_n\right)=\frac{\mu}{\lambda_1+r}n^{1-\alpha}e^{\lambda_1 ut_n}\left(1-e^{\left(\lambda_0-\lambda_1\right)u t_n}\right)=\frac{\mu}{\lambda_1+r}n^{1-\alpha}\left(n^{\frac{\lambda_1}{r}u}-n^{-u}\right),\\
& \Var\left(Z_0^n\left(u t_n\right)\right)=\frac{r_0+d_0}{r}n^{1-u}\left(1-n^{-u}\right),\\
& \Var\left(Z_1^n\left(u t_n\right)\right)=X\left(n\right)\frac{r_1+d_1}{\lambda_1}n^{\frac{\lambda_1}{r}u}\left(n^{\frac{\lambda_1}{r}u}-1\right).
\end{align*}

We also define the moment generating function of a binary branching process starting from a single cell. Let $Z=\{Z\left(t\right), t\ge 0\}$ denote a binary branching process where $Z\left(0\right)=1$ and each individual cell has birth rate $r_1$, death rate $d_1$, and net growth rate $\lambda_1=r_1-d_1$. The moment generating function is given by 

\begin{align}\label{generating_function}
\phi_t\left(\theta\right) = \EE \exp\left(\theta Z\left(t\right)\right)=\left\{ \begin{array}{cc} 
\frac{d_1\left(e^{\theta}-1\right)-e^{-\lambda_1t}\left(r_1e^{\theta}-d_1\right)}{r_1\left(e^{\theta}-1\right)-e^{-\lambda_1t}\left(r_1e^{\theta}-d_1\right)}, & \hspace{5mm} \theta<\bar{\theta}_t \\
\infty & \hspace{5mm} \theta\ge\bar{\theta}_t \\
\end{array} \right.
\end{align}
where
\begin{equation}\label{bar_theta}
\bar{\theta}_t\doteq \log\left(\frac{r_1e^{\lambda_1t}-d_1}{r_1e^{\lambda_1 t}-r_1}\right)
\end{equation}
(see page 109 of \cite{bp}). Throughout this paper, we will repeatedly use $\phi_t$ to denote the moment generating function of $Z(t)$. 

We are interested in the asymptotic properties of the recurrence time which we define as
\begin{equation*}
\gamma_n\left(a\right)=\inf\{t\ge 0: Z_1^n\left(t\right)+Z_2^n\left(t\right)>an \}
\end{equation*}
for $a>0$. We will often be interested in $\gamma_n(1)$, and use the notation $\gamma_n\equiv \gamma_n(1)$. Recurrence time represents the first time that the mutant cell population given by $Z_1^n\left(t\right)+Z_2^n\left(t\right)$ exceeds a proportion $a$ of the initial population size of drug-sensitive cells. We denote by $\zeta_n\left(a\right)$ the unique solution to $z_1^n\left(t\right)+z_2^n\left(t\right)=an$. We will show that $\gamma_n\left(a\right)-\zeta_n\left(a\right)\rightarrow 0$ in probability. Due to the complexity of the equation $z_1^n\left(t\right)+z_2^n\left(t\right)=an$, we cannot get an explicit form of $\zeta_n\left(a\right)$. Instead we solve the equation (by noting that $z_2^n\left(t\right)\ge \frac{\mu}{\lambda_1+r}n^{1-\alpha}e^{\lambda_1 t}\left(1-e^{-\lambda_1 t}\right)$)
\begin{equation}\label{upper_bound_equation}
\left(X\left(n\right)+\frac{\mu}{\lambda_1+r}n^{1-\alpha}\right)e^{\lambda_1 t}-\frac{\mu}{\lambda_1+r}n^{1-\alpha}=an
\end{equation}
to get an upper bound $\bar{u}_n^{upper}=\frac{1}{\lambda_1}\log \left(\frac{an+\frac{\mu}{\lambda_1+r}n^{1-\alpha}}{X\left(n\right)+\frac{\mu}{\lambda_1+r}n^{1-\alpha}}\right)$.
Similarly we solve the equation (by noting that $z_2^n\left(t\right)\le \frac{\mu}{\lambda_1+r}n^{1-\alpha}e^{\lambda_1 t}$)
\begin{equation}\label{lower_bound_equation}
\left(X\left(n\right)+\frac{\mu}{\lambda_1+r}n^{1-\alpha}\right)e^{\lambda_1 t}=an
\end{equation}
to get a lower bound $\bar{u}_n^{lower}=\frac{1}{\lambda_1}\log \left(\frac{an}{X\left(n\right)+\frac{\mu}{\lambda_1+r}n^{1-\alpha}}\right)$,
which satisfies,
\begin{align*}
\bar{u}_n^{upper}-\bar{u}_n^{lower}=\frac{1}{\lambda_1}\log \left(\frac{an+\frac{\mu}{\lambda_1+r}n^{1-\alpha}}{an}\right) \rightarrow 0, \text{ as } n\rightarrow \infty.
\end{align*}

With the convergence in probability result (Theorem \ref{cip_recurrence}), it is easy to notice that an `early recurrence', which we denote by $\{\gamma_n\left(a\right)\le \zeta_n\left(a\right)-y\}$, is a rare event if we take $n$ sufficiently large. 
In this work, we will show that the probability of an early recurrence decays exponentially fast, and the corresponding rate and rate function depend on the initial number of mutant cells. 

Foo, Leder and Zhu \cite{JKJ2014}  study a different stochastic time for a closely related model. In particular define the crossover time as
$$
\tau_n=\inf\{t\geq 0: Z_1^n(t)+Z_2^n(t)>Z_0^n(t)\}.
$$
In addition define $\xi_n$ as the unique solution to $z_1^n(t)+z_2^n(t)=z_0^n(t)$. In \cite{JKJ2014} they establish that $\xi_n-\tau_n\to 0$ as $n\to\infty$ and establish a central limit theorem for fluctuations of $\tau_n$ away from $\xi_n$. In this work we focus on the recurrence time, but our main results have also been established for the crossover time and this can be found in Section \ref{results_crossover}.

Throughout this work we will use the following notation for the asymptotic behavior of positive functions:
\begin{align*}
f\left(t\right) \sim g\left(t\right) & \quad\hbox{if $f\left(t\right)/g\left(t\right) \to 1$ as $t \to \infty$}, \\
f\left(t\right) = o\left(g\left(t\right)\right) & \quad\hbox{if $f\left(t\right)/g\left(t\right) \to 0$ as $t \to \infty$}, \\
f\left(t\right) = O\left(g\left(t\right)\right) & \quad \hbox{if $f\left(t\right) \leq C g\left(t\right)$ for all $t$}, \\
f\left(t\right) = \Theta\left(g\left(t\right)\right) & \quad \hbox{if $c g\left(t\right) \le f\left(t\right) \leq C g\left(t\right)$ for all $t$},
\end{align*}
where $C$ and $c$ are positive constants.

\section{Results}\label{results}

We first establish the convergence in probability result for the recurrence time.

\begin{theorem}\label{cip_recurrence}
	Assume that $\alpha\in \left(0,1\right)$, $a>0$, $X\left(n\right)<n$, and $X\left(n\right)$ is non-decreasing in $n$, then for every $\epsilon>0$ we have that
	\begin{center}
		$\lim\limits_{n\rightarrow \infty}\PP\left(|\gamma_n\left(a\right)-\zeta_n\left(a\right)|>\epsilon\right)=0.$
	\end{center}
\end{theorem}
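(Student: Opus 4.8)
Write $Y^n(t)=Z_1^n(t)+Z_2^n(t)$ for the mutant population and $y^n(t)=z_1^n(t)+z_2^n(t)=\EE Y^n(t)$, so that $\gamma_n(a)=\inf\{t\ge 0:Y^n(t)>an\}$ and, by definition, $y^n(\zeta_n(a))=an$. I will bound $\PP(\gamma_n(a)>\zeta_n(a)+\epsilon)$ and $\PP(\gamma_n(a)<\zeta_n(a)-\epsilon)$ separately. Two preliminaries are needed. From the explicit formula $y^n(t)=(X(n)+\tfrac{\mu}{\lambda_1+r}n^{1-\alpha})e^{\lambda_1 t}-\tfrac{\mu}{\lambda_1+r}n^{1-\alpha}e^{-rt}$ (which also shows $y^n$ is strictly increasing) one gets, using $y^n(\zeta_n(a))=an$, the bound $e^{\lambda_1\zeta_n(a)}\le 2an/(X(n)+\tfrac{\mu}{\lambda_1+r}n^{1-\alpha})$ for $n$ large, and the identity $y^n(\zeta_n(a)+s)=an\,e^{\lambda_1 s}+\tfrac{\mu}{\lambda_1+r}n^{1-\alpha}e^{-r\zeta_n(a)}(e^{\lambda_1 s}-e^{-rs})\ge an\,e^{\lambda_1 s}$ for all $s\ge 0$. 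Secondly, $Z_1^n$ is independent of $Z_2^n$ (the pre-existing mutants evolve independently of $Z_0^n$, hence of $Z_2^n$), and a routine second-moment computation for a supercritical branching process whose immigration is driven by the subcritical $Z_0^n$ --- condition on $Z_0^n$, use the variance formula for $Z_0^n$, cf.\ \cite{JKJ2014,bp} --- gives $\Var(Z_2^n(t))\le Cn^{1-\alpha}e^{2\lambda_1 t}$ for all $t\ge 0$. Combining this with the stated formula $\Var(Z_1^n(t))=X(n)\tfrac{r_1+d_1}{\lambda_1}e^{\lambda_1 t}(e^{\lambda_1 t}-1)$, the independence, the above bound on $e^{\lambda_1\zeta_n(a)}$, and $\alpha<1$, a short calculation yields $\Var(Y^n(\zeta_n(a)+s))/(\EE Y^n(\zeta_n(a)+s))^2\to 0$ for each fixed $s\ge 0$; in particular $\Var(Y^n(\zeta_n(a)))/(an)^2\to 0$.

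\textbf{Recurrence is not too late.} Since $\{\gamma_n(a)>\zeta_n(a)+\epsilon\}\subseteq\{Y^n(\zeta_n(a)+\epsilon)\le an\}$ and $\EE Y^n(\zeta_n(a)+\epsilon)=y^n(\zeta_n(a)+\epsilon)\ge an\,e^{\lambda_1\epsilon}$, Chebyshev's inequality gives $\PP(\gamma_n(a)>\zeta_n(a)+\epsilon)\le\Var(Y^n(\zeta_n(a)+\epsilon))/[(1-e^{-\lambda_1\epsilon})^2(\EE Y^n(\zeta_n(a)+\epsilon))^2]\to 0$ by the variance estimate above.

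\textbf{Recurrence is not too early --- the main obstacle.} The difficulty here is that a first-moment estimate on $\PP(\sup_{t\le\zeta_n(a)-\epsilon}Y^n(t)>an)$ only yields the constant $e^{-\lambda_1\epsilon}$, since running time back by $\epsilon$ shrinks $\EE Y^n$ only by the factor $e^{-\lambda_1\epsilon}$; some genuine concentration has to be injected, and I plan to do this by a coupling at the stopping time $\gamma_n(a)$. On $E_n:=\{\gamma_n(a)\le\zeta_n(a)-\epsilon\}$ one has $Y^n(\gamma_n(a))\ge an$ (the process crosses level $an$ by a jump of size one, as $Y^n(0)=X(n)<an$). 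By the strong Markov property, deleting every mutant that immigrates after $\gamma_n(a)$ together with all its descendants shows that $(Y^n(\gamma_n(a)+u))_{u\ge 0}$ stochastically dominates a birth--death process $\check Z$ with parameters $(r_1,d_1)$ started from $\lfloor an\rfloor$ particles. For $\check Z$, the process $u\mapsto e^{-\lambda_1 u}\check Z(u)/\lfloor an\rfloor$ is a mean-one martingale with $\Var(e^{-\lambda_1 u}\check Z(u)/\lfloor an\rfloor)=\tfrac{r_1+d_1}{\lambda_1\lfloor an\rfloor}(1-e^{-\lambda_1 u})\le\tfrac{r_1+d_1}{\lambda_1\lfloor an\rfloor}$ uniformly in $u$ (again from the variance formula for $Z_1^n$ with a single ancestor), so Doob's $L^2$ maximal inequality gives, for each $\delta>0$, $\PP(\inf_{u\ge 0}e^{-\lambda_1 u}\check Z(u)<(1-\delta)\lfloor an\rfloor)\to 0$. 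Hence with probability tending to one, on $E_n$ we have $Y^n(\zeta_n(a))\ge\check Z(\zeta_n(a)-\gamma_n(a))\ge(1-\delta)\lfloor an\rfloor e^{\lambda_1(\zeta_n(a)-\gamma_n(a))}\ge(1-\delta)\lfloor an\rfloor e^{\lambda_1\epsilon}$. But $\EE Y^n(\zeta_n(a))=an$, so by Chebyshev and the variance estimate above, $Y^n(\zeta_n(a))\le(1+\delta)an$ with probability tending to one. Choosing $\delta>0$ small enough that $(1-\delta)e^{\lambda_1\epsilon}>1+2\delta$ (possible since $\lambda_1\epsilon>0$), and noting $\lfloor an\rfloor\ge(1-o(1))an$, these two high-probability statements contradict one another on $E_n$, so $\PP(\gamma_n(a)<\zeta_n(a)-\epsilon)\le\PP(E_n)\to 0$.

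Combining the two halves gives $\PP(|\gamma_n(a)-\zeta_n(a)|>\epsilon)\to 0$. The cases $\zeta_n(a)<\epsilon$ (where $\{\gamma_n(a)<\zeta_n(a)-\epsilon\}=\emptyset$ since $\gamma_n(a)\ge 0$) and the requirement $X(n)<an$ needed for $\zeta_n(a)$ to be well defined are disposed of at the outset. The steps that demand actual work are the second-moment bound on $Z_2^n$ and the uniform martingale control for $\check Z$, both standard; as an alternative to the coupling one could instead apply Doob's $L^p$ maximal inequality to the submartingale $e^{-\lambda_1 t}Y^n(t)$ together with an $L^p$-concentration bound for $Y^n(\zeta_n(a)-\epsilon)$ and let $p\to\infty$.
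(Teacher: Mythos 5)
Your proof is correct, and the ``too early'' half takes a genuinely different route from the paper's. The paper's argument (Section \ref{proof_cip_recurrence}) handles both directions by centering each $Z_i^n$ around its mean, rescaling by $1/(an)$, and applying Doob-type maximal-inequality lemmas (Lemma~\ref{z_2_limit} and Lemma~\ref{cip_crossover_mean}) to show that the suprema over the time window of the centered, rescaled processes vanish in probability while the deterministic term $\frac{1}{an}(z_1^n+z_2^n)-1$ stays uniformly bounded away from $0$; the three-term decompositions $\hat A_1+\hat A_2+\hat A_3$ and $B_1+B_2+B_3$ do the work in both directions. Your ``too late'' step is essentially the paper's $B$-decomposition collapsed into a single Chebyshev bound at the fixed time $\zeta_n(a)+\epsilon$. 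Your ``too early'' step is where you diverge: instead of controlling $\sup_{t\le\zeta_n(a)-\epsilon}\bigl(Y^n(t)-an\bigr)$ via maximal inequalities for the centered components, you restart at the stopping time $\gamma_n(a)$, couple the post-$\gamma_n$ population (after pruning future immigrants, via the strong Markov property) with a pure birth--death process $\check Z$ started from $\lfloor an\rfloor$ particles, use the $L^2$-bounded martingale $e^{-\lambda_1 u}\check Z(u)/\lfloor an\rfloor$ to show $\check Z$ cannot drop appreciably below $\lfloor an\rfloor e^{\lambda_1 u}$, and deduce that on $\{\gamma_n(a)\le\zeta_n(a)-\epsilon\}$ the population at the deterministic time $\zeta_n(a)$ would overshoot $(1-\delta)an\,e^{\lambda_1\epsilon}$, contradicting a Chebyshev bound at $\zeta_n(a)$. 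This ``push forward from $\gamma_n$'' argument extracts the $e^{\lambda_1\epsilon}$ gap cleanly and never requires uniform-in-$t$ control of the fluctuations of the immigration-driven $Z_2^n$; in exchange you must set up the strong-Markov coupling carefully and verify the variance bound $\Var\,Z_2^n(t)\le Cn^{1-\alpha}e^{2\lambda_1 t}$, which the paper sidesteps by quoting Lemma~2 of \cite{JK2013}. Both proofs ultimately rest on the same two ingredients --- concentration of $Y^n$ around its mean at fixed times, and an $L^2$ maximal inequality for a martingale --- so your version is a legitimate and arguably more self-contained alternative.
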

\begin{proof}
Section \ref{proof_cip_recurrence}.
\end{proof}

The proof of Theorem \ref{cip_recurrence} is similar to Theorem $1$ of \cite{JKJ2014} but it extends the result by incorporating the initial size of mutant population. 
Recurrence time (sometimes called relapse time in literature) is very important in cancer treatment. Theorem \ref{cip_recurrence} shows that if we have enough information about the mutation rate and the initial size of mutant population to certain therapy, then we can have a good approximation of the recurrence time.

Next, we establish the large deviations results for the probability of an early recurrence. In short, a large deviation result can be thought as finding $c\ge 0$ and $L\left(y\right)$ such that
\begin{align*}
\PP \left(\gamma_n\left(a\right)\le \zeta_n\left(a\right)-y\right)= \exp\left[-n^{c}\left(L\left(y\right)+o(1)\right) \right],
\end{align*}
where we call $n^c$ the large deviation rate and $L\left(y\right)$ the rate function. We investigate three cases: (1) $X\left(n\right)=o\left(n^{1-\alpha}\right)$, (2) $X\left(n\right) \sim n^{1-\alpha}$ and (3) $X\left(n\right) \sim n^{1-\beta}$, where $0<\beta<\alpha$. Case (1) is the setting where there is an insignificant amount of mutants present at diagnosis and resistance is driven by mutants created during treatment. Case (3) is the setting where resistance is driven by pre-existing mutants, and Case (2) is the critical case where both pre-existing and acquired mutants drive resistance.

\begin{theorem}\label{recurrence_LD}
	Assume that $\alpha\in \left(0,1\right)$.\\
	
	(1) When $X\left(n\right)=o\left(n^{1-\alpha}\right)$,
	\begin{equation*}
	\lim\limits_{n\rightarrow \infty}\frac{1}{n^{1-\alpha}}\log \PP \left(\gamma_n\left(a\right)\le \zeta_n\left(a\right)-y\right)= -\sup\limits_{\theta\in\left(0,1\right)}\left[\frac{\lambda_1 \mu  \theta e^{y \lambda_1}}{r_1\left(\lambda_1+r\right)}-\frac{\lambda_1\mu\theta}{r_1}\int_{0}^{\infty}\frac{e^{-rs}}{e^{\lambda_1s}-\theta}ds\right].
	\end{equation*}
	
	(2) When $X\left(n\right) \sim n^{1-\alpha} $,
	\begin{align*}
	& \quad \lim\limits_{n\rightarrow \infty}\frac{1}{n^{1-\alpha}}\log \PP \left(\gamma_n\left(a\right)\le \zeta_n\left(a\right)-y\right)\\\nonumber
	& = -\sup\limits_{\theta\in\left(0,1\right)}\left[\frac{ \lambda_1 \theta \left(\mu+\lambda_1+r\right)e^{\lambda_1y}}{r_1\left(\lambda_1+r\right)}-\log\left(1-\frac{\lambda_1}{r_1}\frac{\theta}{\theta-1}\right)-\frac{\lambda_1 \mu \theta}{r_1}\int_{0}^{\infty}\frac{ e^{-rs}}{e^{\lambda_1 s}-\theta}ds\right].
	\end{align*}
	
	(3) When $X\left(n\right) \sim n^{1-\beta} $, where $0<\beta<\alpha$,
	\begin{equation*}
	\lim\limits_{n\rightarrow \infty}\frac{1}{n^{1-\beta}}\log \PP \left(\gamma_n\left(a\right)\le \zeta_n\left(a\right)-y\right)= -\sup\limits_{\theta\in\left(0,1\right)}\left[\frac{ \lambda_1 \theta e^{\lambda_1 y}}{r_1}-\log\left(1-\frac{\lambda_1}{r_1}\frac{\theta}{\theta-1}\right)\right].
	\end{equation*}
\end{theorem}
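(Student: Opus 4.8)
The overall strategy is to compute the exponential decay rate of $\PP(\gamma_n(a)\le \zeta_n(a)-y)$ by relating the event to the value of the mutant process $Z_1^n+Z_2^n$ at a deterministic time. The key observation is that $\{\gamma_n(a)\le \zeta_n(a)-y\}$ is, up to the bounds $\bar u_n^{lower}\le \zeta_n(a)\le \bar u_n^{upper}$ derived in Section \ref{model}, essentially the event $\{Z_1^n(s_n)+Z_2^n(s_n)>an\}$ for a time $s_n$ that is $y$ units below the deterministic crossing time. Since $\zeta_n(a)$ grows like $t_n\cdot(\text{const})$ and the shift is by a constant $y$, the relevant threshold $an$ has to be compared with $\EE[Z_1^n(s_n)+Z_2^n(s_n)]$, which by the mean formulas in Section \ref{model} is of order $an\,e^{-\lambda_1 y}$ — strictly below $an$. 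So we are asking for the probability that a branching process exceeds a level that is an $e^{\lambda_1 y}$-fold multiple of its mean; this is a classical upper-tail large deviation question, and the rate is governed by the logarithmic moment generating function.

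First I would set up the change of measure / Chernoff bound. For the upper bound on the probability one writes, for $\theta>0$,
\[
\PP\big(Z_1^n(s_n)+Z_2^n(s_n) > an\big) \le e^{-\theta_n an}\,\EE\exp\big(\theta_n(Z_1^n(s_n)+Z_2^n(s_n))\big),
\]
with $\theta_n$ scaled appropriately — here the right scaling is $\theta_n = \theta\, e^{-\lambda_1 s_n}/r_1$ roughly, so that the argument of $\phi_{s_n}$ stays just below the explosion threshold $\bar\theta_{s_n}$; note $\bar\theta_{s_n}\sim (\lambda_1/r_1)e^{-\lambda_1 s_n}$. Using independence of the $X(n)$ preexisting clones and, for $Z_2^n$, the Poisson-thinning / immigration structure (the clones founded by mutation form a Poisson number with intensity $\int Z_0^n(u)\mu n^{-\alpha}\,du$, each evolving as an independent $\phi$), the joint MGF factors as
\[
\EE e^{\theta_n Z_1^n(s_n)} = \phi_{s_n}(\theta_n)^{X(n)}, \qquad
\EE e^{\theta_n Z_2^n(s_n)} = \exp\!\Big(\mu n^{-\alpha}\!\int_0^{s_n}\! z_0^n(u)\,(\phi_{s_n-u}(\theta_n)-1)\,du\Big).
\]
Then I would substitute the closed form \eqref{generating_function} for $\phi$, perform the asymptotic expansion as $n\to\infty$ (using $s_n\to\infty$, $e^{-\lambda_1 s_n}\to 0$, and $e^{\lambda_1 s_n}\asymp an/(X(n)+\tfrac{\mu}{\lambda_1+r}n^{1-\alpha})$), and identify the leading exponential order. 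In case (1) the $n^{1-\alpha}$ from $z_2^n$ dominates and $X(n)$ is negligible, giving only the $\mu$-term; in case (3) the $n^{1-\beta}$ from $X(n)$ dominates, giving only the $\log(1-\tfrac{\lambda_1}{r_1}\tfrac{\theta}{\theta-1})$ term; case (2) is the balanced sum of both. The substitution $u = s\cdot t_n$ (or equivalently tracking $z_0^n(u)=ne^{-ru}$) turns the integral $\int_0^{s_n} z_0^n(u)(\phi_{s_n-u}(\theta_n)-1)\,du$ into $\int_0^\infty \tfrac{e^{-rs}}{e^{\lambda_1 s}-\theta}\,ds$ after cancellation, which is exactly the integral appearing in the statement. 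Optimizing over $\theta\in(0,1)$ (the constraint $\theta<1$ comes from $\bar\theta_{s_n}$ and from convergence of the integral) yields the claimed $\sup$.

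For the matching lower bound I would use a second-moment / Chebyshev-type argument or, more cleanly, a change-of-measure (tilting) argument: tilt each clone's law by $\theta_n$, under which the process has the desired inflated mean, show the event has probability bounded below by a subexponential factor, and pay the relative-entropy cost $\theta_n an - \log\EE e^{\theta_n(\cdots)}$, which matches the Chernoff exponent to leading order. Finally I would assemble: combine the two-sided bound on $Z_1^n+Z_2^n$ at time $s_n$ with the sandwich $\bar u_n^{lower}\le \zeta_n(a)\le \bar u_n^{upper}$ and $\bar u_n^{upper}-\bar u_n^{lower}\to 0$ to transfer the estimate from the fixed-time event back to $\{\gamma_n(a)\le \zeta_n(a)-y\}$; a small-$\epsilon$ argument handles the $o(1)$ discrepancy in the crossing time.

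The main obstacle I anticipate is the lower bound and, relatedly, the precise asymptotic bookkeeping in the $\phi_{s_n}(\theta_n)$ expansion. One must be careful that the rare event is driven by the \emph{number} of clones (or preexisting cells) behaving typically while their aggregate size is atypically large, versus a few clones being atypically large; the Poisson/branching structure means the cheapest way to overshoot is delicate, and getting the constant in front of $n^{1-\alpha}$ (resp. $n^{1-\beta}$) exactly right — including the interaction between the immigration integral and the explosion threshold $\bar\theta_{s_n}$ — requires uniform control of $\phi_{s_n-u}(\theta_n)$ over $u\in[0,s_n]$. I expect this uniformity, together with justifying the interchange of limit and supremum, to be where the real work lies.
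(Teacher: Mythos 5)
Your overall direction is right, but the upper bound as written has a genuine gap: you identify $\{\gamma_n(a)\le\zeta_n(a)-y\}$ with the fixed-time event $\{Z_1^n(s_n)+Z_2^n(s_n)>an\}$ and then apply a Chernoff bound to the latter. These two events are not the same. Since $\gamma_n(a)$ is a first-passage time, $\{\gamma_n(a)\le s_n\}=\{\sup_{t\le s_n}(Z_1^n(t)+Z_2^n(t))>an\}$, and $Z_1^n+Z_2^n$ is not monotone (cells die), so the supremum event strictly dominates the one-time event. Bounding $\PP(Z_1^n(s_n)+Z_2^n(s_n)>an)$ does \emph{not} bound the probability you want. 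The paper resolves this by observing that the discounted process $u\mapsto n^{-\lambda_1 u/r}(Z_1^n(ut_n)+Z_2^n(ut_n))$ is a submartingale (martingale plus the nondecreasing immigration drift), exponentiating, and applying Doob's maximal inequality on a time interval $[bt_n, s_n]$; this recovers exactly the same final MGF formula you wrote down, which is why your subsequent asymptotic bookkeeping still leads to the right rate. A second, smaller omission: the interval $[0, bt_n]$ has to be treated separately — the paper's Proposition~\ref{prop_recurrence_1} shows early exceedance is super-exponentially unlikely — because for small $u$ the threshold $\bar\theta_{\cdot}$ is not uniformly controllable and the Doob bound is not available on the full interval starting at zero.

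For the lower bound your proposed tilting argument is a legitimate alternative route. The paper instead drops the supremum (which is automatically valid in the lower-bound direction), reformulates the one-time event in terms of a scaled random variable $Z_n$, and invokes the Gärtner–Ellis theorem: the conditions (convergence of the scaled cumulant generating function to a limit $\Lambda$ that is essentially smooth and steep) are verified in Proposition~\ref{GE_cond}, and then a separate argument (Propositions~\ref{proposition_recurrence_3} and the convexity of the Legendre transform) shows the infimum over the open set matches the supremum over $\theta\in(0,1)$. Gärtner–Ellis is morally a packaged tilting argument, so your plan is not a different idea so much as a more hands-on version of the same one; the advantage of the packaged version is that you don't have to directly control the variance of the sum of immigration clones under the tilted law. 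Your proposed second-moment alternative is weaker: it typically yields a lower bound off by a polynomial or subexponential factor, which is harmless here, but it can be delicate to get the exact exponent when the dominant contribution is an aggregate of many small contributions rather than one large clone — you correctly flag this as the crux, and the paper's Gärtner–Ellis machinery is precisely what sidesteps it.
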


For case (3), we can show that the optimal value of $\theta$ is $\frac{r_1+d_1-\sqrt{{\lambda_1}^2+4r_1d_1 e^{-\lambda_1 y}}}{2d_1}$. Surprisingly, in all three cases, the rate functions do not depend on the value of $a$. The main reason is that the recurrence time is of order $\log\left(an\right)=\log\left(a\right)+\log\left(n\right)$, and hence the value of $a$ is negligible.

From Theorem \ref{recurrence_LD} we can see that the large deviation rates and rate functions depend on the initial size of mutant cell population and the mutation rate. We can observe that the function within the supremum operator in Case (2) is the summation of those in Case (1) and Case (3), which is reasonable as Case (2) is the critical case. In all three cases, the rate function increases in $y$ which is intuitive as an earlier recurrence occurs with a lower probability. In Figure \ref{ratefunction_y}, we plot an example of the rate function for $y\in \left(0, 2\right)$ in Case (1) and Case (3).

We can also observe that in Case (1) and (2), the rate functions depend on $\mu$, while it doesn't in Case (3). That is because mutation rate is irrelevant when resistance is driven by pre-existing mutants. Moreover, in Case (1), the rate function increases in $\mu$, which implies that when resistance is driven by mutants created during treatment, higher mutation rate leads to tighter concentration of recurrence time.

If we compare the decay rates (large deviation rate multiplied by rate function) of Case (1) and Case (3), we can observe that when $n$ is large, the decay rate of case (3) dominates (in magnitude) that of Case (1). The intuition is that the probability of early recurrence will have higher decay rate when more mutant cells accumulate early on. The reason is that a branching process will behave more deterministically when its initial size is larger. In Figure \ref{compare_1_3}, we plot $n^{\beta-\alpha}L_1(y)/L_3(y)$ (which is the ratio of decay rates between Case (1) and Case (3)) for different values of $\beta$, where $L_1$ and $L_3$ are the rate functions in Cases (1) and (3). Note that recurrence is more likely to occur early in Case (3), what we are showing is that the recurrence time is more tightly concentrated around the limiting behavior in Case (3).

In Case (1), we can also observe that the tail probability decay rate decreases in $r_1$ when $\lambda_1$ is fixed. The intuition is that given a fixed net growth rate, the decay rate will decrease in birth rate and death rate as higher birth rate/death rate will lead to higher fluctuation. In Figure \ref{fluctuation}, we plot an example of rate function for $r_1\in \left(0.2, 2\right)$.

\begin{figure}[h!]
\begin{center}
	\includegraphics[width=1\textwidth]{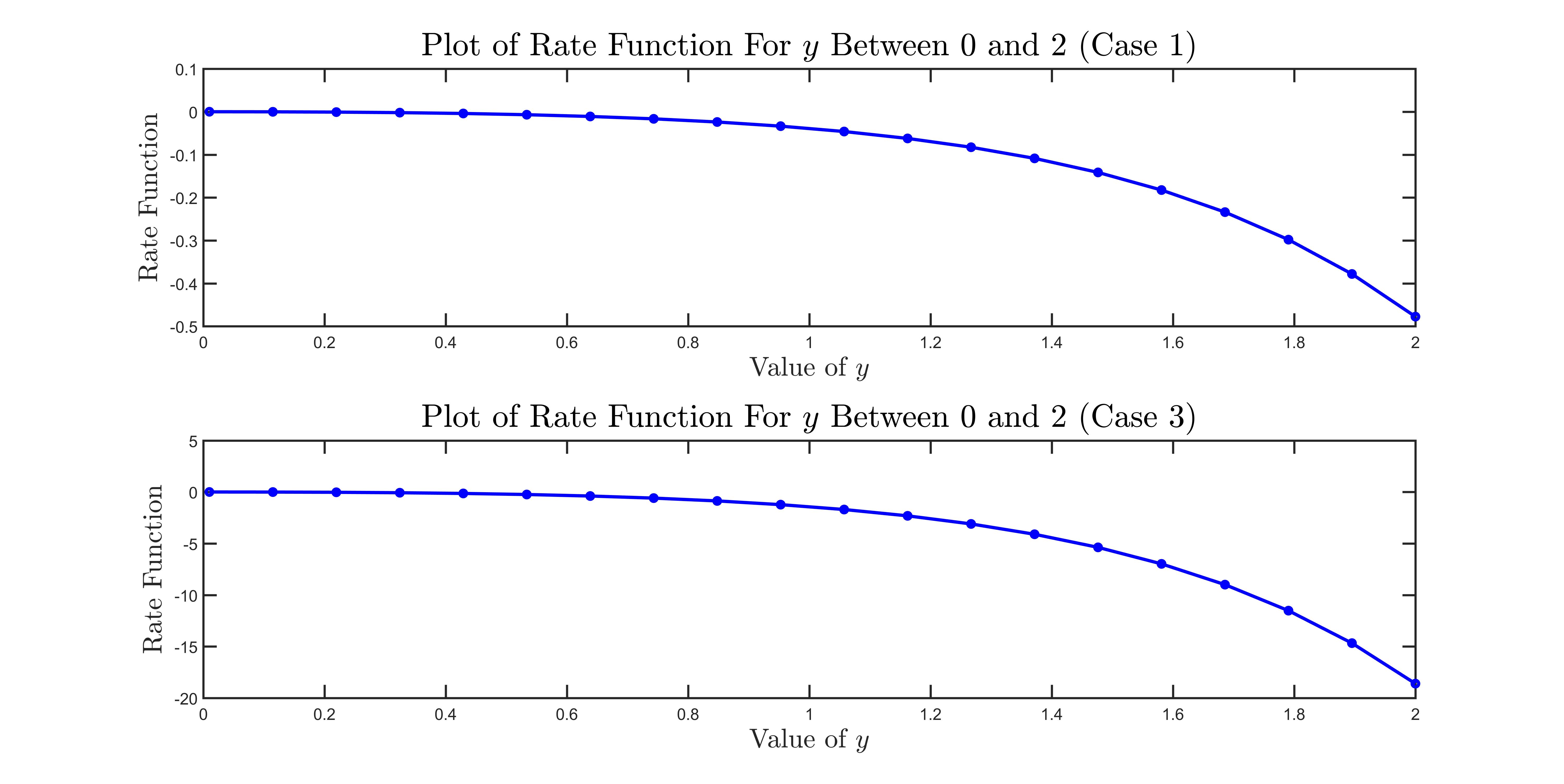}
\end{center}
\caption{Demonstration of rate function, where $\lambda_1 = 2, r_1=5, r = 2, $ and $\mu = 0.1.$}
\label{ratefunction_y}
\end{figure}

\begin{figure}[h!]
\begin{center}
	\includegraphics[width=1\textwidth]{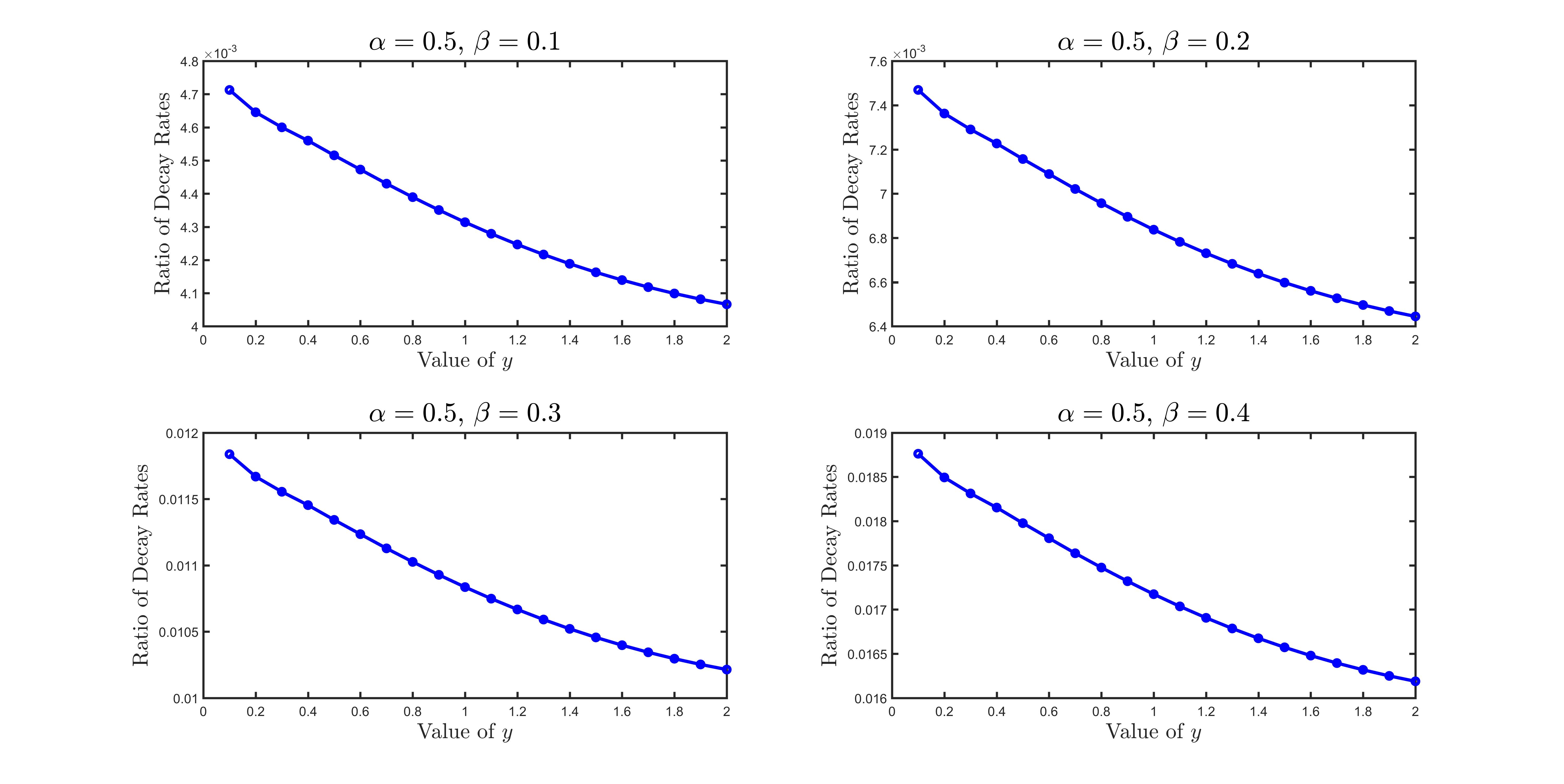}
\end{center}
\caption{Demonstration of ratios of decay rates between Case (1) and Case (3), where $\lambda_1 = 2, r_1=5, r = 2, \mu = 0.1$ and $n=100$.}
\label{compare_1_3}
\end{figure}

\begin{figure}[h!]
\begin{center}
	\includegraphics[width=1\textwidth]{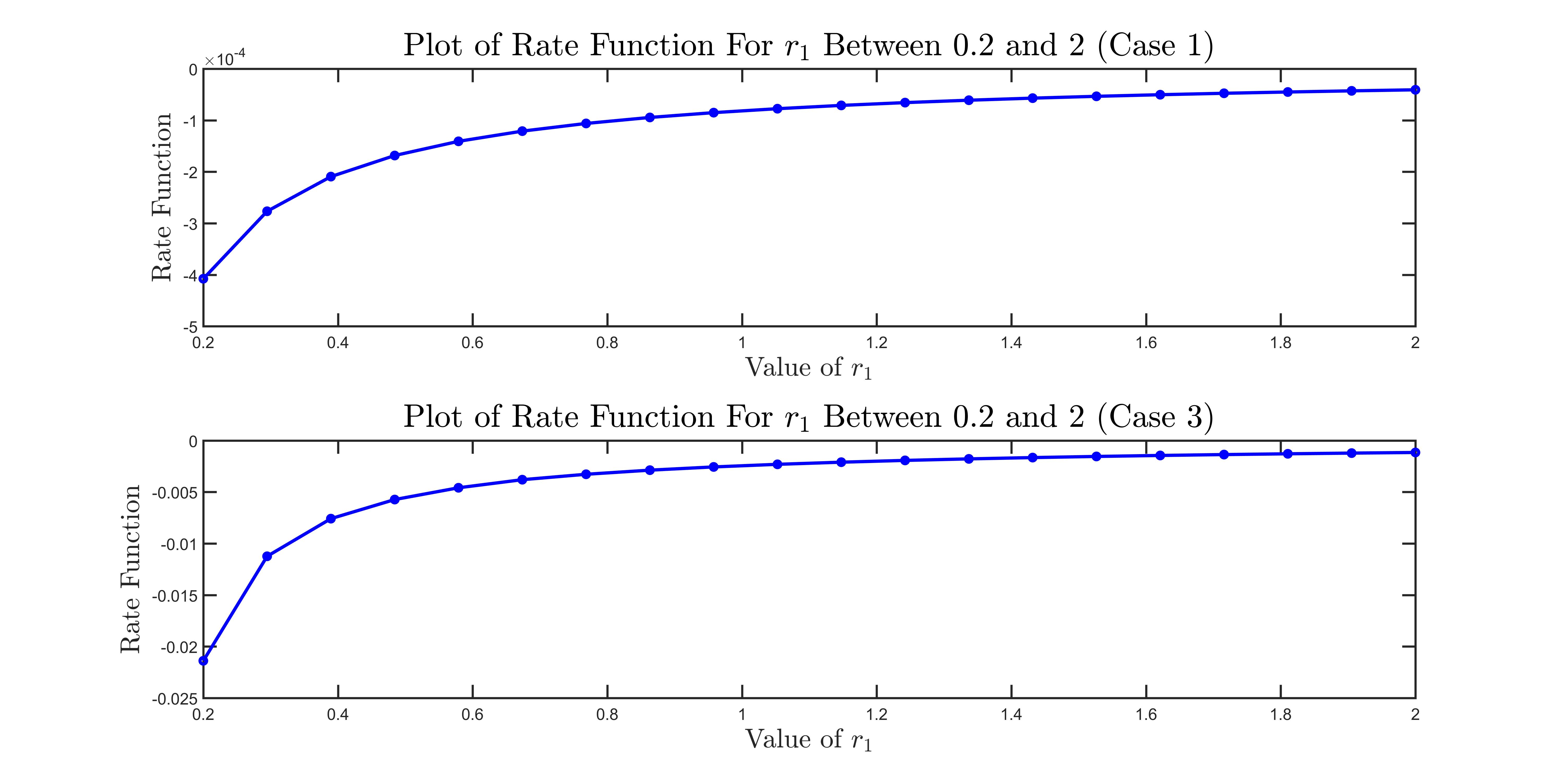}
\end{center}
\caption{Demonstration of rate function, where $\lambda_1 =0.2, y=1, r =0.2$ and $\mu = 0.01$.}
\label{fluctuation}
\end{figure}
\clearpage

Since the proofs for three cases are very similar, we will combine them together with the following manner: whenever the proofs of the three cases differ, we mark the difference by using (1), (2), or (3) to indicate which case is being dealt with.

Before proving Theorem \ref{recurrence_LD} we state some upper bounds that we will use throughout the paper. The proof of the following three results can be found in Section \ref{proof_upper}. 

This first result establishes an upper bound on a functional of the subcritical process.
\begin{proposition}\label{proposition_exp}
For $\theta\in \left(-\infty,1 \right)$, there exists $k_1>0$ such that
	\begin{equation*}
	\EE\exp\left(\frac{\mu}{n^{\alpha}}\int_{0}^{bt_n}\left(\phi_{bt_n-s}\left(v_{n,\theta,b}\right)-1\right)\left(Z_0^n\left(s\right)-ne^{-rs}\right)ds\right)\le \exp\left(k_1\left(\log n\right)^2n^{1-2\alpha}\right)
	\end{equation*}
for all $b>0$.
\end{proposition}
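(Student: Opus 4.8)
The plan is to control the exponential moment by first passing to the conditional expectation given the path of $Z_0^n$, recognizing the exponent as a deterministic functional of the centered process $M_n(s) \doteq Z_0^n(s) - ne^{-rs}$, and then bounding the resulting expression using the known mean and variance of $Z_0^n$ together with a second-order (Gaussian-type) estimate. The key point is that $v_{n,\theta,b}$ — which I expect to be defined just below as a vanishing sequence of the form $v_{n,\theta,b} = \theta \lambda_1 e^{-\lambda_1 b t_n}/(r_1 n^{1-\alpha}) + o(\cdot)$ or similar, chosen so that $n^{-\alpha}(\phi_{bt_n-s}(v_{n,\theta,b})-1)$ is of order $n^{-1}$ uniformly in $s\in[0,bt_n]$ — makes the integrand small. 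Indeed, since $\phi_t(0)=1$ and $\phi_t$ is smooth in $\theta$ below $\bar\theta_t$, a Taylor expansion gives $\phi_{bt_n-s}(v_{n,\theta,b})-1 = v_{n,\theta,b}\,\EE Z(bt_n-s) + O(v_{n,\theta,b}^2 \EE Z(bt_n-s)^2)$, and $\EE Z(t) = e^{\lambda_1 t}$, so after multiplying by $\mu n^{-\alpha}$ the coefficient multiplying $M_n(s)$ in the exponent is $\Theta(n^{-1})$ times something like $e^{\lambda_1(bt_n - s)}e^{-\lambda_1 bt_n} = e^{-\lambda_1 s}$, hence uniformly bounded (up to logs in $n$ from the endpoints).

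Concretely, I would proceed as follows. First, write $g_n(s) \doteq \mu n^{-\alpha}(\phi_{bt_n-s}(v_{n,\theta,b})-1)$ and establish the uniform bound $|g_n(s)| \le C (\log n)\, n^{-1}$ for $s\in[0,bt_n]$ and all $n$ large, using the explicit form \eqref{generating_function} and the fact that $v_{n,\theta,b}\to 0$ stays safely below $\bar\theta_{bt_n-s}$ (this is where $\theta<1$ enters: it keeps $v_{n,\theta,b}$ bounded away from the explosion threshold, uniformly). Second, condition on $Z_0^n$ and use the elementary bound $e^{x}\le 1+x+\tfrac12 x^2 e^{|x|}$, or more simply Cauchy–Schwarz / Jensen, to reduce to estimating $\EE\exp(\int_0^{bt_n} g_n(s) M_n(s)\,ds)$; since the exponent is a linear functional of $M_n$, its second moment is $\int\int g_n(s)g_n(u)\Cov(Z_0^n(s),Z_0^n(u))\,ds\,du$, and using $\Cov(Z_0^n(s),Z_0^n(u)) = \Var(Z_0^n(s\wedge u))\cdot e^{\lambda_0|s-u|/?}$-type formulas together with $\Var(Z_0^n(s))\le \tfrac{r_0+d_0}{r} n e^{-rs}$ gives a double integral of order $n^{-2}\cdot n \cdot (\log n)^2 = (\log n)^2 n^{-1}$; multiplying by the $\mu^2 n^{-2\alpha}$ already inside $g_n$ — actually $g_n$ already carries the $n^{-\alpha}$, so the double integral is of order $(\log n)^2 n^{1-2\alpha}$. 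Third, to turn the second-moment bound into an exponential-moment bound I would either invoke a standard sub-Gaussian estimate for the centered birth–death process evaluated at finitely many times and take limits, or more robustly split $[0,bt_n]$ and use that $Z_0^n(s)$ is a sum of $n$ i.i.d.\ centered birth–death processes so that $\int_0^{bt_n} g_n(s)M_n(s)\,ds$ is itself a sum of $n$ i.i.d.\ mean-zero terms, each bounded in the relevant exponential-moment sense; then $\EE\exp(\text{sum}) = (\EE\exp(\text{single term}))^n \le \exp(n\cdot C(\log n)^2 n^{-2\alpha}) = \exp(C(\log n)^2 n^{1-2\alpha})$, which is exactly the claimed form with $k_1 = C$.

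The main obstacle is the passage from a second-moment (variance) estimate to a genuine exponential-moment bound: a priori $\int_0^{bt_n} g_n(s) M_n(s)\,ds$ could have heavy tails. The cleanest fix, which I would adopt, is to exploit the branching structure: $Z_0^n = \sum_{i=1}^n Z_0^{(i)}$ with i.i.d.\ single-ancestor subcritical birth–death processes $Z_0^{(i)}$, so the functional is a sum of $n$ i.i.d.\ centered random variables $Y_{n,i} \doteq \int_0^{bt_n} g_n(s)(Z_0^{(i)}(s) - e^{-rs})\,ds$ with $|g_n|\le C(\log n)n^{-1}$; one then needs a one-ancestor exponential moment bound $\EE\exp(\lambda Y_{n,i})\le \exp(C\lambda^2 (\log n)^2 n^{-2})$ for $\lambda$ in a neighborhood of $1$, which follows because $Z_0^{(i)}(s)$ has exponential moments (being dominated by a Yule-type process and subcritical, its total progeny / running size has light tails on the relevant time horizon $bt_n = \Theta(\log n)$), and because the coefficient $g_n$ is uniformly $O((\log n)n^{-1})$ so the integrated quantity is tiny. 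Taking the $n$-th power then yields the proposition. A secondary technical annoyance is making the uniform bound on $g_n(s)$ near the endpoint $s=bt_n$ precise — there $\bar\theta_{bt_n-s}\to\infty$ is not the issue, rather $bt_n-s\to 0$ makes $\phi_{bt_n-s}(v)-1$ small, which is fine — and near $s=0$, where $\phi_{bt_n}(v_{n,\theta,b})-1 \approx v_{n,\theta,b} e^{\lambda_1 bt_n}$ must be shown to be $O((\log n)n^{-1+?})$ by the choice of $v_{n,\theta,b}$; this is where the specific scaling of $v_{n,\theta,b}$ (to be given in the proof section) does the work.
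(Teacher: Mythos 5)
Your overall strategy is the same as the paper's: exploit the i.i.d.\ decomposition $Z_0^n=\sum_{i=1}^n Z_0^{(i)}$ so the exponential moment factorizes as an $n$-th power of a single-ancestor quantity, then use light tails of the subcritical single-ancestor process (which the paper gets from a bounded second derivative of $\psi_s(\theta)=\EE\exp(\theta(Z_0(s)-e^{-rs}))$, via an exponential moment bound on the total progeny) together with the uniform boundedness of $\phi_{bt_n-s}(v_{n,\theta,b})-1$ (the paper's Lemma \ref{phi_property}). The one genuine difference in route is how the time integral is handled: you propose to bound the per-ancestor exponential moment of the integrated functional $Y_{n,i}=\int_0^{bt_n}g_n(s)(Z_0^{(i)}(s)-e^{-rs})\,ds$ directly, whereas the paper first rewrites $\int_0^{bt_n}(\cdot)\,ds=bt_n\,E_U[\cdot]$ and applies Jensen's inequality in the uniform variable, reducing everything to the single-time quantity $\psi_s(w_n(s,bt_n))$ with $w_n(s,bt_n)=\tfrac{\mu bt_n}{n^\alpha}(\phi_{bt_n-s}(v_{n,\theta,b})-1)$, and then Taylor-expands $\psi_s$ around $0$. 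The paper's route avoids dealing with exponential moments of time integrals altogether; your route would also work but needs one more lemma (e.g.\ bounding $\int_0^{bt_n}|g_n(s)|\,ds\cdot\sup_s Z_0^{(i)}(s)$ by the total progeny and then applying the same light-tail bound) to justify the per-ancestor estimate rigorously.

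One concrete error worth flagging: your guessed form $v_{n,\theta,b}\approx\theta\lambda_1 e^{-\lambda_1 bt_n}/(r_1 n^{1-\alpha})$ has a spurious $n^{-(1-\alpha)}$; the actual definition is $v_{n,\theta,b}=\lambda_1\theta/(r_1 e^{\lambda_1 bt_n})$. With the correct $v_{n,\theta,b}$, one does \emph{not} have $\phi_{bt_n-s}(v_{n,\theta,b})-1=O(n^{-(1-\alpha)})$; rather it is $O(1)$ uniformly in $s$ and $n$ (this is precisely what Lemma \ref{phi_property} says), so $g_n(s)=O(n^{-\alpha})$ rather than $O((\log n)n^{-1})$. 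Your last paragraph is consequently internally inconsistent: a per-ancestor bound $\EE\exp(\lambda Y_{n,i})\le\exp(C\lambda^2(\log n)^2 n^{-2})$ would give $\exp(C(\log n)^2 n^{-1})$ after the $n$-th power, not the required $\exp(C(\log n)^2 n^{1-2\alpha})$. The correct per-ancestor bound is $\exp(C(\log n)^2 n^{-2\alpha})$, which comes from $|w_n(s,bt_n)|=O(n^{-\alpha}\log n)$ (the $\log n$ from the $bt_n$ length of the interval), and this is exactly what the paper's Taylor expansion of $\psi_s$ produces. Your mid-paragraph variance computation, where you self-correct to $(\log n)^2 n^{1-2\alpha}$, has the right order; it's the final paragraph's restatement that reverts to the wrong scale.
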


We next establish that recurrence that occurs too early can be safely ignored.
\begin{proposition}\label{prop_recurrence_1}
If $b\in (0,\frac{\alpha r}{\lambda_1})$ for case (1) and (2), and $b\in (0,\frac{\beta r}{\lambda_1})$ for case (3), then there exists $C>0$ such that 
$$
\PP\left(\sup\limits_{u\in\left[0,b\right]}\left(Z_1^n\left(ut_n\right)+Z_2^n\left(ut_n\right)\right)-an>0\right)=O\left(e^{-Cn^{1-b\left(\lambda_1/r\right)}}\right).
$$
\end{proposition}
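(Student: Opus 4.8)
The plan is to bound the event by a statement about a nonnegative submartingale and then apply an exponential maximal inequality, which reduces everything to moment generating function estimates already at hand. Fix $b$ as in the statement and set $W(t):=e^{-\lambda_1 t}\bigl(Z_1^n(t)+Z_2^n(t)\bigr)$. Since $e^{-\lambda_1 t}Z_1^n(t)$ is a martingale and $e^{-\lambda_1 t}Z_2^n(t)$ has nonnegative drift (immigration into $Z_2^n$ occurs at rate $Z_0^n(t)\mu n^{-\alpha}\ge 0$), the process $W$ is a nonnegative submartingale for the natural filtration. Because $e^{\lambda_1 ut_n}\le e^{\lambda_1 bt_n}=n^{b\lambda_1/r}$ for $u\in[0,b]$ and $W\ge 0$,
\[
\Bigl\{\sup_{u\in[0,b]}\bigl(Z_1^n(ut_n)+Z_2^n(ut_n)\bigr)>an\Bigr\}\subseteq\Bigl\{\sup_{t\in[0,bt_n]}W(t)\ge an^{1-b\lambda_1/r}\Bigr\},
\]
and it suffices to bound the probability on the right.

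Next I would run an exponential Chernoff/Doob argument. Fix $\vartheta\in(0,1)$, let $v_n:=v_{n,\vartheta,b}$ be the quantity appearing in Proposition \ref{proposition_exp}, and set $\eta_n:=e^{\lambda_1 bt_n}v_n$, so that $v_n>0$ is of order $n^{-b\lambda_1/r}$, $v_n<\bar\theta_{bt_n}$ for all large $n$, and $\eta_n\to\eta\in(0,\lambda_1/r_1)$. A short computation from \eqref{generating_function}--\eqref{bar_theta} shows $e^{\lambda_1 t}\bar\theta_t>\lambda_1/r_1$ for every $t>0$; combined with the decomposition below, this gives $\EE e^{\eta_n W(t)}<\infty$ for every $t\le bt_n$ and all large $n$, so $\exp(\eta_n W(\cdot))$ — a convex nondecreasing function of the submartingale $W$ — is itself a nonnegative submartingale. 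Doob's maximal inequality then yields
\[
\PP\Bigl(\sup_{t\in[0,bt_n]}W(t)\ge an^{1-b\lambda_1/r}\Bigr)\le e^{-\eta_n a\,n^{1-b\lambda_1/r}}\,\EE\exp\!\bigl(v_n(Z_1^n(bt_n)+Z_2^n(bt_n))\bigr),
\]
using $\eta_n W(bt_n)=v_n\bigl(Z_1^n(bt_n)+Z_2^n(bt_n)\bigr)$.

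To control the remaining expectation I would condition on the path of $Z_0^n$ on $[0,bt_n]$. Since $Z_1^n$ is independent of $(Z_0^n,Z_2^n)$, the branching property for the $X(n)$ founding type-$1$ cells together with the Campbell (immigration) formula for $Z_2^n$ given $Z_0^n$ yield
\[
\EE\exp\!\bigl(v_n(Z_1^n(bt_n)+Z_2^n(bt_n))\bigr)=\phi_{bt_n}(v_n)^{X(n)}\,\EE\exp\!\Bigl(\frac{\mu}{n^\alpha}\int_0^{bt_n}\!\bigl(\phi_{bt_n-s}(v_n)-1\bigr)Z_0^n(s)\,ds\Bigr).
\]
Writing $Z_0^n(s)=ne^{-rs}+\bigl(Z_0^n(s)-ne^{-rs}\bigr)$ splits the second expectation into a deterministic factor and a fluctuation factor. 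The fluctuation factor is at most $\exp\!\bigl(k_1(\log n)^2 n^{1-2\alpha}\bigr)$ by Proposition \ref{proposition_exp}. For the deterministic factor, an elementary estimate on the explicit form \eqref{generating_function} produces a constant $C_1$ with $0\le\phi_{bt_n-s}(v_n)-1\le C_1 e^{-\lambda_1 s}$ uniformly in $s\in[0,bt_n]$ and in $n$, whence the deterministic factor is at most $\exp\!\bigl(\mu C_1(\lambda_1+r)^{-1}n^{1-\alpha}\bigr)$. Finally $\phi_{bt_n}(v_n)\to(d_1\eta-\lambda_1)/(r_1\eta-\lambda_1)=:K\in(1,\infty)$, so $\phi_{bt_n}(v_n)^{X(n)}\le e^{X(n)\log K}$ for all large $n$.

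It remains to collect these bounds. In cases (1)--(2) we have $b<\alpha r/\lambda_1$ and $X(n)=O(n^{1-\alpha})$; in case (3) we have $b<\beta r/\lambda_1$, $X(n)=\Theta(n^{1-\beta})$, and $\beta<\alpha$. In every case $\lambda_1 b/r<\alpha$, so $1-b\lambda_1/r>1-\alpha$ (hence also $>1-2\alpha$), and $1-b\lambda_1/r$ exceeds the exponent governing $X(n)$; therefore $X(n)\log K$, $n^{1-\alpha}$ and $(\log n)^2 n^{1-2\alpha}$ are each $o\!\bigl(n^{1-b\lambda_1/r}\bigr)$. Consequently $\EE\exp\!\bigl(v_n(Z_1^n(bt_n)+Z_2^n(bt_n))\bigr)=\exp\!\bigl(o(n^{1-b\lambda_1/r})\bigr)$, and since $\eta_n a$ is bounded below by a positive constant the probability in question is $O\!\bigl(e^{-Cn^{1-b\lambda_1/r}}\bigr)$ for any $C<\eta a$, as claimed. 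I expect the one genuinely delicate point to be the uniform estimate $\phi_{bt_n-s}(v_n)-1\le C_1 e^{-\lambda_1 s}$ — equivalently, keeping $v_n$ bounded away from the blow-up threshold $\bar\theta_{bt_n-s}$ uniformly in $s$ — since the one technically heavy ingredient, the control of the fluctuations of $Z_0^n$, is already packaged in Proposition \ref{proposition_exp}.
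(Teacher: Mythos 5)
Your proposal is correct and follows essentially the same route as the paper's proof: reduce the supremum to the scaled submartingale $n^{-u\lambda_1/r}(Z_1^n(ut_n)+Z_2^n(ut_n))$, apply an exponential Doob/Chernoff bound at the right endpoint $bt_n$, factor the resulting moment generating function through the immigration (Durrett/Campbell) identity into a $Z_1^n$ piece and a $Z_0^n$-integral piece, handle the fluctuation of $Z_0^n$ via Proposition \ref{proposition_exp}, and observe that every remaining exponent is of strictly smaller order than $n^{1-b\lambda_1/r}$. The one point you flag as delicate — the uniform bound $\phi_{bt_n-s}(v_n)-1\le C_1 e^{-\lambda_1 s}$ — is in fact not needed: the paper only uses the cruder uniform boundedness $\sup_{s,n}|\phi_{bt_n-s}(v_{n,\theta,b})-1|<\infty$ (Lemma \ref{phi_property}) together with dominated convergence of $\int_0^{bt_n}e^{-rs}(\phi_{bt_n-s}(v_{n,\theta,b})-1)\,ds$ (Lemma \ref{lim_int}), which already pins the deterministic factor at $\exp(O(n^{1-\alpha}))$.
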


We finally establish a Chernoff type upper bound on early recurrence.
\begin{proposition}\label{proposition_recurrence_1} 
For $\theta\in (0,1)$, 
	\begin{align*}
	& \quad \PP\left(\sup\limits_{u\in\left[b,u_n\left(y\right)\right]}n^{\alpha-1-\lambda_1 u/r}\left(Z_1^n\left(ut_n\right)+Z_2^n\left(ut_n\right)\right)> an^{\alpha-\lambda_1 u_n\left(y\right)/r}\right)\\
	& \le \EE \exp\left(\frac{\mu}{n^{\alpha}}\int_{0}^{u_n\left(y\right)t_n}Z_0^n\left(s\right)\left(\phi_{u_n\left(y\right)t_n-s}\left(v_{n,\theta,u_n\left(y\right)}\right)-1\right)ds\right)\EE\exp\left(v_{n,\theta,u_n\left(y\right)}Z_1^n\left(u_n\left(y\right)t_n\right)\right)\\
	& \quad \times \exp\left(-anv_{n,\theta,u_n\left(y\right)}\right).
	\end{align*}
\end{proposition}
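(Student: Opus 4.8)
The plan is to recognize the event as a first-passage event for an explicit nonnegative submartingale, bound it via Doob's maximal inequality, and then evaluate the resulting terminal moment generating function using the conditional Poisson structure of $Z_2^n$. First I would change variables by setting $t=ut_n$ and using $t_n=\frac1r\log n$, so that $n^{-\lambda_1 u/r}=e^{-\lambda_1 ut_n}$ and hence the quantity inside the supremum is $n^{\alpha-1}e^{-\lambda_1 t}\left(Z_1^n(t)+Z_2^n(t)\right)$, while the threshold equals $an^{\alpha}e^{-\lambda_1 u_n(y)t_n}$. Writing $T_n:=u_n(y)t_n$ and multiplying the inequality inside the probability by the positive factor $n^{1-\alpha}e^{\lambda_1 T_n}$, the event becomes exactly $\{\sup_{t\in[bt_n,T_n]}M_n(t)>an\}$, where $M_n(t):=e^{\lambda_1(T_n-t)}\left(Z_1^n(t)+Z_2^n(t)\right)$ for $t\in[0,T_n]$.

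Next I would show that $M_n$, and then $\exp(vM_n)$ with $v:=v_{n,\theta,u_n(y)}$, are submartingales for the natural filtration $(\mathcal{F}^n_t)_{t\ge0}$. Since $Z_1^n$ is a sum of $X(n)$ independent birth--death processes of net rate $\lambda_1$, the process $e^{-\lambda_1 t}Z_1^n(t)$ is a nonnegative martingale; since the immigration rate $\mu n^{-\alpha}Z_0^n(\cdot)$ is nonnegative and adapted, the branching property applied after time $t$ gives $\EE[Z_2^n(t+s)\mid\mathcal{F}^n_t]\ge e^{\lambda_1 s}Z_2^n(t)$, so $e^{-\lambda_1 t}Z_2^n(t)$ is a submartingale; hence $M_n=e^{\lambda_1 T_n}\left(e^{-\lambda_1 t}Z_1^n+e^{-\lambda_1 t}Z_2^n\right)$ is a submartingale on $[0,T_n]$. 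Because $\theta\in(0,1)$ we have $v<\bar{\theta}_{T_n}$, and combining this with the explicit form \eqref{bar_theta}---equivalently, with the elementary fact that $t\mapsto e^{\lambda_1 t}\bar{\theta}_t$ is nonincreasing---gives $ve^{\lambda_1(T_n-t)}<\bar{\theta}_t$ for every $t\in[0,T_n]$, which by \eqref{generating_function} makes $\EE\exp(vM_n(t))$ finite for each such $t$. Since $x\mapsto e^{vx}$ is convex and nondecreasing for $v>0$, Jensen's inequality then promotes $M_n$ to the nonnegative submartingale $\exp(vM_n)$.

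I would then apply Doob's maximal inequality to the right-continuous nonnegative submartingale $\exp(vM_n(\cdot))$ on $[0,T_n]$, obtaining $\PP\left(\sup_{t\in[bt_n,T_n]}M_n(t)>an\right)\le e^{-anv}\,\EE\exp(vM_n(T_n))=e^{-anv}\,\EE\exp\left(v(Z_1^n(T_n)+Z_2^n(T_n))\right)$, using $M_n(T_n)=Z_1^n(T_n)+Z_2^n(T_n)$. Since $Z_1^n$ is independent of $(Z_0^n,Z_2^n)$, this expectation factors as $\EE\exp(vZ_1^n(T_n))\cdot\EE\exp(vZ_2^n(T_n))$; and conditionally on the path of $Z_0^n$, the clones making up $Z_2^n$ are founded along a Poisson process of intensity $\mu n^{-\alpha}Z_0^n(s)\,ds$ on $[0,T_n]$, a clone founded at time $s$ contributing an independent copy of $Z(T_n-s)$ with moment generating function $\phi_{T_n-s}(v)$, so the exponential formula for Poisson processes (Campbell's formula) yields $\EE[\exp(vZ_2^n(T_n))\mid Z_0^n]=\exp\left(\frac{\mu}{n^\alpha}\int_0^{T_n}Z_0^n(s)(\phi_{T_n-s}(v)-1)\,ds\right)$. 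Taking expectations and reassembling the three factors gives precisely the stated bound.

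The main obstacle I expect is the uniform-in-$t$ integrability $ve^{\lambda_1(T_n-t)}<\bar{\theta}_t$ over the whole interval $[0,T_n]$: this is exactly what makes $\exp(vM_n)$ a genuine (integrable) submartingale rather than a formal one, and it is the step that consumes the hypothesis $\theta\in(0,1)$ and must be married carefully to the explicit form of $\bar{\theta}_t$ in \eqref{bar_theta}. A secondary point is verifying that $e^{-\lambda_1 t}Z_2^n(t)$ is an honest submartingale, not merely that its mean is nondecreasing, which relies on the immigration rate being $\mathcal{F}^n_t$-adapted and on applying the branching property to the evolution after time $t$. Once these are in place, Doob's inequality and the Campbell computation are routine.
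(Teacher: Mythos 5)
Your proof takes essentially the same route as the paper: both rewrite the event as a first-passage event for the nonnegative submartingale $\exp\left(\tfrac{\lambda_1\theta}{r_1}n^{-\lambda_1 u/r}\left(Z_1^n(ut_n)+Z_2^n(ut_n)\right)\right)$ (your $\exp(vM_n(t))$ after the change of variables $t=ut_n$), apply Doob's maximal inequality over $[b,u_n(y)]$, factor the terminal moment generating function via the independence of $Z_1^n$ from $(Z_0^n,Z_2^n)$, and evaluate the $Z_2^n$ factor with the Poisson/Campbell identity (the paper cites this identity as \eqref{Durrett_result} rather than rederiving it). The only genuine additions in your write-up are explicit verifications that the paper leaves implicit---namely that $e^{-\lambda_1 t}Z_2^n(t)$ is an honest submartingale because of the nonnegative adapted immigration, and the uniform integrability check $ve^{\lambda_1(T_n-t)}<\bar\theta_t$ on $[0,T_n]$; the paper handles the latter by the slightly different elementary bound in Remark \ref{finite_mgf} rather than by the monotonicity of $t\mapsto e^{\lambda_1 t}\bar\theta_t$, but both routes deliver the same conclusion.
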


We will break up the proof of Theorem \ref{recurrence_LD} into matching upper and lower bounds. We establish the upper bound first.
\begin{proposition}
\label{recurrence_LDUB}
For $y>0$ and $a>0$, when $X\left(n\right)=o\left(n^{1-\alpha}\right)$,
\begin{equation}\label{upperbound_recurrence_1}
\limsup\limits_{n\rightarrow \infty}\frac{1}{n^{1-\alpha}}\log \PP \left(\gamma_n\left(a\right)\le \zeta_n\left(a\right)-y\right)\le -\sup\limits_{\theta\in\left(0,1\right)}\left[\frac{\mu \lambda_1 \theta e^{y \lambda_1}}{r_1\left(\lambda_1+r\right)}-\frac{\lambda_1\mu\theta}{r_1}\int_{0}^{\infty}\frac{e^{-rs}}{e^{\lambda_1s}-\theta}ds\right];
\end{equation}
when $X\left(n\right) \sim n^{1-\alpha}$,
\begin{align}
	& \quad \limsup\limits_{n\rightarrow \infty}\frac{1}{n^{1-\alpha}}\log \PP \left(\gamma_n\left(a\right)\le \zeta_n\left(a\right)-y\right)\nonumber\\
	& \le -\sup\limits_{\theta\in\left(0,1\right)}\left[\frac{ \lambda_1 \theta \left(\mu+\lambda_1+r\right)e^{\lambda_1y}}{r_1\left(\lambda_1+r\right)}-\log\left(1-\frac{\lambda_1}{r_1}\frac{\theta}{\theta-1}\right)-\frac{\lambda_1 \mu \theta}{r_1}\int_{0}^{\infty}\frac{ e^{-rs}}{e^{\lambda_1 s}-\theta}ds\right]; \label{upperbound_recurrence_2}
\end{align}
when $X\left(n\right) \sim n^{1-\beta}$, where $0<\beta<\alpha$,
\begin{equation}\label{upperbound_recurrence_3}
\limsup\limits_{n\rightarrow \infty}\frac{1}{n^{1-\beta}}\log \PP \left(\gamma_n\left(a\right)\le \zeta_n\left(a\right)-y\right) \le -\sup\limits_{\theta\in\left(0,1\right)}\left[\frac{ \lambda_1 \theta e^{\lambda_1 y}}{r_1}-\log\left(1-\frac{\lambda_1}{r_1}\frac{\theta}{\theta-1}\right)\right].
\end{equation}
\end{proposition}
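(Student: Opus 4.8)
The plan is to establish the exponential upper bound via a Chernoff-type argument using the moment generating function machinery set up in Propositions \ref{proposition_exp}--\ref{proposition_recurrence_1}. First I would observe that the event $\{\gamma_n(a)\le \zeta_n(a)-y\}$ can be controlled by splitting the time interval: using Proposition \ref{prop_recurrence_1}, early recurrence in $[0,bt_n]$ has probability $O(e^{-Cn^{1-b\lambda_1/r}})$, which is super-exponentially small relative to the target rate $n^{1-\alpha}$ (resp. $n^{1-\beta}$) provided $b$ is chosen small enough, so that term is negligible. Thus it suffices to bound $\PP\big(\sup_{u\in[b,u_n(y)]}(Z_1^n(ut_n)+Z_2^n(ut_n))>an\big)$, where $u_n(y)\doteq \zeta_n(a)/t_n - y/t_n$ (or the analogous deterministic crossover level adjusted by $y$); here one uses that $\zeta_n(a)$ and its bounds $\bar u_n^{lower}, \bar u_n^{upper}$ pin down $u_n(y)$ up to $o(1)$, and crucially that $n^{\lambda_1 u_n(y)/r} \sim (\text{const})\, n \, e^{-\lambda_1 y}$ with the constant depending on which case we are in (carrying the factor $\mu/(\lambda_1+r)$ in case (1), $(\mu+\lambda_1+r)/(\lambda_1+r)$ in case (2), and $1$ in case (3), coming from the asymptotics of $z_1^n+z_2^n$).

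Next I would apply Proposition \ref{proposition_recurrence_1} with the choice $v_{n,\theta,u_n(y)}$ scaled appropriately (presumably $v_{n,\theta,b} = \theta n^{\alpha-1-\lambda_1 b/r}$ or a similar normalization making the exponents balance), to get the three-factor bound: an expectation over the subcritical-driven immigration integral, an expectation of $\exp(v Z_1^n)$, and the deterministic factor $\exp(-anv)$. For the first factor, I would split $Z_0^n(s) = ne^{-rs} + (Z_0^n(s)-ne^{-rs})$; the fluctuation part is handled by Proposition \ref{proposition_exp}, contributing only $\exp(k_1(\log n)^2 n^{1-2\alpha}) = \exp(o(n^{1-\alpha}))$, while the mean part $\frac{\mu}{n^\alpha}\int_0^{u_n(y)t_n} ne^{-rs}(\phi_{u_n(y)t_n-s}(v)-1)ds$ is the dominant term. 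For this I would use the explicit form of $\phi_t$ from \eqref{generating_function}: as $n\to\infty$, $\phi_{u_n(y)t_n-s}(v_{n,\theta,u_n(y)})-1 \approx \lambda_1 v /(r_1(e^{\lambda_1 s'}-\theta))$-type expressions after the change of variables, and a Laplace/Watson-type asymptotic on the integral (substituting $s = w t_n$ and noting the mass concentrates near $w=0$, i.e. $s=O(1)$) yields the $\int_0^\infty e^{-rs}/(e^{\lambda_1 s}-\theta)\,ds$ term after multiplying through by $\frac{\lambda_1\mu\theta}{r_1}$. For the second factor, in cases (2) and (3) where $Z_1^n(0)=X(n) \sim n^{1-\alpha}$ or $n^{1-\beta}$, I would use $\EE\exp(vZ_1^n(ut_n)) = \phi_{ut_n}(v)^{X(n)}$ and the large-$t$ asymptotics of $\phi$ (with $e^{-\lambda_1 t}\to 0$) to extract $-\log(1-\frac{\lambda_1}{r_1}\frac{\theta}{\theta-1})$ after raising to the power $X(n)$; in case (1) this factor is negligible since $X(n)=o(n^{1-\alpha})$. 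The deterministic factor $\exp(-anv_{n,\theta,u_n(y)})$ contributes the leading term $\frac{\lambda_1\theta(\cdots)e^{\lambda_1 y}}{r_1(\lambda_1+r)}$ once $v$ is written in terms of the pinned-down level $u_n(y)$.

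Finally I would collect the three contributions, divide by $n^{1-\alpha}$ (resp. $n^{1-\beta}$), take $\limsup_{n\to\infty}$, and then optimize over $\theta\in(0,1)$ — since the bound holds for every admissible $\theta$, we get the supremum on the right-hand side. Care is needed that $v_{n,\theta,u_n(y)}$ stays below the critical threshold $\bar\theta_t$ in \eqref{bar_theta} so that $\phi$ remains finite; this is where the restriction $\theta\in(0,1)$ enters, and I would verify it holds for all large $n$ along the relevant time range.

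\medskip

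\textbf{Main obstacle.} The hard part will be the asymptotic analysis of the immigration integral $\frac{\mu}{n^{\alpha-1}}\int_0^{u_n(y)t_n} e^{-rs}\big(\phi_{u_n(y)t_n-s}(v_{n,\theta,u_n(y)})-1\big)\,ds$: one must simultaneously track that $v_{n,\theta,u_n(y)}\to 0$ at rate $n^{\alpha-1}$ (roughly), that $\phi_{u_n(y)t_n - s}$ is evaluated at a time going to infinity for $s$ of order $1$ but becomes order-$1$ time-horizon when $s$ is near $u_n(y)t_n$, and combine these via a dominated-convergence / uniform-integrability argument to land exactly on $\int_0^\infty e^{-rs}/(e^{\lambda_1 s}-\theta)\,ds$. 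Getting the constant $\frac{\lambda_1\mu\theta}{r_1}$ correct requires a careful first-order expansion of \eqref{generating_function} in the small parameter $v$, uniformly in the time argument, which is the most delicate estimate in the proof. Matching this against the $e^{\lambda_1 y}$ factor coming from the threshold level is where the interplay between $\zeta_n(a)$'s asymptotics and the three cases must be handled with precision.
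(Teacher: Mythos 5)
Your proposal takes essentially the same route as the paper: split the time interval at $bt_n$ and dispose of the early part via Proposition \ref{prop_recurrence_1}; apply the Chernoff/Doob bound of Proposition \ref{proposition_recurrence_1}; decompose $Z_0^n(s)=ne^{-rs}+(Z_0^n(s)-ne^{-rs})$ and control the fluctuation with Proposition \ref{proposition_exp}; pass to the limit in the immigration integral by dominated convergence to get $\frac{\lambda_1\mu\theta}{r_1}\int_0^\infty\frac{e^{-rs}}{e^{\lambda_1 s}-\theta}ds$; use the explicit moment generating function of $Z_1^n$ for the $-\log(1-\frac{\lambda_1}{r_1}\frac{\theta}{\theta-1})$ term; and finally optimize over $\theta\in(0,1)$. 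This is exactly the paper's argument (with the paper's Lemma~\ref{z1_moment_generate}, Lemma~\ref{lim_int}, and Lemma~\ref{c_2_recurrence_1} filling the asymptotic steps you describe informally).

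Two bookkeeping slips worth noting. First, you write $n^{\lambda_1 u_n(y)/r}\sim(\text{const})\,n\,e^{-\lambda_1 y}$, but since $u_n(y)t_n=\zeta_n(a)-y$ and $e^{\lambda_1\zeta_n(a)}\approx\frac{a(\lambda_1+r)}{\mu}n^{\alpha}$ in case (1), the correct order is $n^{\lambda_1 u_n(y)/r}\sim\frac{a(\lambda_1+r)}{\mu}\,n^{\alpha}\,e^{-\lambda_1 y}$ (respectively $n^{\beta}$ in case (3)); equivalently $an^{\alpha-\lambda_1 u_n(y)/r}\to\frac{\mu e^{\lambda_1 y}}{\lambda_1+r}$, which is the content of Lemma~\ref{c_2_recurrence_1}. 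Second, your guessed normalization $v_{n,\theta,b}=\theta n^{\alpha-1-\lambda_1 b/r}$ carries an extra $n^{\alpha-1}$: the paper takes $v_{n,\theta,b}=\frac{\lambda_1\theta}{r_1}n^{-\lambda_1 b/r}$, so that $\exp(-anv_{n,\theta,u_n(y)})$ yields the leading factor $\exp\left(-n^{1-\alpha}\cdot\frac{\lambda_1\theta}{r_1}\cdot\frac{\mu e^{\lambda_1 y}}{\lambda_1+r}(1+o(1))\right)$ at the target large-deviation speed. Neither slip affects the structure of the argument, which is correct.
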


\begin{proof}\\
We define $t_n=\frac{1}{r}\log \left(n\right)$, and $u_n\left(y\right)=\left(\zeta_n\left(a\right)-y\right)/t_n$. Recall the definition of the moment-generating function $\phi_t$ from \eqref{generating_function}. For $\theta\in (0,1)$ and $b>0$ define 
\begin{align}
\label{eq:vdef}
& v_{n,\theta,b}=\frac{\lambda_1\theta}{r_1e^{\lambda_1 bt_n}}.
\end{align}

Observe that $\gamma_n\left(a\right)\le \zeta_n\left(a\right)-y\Leftrightarrow \sup\limits_{t\le \zeta_n\left(a\right)-y}\left(Z_1^n\left(t\right)+Z_2^n\left(t\right)\right)-an>0$. It then follows that 
\begin{equation*}
\PP\left(\gamma_n\left(a\right)\le \zeta_n\left(a\right)-y\right)=\PP\left(\sup\limits_{t\le \zeta_n\left(a\right)-y}\left(Z_1^n\left(t\right)+Z_2^n\left(t\right)\right)-an>0\right).
\end{equation*}
Recall that $t_n=\frac{1}{r}\log n$, and $u_n\left(y\right)=\left(\zeta_n\left(a\right)-y\right)/t_n$, then
\begin{equation*}
\PP\left(\sup\limits_{t\le \zeta_n\left(a\right)-y}\left(Z_1^n\left(t\right)+Z_2^n\left(t\right)\right)-an>0\right)=\PP\left(\sup\limits_{u\in\left[0,u_n\left(y\right)\right]}\left(Z_1^n\left(ut_n\right)+Z_2^n\left(ut_n\right)\right)-an>0\right).
\end{equation*}
It can be seen that for case (1) and (2), $u_n\left(y\right)\rightarrow \frac{\alpha r}{\lambda_1}$ as $n\rightarrow \infty$; for case (3), $u_n\left(y\right)\rightarrow \frac{\beta r}{\lambda_1}$ as $n\rightarrow \infty$. Therefore, if we define $b>0$ such that $b<\frac{\alpha r}{\lambda_1}$ for case (1) and (2), and $b<\frac{\beta r}{\lambda_1}$ for case (3), then apply Proposition \ref{prop_recurrence_1}
to see that
\begin{align*}
& \quad \PP\left(\sup\limits_{u\in\left[0,u_n\left(y\right)\right]}\left(Z_1^n\left(ut_n\right)+Z_2^n\left(ut_n\right)\right)-an>0\right)\\
& \le \PP\left(\sup\limits_{u\in\left[0,b\right]}\left(Z_1^n\left(ut_n\right)+Z_2^n\left(ut_n\right)\right)-an>0\right)+\PP\left(\sup\limits_{u\in\left[b,u_n\left(y\right)\right]}\left(Z_1^n\left(ut_n\right)+Z_2^n\left(ut_n\right)\right)-an>0\right)\\
&=
O\left(\exp\left[-Cn^{1-b\lambda_1/r}\right]\right)+\PP\left(\sup\limits_{u\in\left[b,u_n\left(y\right)\right]}\left(Z_1^n\left(ut_n\right)+Z_2^n\left(ut_n\right)\right)-an>0\right).
\end{align*}
We now analyze the case of an early recurrence occurring in the interval $\left[bt_n, \zeta_n\left(a\right)-y\right]$. We observe that
\begin{align}
& \quad \PP\left(\sup\limits_{u\in\left[b,u_n\left(y\right)\right]}\left(Z_1^n\left(ut_n\right)+Z_2^n\left(ut_n\right)\right)-an>0\right) \nonumber\\
& \le \PP\left(\sup\limits_{u\in\left[b,u_n\left(y\right)\right]}n^{\alpha-1-\lambda_1 u/r}\left(Z_1^n\left(ut_n\right)+Z_2^n\left(ut_n\right)\right)>an^{\alpha-\lambda_1 u_n\left(y\right)/r}\right). \label{eq:tail}
\end{align}
Note that we can modify the proof of Proposition \ref{proposition_exp} to obtain that there exists $k_2$ such that for $\theta\in \left(0,1\right)$,
\begin{equation}\label{eq:exponential_integral}
	\EE\exp\left(\frac{\mu}{n^{\alpha}}\int_{0}^{u_n\left(y\right)t_n}\left(\phi_{u_n\left(y\right)t_n-s}\left(v_{n,\theta,u_n\left(y\right)}\right)-1\right)\left(Z_0^n\left(s\right)-ne^{-rs}\right)ds\right)\le \exp\left(k_2\left(\log n\right)^2n^{1-2\alpha}\right),
\end{equation}
where $v_{n,\theta,u_n\left(y\right)}$ is defined in \ref{eq:vdef}. We now use Proposition \ref{proposition_recurrence_1} to bound the right hand side of \eqref{eq:tail}.
In particular, Proposition \ref{proposition_recurrence_1} tells us we can consider
\begin{align*}
& \quad \EE \exp\left(\frac{\mu}{n^{\alpha}}\int_{0}^{u_n\left(y\right)t_n}Z_0^n\left(s\right)\left(\phi_{u_n\left(y\right)t_n-s}\left(v_{n,\theta,u_n\left(y\right)}\right)-1\right)ds\right)\\
& =
\exp\left(\frac{\mu}{n^{\alpha-1}}\int_{0}^{u_n\left(y\right)t_n}e^{-rs}\left(\phi_{u_n\left(y\right)t_n-s}\left(v_{n,\theta,u_n\left(y\right)}\right)-1\right)ds\right)\\
& \quad \times \EE \exp\left(\frac{\mu}{n^{\alpha}}\int_{0}^{u_n\left(y\right)t_n}\left(Z_0^n\left(s\right)-ne^{-rs}\right)\left(\phi_{u_n\left(y\right)t_n-s}\left(v_{n,\theta,u_n\left(y\right)}\right)-1\right)ds\right)\\
&\leq
\exp\left(\frac{\mu}{n^{\alpha-1}}\int_{0}^{u_n\left(y\right)t_n}e^{-rs}\left(\phi_{u_n\left(y\right)t_n-s}\left(v_{n,\theta,u_n\left(y\right)}\right)-1\right)ds+k_2\left(\log n\right)^2n^{1-2\alpha}\right),
\end{align*}
where the inequality follows from \eqref{eq:exponential_integral}. To summarize we now have
\begin{align*}
& \quad \PP\left(\gamma_n\left(a\right)\le \zeta_n\left(a\right)-y\right)\\
& \le \PP\left(\sup\limits_{u\in\left[b,u_n\left(y\right)\right]}n^{\alpha-1-\lambda_1 u/r}\left(Z_1^n\left(ut_n\right)+Z_2^n\left(ut_n\right)\right)>an^{\alpha-\lambda_1 u_n\left(y\right)/r}\right)+O\left(\exp\left(-Cn^{1-b \left(\lambda_1/r\right)}\right)\right).
\end{align*}
Applying Proposition \ref{proposition_recurrence_1} and \eqref{eq:exponential_integral}, we obtain that
\begin{align*}
& \quad \PP\left(\sup\limits_{u\in\left[b,u_n\left(y\right)\right]}n^{\alpha-1-\lambda_1 u/r}\left(Z_1^n\left(ut_n\right)+Z_2^n\left(ut_n\right)\right)>an^{\alpha-\lambda_1 u_n\left(y\right)/r}\right)\\
& \le \EE \exp\left(\frac{\mu}{n^{\alpha}}\int_{0}^{u_n\left(y\right)t_n}Z_0^n\left(s\right)\left(\phi_{u_n\left(y\right)t_n-s}\left(v_{n,\theta,u_n\left(y\right)}\right)-1\right)ds\right)\\
& \quad \times \EE\exp\left(v_{n,\theta,u_n\left(y\right)}Z_1^n\left(u_n\left(y\right)t_n\right)\right) \exp\left(-anv_{n,\theta,u_n\left(y\right)}\right)\\
& \le \exp\left(\frac{\mu}{n^{\alpha-1}}\int_{0}^{u_n\left(y\right)t_n}e^{-rs}\left(\phi_{u_n\left(y\right)t_n-s}\left(v_{n,\theta,u_n\left(y\right)}\right)-1\right)ds\right)\exp\left(k_2\left(\log n\right)^2n^{1-2\alpha}\right)\\
& \quad \times \EE\exp\left(v_{n,\theta,u_n\left(y\right)}Z_1^n\left(u_n\left(y\right)t_n\right)\right) \exp\left(-anv_{n,\theta,u_n\left(y\right)}\right).
\end{align*}
Lemma \ref{z1_moment_generate} gives us that for case (1),
\begin{align*}
& \quad \PP\left(\gamma_n\left(a\right)\le \zeta_n\left(a\right)-y\right)\\
& =\exp\left(-n^{1-\alpha}\left(\frac{\lambda_1 \theta}{r_1}an^{\alpha-\lambda_1 u_n\left(y\right)/r}-\int_{0}^{u_n\left(y\right)t_n}\mu e^{-rs}\left(\phi_{u_n\left(y\right)t_n-s}\left(v_{n,\theta,u_n\left(y\right)}\right)-1\right)ds+o\left(1\right)\right)\right)\\
& \quad +O\left(\exp\left(-Cn^{1-b \left(\lambda_1/r\right)}\right)\right);
\end{align*}
for case (2),
\begin{align*}
& \quad \PP\left(\gamma_n\left(a\right)\le \zeta_n\left(a\right)-y\right)\\
& =\exp\left(-n^{1-\alpha}\left(\frac{\lambda_1 \theta}{r_1}an^{\alpha-\lambda_1 u_n\left(y\right)/r}-\int_{0}^{u_n\left(y\right)t_n}\mu e^{-rs}\left(\phi_{u_n\left(y\right)t_n-s}\left(v_{n,\theta,u_n\left(y\right)}\right)-1\right)ds+o\left(1\right)\right)\right)\\
& \quad \times \EE\exp\left(v_{n,\theta,u_n\left(y\right)}Z_1^n\left(u_n\left(y\right)t_n\right)\right)+O\left(\exp\left(-Cn^{1-b \left(\lambda_1/r\right)}\right)\right);
\end{align*}
for case (3)
\begin{align*}
& \quad \PP\left(\gamma_n\left(a\right)\le \zeta_n\left(a\right)-y\right)\\
& =\exp\left(-n^{1-\beta}\left(\frac{\lambda_1 \theta}{r_1}n^{\beta-\alpha}an^{\alpha-\lambda_1 u_n\left(y\right)/r}+o\left(1\right)\right)\right)\times \EE\exp\left(v_{n,\theta,u_n\left(y\right)}Z_1^n\left(u_n\left(y\right)t_n\right)\right)\\
& \quad +O\left(\exp\left(-Cn^{1-b \left(\lambda_1/r\right)}\right)\right).
\end{align*}
By Lemma \ref{c_2_recurrence_1}, we have that for case (1),
\begin{equation*}
\limsup\limits_{n\rightarrow \infty}\frac{1}{n^{1-\alpha}}\log \PP\left(\gamma_n\left(a\right)\le \zeta_n\left(a\right)-y\right)\le \frac{\mu \lambda_1}{r_1}\int_{0}^{\infty}\frac{\theta e^{-rs}}{e^{\lambda_1 s}-\theta}ds-\frac{\mu \lambda_1 \theta e^{y \lambda_1}}{r_1\left(\lambda_1+r\right)};
\end{equation*}
for case (2), 
\begin{equation*}
\limsup\limits_{n\rightarrow \infty}\frac{1}{n^{1-\alpha}}\log \PP\left(\gamma_n\left(a\right)\le \zeta_n\left(a\right)-y\right)\le \frac{\mu \lambda_1}{r_1}\int_{0}^{\infty}\frac{\theta e^{-rs}}{e^{\lambda_1 s}-\theta}ds-\frac{\left(\mu+\lambda_1+r \right)\lambda_1 \theta e^{y \lambda_1}}{r_1\left(\lambda_1+r\right)}+\log\left(1-\frac{\lambda_1}{r_1} \frac{\theta}{\theta-1}\right);
\end{equation*}
for case (3),
\begin{equation*}
\limsup\limits_{n\rightarrow \infty}\frac{1}{n^{1-\beta}}\log \PP\left(\gamma_n\left(a\right)\le \zeta_n\left(a\right)-y\right)\le -\frac{\lambda_1 \theta e^{y \lambda_1}}{r_1}+\log\left(1-\frac{\lambda_1}{r_1} \frac{\theta}{\theta-1}\right).
\end{equation*}
The results \eqref{upperbound_recurrence_1}, \eqref{upperbound_recurrence_2} and \eqref{upperbound_recurrence_3} then follow by maximizing the upper bounds over $\theta$. \qed
\end{proof} 
\\

We next establish a lower bound that matches Proposition \ref{recurrence_LDUB}.
\begin{proposition}
\label{recurrence_LDLB}
For $y>0$ and $a>0$, when $X\left(n\right)=o\left(n^{1-\alpha}\right)$,
\begin{equation}\label{lowerbound_recurrence_1}
\liminf\limits_{n\rightarrow \infty}\frac{1}{n^{1-\alpha}}\log \PP \left(\gamma_n\left(a\right)\le \zeta_n\left(a\right)-y\right)\ge -\sup\limits_{\theta\in\left(0,1\right)}\left[\frac{\mu \lambda_1 \theta e^{y \lambda_1}}{r_1\left(\lambda_1+r\right)}-\frac{\lambda_1\mu\theta}{r_1}\int_{0}^{\infty}\frac{e^{-rs}}{e^{\lambda_1s}-\theta}ds\right];
\end{equation}
when $X\left(n\right) \sim n^{1-\alpha}$,
\begin{align}
	& \quad \liminf\limits_{n\rightarrow \infty}\frac{1}{n^{1-\alpha}}\log \PP \left(\gamma_n\left(a\right)\le \zeta_n\left(a\right)-y\right)\nonumber\\
	& \ge -\sup\limits_{\theta\in\left(0,1\right)}\left[\frac{ \lambda_1 \theta \left(\mu+\lambda_1+r\right)e^{\lambda_1y}}{r_1\left(\lambda_1+r\right)}-\log\left(1-\frac{\lambda_1}{r_1}\frac{\theta}{\theta-1}\right)-\frac{\lambda_1 \mu \theta}{r_1}\int_{0}^{\infty}\frac{ e^{-rs}}{e^{\lambda_1 s}-\theta}ds\right]; \label{lowerbound_recurrence_2}
\end{align}
when $X\left(n\right) \sim n^{1-\beta}$, where $0<\beta<\alpha$,
\begin{equation}\label{lowerbound_recurrence_3}
\liminf\limits_{n\rightarrow \infty}\frac{1}{n^{1-\beta}}\log \PP \left(\gamma_n\left(a\right)\le \zeta_n\left(a\right)-y\right) \ge -\sup\limits_{\theta\in\left(0,1\right)}\left[\frac{ \lambda_1 \theta e^{\lambda_1 y}}{r_1}-\log\left(1-\frac{\lambda_1}{r_1}\frac{\theta}{\theta-1}\right)\right].
\end{equation}
\end{proposition}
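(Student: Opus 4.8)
Below is how I would attempt Proposition~\ref{recurrence_LDLB}.

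The plan is to prove the three lower bounds by a single Cramér-type change-of-measure argument, dual to the Chernoff upper bound of Proposition~\ref{recurrence_LDUB} and built on the same family of exponential tilts \(v_{n,\theta,u_n(y)}\) of \eqref{eq:vdef}; the three cases differ only in which of \(Z_1^n,Z_2^n\) carries the leading-order mass and in whether the normalization is \(n^{1-\alpha}\) or \(n^{1-\beta}\), so I would treat them in parallel. Write \(t_n=\frac1r\log n\), \(u_n(y)=(\zeta_n(a)-y)/t_n\), \(t_n^\ast=u_n(y)t_n=\zeta_n(a)-y\), and \(S_n=Z_1^n(t_n^\ast)+Z_2^n(t_n^\ast)\). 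The first, essentially trivial, step is the reduction: since \(\{S_n>an\}\subseteq\{\gamma_n(a)\le\zeta_n(a)-y\}\), it suffices to bound \(\PP(S_n>an)\) from below, so ---unlike for the upper bound--- there is no supremum over time to control and none of Propositions~\ref{prop_recurrence_1}--\ref{proposition_recurrence_1} is needed.

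Next I would fix \(\theta\in(0,1)\), put \(v=v_{n,\theta,u_n(y)}\), and note that \(v/\bar\theta_{t_n^\ast}\to\theta<1\), so for large \(n\) the cumulant \(\Lambda_n(v):=\log\EE e^{vS_n}\) is finite; conditioning on the trajectory of \(Z_0^n\), under which \(Z_1^n(t_n^\ast)\) and \(Z_2^n(t_n^\ast)\) are independent, gives
\[
\Lambda_n(v)=X(n)\log\phi_{t_n^\ast}(v)+\log\EE\exp\!\Bigl(\tfrac{\mu}{n^\alpha}\!\int_0^{t_n^\ast}\!Z_0^n(s)\bigl(\phi_{t_n^\ast-s}(v)-1\bigr)\,ds\Bigr),
\]
the second term being finite by Proposition~\ref{proposition_exp}. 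Defining \(\tilde\PP\) via \(d\tilde\PP/d\PP=e^{vS_n-\Lambda_n(v)}\) and using \(v>0\), on the event \(\{an<S_n\le an(1+\delta)\}\) we have \(d\PP/d\tilde\PP=e^{\Lambda_n(v)-vS_n}\ge e^{\Lambda_n(v)-an(1+\delta)v}\), whence for every \(\delta>0\)
\[
\PP(S_n>an)\ge e^{\Lambda_n(v)-an(1+\delta)v}\,\tilde\PP\bigl(an<S_n\le an(1+\delta)\bigr).
\]

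The heart of the argument is to pick \(\theta\) so that the tilted probability on the right is not small. Under \(\tilde\PP\), \(\EE_{\tilde\PP}S_n=\Lambda_n'(v)\) increases in \(\theta\), equals \(\sim an\,e^{-\lambda_1 y}<an\) as \(\theta\downarrow0\), and (since the limiting cumulant \(\log\phi_\infty(\theta)\) is finite for \(\theta<1\) but blows up as \(\theta\uparrow1\)) exceeds \(an\) once \(\theta\) is close enough to \(1\); hence for each \(\delta>0\) and all large \(n\) there is a unique \(\theta_n(\delta)\in(0,1)\) with \(\EE_{\tilde\PP}S_n=an(1+\tfrac{\delta}{2})\). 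For this choice I would establish the variance bound \(\Var_{\tilde\PP}S_n=o\bigl((\EE_{\tilde\PP}S_n)^2\bigr)\): each of the \(\Theta(X(n))\) (resp.\ \(\Theta(n^{1-\alpha})\)) branching processes composing \(S_n\) contributes, after the tilt, mean of order \(e^{\lambda_1 t_n^\ast}\) and second moment of order \(e^{2\lambda_1 t_n^\ast}\), so \(\Var_{\tilde\PP}S_n=O(n^{1+\alpha})\) (resp.\ \(O(n^{1+\beta})\)) \(=o(n^2)\); Chebyshev then yields \(\tilde\PP(an<S_n\le an(1+\delta))\to1\). Consequently \(\frac1{n^{1-\alpha}}\log\PP(S_n>an)\ge\frac1{n^{1-\alpha}}\bigl(\Lambda_n(v)-an(1+\delta)v\bigr)+o(1)\) with \(v=v_{n,\theta_n(\delta),u_n(y)}\) (and the analogous statement with \(n^{1-\beta}\) in cases~(3)).

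It remains to identify this limit. The quantity \(an\,v_{n,\theta,u_n(y)}=n^{1-\alpha}\cdot\frac{\lambda_1\theta}{r_1}\,an^{\alpha-\lambda_1 u_n(y)/r}\) and the cumulant \(\Lambda_n(v_{n,\theta,u_n(y)})\) have precisely the asymptotics already worked out in the proof of Proposition~\ref{recurrence_LDUB} (via \eqref{eq:exponential_integral} and the moment-generating-function estimates of Lemmas~\ref{z1_moment_generate} and \ref{c_2_recurrence_1}, using \(u_n(y)t_n=\zeta_n(a)-y\to\) its deterministic limit); from these, \(\theta_n(\delta)\) converges as \(n\to\infty\) to some \(\bar\theta(\delta)\), and the lower bound converges to the bracket of case~(1) of the theorem evaluated at \(\bar\theta(\delta)\), with the first term multiplied by \((1+\delta)\). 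Letting \(\delta\downarrow0\), the mean-matching constraint \(\EE_{\tilde\PP}S_n=an(1+\delta/2)\) forces \(\bar\theta(\delta)\) to the first-order-condition point of that bracket, namely its maximizer \(\theta^\ast\), so the bound tends to \(-\sup_{\theta\in(0,1)}[\cdots]\); in case~(3) the \(Z_2^n\)-part of \(\Lambda_n\) is of lower order and drops out and the normalization is \(n^{1-\beta}\), while in case~(2) both pieces survive, yielding the sum of the case~(1) and case~(3) rates, exactly matching Proposition~\ref{recurrence_LDUB}. I expect the main obstacle to be the variance estimate under \(\tilde\PP\): since \(\tilde\PP\) also tilts the subcritical driver \(Z_0^n\), and hence the random immigration intensity of \(Z_2^n\), bounding \(\Lambda_n''\) rigorously will likely require a refinement of Proposition~\ref{proposition_exp}, together with some care to confirm that the mean-matching tilts \(\theta_n(\delta)\) do converge, as \(n\to\infty\) and then \(\delta\downarrow0\), to the rate-function optimizer.
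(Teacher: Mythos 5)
Your opening reduction --- dropping the time-supremum to the single time $t_n^\ast=\zeta_n(a)-y$ --- is exactly the paper's first move, but from there you replace the paper's Gartner--Ellis argument with a hands-on Cram\'er-type change of measure. The paper instead centres and rescales $S_n$ into the random variable $Z_n$ of \eqref{rv_recurrence_1}, verifies the five Gartner--Ellis conditions for $\Lambda_n(\theta)=\log\EE\exp(\theta Z_n)$ (Proposition~\ref{GE_cond}), and reads off the lower bound from the theorem, after which Proposition~\ref{proposition_recurrence_3} and standard convexity evaluate the resulting infimum over the open ray $\bigl(\lambda_1\mu e^{\lambda_1 y}/(r_1(\lambda_1+r)),\infty\bigr)$. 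The two routes are dual formulations of the same idea, and they use the very same inputs (Lemma~\ref{lim_int}, Lemma~\ref{z1_moment_generate}, Proposition~\ref{proposition_exp}, Lemma~\ref{c_2_recurrence_1}), but the Gartner--Ellis route is substantially cheaper: once the scaled cumulant limit, steepness and differentiability are verified, the theorem supplies for free the solvability of the mean-matching equation, the tilted law-of-large-numbers step, and the passage $\delta\downarrow 0$ --- all of which your manual version must rebuild explicitly.

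The one spot where your proposal is genuinely incomplete --- and you flag it yourself --- is the Chebyshev step $\Var_{\tilde\PP}S_n=o\bigl((\EE_{\tilde\PP}S_n)^2\bigr)$. Your order-of-magnitude count for the $Z_1^n$ contribution is plausible, but the $Z_2^n$ contribution is subtler than the ``each clone contributes $\Theta(e^{2\lambda_1 t_n^\ast})$'' picture suggests: the tilt $e^{vS_n}$ reweights the entire $Z_0^n$ trajectory through the factor $\exp\bigl(\frac{\mu}{n^\alpha}\int_0^{t_n^\ast}Z_0^n(s)(\phi_{t_n^\ast-s}(v)-1)\,ds\bigr)$, so the immigration intensity of $Z_2^n$ is correlated with the tilting, and $\Lambda_n''$ is a second derivative of a functional of a reweighted path measure. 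Making this rigorous would require a second-order analogue of Proposition~\ref{proposition_exp} that the paper never has to prove; in the Gartner--Ellis framing, all the tilted-variance control is absorbed inside the proof of the theorem, and you only need first-order convergence of the scaled cumulants together with the analytic properties of the limit $\Lambda$. Likewise your ``forces $\bar\theta(\delta)$ to the first-order-condition point'' and the interchange of $n\to\infty$ and $\delta\downarrow0$ are not free: they need the continuity of $\Lambda^{\prime-1}$ (the content of Proposition~\ref{proposition_recurrence_3}) and some uniformity that you would have to supply. So the plan is workable in principle but meaningfully heavier than the paper's; filling the variance gap is real work, whereas the paper's choice to verify Gartner--Ellis conditions on a centered, rescaled version of $S_n$ avoids it entirely.
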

\begin{proof}\\
We first note that
\begin{align}
&\quad \PP\left(\gamma_n\left(a\right)\le \zeta_n\left(a\right)-y\right)\nonumber\\
&= \PP\left(\sup\limits_{u\in \left[0, u_n\left(y\right)\right]}Z_1^n\left(ut_n\right)+Z_2^n\left(ut_n\right)-an>0\right)\nonumber\\
&\ge
\PP\left(Z_1^n\left(u_n\left(y\right)t_n\right)+Z_2^n\left(u_n\left(y\right)t_n\right)-an>0\right)\nonumber\\
&=
\PP\left(\frac{\lambda_1}{r_1}n^{\alpha-1-\lambda_1u_n\left(y\right)/r}\left(Z_1^n\left(u_n\left(y\right)t_n\right)+Z_2^n\left(u_n\left(y\right)t_n\right)\right)>\frac{\lambda_1}{r_1}an^{\alpha-\lambda_1u_n\left(y\right)/r}\right)\label{eq:lowerbound_drop_sup_1}\\
&=
\PP\left(\frac{\lambda_1}{r_1}n^{\beta-1-\lambda_1u_n\left(y\right)/r}\left(Z_1^n\left(u_n\left(y\right)t_n\right)+Z_2^n\left(u_n\left(y\right)t_n\right)\right)>\frac{\lambda_1}{r_1}an^{\beta-\lambda_1u_n\left(y\right)/r}\right).\label{eq:lowerbound_drop_sup_2}
\end{align}
For case (1), we analyze \eqref{eq:lowerbound_drop_sup_1}. If we define
\begin{equation}\label{c_recurrence_1}
c\left(y;n\right)=\frac{\lambda_1}{r_1}\frac{\mu e^{y\lambda_1}}{\lambda_1+r}-\frac{\lambda_1}{r_1}an^{\alpha-\lambda_1u_n\left(y\right)/r},
\end{equation}
and the sequence of random variables
\begin{equation}\label{rv_recurrence_1}
Z_n=\frac{\lambda_1}{r_1}n^{\alpha-1-\lambda_1u_n\left(y\right)/r}\left(Z_1^n\left(u_n\left(y\right)t_n\right)+Z_2^n\left(u_n\left(y\right)t_n\right)\right)+c\left(y;n\right), n\geq 1
\end{equation}
we can obtain that
\begin{align*}
&\quad \PP\left(\frac{\lambda_1}{r_1}n^{\alpha-1-\lambda_1u_n\left(y\right)/r}\left(Z_1^n\left(u_n\left(y\right)t_n\right)+Z_2^n\left(u_n\left(y\right)t_n\right)\right)>\frac{\lambda_1}{r_1}an^{\alpha-\lambda_1u_n\left(y\right)/r}\right)\\
&=\PP\left(\frac{\lambda_1}{r_1}n^{\alpha-1-\lambda_1u_n\left(y\right)/r}\left(Z_1^n\left(u_n\left(y\right)t_n\right)+Z_2^n\left(u_n\left(y\right)t_n\right)\right)+c\left(y;n\right)>\frac{\lambda_1}{r_1}\frac{\mu e^{y\lambda_1}}{\lambda_1+r}\right)\\
&=
\PP\left(Z_n>\frac{\lambda_1}{r_1}\frac{\mu e^{y\lambda_1}}{\lambda_1+r}\right).
\end{align*}
From the previous display and \eqref{eq:lowerbound_drop_sup_1} it suffices to prove that
\begin{align}
\label{eq:lowerbound_recurrence_goal}
&\liminf_{n\to\infty}\frac{1}{n^{1-\alpha}}\log\PP\left(Z_n>\frac{\lambda_1\mu e^{y\lambda_1}}{(\lambda_1+r)r_1}\right)\geq
 -\sup\limits_{\theta\in\left(0,1\right)}\left[\frac{\mu \lambda_1 \theta e^{y \lambda_1}}{r_1\left(\lambda_1+r\right)}-\frac{\lambda_1\mu\theta}{r_1}\int_{0}^{\infty}\frac{e^{-rs}}{e^{\lambda_1s}-\theta}ds\right].
\end{align}
In pursuit of this goal, we show that the random variables $Z_n$ defined in \eqref{rv_recurrence_1} satisfy the conditions of the Gartner-Ellis Theorem \cite{Hollander}. Specifically for $\theta\in\mathbb{R}$ define
$$
\Lambda_n(\theta)=\log\EE\exp\left(\theta Z_n\right),
$$
then the conditions of the Gartner-Ellis theorem can be stated as
\begin{enumerate}
\item There exists a function $\Lambda\left(\theta\right)\in \left[-\infty, \infty\right]$ such that $\lim\limits_{n\rightarrow \infty}n^{\alpha-1}\Lambda_n\left(\theta n^{1-\alpha}\right)=\Lambda\left(\theta\right)$ for all $\theta \in \RR$,
\item $0\in int\left(D_{\Lambda}\right)$ where $int\left(D_{\Lambda}\right)=\{\theta\in \RR: \Lambda\left(\theta\right)<\infty \}$,
\item $\Lambda$ is lower semi-continuous on $\RR$,
\item $\Lambda$ is differentiable on $int\left(D_{\Lambda}\right)$,
\item	$\Lambda$ is steep at $\partial D_{\Lambda}$.
\end{enumerate}
Gartner-Ellis theorem tells us that if all the five conditions are satisfied, then for any open set $O\in \RR$, the sequence of probability measures $\{\PP_n\}_{n\ge 1}$ where $\PP_n\left(\cdot\right)=P\left(Z_n\in \cdot\right)$ satisfies
$$
\liminf\limits_{n\rightarrow \infty}\frac{1}{n^{1-\alpha}}\log \PP_n\left(O\right)\ge -\Lambda^*\left(O\right),
$$
where $\Lambda^*\left(x\right)=\sup\limits_{\theta\in \RR}\{\theta x -\Lambda \left(\theta\right)\}$ and we define $\Lambda^*\left(S\right)=\inf\limits_{x\in S}\Lambda^*\left(x\right)$.

The next result is to verify that this Theorem does in fact apply in our setting, and the proof can be found in Section \ref{proof_lower}.
\begin{proposition}\label{GE_cond}
The sequence of random variables $\{Z_n\}_{n\ge 1}$ as defined in (\ref{rv_recurrence_1}) satisfy the conditions of the Gartner-Ellis Theorem with $$
\Lambda(\theta)=\begin{cases}\frac{\lambda_1\mu\theta}{r_1}\int_0^\infty\frac{e^{-rs}}{e^{\lambda_1s}-\theta}ds,&\enskip \theta<1\\
\infty,&\enskip \theta\geq 1.
\end{cases}
$$
\end{proposition}

From Proposition \ref{GE_cond} we know that 
$$
\liminf_{n\to\infty}n^{\alpha-1}\log\PP\left(Z_n>\frac{\lambda_1\mu e^{y\lambda_1}}{(\lambda_1+r)r_1}\right)\geq -\inf_{x\in (\frac{\lambda_1\mu e^{y\lambda_1}}{(\lambda_1+r)r_1},\infty)}\sup_{\theta\in\mathbb{R}}\left(\theta x - \Lambda(\theta)\right).
$$
Our next result proves the existence and uniqueness of a maximizing $\theta,$ and relates the maximizing $\theta$ to the function $\Lambda$. The proof can be found in Section \ref{proof_lower}.
\begin{proposition}\label{proposition_recurrence_3}
	For any $x\in \left(\frac{\mu\lambda_1e^{y\lambda_1}}{r_1\left(\lambda_1+r\right)},\infty \right)$, there exists $\theta^*(x)\in \left(0,1\right)$, such that $x\theta^*(x)-\Lambda\left(\theta^*(x)\right)=\sup\limits_{\theta\in \RR}\left[\theta x-\Lambda\left(\theta\right)\right]$.
	
Furthermore, 
$$
\theta^*(x)=\Lambda^{\prime-1}(x),
$$
and in particular $\theta^*$ is a continuous function.
\end{proposition}

From Proposition \ref{proposition_recurrence_3} we know that 
$$
\inf_{x\in (\frac{\lambda_1\mu e^{y\lambda_1}}{(\lambda_1+r)r_1},\infty)}\sup_{\theta\in\mathbb{R}}\left(\theta x - \Lambda(\theta)\right)=\inf_{x\in (\frac{\lambda_1\mu e^{y\lambda_1}}{(\lambda_1+r)r_1},\infty)}\sup_{\theta\in (0,1)}\left(\theta x - \Lambda(\theta)\right).
$$
Define the function 
$$
h(x)=\sup_{\theta\in (0,1)}\left(\theta x-\Lambda(\theta)\right),
$$
then standard convex analysis tells us that $h$ is a convex function on $\mathbb{R}$ with minimum at 
$$
\Lambda^\prime(0)=\frac{\lambda_1\mu}{(\lambda_1+r)r_1}.
$$
In particular, $h$ is increasing on the set $(\frac{\lambda_1\mu}{(\lambda_1+r)r_1},\infty)$ and we conclude that
$$
\inf_{x\in (\frac{\lambda_1\mu e^{y\lambda_1}}{(\lambda_1+r)r_1},\infty)}\sup_{\theta\in (0,1)}\left(\theta x - \Lambda(\theta)\right)=\sup_{\theta\in (0,1)}\left(\frac{\mu \lambda_1 \theta e^{y \lambda_1}}{r_1\left(\lambda_1+r\right)}-\Lambda\left(\theta\right)\right),
$$
which recalling the definition of $\Lambda$ establishes \eqref{eq:lowerbound_recurrence_goal}.

The proofs for case (2) and case (3) are similar and thus we only point out the key differences. For case (2), we analyze \eqref{eq:lowerbound_drop_sup_1}. The key difference is that we need to redefine $c\left(y,n\right)=\frac{\lambda_1}{r_1}\frac{\left(\mu+\lambda_1+r\right)e^{y\lambda_1}}{\lambda_1+r}-\frac{\lambda_1}{r}an^{\alpha-\lambda_1 u_n\left(y\right)/r}$ (originally defined in \ref{c_recurrence_1}). The rest of the proof directly follows that of case (1). For case (3), we analyze \eqref{eq:lowerbound_drop_sup_2}. The key difference is that we need to redefine $c\left(y,n\right)=\frac{\lambda_1}{r_1}e^{\lambda_1 y}-\frac{\lambda_1}{r}an^{\beta-\lambda_1 u_n\left(y\right)/r}$. The rest of the proof directly follows that of case (1).
\end{proof}\qed 

\subsection{Results for Crossover Time}\label{results_crossover}

In this section, we present results concerning the crossover time. We first establish the convergence in probability result for the crossover time.

\begin{theorem}\label{cip_crossover}
    Assume that $\alpha\in \left(0,1\right)$, $X\left(n\right)<n$, and $X\left(n\right)$ is non-decreasing in $n$, then for every $\epsilon>0$ we have that
	\begin{center}
		$\lim\limits_{n\rightarrow \infty}\PP\left(|\xi_n-\tau_n|>\epsilon\right)=0.$
	\end{center}
\end{theorem}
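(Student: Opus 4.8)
The plan is to mirror the proof of Theorem \ref{cip_recurrence} (equivalently Theorem~1 of \cite{JKJ2014}), the two new features being that the crossing threshold is now the random, time-varying process $Z_0^n(t)$ rather than the fixed level $an$, and that the pre-existing mutant population $X(n)$ must be tracked throughout. Write $W^n(t)=Z_1^n(t)+Z_2^n(t)$ and $w_n(t)=z_1^n(t)+z_2^n(t)$. Just as there is no closed form for $\zeta_n$, there is none for $\xi_n$, so I would first sandwich it: from $\frac{\mu}{\lambda_1+r}n^{1-\alpha}(e^{\lambda_1t}-1)\le z_2^n(t)\le\frac{\mu}{\lambda_1+r}n^{1-\alpha}e^{\lambda_1t}$ one obtains explicitly solvable equations bounding $w_n(t)=z_0^n(t)$, whose roots satisfy $\xi_n^{\mathrm{lo}}\le\xi_n\le\xi_n^{\mathrm{up}}$ with $\xi_n^{\mathrm{up}}-\xi_n^{\mathrm{lo}}\to0$ and $\xi_n/t_n\le\frac{\alpha r}{\lambda_1+r}+o(1)<\frac{r}{\lambda_1+r}$, where $t_n=\frac1r\log n$. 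Since $w_n$ is strictly increasing, $z_0^n(t)=ne^{-rt}$ strictly decreasing, and $w_n(\xi_n)=z_0^n(\xi_n)$, a direct computation gives $w_n(\xi_n\pm\epsilon)/z_0^n(\xi_n\pm\epsilon)\to e^{\pm(\lambda_1+r)\epsilon}$, so at the times $\xi_n\pm\epsilon$ the two sides differ by a constant multiplicative factor. It then suffices to bound $\PP(\tau_n\ge\xi_n+\epsilon)$ and $\PP(\tau_n\le\xi_n-\epsilon)$ separately.

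For the first, $\{\tau_n\ge\xi_n+\epsilon\}\subseteq\{W^n(\xi_n+\epsilon)\le Z_0^n(\xi_n+\epsilon)\}$, and I would use pointwise concentration: the stated mean and variance formulas for $Z_0^n$ and $Z_1^n$, together with a second-moment bound for the immigration process $Z_2^n$ obtained by integrating the clone-size second moments against the intensity $\mu n^{-\alpha}Z_0^n(s)$, give $\Var(Z_0^n(\xi_n+\epsilon))=o(z_0^n(\xi_n+\epsilon)^2)$ and $\Var(W^n(\xi_n+\epsilon))=o(w_n(\xi_n+\epsilon)^2)$ (the key point being that $n^{1-\alpha}\to\infty$), so Chebyshev yields $W^n(\xi_n+\epsilon)\ge(1-\delta)w_n(\xi_n+\epsilon)>(1+\delta)z_0^n(\xi_n+\epsilon)\ge Z_0^n(\xi_n+\epsilon)$ with probability tending to $1$, once $\delta$ is small relative to the gap $e^{(\lambda_1+r)\epsilon}-1$.

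For the second I would write $\{\tau_n\le\xi_n-\epsilon\}=\{\sup_{t\le\xi_n-\epsilon}(W^n(t)-Z_0^n(t))>0\}$ and control the events $G=\{Z_0^n(t)\ge(1-\delta)z_0^n(t)\text{ for all }t\le\xi_n-\epsilon\}$ and $H=\{\sup_{t\le\xi_n-\epsilon}W^n(t)\le(1+\delta)w_n(\xi_n-\epsilon)\}$; on $G\cap H$ one has $W^n(t)\le(1+\delta)w_n(\xi_n-\epsilon)=(1+\delta)e^{-(\lambda_1+r)\epsilon}(1+o(1))z_0^n(\xi_n-\epsilon)\le(1+\delta)e^{-(\lambda_1+r)\epsilon}(1+o(1))z_0^n(t)<(1-\delta)z_0^n(t)\le Z_0^n(t)$ for every $t\le\xi_n-\epsilon$ (for $\delta$ small), so $\{\tau_n\le\xi_n-\epsilon\}$ is impossible there, whence $\PP(\tau_n\le\xi_n-\epsilon)\le\PP(G^c)+\PP(H^c)$. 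Since $e^{rt}Z_0^n(t)$ is a non-negative martingale of constant mean $n$, $G^c\subseteq\{\sup_{t\le\xi_n-\epsilon}|e^{rt}Z_0^n(t)-n|>\delta n\}$, and Doob's $L^2$ maximal inequality plus the variance formula bound $\PP(G^c)$ by $O(e^{r\xi_n}/(\delta^2n))=O(n^{\alpha r/(\lambda_1+r)-1+o(1)})\to0$. For $\PP(H^c)\to0$ I would use the Dynkin/Doob decomposition $e^{-\lambda_1t}W^n(t)=X(n)+N^n_t+A^n_t$, where $A^n_t=\int_0^te^{-\lambda_1s}\mu n^{-\alpha}Z_0^n(s)\,ds$ is increasing with $\EE A^n_t=e^{-\lambda_1t}w_n(t)-X(n)$ and $N^n$ is a mean-zero martingale: then $\sup_{t\le T}e^{-\lambda_1t}W^n(t)\le X(n)+\sup_{t\le T}N^n_t+A^n_T$, the martingale term being small because $\Var(N^n_T)=\EE\langle N^n\rangle_T=O(\max(X(n),n^{1-\alpha}))$ is of smaller order than $(\EE[e^{-\lambda_1T}W^n(T)])^2\gtrsim n^{2(1-\alpha)}$ (Doob $L^2$), and the increasing term concentrating because the $Z_0^n$ variance formula gives $\Var(A^n_T)=O(n^{1-2\alpha})=o((\EE A^n_T)^2)$ when $T=\xi_n-\epsilon$ is bounded away from $0$ (Chebyshev); combining these and using $w_n(T)=e^{\lambda_1T}(X(n)+\EE A^n_T)$ gives $\sup_{t\le T}W^n(t)\le(1+O(\delta))w_n(T)$ with high probability. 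The degenerate regime in which $X(n)=o(n^{1-\alpha})$ and $\xi_n-\epsilon$ is small, so that $T$-boundedness fails, is handled in the same way as Proposition \ref{prop_recurrence_1}: there $W^n\ll n\le Z_0^n$ on the whole interval with overwhelming probability.

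The step I expect to be the main obstacle is the uniform second-moment control of the immigration process $Z_2^n$ on the logarithmic time scale --- in particular, propagating the $O(\delta n)$ fluctuations of $Z_0^n$ through the random immigration intensity into both $\Var(Z_2^n(t))$ and $\EE\langle N^n\rangle_T$ and verifying these are of smaller order than the squared means. The simplification relative to Theorem \ref{recurrence_LD} is that for convergence in probability we need only orders of magnitude, not sharp constants, so the three regimes of $X(n)$ need not be separated; but the bookkeeping with the scale $m_n\asymp\max(X(n),n^{1-\alpha})$ must still be carried through. Once these second-moment bounds are in place, the rest reduces to Doob's maximal inequality, Chebyshev, and the sandwich bounds for $\xi_n$.
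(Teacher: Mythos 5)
Your proposal is essentially correct and follows the strategy the paper intends (the paper omits the proof, deferring to Theorem~\ref{cip_recurrence} and Theorem~1 of \cite{JKJ2014}): sandwich the deterministic root, control $Z_0^n$ and $Z_1^n+Z_2^n$ around their means uniformly in time by Doob/Chebyshev, and observe that the deterministic drift at $\xi_n\pm\epsilon$ has a gap of size $e^{\pm(\lambda_1+r)\epsilon}-1$ that cannot be bridged by the vanishing fluctuations. The one structural difference is bookkeeping: the paper's proof of Theorem~\ref{cip_recurrence} normalizes each term separately and invokes two ready-made uniform concentration lemmas --- Lemma~\ref{cip_crossover_mean} for $Z_1^n$ (a direct martingale $L^2$/Doob bound) and Lemma~\ref{z_2_limit} for $Z_2^n$ (an extension of Lemma~2 of \cite{JK2013}, which is precisely the ``uniform second-moment control of the immigration process'' that you flag as the main obstacle) --- and then isolates a deterministic remainder $\hat A_3$ bounded away from zero; for the crossover version one would add a fourth term $\hat A_0$ for $Z_0^n-z_0^n$ and control it by the same Doob argument you use for your event $G$. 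You instead bundle $Z_1^n+Z_2^n$ into a single process $W^n$ and run a Dynkin decomposition $e^{-\lambda_1 t}W^n(t)=X(n)+N^n_t+A^n_t$, controlling the martingale part by Doob $L^2$ and the predictable part by Chebyshev. The two routes are equivalent in strength; yours has the mild advantage of not needing to case-split on the size of $X(n)$ in the concentration lemmas, while the paper's has the advantage of reusing a lemma already established in the literature so that the variance of the immigration integral is not recomputed. Both resolve the technical point you worry about in the same way (the variance contributed by the random intensity $\mu n^{-\alpha}Z_0^n$ is $O(n^{1-2\alpha})$, negligible against the squared mean $\asymp n^{2(1-\alpha)}$), so your concern is well-placed but not a gap.
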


The proof of Theorem \ref{cip_crossover} is very similar to that of Theorem \ref{cip_recurrence} and Theorem 1 in \cite{JKJ2014}. Thus we omit the proof here.

We then prove three large deviations results for the probability of an early crossover. The proof for crossover time shares the main idea with that for recurrence time. However, it is more complicated due to the complexity of the stochastic term $Z_0^n\left(t\right)$ (compared to the deterministic term $an$ in recurrence time). Therefore, we find it necessary to provide a separate write-up. Similar to Theorem \ref{recurrence_LD}, we investigate three cases: (1) $X\left(n\right)=o\left(n^{1-\alpha}\right)$, (2) $X\left(n\right) \sim n^{1-\alpha}$ and (3) $X\left(n\right) \sim n^{1-\beta}$, where $0<\beta<\alpha$. Since the proofs for three cases are very similar, we shall only provide proofs for case (1).

\begin{theorem}\label{crossover_LD}
(1) When $X\left(n\right)=o\left(n^{1-\alpha}\right)$,
\begin{equation*}
\lim\limits_{n\rightarrow \infty}\frac{1}{n^{1-\alpha}}\log \PP \left(\tau_n\le \xi_n-y\right)= -\sup\limits_{\theta\in\left(0,1\right)}\left[\frac{\mu\lambda_1e^{\left(\lambda_1+r\right)y}\theta}{r_1\left(\lambda_1+r\right)}-\frac{\lambda_1\mu\theta}{r_1}\int_{0}^{\infty}\frac{e^{-rs}}{e^{\lambda_1s}-\theta}ds\right].
\end{equation*}

(2) When $X\left(n\right) \sim n^{1-\alpha}$,
\begin{align*}
& \quad \lim\limits_{n\rightarrow \infty}\frac{1}{n^{1-\alpha}}\log \PP \left(\tau_n\le \xi_n-y\right)\\
& = -\sup\limits_{\theta\in\left(0,1\right)}\left[\frac{ \lambda_1 \theta \left(\mu+\lambda_1+r\right)e^{\left(\lambda_1+r\right)y}}{r_1\left(\lambda_1+r\right)}-\log\left(1-\frac{\lambda_1}{r_1}\frac{\theta}{\theta-1}\right)-\frac{\lambda_1 \mu \theta}{r_1}\int_{0}^{\infty}\frac{ e^{-rs}}{e^{\lambda_1 s}-\theta}ds\right]
\end{align*}

(3) When $X\left(n\right) \sim n^{1-\beta}$, where $0<\beta<\alpha$,
\begin{equation*}
\lim\limits_{n\rightarrow \infty}\frac{1}{n^{1-\beta}}\log \PP \left(\tau_n\le \xi_n-y\right)= -\sup\limits_{\theta\in\left(0,1\right)}\left[\frac{ \lambda_1 \theta e^{\left(\lambda_1+r\right)y}}{r_1}-\log\left(1-\frac{\lambda_1}{r_1}\frac{\theta}{\theta-1}\right)\right].
\end{equation*}
\end{theorem}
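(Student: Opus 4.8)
The plan is to follow the proof of Theorem~\ref{recurrence_LD} (Propositions~\ref{recurrence_LDUB} and~\ref{recurrence_LDLB}) essentially verbatim, the only structural difference being that the deterministic threshold $an$ is replaced throughout by the stochastic threshold $Z_0^n(t)$; as in the paper we treat case (1) only. First I would rewrite $\{\tau_n\le\xi_n-y\}=\{\sup_{t\le\xi_n-y}(Z_1^n(t)+Z_2^n(t)-Z_0^n(t))>0\}$, pass to the time scale $t_n=\frac1r\log n$, and put $u_n(y)=(\xi_n-y)/t_n$. Using the explicit formulas for $z_0^n,z_1^n,z_2^n$ together with the fact that in case (1), near $t=\xi_n$, the term $\frac{\mu}{\lambda_1+r}n^{1-\alpha}e^{\lambda_1 t}$ dominates both $X(n)e^{\lambda_1 t}$ and $ne^{-rt}$, one sees that $\xi_n$ solves $\frac{\mu}{\lambda_1+r}n^{1-\alpha}e^{\lambda_1 t}=ne^{-rt}$ to leading order, so $u_n(y)\to\frac{\alpha r}{\lambda_1+r}$. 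This limit is strictly below $\alpha$, hence $z_0^n(u_n(y)t_n)=n^{1-u_n(y)}$ has strictly larger polynomial order than $n^{1-\alpha}$, and a short computation gives $ne^{-(\lambda_1+r)u_n(y)t_n}\sim\frac{\mu}{\lambda_1+r}e^{(\lambda_1+r)y}\,n^{1-\alpha}$; this is the source of the factor $e^{(\lambda_1+r)y}$ in the statement.

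For the upper bound I would split the interval at some $b<\frac{\alpha r}{\lambda_1+r}$. On $[0,bt_n]$, $\EE Z_0^n\ge n^{1-b}$ exceeds the $O(n^{1-\alpha+\lambda_1 b/r})$-scale of $Z_1^n+Z_2^n$ by a polynomial factor, so early crossover is $O(e^{-Cn^{c}})$ with $c>0$; this is the analog of Proposition~\ref{prop_recurrence_1}, obtained by comparing with the barrier $(1-\epsilon_n)ne^{-rt}$ and a Chernoff bound for the downward deviations of $Z_0^n$ (a sum of $n$ i.i.d.\ subcritical families). On $[bt_n,u_n(y)t_n]$, since $(1-\epsilon_n)ne^{-(\lambda_1+r)t}$ is decreasing in $t$,
$$\big\{\exists t:\ Z_1^n(t)+Z_2^n(t)\ge Z_0^n(t)\big\}\subseteq\big\{\sup_t e^{-\lambda_1 t}(Z_1^n(t)+Z_2^n(t))>(1-\epsilon_n)ne^{-(\lambda_1+r)u_n(y)t_n}\big\}\cup\big\{\exists t:\ Z_0^n(t)<(1-\epsilon_n)ne^{-rt}\big\}.$$
The second event has probability that is $o(e^{-Cn^{1-\alpha}})$ for every $C>0$ (Chernoff for $Z_0^n$, whose mean stays $\gg n^{1-\alpha}$ on the interval, plus a standard discretization to handle the time-supremum). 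For the first event I would apply Doob's maximal inequality to the exponential submartingale $\exp(v\,e^{-\lambda_1 t}(Z_1^n(t)+Z_2^n(t)))$ with $v=\lambda_1\theta/r_1$, getting $\exp(-(1-\epsilon_n)v\,ne^{-(\lambda_1+r)u_n(y)t_n})\,\EE\exp(v_{n,\theta,u_n(y)}(Z_1^n(u_n(y)t_n)+Z_2^n(u_n(y)t_n)))$, whose prefactor is $\exp(-(1+o(1))n^{1-\alpha}\tfrac{\mu\lambda_1\theta e^{(\lambda_1+r)y}}{r_1(\lambda_1+r)})$ and whose expectation is bounded exactly as in Proposition~\ref{recurrence_LDUB}: condition on $\mathcal{F}^{Z_0}$, use the branching property, split off the deterministic mean $ne^{-rs}$ to get $\frac{\mu}{n^{\alpha-1}}\int_0^{u_n(y)t_n}e^{-rs}(\phi_{u_n(y)t_n-s}(v_{n,\theta,u_n(y)})-1)ds\to\frac{\lambda_1\mu\theta}{r_1}\int_0^\infty\frac{e^{-rs}}{e^{\lambda_1 s}-\theta}ds$ (the very same integral as in the recurrence proof, being insensitive to how fast $u_n(y)t_n\to\infty$), absorb the fluctuation $Z_0^n(s)-ne^{-rs}$ into a $\exp(o(n^{1-\alpha}))$ factor via Proposition~\ref{proposition_exp}, and use $\EE\exp(v_{n,\theta,u_n(y)}Z_1^n(u_n(y)t_n))=\exp(o(n^{1-\alpha}))$ since $X(n)=o(n^{1-\alpha})$. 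Taking logarithms, dividing by $n^{1-\alpha}$, and optimizing over $\theta\in(0,1)$ gives the claimed upper bound.

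For the lower bound I would drop the supremum and evaluate at $T=u_n(y)t_n$: $\PP(\tau_n\le\xi_n-y)\ge\PP(Z_n>0)$ with $Z_n=\frac{\lambda_1}{r_1}n^{\alpha-1-\lambda_1 u_n(y)/r}(Z_1^n(T)+Z_2^n(T)-Z_0^n(T))$, and verify the Gärtner--Ellis conditions as in Propositions~\ref{GE_cond} and~\ref{proposition_recurrence_3}. Conditioning on the $Z_0^n$-path, using the branching property, and decomposing $Z_0^n(s)=ne^{-rs}+(Z_0^n(s)-ne^{-rs})$ both inside the $\phi$-integral and at $s=T$, the deterministic part of $\log\EE\exp(v_{n,\theta,u_n(y)}(Z_1^n(T)+Z_2^n(T)-Z_0^n(T)))$ is $-v_{n,\theta,u_n(y)}ne^{-rT}+\mu n^{-\alpha}\int_0^T ne^{-rs}(\phi_{T-s}(v_{n,\theta,u_n(y)})-1)ds$, which after multiplying by $n^{\alpha-1}$ converges to
$$\Lambda(\theta)=-\frac{\lambda_1\mu\theta e^{(\lambda_1+r)y}}{r_1(\lambda_1+r)}+\frac{\lambda_1\mu\theta}{r_1}\int_0^\infty\frac{e^{-rs}}{e^{\lambda_1 s}-\theta}\,ds\quad(\theta<1),\qquad\Lambda(\theta)=\infty\quad(\theta\ge1).$$
The fluctuation parts are $o(n^{1-\alpha})$: the $\phi$-integral part by (the proof of) Proposition~\ref{proposition_exp}, and the new endpoint term because $\log\EE\exp(-v_{n,\theta,u_n(y)}(Z_0^n(T)-ne^{-rT}))=\tfrac12 v_{n,\theta,u_n(y)}^2\,\Var Z_0^n(T)+\cdots=O(n^{1-u_n(y)(r+2\lambda_1)/r})=o(n^{1-\alpha})$, since $u_n(y)(r+2\lambda_1)/r\to\alpha(r+2\lambda_1)/(\lambda_1+r)>\alpha$ (the two are combined by Cauchy--Schwarz); the $Z_1^n$ factor is again $\exp(o(n^{1-\alpha}))$ because $X(n)=o(n^{1-\alpha})$. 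Since $\Lambda$ is finite and differentiable on $(-\infty,1)$ with $\Lambda'(0)=\frac{\lambda_1\mu}{r_1(\lambda_1+r)}(1-e^{(\lambda_1+r)y})<0$ and is steep at $\theta=1$ (as $\int_0^\infty\frac{e^{-rs}}{e^{\lambda_1 s}-\theta}ds\to\infty$ when $\theta\uparrow1$), the Gärtner--Ellis theorem yields $\liminf_n n^{\alpha-1}\log\PP(Z_n>0)\ge-\inf_{x>0}\Lambda^*(x)=-\Lambda^*(0)=-\sup_{\theta\in(0,1)}(-\Lambda(\theta))$ (the last step by convexity of $\Lambda^*$ together with $\Lambda'(0)<0$), which is exactly the asserted rate. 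Matching the two bounds gives the limit, and cases (2), (3) are identical except that the $Z_1^n$ factor then contributes the extra term $-\log(1-\frac{\lambda_1}{r_1}\frac{\theta}{\theta-1})$ (and, in case (3), the $\phi$-integral term becomes lower order and $n^{1-\alpha}$ is replaced by $n^{1-\beta}$), just as in Theorem~\ref{recurrence_LD}.

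The step I expect to be the main obstacle is the treatment of the stochastic threshold $Z_0^n$ in the upper bound on $[bt_n,u_n(y)t_n]$: unlike in the recurrence proof, the process $Z_1^n+Z_2^n-Z_0^n$ is not of the form ``exponential submartingale above a barrier'', so one must peel off the downward fluctuations of $Z_0^n$ and control them uniformly over the interval --- which succeeds only because $u_n(y)\to\frac{\alpha r}{\lambda_1+r}<\alpha$ keeps $\EE Z_0^n$ of polynomial order strictly above $n^{1-\alpha}$ along the whole interval, so the Chernoff bound for $Z_0^n$ decays faster than $e^{-Cn^{1-\alpha}}$ for every $C$. A secondary technical point is strengthening Proposition~\ref{proposition_exp} so as to absorb the correlated endpoint term $\exp(-v_{n,\theta,u_n(y)}(Z_0^n(T)-ne^{-rT}))$; this needs only the variance bound for $Z_0^n$ but must be combined with the $\phi$-integral fluctuation via Cauchy--Schwarz.
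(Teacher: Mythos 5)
Your proposal is correct and captures both the scale $u_n(y)\to\alpha r/(\lambda_1+r)$ and the factor $e^{(\lambda_1+r)y}$, but the lower bound takes a genuinely different route from the paper while the upper bound is essentially the same (you send a sequence $\epsilon_n\downarrow 0$ in one pass; the paper uses a fixed $\delta\in(0,1)$, bounds the two pieces via its Propositions \ref{proposition_crossover_1}--\ref{proposition_crossover_3}, and only sends $\delta\to 0$ at the very end --- your event split $\{Z_1+Z_2\geq Z_0\}\subseteq\{\sup e^{-\lambda_1t}(Z_1+Z_2)>\cdot\}\cup\{Z_0<(1-\epsilon_n)ne^{-rt}\}$ is just the $A_1+A_2+(1-\delta)A_4$ vs.\ $A_3+\delta A_4$ decomposition written differently, and the paper too controls the $Z_0$-downcrossing by a martingale/Chernoff argument that is $o(e^{-Cn^{1-\alpha}})$ for every $C$ because $\EE Z_0^n$ stays of order $n^{1-\alpha r/(\lambda_1+r)}\gg n^{1-\alpha}$).

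Where you truly diverge is the lower bound. The paper introduces a vanishing sequence $a_n\downarrow 0$, writes $\PP(Z_1+Z_2-Z_0>0)\geq \PP(Z_1+Z_2>(1+a_n)z_0)-\PP(Z_0>(1+a_n)z_0)$, disposes of the second term by a separate Chernoff bound (Proposition \ref{proposition_crossover_4}, decay rate $n^{1-\alpha(2r+\lambda_1)/2(\lambda_1+r)}$, achieved by tuning $a_n,b_n$ to specific powers of $n$), and then runs Gärtner--Ellis on $\tilde Z_n$ built from $Z_1+Z_2-a_nz_0$ with a \emph{deterministic} threshold; the limiting $\Lambda$ is the same integral as in the recurrence case. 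You instead keep $Z_0^n(T)$ inside the Gärtner--Ellis random variable $Z_n\propto Z_1+Z_2-Z_0$, so your $\Lambda(\theta)$ acquires the extra linear term $-\lambda_1\mu\theta e^{(\lambda_1+r)y}/(r_1(\lambda_1+r))$ from the deterministic mean of $Z_0(T)$, and you absorb the residual endpoint fluctuation $\exp(-v(Z_0(T)-ne^{-rT}))$ by the variance estimate $v^2\,\mathrm{Var}\,Z_0^n(T)=O(n^{1-u_n(y)(r+2\lambda_1)/r})=o(n^{1-\alpha})$ (valid precisely because $\lambda_1>0$), combined with the $\phi$-integral fluctuation via Cauchy--Schwarz. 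You then correctly observe $\Lambda'(0)=\frac{\lambda_1\mu}{r_1(\lambda_1+r)}(1-e^{(\lambda_1+r)y})<0$, so $\inf_{x>0}\Lambda^*(x)=\Lambda^*(0)$ gives the claimed rate. Both routes are sound. Your version is more direct and avoids the auxiliary sequences $a_n,b_n$ and Proposition \ref{proposition_crossover_4} entirely, at the cost of having to justify the Cauchy--Schwarz step: you need a two-sided $o(n^{1-\alpha})$ bound on $\log\EE\exp(A+B)$ (the lower direction is free by Jensen since $A,B$ are centered, the upper direction requires extending the Taylor argument of Proposition \ref{proposition_exp} to the endpoint term and handling the correlation between $A$ and $B$), and you must also check that doubling the exponent inside Cauchy--Schwarz does not push $\phi$ outside its domain of finiteness (it does not, because the factor of $2$ multiplies $\phi_{T-s}(v)-1$, not the argument $v$ of $\phi$). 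The paper's factor-out approach buys a cleaner application of the already-proved Proposition \ref{GE_cond}/\ref{G_E}, at the cost of an extra auxiliary estimate.

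One small point worth making explicit in a write-up: for your choice of $\epsilon_n$ to work in the upper bound you must pick it so that $\epsilon_n^2\,n^{\alpha\lambda_1/(\lambda_1+r)}\to\infty$ while still $\epsilon_n\to 0$; any power $\epsilon_n=n^{-c}$ with $0<c<\frac{\alpha\lambda_1}{2(\lambda_1+r)}$ suffices.
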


We first establish the following upper bound. 

\begin{proposition}\label{crossover_LDUB}
For $y>0$, when $X\left(n\right)=o\left(n^{1-\alpha}\right)$,
\begin{equation}\label{upperbound_crossover_1}
\limsup\limits_{n\rightarrow \infty}\frac{1}{n^{1-\alpha}}\log \PP \left(\tau_n\le \xi_n-y\right)\le -\sup\limits_{\theta\in\left(0,1\right)}\left[\frac{\mu\lambda_1e^{\left(\lambda_1+r\right)y}\theta}{r_1\left(\lambda_1+r\right)}-\frac{\lambda_1\mu\theta}{r_1}\int_{0}^{\infty}\frac{e^{-rs}}{e^{\lambda_1s}-\theta}ds\right].
\end{equation}
\end{proposition}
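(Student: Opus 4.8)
The plan is to follow the template of Proposition~\ref{recurrence_LDUB} in case~(1), the one genuinely new feature being that the level the mutant population must cross is the stochastic process $Z_0^n$ rather than the constant $an$.

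\textbf{Set-up.} I would put $t_n=\frac1r\log n$ and $u_n(y)=(\xi_n-y)/t_n$, so that $\{\tau_n\le\xi_n-y\}=\{\sup_{u\in[0,u_n(y)]}(Z_1^n(ut_n)+Z_2^n(ut_n)-Z_0^n(ut_n))>0\}$. Solving $z_1^n(\xi_n)+z_2^n(\xi_n)=z_0^n(\xi_n)$ explicitly gives $e^{(\lambda_1+r)\xi_n}=\big(n+\tfrac{\mu}{\lambda_1+r}n^{1-\alpha}\big)/\big(X(n)+\tfrac{\mu}{\lambda_1+r}n^{1-\alpha}\big)$, so in case~(1) ($X(n)=o(n^{1-\alpha})$) one has $e^{(\lambda_1+r)\xi_n}\sim\tfrac{\lambda_1+r}{\mu}n^{\alpha}$, hence $u_n(y)\to u^\ast:=\tfrac{\alpha r}{\lambda_1+r}\in(0,\alpha)$ and, crucially,
\[
z_0^n\big(u_n(y)t_n\big)=n e^{-r(\xi_n-y)}\sim\frac{\mu\,e^{(\lambda_1+r)y}}{\lambda_1+r}\,n^{1-\alpha}.
\]
This is where the exponent $(\lambda_1+r)y$ comes from: moving back $y$ units of time inflates the crossing level by $e^{ry}$ (whereas the recurrence level $an$ was fixed), and on top of that the mutant population must overshoot its typical size by the usual factor $e^{\lambda_1 y}$. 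Just as in Proposition~\ref{prop_recurrence_1}, a crossover before time $bt_n$ with $b<u^\ast$ (where $Z_0^n\gg Z_1^n+Z_2^n$ in mean) has probability $o(\exp(-Cn^{1-\alpha}))$ for every $C$ and can be discarded; I would keep the analysis on $u\in[b,u_n(y)]$.

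\textbf{Freezing the threshold.} The main device would be to fix $c_1\in(0,1)$ and use $\{(Z_1^n+Z_2^n)(ut_n)>Z_0^n(ut_n)\}\subseteq\{Z_0^n(ut_n)<c_1n^{1-u}\}\cup\{(Z_1^n+Z_2^n)(ut_n)>c_1n^{1-u}\}$. For the first event, $Z_0^n(ut_n)<c_1n^{1-u}\iff e^{rut_n}Z_0^n(ut_n)<c_1 n$, and since $e^{rt}Z_0^n(t)$ is a martingale, Doob's inequality applied to the nonnegative submartingale $\exp(-\lambda e^{rt}Z_0^n(t)/n)$ gives, for every $\lambda>0$,
\[
\PP\big(\inf_{t\le\xi_n-y}e^{rt}Z_0^n(t)\le c_1 n\big)\le e^{\lambda c_1}\big(\EE\exp(-\lambda e^{r(\xi_n-y)}Z_0^{(1)}(\xi_n-y)/n)\big)^{n},
\]
with $Z_0^{(1)}$ a single-cell subcritical copy; using that conditioned on non-extinction $Z_0^{(1)}(t)$ has moments bounded uniformly in $t$, the right side is $e^{-\lambda(1-c_1)(1+o(1))}$ provided $\lambda=o(n^{1-u^\ast})$, and choosing $\lambda=n^{1-\alpha+\epsilon}$ makes it $o(\exp(-Cn^{1-\alpha}))$ for every $C$, thanks to $1-u^\ast>1-\alpha$.

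\textbf{Chernoff bound with the frozen threshold.} For the remaining event I would note that $c_1n^{1-u}=c_1 n e^{-rut_n}$ is \emph{decreasing} in $u$; combining the submartingale property of $\phi_{u_n(y)t_n-t}(v)^{Z_1^n(t)+Z_2^n(t)}$ (its immigration compensator is nonnegative) with Jensen's bound $\phi_s(v)\ge e^{v\EE Z(s)}=e^{v e^{\lambda_1 s}}$ shows, just as in Proposition~\ref{proposition_recurrence_1}, that the first crossing of this barrier is cheapest at the endpoint $u=u_n(y)$, and yields with $v=v_{n,\theta,u_n(y)}$
\[
\PP\Big(\sup_{u\in[b,u_n(y)]}(Z_1^n+Z_2^n)(ut_n)>c_1n^{1-u}\Big)\le e^{-v\,c_1 z_0^n(\xi_n-y)}\,\phi_{u_n(y)t_n}(v)^{X(n)}\,\EE\exp\Big(\tfrac{\mu}{n^{\alpha}}\!\int_0^{u_n(y)t_n}\!\!Z_0^n(s)\big(\phi_{u_n(y)t_n-s}(v)-1\big)ds\Big),
\]
which is exactly the bound of Proposition~\ref{proposition_recurrence_1} with $an$ replaced by $c_1 z_0^n(\xi_n-y)$. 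Then \eqref{eq:exponential_integral} and the recurrence computation (Lemma~\ref{c_2_recurrence_1}) handle the last factor ($\tfrac{1}{n^{1-\alpha}}$ times its log $\to\tfrac{\mu\lambda_1\theta}{r_1}\int_0^\infty\tfrac{e^{-rs}}{e^{\lambda_1 s}-\theta}\,ds$), $\phi_{u_n(y)t_n}(v)^{X(n)}=\exp(o(n^{1-\alpha}))$ since $X(n)=o(n^{1-\alpha})$, and $v\,c_1 z_0^n(\xi_n-y)=c_1\tfrac{\lambda_1\theta}{r_1}n e^{-(\lambda_1+r)(\xi_n-y)}\sim c_1\tfrac{\mu\lambda_1 e^{(\lambda_1+r)y}\theta}{r_1(\lambda_1+r)}n^{1-\alpha}$ by the Set-up. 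This gives, for each $c_1\in(0,1)$,
\[
\limsup_{n\to\infty}\frac1{n^{1-\alpha}}\log\PP(\tau_n\le\xi_n-y)\le\inf_{\theta\in(0,1)}\Big[\frac{\mu\lambda_1\theta}{r_1}\int_0^\infty\frac{e^{-rs}}{e^{\lambda_1 s}-\theta}\,ds-c_1\frac{\mu\lambda_1 e^{(\lambda_1+r)y}\theta}{r_1(\lambda_1+r)}\Big],
\]
and letting $c_1\uparrow1$ (for each $\theta$ the bracket decreases to its $c_1=1$ value, and $\inf_\theta$ commutes with the monotone limit) would give \eqref{upperbound_crossover_1}.

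\textbf{Expected obstacle.} The hard part is the two middle steps: showing that trading the stochastic level $Z_0^n$ for the deterministic $c_1 z_0^n$ is costless at scale $n^{1-\alpha}$. This needs (a) a uniform-in-time lower deviation bound for the \emph{non-monotone} subcritical process $Z_0^n$, for which I would lean on the exponential submartingale $\exp(-\lambda e^{rt}Z_0^n(t)/n)$ and Doob, using that $n^{1-u^\ast}\gg n^{1-\alpha}$ leaves room for an arbitrarily large deviation exponent; and (b) verifying that the $\phi$-martingale Chernoff estimate survives a \emph{time-dependent} crossing level — the point being that the level is decreasing and $\phi_s(v)\ge e^{v\EE Z(s)}$, so that optional stopping localizes the cost at the right endpoint $u=u_n(y)$.
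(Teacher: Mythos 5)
Your proposal is correct and reaches the stated exponent, and the broad architecture is the same as the paper's: discard the early interval $[0,bt_n]$, freeze the stochastic barrier $Z_0^n$ to a deterministic one at the cost of a negligible lower-deviation event for $Z_0^n$, run the exponential-submartingale/Doob Chernoff bound for $Z_1^n+Z_2^n$ against that frozen level, invoke Proposition~\ref{proposition_exp}/Lemma~\ref{lim_int}/Lemma~\ref{z1_moment_generate}, and then take a limit in the slack parameter. The bookkeeping is genuinely different, though. The paper decomposes $Z_1^n+Z_2^n-Z_0^n$ additively into $A_1,\ldots,A_4$ and sacrifices a $\delta$-fraction of $A_4$ to absorb $A_3=z_0^n-Z_0^n$ (Propositions~\ref{proposition_crossover_1}--\ref{proposition_crossover_3}, then $\delta\to0$); you instead use a multiplicative cut $Z_0^n\ge c_1 z_0^n$ on $[0,\xi_n-y]$ and let $c_1\uparrow 1$. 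For the lower-deviation control of $Z_0^n$ you apply Doob to $\exp(-\lambda e^{rt}Z_0^n(t)/n)$ with $\lambda=n^{1-\alpha+\epsilon}$; the paper instead works directly with the Laplace transform of $Z_0^n$ (Lemma~\ref{z0_decay} inside Proposition~\ref{proposition_crossover_3}). Both give $o(e^{-Cn^{1-\alpha}})$ for every $C$, which is all that is needed. Your Chernoff step via the submartingale $\phi_{T-t}(v)^{Z_1^n(t)+Z_2^n(t)}$ together with $\phi_s(v)\ge e^{ve^{\lambda_1s}}$ is equivalent to the paper's submartingale $\exp\bigl(\tfrac{\lambda_1\theta}{r_1}e^{-\lambda_1 t}(Z_1^n(t)+Z_2^n(t))\bigr)$; both localize the cost at $u=u_n(y)$ because the frozen level is monotone in $u$. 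The final exchange of $c_1\uparrow1$ with $\inf_\theta$ is justified exactly as you say.

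One slip in the Set-up: the stated asymptotic $z_0^n\bigl(u_n(y)t_n\bigr)=ne^{-r(\xi_n-y)}\sim\frac{\mu e^{(\lambda_1+r)y}}{\lambda_1+r}n^{1-\alpha}$ is not correct. Since $e^{(\lambda_1+r)\xi_n}\sim\tfrac{\lambda_1+r}{\mu}n^\alpha$, one has $z_0^n(\xi_n-y)\sim e^{ry}\bigl(\tfrac{\mu}{\lambda_1+r}\bigr)^{r/(\lambda_1+r)}n^{1-\alpha r/(\lambda_1+r)}$, which is of strictly larger order than $n^{1-\alpha}$. The constant $\tfrac{\mu e^{(\lambda_1+r)y}}{\lambda_1+r}n^{1-\alpha}$ is what you get only after multiplying by $e^{-\lambda_1(\xi_n-y)}$, i.e.\ for $n e^{-(\lambda_1+r)(\xi_n-y)}$. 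This does not propagate, because the quantity you actually feed into the Chernoff bound is $v\,c_1 z_0^n(\xi_n-y)=c_1\tfrac{\lambda_1\theta}{r_1}n e^{-(\lambda_1+r)(\xi_n-y)}$, which you do compute correctly as $\sim c_1\tfrac{\mu\lambda_1 e^{(\lambda_1+r)y}\theta}{r_1(\lambda_1+r)}n^{1-\alpha}$. I would just correct the intermediate display so the set-up is consistent with the use.
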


\begin{proof}\\
Observe that $\tau_n\le \xi_n-y\Leftrightarrow \sup\limits_{t\le \xi_n-y}\left(Z_1^n\left(t\right)+Z_2^n\left(t\right)-Z_0^n\left(t\right)\right)>0$. It follows that 
\begin{equation*}
\PP\left(\tau_n\le \xi_n-y\right)=\PP\left(\sup\limits_{t\le \xi_n-y}\left(Z_1^n\left(t\right)+Z_2^n\left(t\right)-Z_0^n\left(t\right)\right)>0\right).
\end{equation*}
Abusing the notation, we define $t_n=\frac{1}{r}\log n$, and $u_n\left(y\right)=\left(\xi_n-y\right)/t_n$.
\begin{equation*}
\PP\left(\sup\limits_{t\le \xi_n-y}\left(Z_1^n\left(t\right)+Z_2^n\left(t\right)-Z_0^n\left(t\right)\right)>0\right)=\PP\left(\sup\limits_{u\in\left[0,u_n\left(y\right)\right]}\left(Z_1^n\left(ut_n\right)+Z_2^n\left(ut_n\right)-Z_0^n\left(ut_n\right)\right)>0\right).
\end{equation*}
It can be seen that $u_n\left(y\right)\rightarrow \alpha r/\left(\lambda_1+r\right)$ as $n\rightarrow \infty$. Therefore, if we define $a>0$ such that $a<\alpha r/\left(\lambda_1+r\right)$, then apply Proposition \ref{proposition_crossover_1} and \ref{proposition_crossover_2} to see that
\begin{align*}
& \quad \PP\left(\sup\limits_{u\in\left[0,u_n\left(y\right)\right]}\left(Z_1^n\left(ut_n\right)+Z_2^n\left(ut_n\right)-Z_0^n\left(ut_n\right)\right)>0\right)\\
& \le \PP\left(\sup\limits_{u\in\left[0,a\right]}\left(Z_1^n\left(ut_n\right)+Z_2^n\left(ut_n\right)-Z_0^n\left(ut_n\right)\right)>0\right)+\PP\left(\sup\limits_{u\in\left[a,u_n\left(y\right)\right]}\left(Z_1^n\left(ut_n\right)+Z_2^n\left(ut_n\right)-Z_0^n\left(ut_n\right)\right)>0\right)\\
& = O\left(\exp\left[-Cn^{1-a\left(1+\lambda_1/r\right)}\right]\right)+\PP\left(\sup\limits_{u\in\left[a,u_n\left(y\right)\right]}\left(Z_1^n\left(ut_n\right)+Z_2^n\left(ut_n\right)-Z_0^n\left(ut_n\right)\right)>0\right).
\end{align*}
We now analyze the case of an early recurrence occurring in the interval $\left[at_n, \xi_n-y\right]$. With Proposition \ref{proposition_crossover_3} and the quantities defined in \ref{a1_crossover_1}, \ref{a2_crossover_1}, \ref{a3_crossover_1}, and \ref{a4_crossover_1}, we have for $\delta \in \left(0,1\right)$ that
\begin{align*}
& \quad \PP\left(\sup\limits_{u\in\left[a,u_n\left(y\right)\right]}\left(Z_1^n\left(ut_n\right)+Z_2^n\left(ut_n\right)-Z_0^n\left(ut_n\right)\right)>0\right)\\
& =\PP\left(\sup\limits_{u\in\left[a,u_n\left(y\right)\right]}\left(A_1\left(u,n\right)+A_2\left(u,n\right)+A_3\left(u,n\right)+A_4\left(u,n\right)\right)>0\right)\\
&\le \PP\left(\sup\limits_{u\in\left[a,u_n\left(y\right)\right]}\left(A_1\left(u,n\right)+A_2\left(u,n\right)+\left(1-\delta\right) A_4\left(u,n\right)\right)>0\right)+\PP\left(\sup\limits_{u\in\left[a,u_n\left(y\right)\right]}\left(A_3\left(u,n\right)+\delta A_4\left(u,n\right)\right)>0\right)\\
& =\PP\left(\sup\limits_{u\in\left[a,u_n\left(y\right)\right]}\left(A_1\left(u,n\right)+A_2\left(u,n\right)+\left(1-\delta\right) A_4\left(u,n\right)\right)>0\right)+O\left(\exp\left[-Cn^{1-\alpha r/\left(\lambda_1+r\right)}\right]\right).
\end{align*}
We observe that
\begin{align*}
& \quad \PP\left(\sup\limits_{u\in\left[a,u_n\left(y\right)\right]}\left(A_1\left(u,n\right)+A_2\left(u,n\right)+\left(1-\delta\right) A_4\left(u,n\right)\right)>0\right)\\
& = \PP\left(\sup\limits_{u\in\left[a,u_n\left(y\right)\right]}n^{\alpha-1-\lambda_1 u/r}\left(A_1\left(u,n\right)+A_2\left(u,n\right)+\left(1-\delta\right) A_4\left(u,n\right)\right)>0\right)\\
& \le \PP\left(\sup\limits_{u\in\left[a,u_n\left(y\right)\right]}n^{\alpha-1-\lambda_1 u/r}\left(A_1\left(u,n\right)+A_2\left(u,n\right)\right)+ \sup\limits_{u\in\left[a,u_n\left(y\right)\right]}n^{\alpha-1-\lambda_1 u/r}\left(1-\delta\right)A_4\left(u,n\right)>0\right).
\end{align*}
If we define 
\begin{equation*}
c_1\left(y,n\right)=n^{\alpha-1-\lambda_1u_n\left(y\right)/r}\left(z_0^n\left(u_n\left(y\right)t_n\right)-z_1^n\left(u_n\left(y\right)t_n\right)-z_2^n\left(u_n\left(y\right)t_n\right)\right),
\end{equation*}
then we find that
\begin{align*}
& \quad \PP\left(\sup\limits_{u\in\left[a,u_n\left(y\right)\right]}n^{\alpha-1-\lambda_1 u/r}\left(A_1\left(u,n\right)+A_2\left(u,n\right)\right)+ \sup\limits_{u\in\left[a,u_n\left(y\right)\right]}n^{\alpha-1-\lambda_1 u/r}\left(1-\delta\right)A_4\left(u,n\right)>0\right)\\
& =\PP\left(\sup\limits_{u\in\left[a,u_n\left(y\right)\right]}n^{\alpha-1-\lambda_1 u/r}\left(A_1\left(u,n\right)+A_2\left(u,n\right)\right)> \left(1-\delta\right)c_1\left(y,n\right)\right),
\end{align*}
where the equality follows from the fact that
\begin{align*}
& \quad n^{\alpha-1-\lambda_1 u/r}\left(z_0^n\left(ut_n\right)-z_1^n\left(ut_n\right)-z_2^n\left(ut_n\right)\right)\\
& = n^{\alpha-1-\lambda_1 u/r}\left(n^{1-u}-X\left(n\right)n^{\frac{\lambda_1}{r}u}-\frac{\mu}{\lambda_1+r}n^{1-\alpha}e^{\lambda_1 u t_n}\left(1-e^{\left(\lambda_0-\lambda_1\right)u t_n}\right)\right)\\
& =n^{\alpha-\left(1+\lambda_1/r\right)u}-X\left(n\right)n^{\alpha-1}+\frac{\mu n^{-u\left(1+\lambda_1/r\right)}}{\lambda_1+r}-\frac{\mu}{\lambda_1+r}
\end{align*}
is a monotone decreasing function in $u$. Now we define
\begin{equation*}
c_2\left(y,n\right)=c_1\left(y,n\right)+n^{\alpha-1-\lambda_1u_n\left(y\right)/r}\left(z_1^n\left(u_n\left(y\right)t_n\right)+z_2^n\left(u_n\left(y\right)t_n\right)\right),
\end{equation*}
and recall that for $\theta\in \left(0,1\right)$ and $b>0$, $v_{n,\theta,b}=\frac{\lambda_1\theta}{r_1e^{\lambda_1 bt_n}}$. Applying a similar reasoning as in Proposition \ref{proposition_recurrence_1} and \eqref{eq:exponential_integral}, we obtain that
\begin{align*}
& \quad \PP\left(\sup\limits_{u\in\left[a,u_n\left(y\right)\right]}n^{\alpha-1-\lambda_1 u/r}\left(A_1\left(u,n\right)+A_2\left(u,n\right)\right)> \left(1-\delta\right)c_1\left(y,n\right)\right)\\
& \le \EE \exp\left(\frac{\mu}{n^{\alpha}}\int_{0}^{u_n\left(y\right)t_n}Z_0^n\left(s\right)\left(\phi_{u_n\left(y\right)t_n-s}\left(v_{n,\theta,u_n\left(y\right)}\right)-1\right)ds\right)\\
& \quad \times \EE\exp\left(v_{n,\theta,u_n\left(y\right)}Z_1^n\left(u_n\left(y\right)t_n\right)\right)\exp\left(-\frac{\lambda_1 \theta n^{1-\alpha}}{r_1}\left[\left(1-\delta\right)c_2\left(y,n\right)-\frac{\mu}{\lambda_1+r}n^{-a\left(1+\lambda_1/r\right)}\right]\right)\\
& \le \exp\left(\frac{\mu}{n^{\alpha-1}}\int_{0}^{u_n\left(y\right)t_n}e^{-rs}\left(\phi_{u_n\left(y\right)t_n-s}\left(v_{n,\theta,u_n\left(y\right)}\right)-1\right)ds\right)\exp\left(k_1\left(\log n\right)^2n^{1-2\alpha}\right)\\
& \quad \times \EE\exp\left(v_{n,\theta,u_n\left(y\right)}Z_1^n\left(u_n\left(y\right)t_n\right)\right) \exp\left(-\frac{\lambda_1 \theta n^{1-\alpha}}{r_1}\left[\left(1-\delta\right)c_2\left(y,n\right)-\frac{\mu}{\lambda_1+r}n^{-a\left(1+\lambda_1/r\right)}\right]\right).
\end{align*}
This gives us that 
\begin{align*}
& \quad \PP\left(\tau_n\le \xi_n-y\right)\\
& \le \PP\left(\sup\limits_{u\in\left[a,u_n\left(y\right)\right]}n^{\alpha-1-\lambda_1 u/r}\left(A_1\left(u,n\right)+A_2\left(u,n\right)\right)> \left(1-\delta\right)c_1\left(y,n\right)\right)+O\left(\exp\left(-Cn^{1-\omega}\right)\right)\\
& \le O\left(\exp\left(-Cn^{1-\omega}\right)\right)+\exp\left(\frac{\mu}{n^{\alpha-1}}\int_{0}^{u_n\left(y\right)t_n}e^{-rs}\left(\phi_{u_n\left(y\right)t_n-s}\left(v_{n,\theta,u_n\left(y\right)}\right)-1\right)ds\right)\exp\left(k_1\left(\log n\right)^2n^{1-2\alpha}\right)\\
& \quad \times \EE\exp\left(v_{n,\theta,u_n\left(y\right)}Z_1^n\left(u_n\left(y\right)t_n\right)\right) \exp\left(-\frac{\lambda_1 \theta n^{1-\alpha}}{r_1}\left[\left(1-\delta\right)c_2\left(y,n\right)-\frac{\mu}{\lambda_1+r}n^{-a\left(1+\lambda_1/r\right)}\right]\right).
\end{align*}

where $\omega = \max\left(\alpha r/\left(\lambda_1+r\right), a\left(1+\lambda_1/r\right)\right)$. Applying Lemma \ref{z1_moment_generate} and Lemma \ref{c_2_recurrence_1}, we obtain that
\begin{equation*}
\limsup\limits_{n\rightarrow \infty}\frac{1}{n^{1-\alpha}}\log \PP\left(\tau_n\le \xi_n-y\right)\le \frac{\mu \lambda_1}{r_1}\int_{0}^{\infty}\frac{\theta e^{-rs}}{e^{\lambda_1 s}-\theta}ds-\left(1-\delta\right)\frac{\mu \lambda_1 \theta e^{\left(\lambda_1+r\right)y}}{r_1\left(\lambda_1+r\right)}.
\end{equation*}
Letting $\delta\rightarrow 0$, and optimizing our upper bound over $0<\theta<1$ we complete the proof. \qed
\end{proof}
\\

We next establish a lower bound that matches Proposition \ref{crossover_LDUB}.
\begin{proposition}\label{crossover_LDLB}
For $y>0$, when $X\left(n\right)=o\left(n^{1-\alpha}\right)$,
\begin{equation}\label{lowerbound_crossover_1}
\liminf\limits_{n\rightarrow \infty}\frac{1}{n^{1-\alpha}}\log \PP \left(\tau_n\le \xi_n-y\right)\ge -\sup\limits_{\theta\in\left(0,1\right)}\left[\frac{\mu\lambda_1e^{\left(\lambda_1+r\right)y}\theta}{r_1\left(\lambda_1+r\right)}-\frac{\lambda_1\mu\theta}{r_1}\int_{0}^{\infty}\frac{e^{-rs}}{e^{\lambda_1s}-\theta}ds\right].
\end{equation}
\end{proposition}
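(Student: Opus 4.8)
The plan is to mirror the lower-bound argument for the recurrence time (Proposition \ref{recurrence_LDLB}) while accounting for the fact that the target is now the stochastic quantity $Z_0^n(u_n(y)t_n)$ rather than the deterministic $an$. First I would drop the supremum, writing
\begin{align*}
\PP\left(\tau_n\le \xi_n-y\right)&=\PP\left(\sup\limits_{u\in[0,u_n(y)]}\left(Z_1^n(ut_n)+Z_2^n(ut_n)-Z_0^n(ut_n)\right)>0\right)\\
&\ge \PP\left(Z_1^n(u_n(y)t_n)+Z_2^n(u_n(y)t_n)-Z_0^n(u_n(y)t_n)>0\right).
\end{align*}
Since $Z_0^n$ is subcritical, it concentrates around its mean $z_0^n(u_n(y)t_n)=n^{1-u_n(y)}$ and has small relative fluctuations on this time scale (its variance is of order its mean); in particular, for any fixed $\eta>0$, the event $\{Z_0^n(u_n(y)t_n)\le(1+\eta)z_0^n(u_n(y)t_n)\}$ holds with probability tending to $1$. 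On that event the probability above is bounded below by $\PP\left(Z_1^n+Z_2^n>(1+\eta)z_0^n(u_n(y)t_n)\right)$ evaluated at time $u_n(y)t_n$, and then I would apply the same Gartner--Ellis machinery (Propositions \ref{GE_cond} and \ref{proposition_recurrence_3}) to the rescaled sum $\frac{\lambda_1}{r_1}n^{\alpha-1-\lambda_1 u_n(y)/r}(Z_1^n+Z_2^n)$, but now with the threshold replaced by $\frac{\lambda_1}{r_1}(1+\eta)n^{\alpha-\lambda_1 u_n(y)/r}$.

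The key computation is that in the crossover setting $u_n(y)\to \alpha r/(\lambda_1+r)$ (rather than $\alpha r/\lambda_1$ as for recurrence), so $n^{\alpha-\lambda_1 u_n(y)/r}\to n^{\alpha(\lambda_1+r)/(\lambda_1+r)-\alpha\lambda_1/(\lambda_1+r)}$-type exponents must be recomputed: using $\xi_n\sim \frac{\alpha r}{\lambda_1+r}t_n$ one gets $n^{\alpha-\lambda_1u_n(y)/r}\to e^{(\lambda_1+r)y}$ asymptotically in the relevant normalization, which is precisely why the exponent $e^{(\lambda_1+r)y}$ replaces $e^{\lambda_1 y}$ in the rate function. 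With the shifting constant redefined as
\[
c(y;n)=\frac{\lambda_1}{r_1}\frac{\mu e^{(\lambda_1+r)y}}{\lambda_1+r}-\frac{\lambda_1}{r_1}n^{\alpha-\lambda_1 u_n(y)/r},
\]
the sequence $Z_n$ satisfies the Gartner--Ellis conditions with the same limiting log-moment generating function $\Lambda(\theta)=\frac{\lambda_1\mu\theta}{r_1}\int_0^\infty\frac{e^{-rs}}{e^{\lambda_1 s}-\theta}\,ds$ for $\theta<1$ (this part is unchanged because it is governed by the immigration from $Z_0^n$, which has the same structure). Then, as in the recurrence proof, convex analysis gives that the infimum of the Legendre transform over the half-line $(\frac{\lambda_1\mu e^{(\lambda_1+r)y}}{(\lambda_1+r)r_1},\infty)$ equals $\sup_{\theta\in(0,1)}\left(\frac{\mu\lambda_1 e^{(\lambda_1+r)y}\theta}{r_1(\lambda_1+r)}-\Lambda(\theta)\right)$, yielding the stated bound after letting $\eta\to 0$.

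The main obstacle is handling the stochastic threshold $Z_0^n(u_n(y)t_n)$ cleanly: unlike the recurrence case where the threshold $an$ is deterministic, here I need a lower bound on a probability of the form $\PP(Y_n>W_n)$ with both $Y_n$ and $W_n$ random. The resolution is that $W_n=Z_0^n(u_n(y)t_n)$ concentrates on a much faster (polynomial, in fact essentially law-of-large-numbers) scale than the large-deviation scale $n^{1-\alpha}$ at which $Y_n$ fluctuates, so conditioning on $\{W_n\le(1+\eta)z_0^n\}$ costs only a factor that is asymptotically negligible on the $n^{1-\alpha}$ scale, and the $\eta$ can be sent to $0$ at the end by continuity of $\theta\mapsto$ (the supremand). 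One must check that the event $\{W_n\le(1+\eta)z_0^n\}$ is (asymptotically) independent of, or at least compatible with, the event driving the large deviation of $Y_n$; since $Z_2^n$ is built from immigration governed by the whole path of $Z_0^n$, I would instead use a coupling/monotonicity argument — replace $Z_2^n$ by the process driven by a lower-bounding deterministic immigration rate and $Z_0^n$ in the threshold by its upper concentration bound, both of which only decrease the probability — to decouple the two and then invoke Gartner--Ellis on the resulting genuinely rescaled sum. The remaining steps (identifying $\Lambda$, verifying steepness, and the convex-analytic optimization) are identical to the recurrence case and can be cited directly.
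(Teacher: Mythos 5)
Your proposal follows the same skeleton as the paper's proof (drop the supremum, replace the random threshold $Z_0^n$ by a slightly inflated deterministic one, then apply Gartner--Ellis to the rescaled mutant population), and the identification of $e^{(\lambda_1+r)y}$ via the different time scale $u_n(y)\to\alpha r/(\lambda_1+r)$ is correct. Where you part ways with the paper is in how you handle the dependence between $Z_0^n$ and $Z_1^n+Z_2^n$: you propose a coupling/monotonicity argument to ``decouple'' the two, but this is unnecessary and would be delicate to justify (choosing a deterministic lower-bounding immigration rate that holds with high probability and is still sharp enough risks circularity). The paper instead uses the elementary inclusion-exclusion bound
\begin{align*}
\PP\left(Y_n>W_n\right)\ \ge\ \PP\left(Y_n>W_n,\ W_n\le c_n\right)\ \ge\ \PP\left(Y_n>c_n\right)-\PP\left(W_n>c_n\right),
\end{align*}
valid with no independence assumption whatsoever (since $\{Y_n>c_n\}\subseteq\{Y_n>W_n\}\cup\{W_n>c_n\}$), taking $Y_n=Z_1^n+Z_2^n$, $W_n=Z_0^n$ at time $u_n(y)t_n$, and $c_n=(1+a_n)z_0^n(u_n(y)t_n)$. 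Proposition \ref{proposition_crossover_4} then shows $\PP(W_n>c_n)=O(\exp(-Cn^{1-\alpha(2r+\lambda_1)/2(\lambda_1+r)}))$, which is negligible on the $n^{1-\alpha}$ scale. Two further small points. The paper lets $a_n\downarrow 0$ as a sequence rather than fixing $\eta>0$ and sending $\eta\to 0$ at the end; both work (your version needs only continuity of the supremand in the threshold), but the $a_n$-version lets the shift constant $c_3(y,n)$ absorb everything so that the threshold in the Gartner--Ellis application is already exactly $\frac{\lambda_1}{r_1}\frac{\mu e^{(\lambda_1+r)y}}{\lambda_1+r}$. Finally, your $c(y;n)$ contains $n^{\alpha-\lambda_1 u_n(y)/r}$ copied from the recurrence proof; for crossover the rescaled threshold is $n^{\alpha-u_n(y)(1+\lambda_1/r)}$ (it is this quantity, not $n^{\alpha-\lambda_1 u_n(y)/r}$, which converges to $\mu e^{(\lambda_1+r)y}/(\lambda_1+r)$; the former diverges). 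The limiting log-MGF $\Lambda$, verification of steepness, and the convex-analytic optimization are indeed unchanged and can be cited from the recurrence case exactly as you say.
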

\begin{proof}\\
We first note that
\begin{align*}
\PP\left(\tau_n\le \xi_n-y\right)&\ge  \PP\left(Z_1^n\left(u_n\left(y\right)t_n\right)+Z_2^n\left(u_n\left(y\right)t_n\right)-Z_0^n\left(u_n\left(y\right)t_n\right)>0\right).
\end{align*}
Now let $\{a_n\}_{n\in \NN}$ be a sequence such that $a_n\downarrow 0$. Then by Proposition \ref{proposition_crossover_4},
\begin{align}
&\quad \PP\left(Z_1^n\left(u_n\left(y\right)t_n\right)+Z_2^n\left(u_n\left(y\right)t_n\right)-Z_0^n\left(u_n\left(y\right)t_n\right)>0\right) \nonumber\\
&\ge \PP\left(Z_1^n\left(u_n\left(y\right)t_n\right)+Z_2^n\left(u_n\left(y\right)t_n\right)-Z_0^n\left(u_n\left(y\right)t_n\right)>0, Z_0^n\left(u_n\left(y\right)t_n\right)\le \left(1+a_n\right)z_0^n\left(u_n\left(y\right)t_n\right)\right)\nonumber\\
&\ge \PP\left(Z_1^n\left(u_n\left(y\right)t_n\right)+Z_2^n\left(u_n\left(y\right)t_n\right)>\left(1+a_n\right)z_0^n\left(u_n\left(y\right)t_n\right)\right)\nonumber\\
&\quad -\PP\left(Z_1^n\left(u_n\left(y\right)t_n\right)+Z_2^n\left(u_n\left(y\right)t_n\right)>\left(1+a_n\right)z_0^n\left(u_n\left(y\right)t_n\right), Z_0^n\left(u_n\left(y\right)t_n\right)> \left(1+a_n\right)z_0^n\left(u_n\left(y\right)t_n\right)\right)\nonumber\\
&\ge \PP\left(Z_1^n\left(u_n\left(y\right)t_n\right)+Z_2^n\left(u_n\left(y\right)t_n\right)>\left(1+a_n\right)z_0^n\left(u_n\left(y\right)t_n\right)\right)-\PP\left(Z_0^n\left(u_n\left(y\right)t_n\right)> \left(1+a_n\right)z_0^n\left(u_n\left(y\right)t_n\right)\right)\nonumber\\
&=\PP\left(Z_1^n\left(u_n\left(y\right)t_n\right)+Z_2^n\left(u_n\left(y\right)t_n\right)>\left(1+a_n\right)z_0^n\left(u_n\left(y\right)t_n\right)\right)-O\left(\exp\left(-Cn^{1-\alpha\left(2r+\lambda_1\right)/2\left(\lambda_1+r\right)}\right)\right),\label{eq:crossover_drop_sup}
\end{align}
where $C$ is some positive number. If we define
\begin{equation}\label{c_3}
c_3\left(y,n\right)=\frac{\lambda_1}{r_1}\frac{\mu e^{\left(\lambda_1+r\right)y}}{\lambda_1+r}\frac{\mu}{\mu+n^{\alpha}\left(\lambda_1+r\right)}-\frac{\lambda_1}{r_1}n^{\alpha}e^{\left(\lambda_1+r\right)y}\frac{X\left(n\right)n^{\alpha}\left(\lambda_1+r\right)}{n\left(\mu+\lambda_1n^{\alpha}+rn^{\alpha}\right)},
\end{equation}
and the sequence of random variables
\begin{equation}\label{rv}
\tilde{Z}_n=\frac{\lambda_1}{r_1}n^{\alpha-1-\lambda_1u_n\left(y\right)/r}\left(Z_1^n\left(u_n\left(y\right)t_n\right)+Z_2^n\left(u_n\left(y\right)t_n\right)-a_nz_0^n\left(u_n\left(y\right)t_n\right)\right)+c_3\left(y,n\right),
\end{equation}
we can obtain that
\begin{align*}
&\quad \PP\left(Z_1^n\left(u_n\left(y\right)t_n\right)+Z_2^n\left(u_n\left(y\right)t_n\right)>\left(1+a_n\right)z_0^n\left(u_n\left(y\right)t_n\right)\right)\\
&= \PP\left(\frac{\lambda_1}{r_1}n^{\alpha-1-\lambda_1u_n\left(y\right)/r}\left(Z_1^n\left(u_n\left(y\right)t_n\right)+Z_2^n\left(u_n\left(y\right)t_n\right)-a_nz_0^n\left(u_n\left(y\right)t_n\right)\right)>\frac{\lambda_1}{r_1}c_2\left(y,n\right)\right)\\
&=\PP\left(\frac{\lambda_1}{r_1}n^{\alpha-1-\lambda_1u_n\left(y\right)/r}\left(Z_1^n\left(u_n\left(y\right)t_n\right)+Z_2^n\left(u_n\left(y\right)t_n\right)-a_nz_0^n\left(u_n\left(y\right)t_n\right)\right)+c_3\left(y,n\right)>\frac{\lambda_1}{r_1}\frac{\mu e^{\left(\lambda_1+r\right)y}}{\lambda_1+r}\right)\\
&=\PP\left(\tilde{Z}_n>\frac{\lambda_1}{r_1}\frac{\mu e^{\left(\lambda_1+r\right)y}}{\lambda_1+r}\right).
\end{align*}
From the previous display and \eqref{eq:crossover_drop_sup} it suffices to prove that
\begin{align}
\label{eq:lowerbound_crossover_goal}
&\liminf_{n\to\infty}\frac{1}{n^{1-\alpha}}\log\PP\left(\tilde{Z}_n>\frac{\lambda_1}{r_1}\frac{\mu e^{\left(\lambda_1+r\right)y}}{\lambda_1+r}\right)\geq
-\sup\limits_{\theta\in\left(0,1\right)}\left[\frac{\mu\lambda_1e^{\left(\lambda_1+r\right)y}\theta}{r_1\left(\lambda_1+r\right)}-\frac{\lambda_1\mu\theta}{r_1}\int_{0}^{\infty}\frac{e^{-rs}}{e^{\lambda_1s}-\theta}ds\right].
\end{align}
In pursuit of this goal let us define the sequence of functions
\begin{equation*}
\tilde{\Lambda}_n\left(\theta\right)=\log\EE\exp\left(\theta \tilde{Z}_n\right).
\end{equation*}
From Proposition \ref{G_E} we know that the random variables $\{\tilde{Z}_n\}_{n\geq 1}$ satisfy the conditions of the Gartner-Ellis theorem with function
$$
\Lambda\left(\theta\right)=\begin{cases}\frac{\lambda_1\mu\theta}{r_1}\int_0^\infty\frac{e^{-rs}}{e^{\lambda_1s}-\theta}ds,&\enskip \theta<1\\
\infty,&\enskip \theta\geq 1.
\end{cases}
$$
Therefore
$$
\liminf_{n\to\infty}n^{\alpha-1}\log\PP\left(\tilde{Z}_n>\frac{\lambda_1}{r_1}\frac{\mu e^{\left(\lambda_1+r\right)y}}{\lambda_1+r}\right)\geq -\inf_{x\in \left(\frac{\lambda_1\mu e^{\left(\lambda_1+r\right)y}}{\left(\lambda_1+r\right)r_1},\infty\right)}\sup_{\theta\in\mathbb{R}}\left(\theta x - \Lambda\left(\theta\right)\right).
$$
Applying the same argument in the proof of Proposition \ref{recurrence_LDLB}, we can obtain that
$$
\inf_{x\in \left(\frac{\lambda_1\mu e^{\left(\lambda_1+r\right)y}}{\left(\lambda_1+r\right)r_1},\infty\right)}\sup_{\theta\in \left(0,1\right)}\left(\theta x - \Lambda\left(\theta\right)\right)=\sup_{\theta\in \left(0,1\right)}\left(\frac{\mu \lambda_1 \theta e^{\left(\lambda_1+r\right)y}}{r_1\left(\lambda_1+r\right)}-\Lambda\left(\theta\right)\right)
$$
which recalling the definition of $\Lambda$ establishes \eqref{eq:lowerbound_crossover_goal}. \qed
\end{proof}

\subsection{Large Deviations for the Conditioned Process}\label{LD_condition_sec}
In this section, we present analysis of the large deviations rate for early recurrence conditioned on a given number of clones at the time $\zeta_n-y$. Note that we make a strong assumption in this section that sensitive cells have deterministic exponential decay (i.e. $Z_0^n\left(t\right)=z_0^n\left(t\right)$). We also assume that $X(n)=0$. 

Let $S_n$ be the number of clones generated in the time period $\left(0,\zeta_n-y\right)$, which will survive to the time $\zeta_n-y$, and let $\hat{Z}_S^n\left(t\right)$ be the number of mutants at time $t$, which belong to those clones. Let $E_n$ be the number of clones generated in the time period $\left(0,\zeta_n-y\right)$, which will go extinct before time $\zeta_n-y$, and let $\hat{Z}_E^n\left(t\right)$ be the number of mutants at time $t$, which belong to those clones. Clearly, $\left(S_n, \hat{Z}_S^n\right)$ and $\left(E_n, \hat{Z}_E^n\right)$ are independent due to the thinning property of Poisson process.


Note that the birth-death process $Z$ with initial condition 1 has the following p.m.f (see expression 8 of \cite{Durrett}),
\begin{align*}
\PP\left(Z\left(t\right)=n|Z\left(0\right)=1\right)=\left(\frac{\lambda_1}{r_1-d_1e^{-\lambda_1 t}}\right)\left(\frac{r_1\left(1-e^{-\lambda_1 t}\right)}{r_1-d_1 e^{-\lambda_1 t}}\right)^{n-1}\left(\frac{\lambda_1 e^{-\lambda_1 t}}{r_1-d_1 e^{-\lambda_1 t}}\right), n>0,
\end{align*}
and 
\begin{align*}
\PP\left(Z\left(t\right)=0|Z\left(0\right)=1\right)=\frac{d_1\left(1-e^{-\lambda_1 t}\right)}{r_1-d_1 e^{-\lambda_1 t}}.
\end{align*}
With the above p.m.f we can obtain the conditional mean of $Z_1(t)$
\begin{align*}
& \quad \EE\left[\hat{Z}_S^n\left(\zeta_n-y\right)|S_n=K\right]\\
& = K \frac{\int_{0}^{\zeta_n-y}\frac{\lambda_1}{r_1-d_1e^{-\lambda_1\left(\zeta_n-y-s\right)}}e^{-rs}\EE \left[ Z\left(\zeta_n-y-s\right) | Z\left(\zeta_n-y-s\right)>0\right]ds}{\int_{0}^{\zeta_n-y}\frac{\lambda_1}{r_1-d_1e^{-\lambda_1\left(\zeta_n-y-s\right)}}e^{-rs}ds}\\
& = K \frac{\int_{0}^{\zeta_n-y}e^{\lambda_1 \left(\zeta_n-y\right)-\left(\lambda_1+r\right)s}ds}{\int_{0}^{\zeta_n-y}\frac{\lambda_1}{r_1-d_1e^{-\lambda_1\left(\zeta_n-y-s\right)}}e^{-rs}ds},
\end{align*}
where the first equality is due to the uniformity of arrival times for a Poisson process, and the integral is obtained by conditioning on the mutation time. For $\theta<\frac{\lambda_1}{r_1}$ we can compute the conditional moment generating function of $Z_1(t)$,
\begin{align*}
\EE\left[e^{\theta e^{-\lambda_1 \left(\zeta_n-y\right)} \hat{Z}_S^n\left(\zeta_n-y\right)}|S_n=K\right]=\left(\frac{\int_{0}^{\zeta_n-y}\frac{\lambda_1}{r_1-d_1e^{-\lambda_1 \left(\zeta_n-y-s\right)}}\phi_{\zeta_n-y-s}\left(\theta e^{-\lambda_1 \left(\zeta_n-y\right)}\right)e^{-rs}ds}{\int_{0}^{\zeta_n-y}\frac{\lambda_1}{r_1-d_1e^{-\lambda_1 \left(\zeta_n-y-s\right)}}e^{-rs}ds}\right)^{K},
\end{align*}
where
\begin{align*}
\phi_{\zeta_n-y-s}\left(\theta e^{-\lambda_1 \left(\zeta_n-y\right)}\right)&=\frac{e^{\theta e^{-\lambda_1 \left(\zeta_n-y\right)}}\lambda_1 e^{-\lambda_1 \left(\zeta_n-y-s\right)}}{r_1\left(1-e^{\theta e^{-\lambda_1 \left(\zeta_n-y\right)}}\right)+e^{-\lambda_1 \left(\zeta_n-y-s\right)}\left(r_1 e^{\theta e^{-\lambda_1 \left(\zeta_n-y\right)}}-d_1\right)}\\
& \rightarrow \frac{\lambda_1 e^{\lambda_1 s}}{\lambda_1 e^{\lambda_1 s}-r_1 \theta} \text{ as } n\rightarrow \infty.
\end{align*}

Define $A_{n,a}$ to be the event that $S_n=\lfloor a\frac{\lambda_1 \mu}{r r_1}n^{1-\alpha} \rfloor$, where $a\in \left(0, e^{\lambda_1 y}\right)$ and denote the corresponding conditional probability measure by $\PP_{A_{n,a}}\left(\cdot\right)=\PP\left(\cdot | A_{n,a}\right)$. Note that $a=1$ corresponds to the mean of $S_n$.
We consider $e^{\lambda_1 y}$ as the upper bound for $a$ due to the following equation
\begin{align*}
\lim\limits_{n\rightarrow \infty}\frac{1}{n}\EE[\hat{Z}_S^n\left(\zeta_n-y\right)|S_n=\lfloor e^{\lambda_1 y}\frac{\lambda_1 \mu}{r r_1}n^{1-\alpha} \rfloor]=1,
\end{align*}
which indicates that conditioned on the event that the number of clones which survive to the time $\zeta_n-y$ is greater than or equal to $e^{\lambda_1 y}\frac{\lambda_1 \mu}{r r_1}n^{1-\alpha}$, early recurrence is no longer a rare event with large deviation rate $n^{1-\alpha}$.

We then have the following result for the large deviations rate of early recurrence conditioned on the event $A_{n,a}$.

\begin{proposition}\label{LD_condition}
Assume that $\alpha\in \left(0,1\right)$ and $y>0$,
\begin{align*}
& \quad \lim\limits_{n\rightarrow \infty}\frac{1}{n^{1-\alpha}}\log \PP_{A_{n,a}}\left(\gamma_n\le \zeta_n-y\right)\\
& =-\sup\limits_{\theta\in \left(0,\frac{\lambda_1}{r_1}\right)}\left[\frac{\theta \mu e^{\lambda_1 y}}{\lambda_1+r}-a\frac{\lambda_1 \mu}{r r_1}\log\left( r\int_{0}^{\infty}\frac{\lambda_1 e^{\lambda_1 s}}{\lambda_1 e^{\lambda_1 s}-r_1 \theta}e^{-rs}ds\right)\right].
\end{align*}	
\end{proposition}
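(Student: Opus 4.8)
The plan is to reduce the event $\{\gamma_n\le\zeta_n-y\}$ to the terminal--value event $\{\hat Z_S^n(\zeta_n-y)>n\}$ and then apply the G\"artner--Ellis theorem to a rescaling of $\hat Z_S^n(\zeta_n-y)$. Under the standing assumptions of this section, $X(n)=0$ forces $Z_1^n\equiv 0$, and every clone counted by $E_n$ is extinct at time $\zeta_n-y$, so $\hat Z_E^n(\zeta_n-y)=0$ and hence $Z_2^n(\zeta_n-y)=\hat Z_S^n(\zeta_n-y)$. The lower bound then follows from the inclusion $\{\hat Z_S^n(\zeta_n-y)>n\}\subseteq\{\gamma_n\le\zeta_n-y\}$. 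For the upper bound one must dispose of $\sup_{t\le\zeta_n-y}Z_2^n(t)>n$: splitting the time axis at $bt_n$ with $b<\alpha r/\lambda_1$, the early window has probability $O(e^{-Cn^{1-b\lambda_1/r}})$ by Proposition~\ref{prop_recurrence_1} (equivalently its proof), which remains negligible after dividing by $\PP(A_{n,a})^{-1}=e^{O(n^{1-\alpha})}$ since $1-b\lambda_1/r>1-\alpha$; on $[bt_n,\zeta_n-y]$ one runs the Chernoff--Doob estimate of Proposition~\ref{proposition_recurrence_1} under $\PP_{A_{n,a}}$, after splitting $Z_2^n=\hat Z_S^n+\hat Z_E^n$ and checking that the running maximum of the independent extinct--clone part exceeds $\epsilon n$ only on an event of probability $e^{-\Theta(n)}$. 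This reduces the task to estimating $\PP_{A_{n,a}}(\hat Z_S^n(\zeta_n-y)>n)$ at speed $n^{1-\alpha}$.

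For the core estimate, set $\tilde V_n:=e^{-\lambda_1(\zeta_n-y)}\hat Z_S^n(\zeta_n-y)$, write $p_n(s)=\lambda_1/(r_1-d_1e^{-\lambda_1(\zeta_n-y-s)})$ for the clone survival probability and $\psi_{n,s}(\theta)$ for the per--clone conditional moment generating function appearing in the display preceding the statement. On $A_{n,a}$ that display gives, for $\theta\in(0,\lambda_1/r_1)$,
\[
n^{\alpha-1}\log\EE_{A_{n,a}}\big[e^{\theta\tilde V_n}\big]=\frac{\lfloor a\lambda_1\mu n^{1-\alpha}/(rr_1)\rfloor}{n^{1-\alpha}}\,\log\!\left(\frac{\int_0^{\zeta_n-y}p_n(s)\,\psi_{n,s}(\theta)\,e^{-rs}\,ds}{\int_0^{\zeta_n-y}p_n(s)\,e^{-rs}\,ds}\right).
\]
Here $p_n(s)\to\lambda_1/r_1$ and $\psi_{n,s}(\theta)\to\lambda_1e^{\lambda_1 s}/(\lambda_1e^{\lambda_1 s}-r_1\theta)$ pointwise, the prefactor tends to $a\lambda_1\mu/(rr_1)$, and a dominated convergence argument for the two integrals (the weight $e^{-rs}$ damps the contribution of $s$ near $\zeta_n-y$, where $p_n$ and $\psi_{n,s}$ degenerate) gives $n^{\alpha-1}\log\EE_{A_{n,a}}[e^{\theta\tilde V_n}]\to\Lambda(\theta)$ with $\Lambda(\theta)=a\frac{\lambda_1\mu}{rr_1}\log\big(r\int_0^\infty\frac{\lambda_1e^{\lambda_1 s}}{\lambda_1e^{\lambda_1 s}-r_1\theta}e^{-rs}\,ds\big)$ for $\theta<\lambda_1/r_1$ and $\Lambda(\theta)=+\infty$ for $\theta\ge\lambda_1/r_1$. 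I would also record that $e^{\lambda_1\zeta_n}\sim(\lambda_1+r)n^\alpha/\mu$, which follows from $z_2^n(\zeta_n)=n$ and $\zeta_n\to\infty$; hence $n\,e^{-\lambda_1(\zeta_n-y)}/n^{1-\alpha}\to v^\ast:=\mu e^{\lambda_1 y}/(\lambda_1+r)$, and $\{\hat Z_S^n(\zeta_n-y)>n\}=\{n^{\alpha-1}\tilde V_n>n\,e^{-\lambda_1(\zeta_n-y)}/n^{1-\alpha}\}$.

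It then remains to check the G\"artner--Ellis hypotheses for $\{n^{\alpha-1}\tilde V_n\}_n$ at speed $n^{1-\alpha}$ with limiting cumulant $\Lambda$: the origin lies in the interior of $\{\Lambda<\infty\}=(-\infty,\lambda_1/r_1)$; $\Lambda$ is lower semicontinuous (continuous on its effective domain, with $\Lambda(\theta)\uparrow\infty$ as $\theta\uparrow\lambda_1/r_1$ because the integral acquires a logarithmic singularity at $s=0$); $\Lambda$ is differentiable on the interior; and $\Lambda$ is steep at $\lambda_1/r_1$ (the $\theta$--derivative of the integral blows up like $\int_0 s^{-2}\,ds$, strictly faster than the $\int_0 s^{-1}\,ds$ blow--up of the integral itself, so $\Lambda'(\theta)\to\infty$). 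G\"artner--Ellis then yields the lower bound for the open half--line and the upper bound for the closed half--line with rate $\Lambda^\ast(v^\ast)=\sup_\theta(\theta v^\ast-\Lambda(\theta))$; since $\Lambda^\ast$ is convex with minimizer $\Lambda'(0)=a\mu/(\lambda_1+r)$, which is $<v^\ast$ precisely because $a<e^{\lambda_1 y}$, the two bounds coincide and the optimizing $\theta$ lies in $(0,\lambda_1/r_1)$. Thus $\Lambda^\ast(v^\ast)=\sup_{\theta\in(0,\lambda_1/r_1)}\big(\frac{\theta\mu e^{\lambda_1 y}}{\lambda_1+r}-\Lambda(\theta)\big)$, which combined with the first paragraph is the claimed identity.

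The step I expect to be the main obstacle is the reduction of $\{\gamma_n\le\zeta_n-y\}$ to $\{\hat Z_S^n(\zeta_n-y)>n\}$ at exponential order $n^{1-\alpha}$ under the conditioning on the exact surviving--clone count: the running--maximum control has to be carried out under $\PP_{A_{n,a}}$, which disrupts the clean martingale structure available in the unconditioned setting, so one must isolate the growing surviving--clone mass---where a Doob--type exponential maximal inequality still applies---from the extinct--clone mass and bound each separately. The other delicate point is the uniform bound $\psi_{n,s}(\theta)\le C_\theta$ (for all $s\in[0,\zeta_n-y]$, all large $n$, and $\theta<\lambda_1/r_1$) needed for the dominated convergence passage; by contrast the steepness check is routine once the $s\downarrow0$ singularity is isolated.
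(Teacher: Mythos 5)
Your proposal is correct and follows essentially the same route as the paper: decompose $Z_2^n$ into the surviving--clone mass $\hat Z_S^n$ and the extinct--clone mass $\hat Z_E^n$, bound the extinct part by a Gambler's--Ruin/Poisson--tail argument, control $\sup_t e^{-\lambda_1 t}\hat Z_S^n(t)$ under $\PP_{A_{n,a}}$ by a Doob--type maximal inequality, and apply the G\"artner--Ellis theorem to the rescaled terminal value $n^{\alpha-1}e^{-\lambda_1(\zeta_n-y)}\hat Z_S^n(\zeta_n-y)$ with the limiting cumulant $\Lambda$ you wrote down. You also correctly flag the genuine technical core: under the conditioning on the exact surviving--clone count, $e^{-\lambda_1 t}\hat Z_S^n(t)$ is no longer a martingale for the natural filtration of $(Z_0^n,Z_1^n,Z_2^n)$, and one must build a fresh filtration and verify a submartingale inequality for each survival--conditioned clone before Doob can be invoked; this is precisely what the paper's Lemma~\ref{condition_ineq} does. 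Two minor remarks: the additional split of the time axis at $bt_n$ is unnecessary once the Doob inequality for the conditioned surviving--clone process is in hand (the paper runs it over all of $[0,\zeta_n-y]$ in one shot), and the phrase ``dividing by $\PP(A_{n,a})^{-1}$'' should read ``multiplying by $\PP(A_{n,a})^{-1}$,'' though the surrounding reasoning makes clear that is what you intend.
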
 
We notice that the rate function in Proposition \ref{LD_condition} is different from that in Case (1) of Theorem \ref{recurrence_LD}. In particular, the integral term in Proposition \ref{LD_condition} is within a log operator, while its counterpart in Case (1) of Theorem \ref{recurrence_LD} is not. Both integral terms in Proposition \ref{LD_condition} and Theorem \ref{recurrence_LD} are related to the moment generating function of the number of type-2 mutants. The major difference is that the moment generating function is conditioned on the number of clones in Proposition \ref{LD_condition}, while it's not in Theorem \ref{recurrence_LD}. If we omit the complicated details of the moment generating function, and only focus on its general form, we can observe that in Theorem \ref{recurrence_LD}, the moment generating function has the form of an exponential function, while in Proposition \ref{LD_condition}, it has the form of a power function. Roughly speaking, when conditioned on the number of clones, the moment generating function is just a single term out of the expansion of an exponential function (the moment generating function without conditioning), and thus has the form of a power function. When the moment generating function is applied with a log operator in Theorem \ref{recurrence_LD}, the log operator is cancelled with the exponential operator. However, the log operator is preserved in Proposition \ref{LD_condition}. 

We would like to calculate the most likely number of clones given early recurrence has occurred, i.e.,
$$
\mbox{argmax}_{a\in(0,e^{\lambda_1y}]}\PP\left(A_{n,a}|\gamma_n\le \zeta_n-y\right)
$$
which is obviously equivalent to calculating
$$
\mbox{argmax}_{a\in(0,e^{\lambda_1y}]}\PP\left(\gamma_n\le \zeta_n-y |A_{n,a}\right)\PP(A_{n,a}).
$$
Since this is a difficult problem we instead focus on the slightly easier problem of finding the number of clones with the smallest exponential decay rate conditioned on early recurrence, i.e.,
\begin{align}
    \label{eq:max_clones}
\max_{a\in (0,e^{\lambda_1y}]}\lim_{n\to\infty}\frac{1}{n^{1-\alpha}}\log \PP\left(\gamma_n\le \zeta_n-y |A_{n,a}\right)\PP(A_{n,a}).
\end{align}
We can use Proposition \ref{LD_condition} for the exponential decay rate of the conditional probability term. To find the exponential decay rate of $\PP(A_{n,a})$ first notice that $S_n$ is a Poisson random variable with mean
\begin{align*}
L_n=\mu n^{1-\alpha}\int_0^{\zeta_n-y}e^{-rs}\left(\frac{\lambda_1}{r_1-d_1e^{-\lambda_1 \left(\zeta_n-y-s\right)}}\right) ds.
\end{align*}
Applying Stirling's approximation to the Poisson random variable $S_n$ and noting that $L_n\approx n^{1-\alpha}\frac{\lambda_1 \mu}{r r_1}$ we see that
\begin{align*}
\PP\left(A_{n,a}\right)= \exp\left[-\left(a \frac{\lambda_1 \mu}{r r_1}\log\left(a\right)-a \frac{\lambda_1 \mu}{r r_1}+\frac{\lambda_1 \mu}{r r_1}\right) n^{1-\alpha}(1+o(1))\right].
\end{align*}
Thus we can write \eqref{eq:max_clones} as
\begin{align}
\min\limits_{a\in \left(0,e^{\lambda_1 y}\right]}\left[\sup\limits_{\theta\in \left(0,\frac{\lambda_1}{r_1}\right)}\left[\frac{\theta \mu e^{\lambda_1 y}}{\lambda_1+r}- a \frac{\lambda_1 \mu}{r r_1}\log\left( r\int_{0}^{\infty}\frac{\lambda_1 e^{\lambda_1 s}}{\lambda_1 e^{\lambda_1 s}-r_1 \theta}e^{-rs}ds\right)\right]+\frac{\lambda_1 \mu}{r r_1}\left(a\log\left(a\right)-a+1\right)\right]. \label{optimization}
\end{align}

We can check that the minimum can be achieved in $\left(0,e^{\lambda_1 y}\right]$ as (i) the objective function is continuous in $\left(0,e^{\lambda_1 y}\right]$; (ii) the objective function is decreasing in $\left(0,1\right]$. Since 
\begin{align*}
\frac{\theta \mu e^{\lambda_1 y}}{\lambda_1+r}- a \frac{\lambda_1 \mu}{r r_1}\log\left( r\int_{0}^{\infty}\frac{\lambda_1 e^{\lambda_1 s}}{\lambda_1 e^{\lambda_1 s}-r_1 \theta}e^{-rs}ds\right)
\end{align*}
is concave on $\theta\in \left(0, \frac{\lambda_1}{r_1}\right)$, 
we can apply Sion's minimax theorem \cite{SION1958} and obtain that the optimization problem \ref{optimization} is equivalent to
\begin{align*}
\sup\limits_{\theta\in \left(0,\frac{\lambda_1}{r_1}\right)}\left[\min\limits_{a\in \left[1,e^{\lambda_1 y}\right]}\left[\frac{\theta \mu e^{\lambda_1 y}}{\lambda_1+r}- a \frac{\lambda_1 \mu}{r r_1}\log\left( r\int_{0}^{\infty}\frac{\lambda_1 e^{\lambda_1 s}}{\lambda_1 e^{\lambda_1 s}-r_1 \theta}e^{-rs}ds\right)+\frac{\lambda_1 \mu}{r r_1}\left(a\log\left(a\right)-a+1\right)\right]\right].
\end{align*}
Given $\theta\in \left(0, \frac{\lambda_1}{r_1}\right)$, we can obtain the solution $a^*$ to the inner minimization problem:
\begin{align*}
a^*=\left( r\int_{0}^{\infty}\frac{\lambda_1 e^{\lambda_1 s}}{\lambda_1 e^{\lambda_1 s}-r_1 \theta}e^{-rs}ds\right)\wedge e^{\lambda_1 y}.
\end{align*}
Let $\theta_1$ be the solution to 
\begin{align*}
\frac{e^{\lambda_1 y}}{r}=\int_{0}^{\infty}\frac{\lambda_1 e^{\lambda_1 s}}{\lambda_1 e^{\lambda_1 s}-r_1 \theta}e^{-rs}ds.
\end{align*}
When $\theta\le \theta_1$, the optimization problem for $\theta$ reduces to
\begin{align*}
\frac{\theta \mu e^{\lambda_1 y}}{\lambda_1+r}-\frac{\lambda_1 \mu}{r r_1}\left( r\int_{0}^{\infty}\frac{\lambda_1 e^{\lambda_1 s}}{\lambda_1 e^{\lambda_1 s}-r_1 \theta}e^{-rs}ds\right)+\frac{\lambda_1 \mu}{r r_1},
\end{align*}
which is concave. Due to the concavity it suffices to consider the critical point $\theta_2$ that is the solution to 
\begin{align*}
\frac{ e^{\lambda_1 y}}{\lambda_1 \left(\lambda_1+r\right)}= \int_{0}^{\infty}\frac{\lambda_1 e^{\lambda_1 s}}{\left(\lambda_1 e^{\lambda_1 s}-r_1 \theta\right)^2}e^{-rs}ds.
\end{align*}

Denote by $a^{**}$ the solution to the optimization problem \ref{optimization}, we can obtain that $a^{**}= r\int_{0}^{\infty}\frac{\lambda_1 e^{\lambda_1 s}}{\lambda_1 e^{\lambda_1 s}-r_1 \theta_2}e^{-rs}ds$ from the following lemma.
\begin{lemma}\label{theta_1_theta_2}
Let $\theta_1$ be the solution to 
\begin{align*}
\frac{e^{\lambda_1 y}}{r}=\int_{0}^{\infty}\frac{\lambda_1 e^{\lambda_1 s}}{\lambda_1 e^{\lambda_1 s}-r_1 \theta}e^{-rs}ds,
\end{align*}
and $\theta_2$ be the solution to
\begin{align*}
\frac{ e^{\lambda_1 y}}{\lambda_1 \left(\lambda_1+r\right)}= \int_{0}^{\infty}\frac{\lambda_1 e^{\lambda_1 s}}{\left(\lambda_1 e^{\lambda_1 s}-r_1 \theta\right)^2}e^{-rs}ds.
\end{align*}
We have $\theta_2 < \theta_1$.
\end{lemma}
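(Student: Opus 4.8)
The plan is to reduce the comparison of $\theta_1$ and $\theta_2$ to a single pointwise inequality between two monotone functions. For $\theta\in[0,\lambda_1/r_1)$ define
$$\psi(\theta)=r\int_0^\infty\frac{\lambda_1 e^{\lambda_1 s}}{\lambda_1 e^{\lambda_1 s}-r_1\theta}\,e^{-rs}\,ds,\qquad \chi(\theta)=\lambda_1(\lambda_1+r)\int_0^\infty\frac{\lambda_1 e^{\lambda_1 s}}{\left(\lambda_1 e^{\lambda_1 s}-r_1\theta\right)^2}\,e^{-rs}\,ds.$$
The defining relations become $\psi(\theta_1)=e^{\lambda_1 y}$ and $\chi(\theta_2)=e^{\lambda_1 y}$. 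For each fixed $s$ the two integrands are strictly increasing in $\theta$ on $[0,\lambda_1/r_1)$ (differentiate: the $\theta$-derivatives are $r_1\lambda_1 e^{\lambda_1 s}/(\lambda_1 e^{\lambda_1 s}-r_1\theta)^2>0$ and $2r_1\lambda_1 e^{\lambda_1 s}/(\lambda_1 e^{\lambda_1 s}-r_1\theta)^3>0$), so $\psi$ and $\chi$ are strictly increasing; moreover $\psi(0)=\chi(0)=1$ and both diverge as $\theta\uparrow\lambda_1/r_1$ because the $s\approx0$ part of each integral blows up. Since $e^{\lambda_1 y}>1$ as $y>0$, it follows that $\theta_1,\theta_2$ are the unique solutions in $(0,\lambda_1/r_1)$. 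Consequently, it is enough to prove $\chi(\theta)>\psi(\theta)$ for every $\theta\in(0,\lambda_1/r_1)$: then $\chi(\theta_1)>\psi(\theta_1)=e^{\lambda_1 y}=\chi(\theta_2)$, and strict monotonicity of $\chi$ gives $\theta_2<\theta_1$.

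The key step is the pointwise inequality. Introduce $\rho_\theta(s)=\dfrac{\lambda_1 e^{\lambda_1 s}}{\lambda_1 e^{\lambda_1 s}-r_1\theta}$, which is well defined and smooth on $[0,\infty)$ since $\lambda_1 e^{\lambda_1 s}-r_1\theta\ge\lambda_1-r_1\theta>0$. A direct algebraic identity gives $\dfrac{\lambda_1 e^{\lambda_1 s}}{(\lambda_1 e^{\lambda_1 s}-r_1\theta)^2}=\dfrac{\rho_\theta(s)^2 e^{-\lambda_1 s}}{\lambda_1}$, so that
$$\chi(\theta)=(\lambda_1+r)\int_0^\infty\rho_\theta(s)^2\,e^{-(\lambda_1+r)s}\,ds,\qquad \psi(\theta)=r\int_0^\infty\rho_\theta(s)\,e^{-rs}\,ds.$$
For $\theta>0$ one has $\rho_\theta(s)>1$ for all $s$, hence $\rho_\theta(s)^2>\rho_\theta(s)$ and therefore $\chi(\theta)>(\lambda_1+r)\int_0^\infty\rho_\theta(s)\,e^{-(\lambda_1+r)s}\,ds$. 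Finally, integrating by parts (using $\rho_\theta(s)e^{-\beta s}\to0$) yields $\beta\int_0^\infty\rho_\theta(s)e^{-\beta s}\,ds=\rho_\theta(0)+\int_0^\infty\rho_\theta'(s)e^{-\beta s}\,ds$ for any $\beta>0$, and since $\rho_\theta$ is decreasing ($\rho_\theta'\le0$) and $e^{-(\lambda_1+r)s}\le e^{-rs}$, we get $\int_0^\infty\rho_\theta'(s)e^{-(\lambda_1+r)s}\,ds\ge\int_0^\infty\rho_\theta'(s)e^{-rs}\,ds$, hence $(\lambda_1+r)\int_0^\infty\rho_\theta(s)e^{-(\lambda_1+r)s}\,ds\ge r\int_0^\infty\rho_\theta(s)e^{-rs}\,ds=\psi(\theta)$. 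Combining the last two displays gives $\chi(\theta)>\psi(\theta)$, which completes the argument.

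The only genuine obstacle is recognizing the right reformulation in the second paragraph: once both integrals are expressed against exponential weights of rates $r$ and $\lambda_1+r$ carrying the common factor $\rho_\theta$, the desired inequality splits into the trivial bound $\rho_\theta^2>\rho_\theta$ and a weight-monotonicity comparison — equivalently, the stochastic domination $\mathrm{Exp}(\lambda_1+r)\preceq\mathrm{Exp}(r)$ evaluated on the decreasing function $\rho_\theta$. Everything else (strict monotonicity and the boundary behaviour of $\psi$ and $\chi$, uniqueness and location of $\theta_1,\theta_2$, the integration by parts) is routine.
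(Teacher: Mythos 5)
Your proof is correct and follows essentially the same route as the paper: you split $\chi(\theta)>\psi(\theta)$ into the pointwise bound $\rho_\theta^2>\rho_\theta$ (the paper's $\frac{e^{\lambda_1 s}}{(e^{\lambda_1 s}-\theta)^2}>\frac{e^{\lambda_1 s}}{e^{\lambda_1 s}(e^{\lambda_1 s}-\theta)}$) together with the observation that, for the decreasing kernel $\rho_\theta$, the $\mathrm{Exp}(\lambda_1+r)$ average dominates the $\mathrm{Exp}(r)$ average. The only difference is how the weight-comparison is proved: you integrate by parts against the decreasing $\rho_\theta$, whereas the paper computes the two tail distributions of $\rho_\theta(s_i)$ explicitly and invokes first-order stochastic dominance; both are routine and equivalent in effect.
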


\begin{proof}\\
Define 
\begin{align*}
f_1\left(\theta\right)=r\int_{0}^{\infty}\frac{ e^{\lambda_1 s}}{e^{\lambda_1 s}-\theta}e^{-rs}ds,
\end{align*}
and 
\begin{align*}
f_2\left(\theta\right)=\left(\lambda_1+r\right)\int_{0}^{\infty}\frac{e^{\lambda_1 s}}{\left(e^{\lambda_1 s}- \theta\right)^2}e^{-rs}ds.
\end{align*}
We can obtain that for $\theta \in \left(0,1\right)$,
\begin{align*}
f_2\left(\theta\right)& > \left(\lambda_1+r\right)\int_{0}^{\infty}\frac{e^{\lambda_1 s}}{e^{\lambda_1 s} \left(e^{\lambda_1 s}- \theta\right)}e^{-rs}ds\\
& =\left(\lambda_1+r\right)\int_{0}^{\infty}\frac{e^{\lambda_1 s}}{\left(e^{\lambda_1 s}- \theta\right)}e^{-\left(\lambda_1+r\right)s}ds
\end{align*}
We then work with a random variable $X_1=\frac{e^{\lambda_1 s_1}}{e^{\lambda_1 s_1}- \theta}$, where $s_1\sim \exp\left(r\right)$, and a random variable $X_2=\frac{e^{\lambda_1 s_2}}{e^{\lambda_1 s_2}- \theta}$, where $s_2\sim \exp\left(\lambda_1+r\right)$. Hence, 
\begin{align*}
f_1\left(\theta\right) =\EE\left[X_1\right], \text{ and } f_2\left(\theta\right) > \EE\left[X_2\right].
\end{align*}
Notice that
\begin{align*}
\PP\left(X_1>x\right) = \left\{ \begin{array}{cc} 
1, & \hspace{5mm} x\le 1 \\
1-e^{-\frac{r}{\lambda_1}\log\left(\frac{x\theta}{x-1}\right)}, & \hspace{5mm} x\in \left(1, \frac{1}{1-\theta}\right) \\
0, & \hspace{5mm} x > \frac{1}{1-\theta}\\
\end{array} \right.
\end{align*}
and 
\begin{align*}
\PP\left(X_2>x\right) = \left\{ \begin{array}{cc} 
1, & \hspace{5mm} x\le 1 \\
1-e^{-\frac{\lambda_1+r}{\lambda_1}\log\left(\frac{x\theta}{x-1}\right)}, & \hspace{5mm} x\in \left(1, \frac{1}{1-\theta}\right) \\
0, & \hspace{5mm} x\ge \frac{1}{1-\theta}.\\
\end{array} \right.
\end{align*}
Hence, $\PP\left(X_2>x\right)\ge \PP\left(X_1>x\right)$, which indicates that $X_2$ is stochastically larger than $X_1$, and thus $\EE\left[X_2\right]\ge \EE\left[X_1\right]$ (See page 404-405 of \cite{Ross_SP}). Therefore, for $\theta \in \left(0,1\right)$, 
\begin{align*}
f_2\left(\theta\right) > f_1\left(\theta\right).
\end{align*}
Since $\theta_1$ is the solution to $f_1\left(\theta\right)=e^{\lambda_1 y}$, and $\theta_2$ is the solution to $f_2\left(\theta\right)=e^{\lambda_1 y}$, the desired result follows by the monotonicity of $f_1\left(\theta\right)$ and $f_2\left(\theta\right)$. \qed
\end{proof}
\\

We observe that $a^{**}$ increases in $y$, which implies that early recurrence is an indicator of a large variety in mutant clones. In Figure \ref{fig:y_a}, we report the value of $a^{**}$ for different values of $y$ obtained from one parameter set. Note that as $y$ increases, the most likely number of clones increases since $a^{**}$ increases. This means that the earlier recurrence occurs, the more likely it is to have a larger number of distinct clones in the recurrent tumor. Also, note that whenever $y$ is strictly larger than $0$, $a^{**}$ is strictly larger than $1$, so when early recurrence occurs, we are always in a setting where the number of mutant clones is above their expected value.


\begin{figure}
	\begin{center}
		\includegraphics[width=1\textwidth]{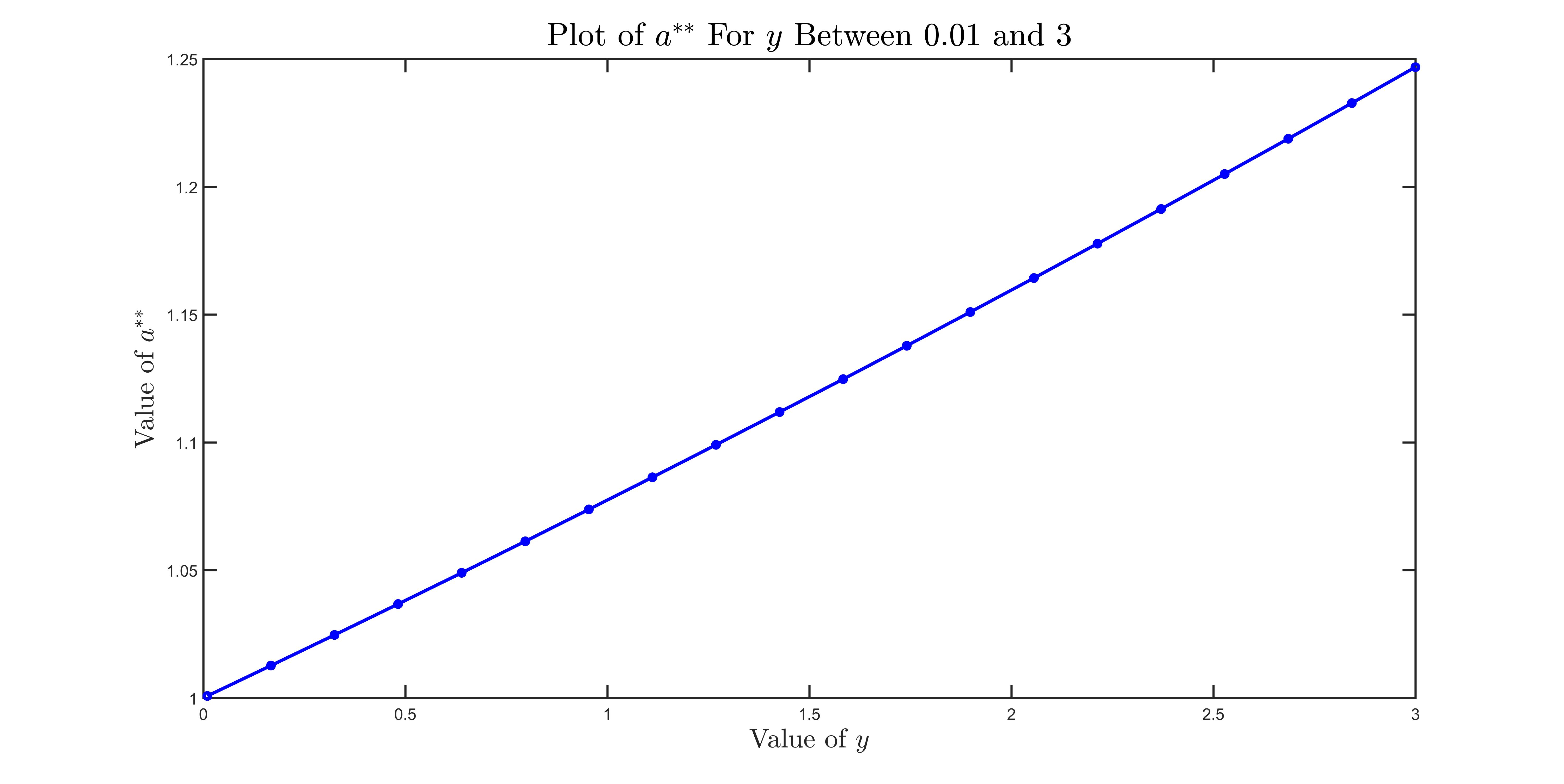}
	\end{center}
	\caption{Demonstration of $a^{**}$, where $\lambda_0=-0.2$, $r_1=0.4$, and $\lambda_1=0.2$.}
	\label{fig:y_a}
\end{figure}

\section{Appendix}\label{proof}

\subsection{Proof of Theorem \ref{cip_recurrence}}\label{proof_cip_recurrence}

We first state the following extension of Lemma 2 in \cite{JK2013}. In \cite{JK2013} they assume $b\in [0,1]$, but that restriction is not necessary, and we thus have the following.

\begin{lemma}\label{z_2_limit}
	For $b\in \left(0,\infty\right)$, $\epsilon>0$,
	
	\begin{equation*}
	\lim\limits_{n\rightarrow \infty}\PP \left(\sup\limits_{u\in [0,b]} n^{-\frac{\lambda_1 u}{r}-1+\alpha}|Z_2^n\left(ut_n\right)-z_2^n\left(ut_n\right)|> \epsilon\right)= 0.
	\end{equation*}
\end{lemma}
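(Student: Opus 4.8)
The plan is to exploit the fact that $Z_2^n$ is a branching process with immigration and to split off a martingale. Since $(Z_0^n,Z_2^n)$ is jointly Markov, with $Z_2^n$ jumping up at rate $r_1Z_2^n(t)+\mu n^{-\alpha}Z_0^n(t)$ and down at rate $d_1Z_2^n(t)$, the process
\[
M^n(t)=e^{-\lambda_1 t}Z_2^n(t)-\mu n^{-\alpha}\int_0^t e^{-\lambda_1 s}Z_0^n(s)\,ds
\]
is a martingale with $M^n(0)=0$ and predictable quadratic variation $\langle M^n\rangle_t=\int_0^t e^{-2\lambda_1 s}\big((r_1+d_1)Z_2^n(s)+\mu n^{-\alpha}Z_0^n(s)\big)\,ds$; the same computation for means yields $e^{-\lambda_1 t}z_2^n(t)=\mu n^{-\alpha}\int_0^t e^{-\lambda_1 s}z_0^n(s)\,ds$, so subtracting gives
\[
e^{-\lambda_1 t}\big(Z_2^n(t)-z_2^n(t)\big)=M^n(t)+\mu n^{-\alpha}\int_0^t e^{-\lambda_1 s}\big(Z_0^n(s)-z_0^n(s)\big)\,ds .
\]
Because $n^{-\lambda_1 u/r}=e^{-\lambda_1 u t_n}$, the expression inside the probability in the lemma equals $n^{\alpha-1}$ times the absolute value of this identity at $t=ut_n$, so by a union bound it suffices to show that $n^{\alpha-1}\sup_{u\in[0,b]}$ of each of the two terms on the right converges to $0$ in probability.

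For the martingale term, $\sup_{u\in[0,b]}|M^n(ut_n)|=\sup_{t\le bt_n}|M^n(t)|$ and Doob's $L^2$ maximal inequality controls its second moment by $4\,\EE[\langle M^n\rangle_{bt_n}]$. Using $z_0^n(s)=ne^{-rs}$ and $z_2^n(s)\le \tfrac{\mu}{\lambda_1+r}n^{1-\alpha}e^{\lambda_1 s}$, the integrand of $\EE[\langle M^n\rangle_{bt_n}]$ is $O\big(n^{1-\alpha}(e^{-\lambda_1 s}+e^{-(2\lambda_1+r)s})\big)$, so $\EE[\langle M^n\rangle_{bt_n}]=O(n^{1-\alpha})$ \emph{uniformly in $b$}, the point being that the integrands decay exponentially in $s$ so the integrals over $[0,bt_n]$ stay bounded. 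Markov's inequality then gives $\PP\big(\sup_{t\le bt_n}|M^n(t)|>\tfrac{\epsilon}{2}n^{1-\alpha}\big)=O(n^{-(1-\alpha)})\to0$ since $\alpha<1$.

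For the second term, $\big|\int_0^{ut_n}e^{-\lambda_1 s}(Z_0^n(s)-z_0^n(s))\,ds\big|\le \int_0^{bt_n}e^{-\lambda_1 s}|Z_0^n(s)-z_0^n(s)|\,ds$ for every $u\le b$, so the supremum over $u$ is bounded by a single random variable whose expectation, by Cauchy--Schwarz and $\Var(Z_0^n(s))\le\tfrac{r_0+d_0}{r}ne^{-rs}$, is $O\big(n^{1/2}\int_0^{bt_n}e^{-(\lambda_1+r/2)s}\,ds\big)=O(n^{1/2})$, again uniformly in $b$. Multiplying by $\mu n^{-\alpha}\cdot n^{\alpha-1}$ leaves $O(n^{-1/2})\to0$, and Markov's inequality finishes this term and hence the lemma.

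The main thing to be careful about is the justification that $M^n$ is a genuine $L^2$ martingale on the fixed interval $[0,bt_n]$, so that Doob's inequality and the quadratic-variation identity are legitimate; this reduces to the fact that a supercritical birth--death process with immigration run for a fixed time has finite moments of every order, which follows from the explicit moment generating function in \eqref{generating_function}. Beyond this the estimates are routine, and — as the two bounds above make explicit — nothing in the argument restricts $b$, which is exactly why the hypothesis $b\in[0,1]$ of Lemma~2 of \cite{JK2013} is not needed here.
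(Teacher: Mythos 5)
Your proof is correct. The paper does not actually supply a proof of this lemma -- it is stated as a routine extension of Lemma~2 of \cite{JK2013}, with only the remark that the restriction $b\in[0,1]$ in that reference is unnecessary -- so there is no in-paper argument to compare against. Your martingale decomposition
\[
e^{-\lambda_1 t}\bigl(Z_2^n(t)-z_2^n(t)\bigr)=M^n(t)+\mu n^{-\alpha}\int_0^t e^{-\lambda_1 s}\bigl(Z_0^n(s)-z_0^n(s)\bigr)\,ds
\]
is the natural way to handle the immigration term (contrast Lemma~\ref{cip_crossover_mean}, where $n^{-\lambda_1 u/r}Z_1^n(ut_n)/X(n)$ is itself a martingale because there is no immigration), and the Doob--$L^2$/quadratic-variation bound together with the Cauchy--Schwarz bound on the second term both produce estimates that are $O(n^{1-\alpha})$ and $O(n^{1/2})$ uniformly in $b$, precisely because the integrands $e^{-\lambda_1 s}$, $e^{-(2\lambda_1+r)s}$, and $e^{-(\lambda_1+r/2)s}$ are absolutely integrable on $[0,\infty)$. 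That is exactly the observation that justifies the paper's claim that $b\in[0,1]$ is not needed, and you make it explicit. Your closing caution about verifying $M^n$ is a genuine $L^2$ martingale on $[0,bt_n]$ (rather than merely a local martingale) is the right thing to flag, and for fixed $n$ and finite horizon this does follow from finiteness of moments of the birth--death-with-immigration process.
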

We then show the following result.
\begin{lemma}\label{cip_crossover_mean}
	Assume that $\lim\limits_{n\rightarrow \infty}X\left(n\right)=\infty$, and $X\left(n\right)<n$. Then for $b\in \left(0,\infty\right)$
	
	\begin{center}
		$\lim\limits_{n\rightarrow \infty}\PP \left(\sup\limits_{u\in [0,b]} \frac{1}{X\left(n\right)}n^{-\frac{\lambda_1}{r}u}|Z_1^n\left(ut_n\right)-z_1^n\left(u t_n\right)|>\epsilon\right)=0.$
	\end{center}
\end{lemma}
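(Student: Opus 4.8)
The plan is to exploit the martingale structure of the supercritical birth--death process $Z_1^n$. For a linear birth--death process with per-capita birth rate $r_1$ and death rate $d_1$, the rescaled process $e^{-\lambda_1 t}Z_1^n(t)$ is a nonnegative c\`adl\`ag martingale with respect to the natural filtration of $Z_1^n$; dividing by the deterministic constant $X(n)$, set
$$
M_n(t)=\frac{e^{-\lambda_1 t}Z_1^n(t)}{X(n)},\qquad t\ge 0,
$$
so that $M_n$ is a nonnegative c\`adl\`ag martingale with $M_n(0)=1$, and by the explicit variance formula for $Z_1^n$ listed above $M_n(t)\in L^2$ for every fixed $t$. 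Since $t_n=\tfrac1r\log n$ gives $e^{\lambda_1 u t_n}=n^{\lambda_1 u/r}$ and $z_1^n(ut_n)=X(n)\,n^{\lambda_1 u/r}$, one has the algebraic identity
$$
\frac{1}{X(n)}\,n^{-\frac{\lambda_1 u}{r}}\bigl|Z_1^n(ut_n)-z_1^n(ut_n)\bigr|=\bigl|M_n(ut_n)-1\bigr|,
$$
so the claim is equivalent to $\sup_{u\in[0,b]}|M_n(ut_n)-1|\to 0$ in probability, i.e.\ $\sup_{s\in[0,bt_n]}|M_n(s)-1|\to 0$ in probability.

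Next I would apply Doob's $L^2$ maximal inequality to the $L^2$ martingale $M_n(\cdot)-1$ on the interval $[0,bt_n]$: for any $\epsilon>0$,
$$
\PP\Bigl(\sup_{s\in[0,bt_n]}|M_n(s)-1|>\epsilon\Bigr)\le \frac{\EE\bigl[(M_n(bt_n)-1)^2\bigr]}{\epsilon^2}=\frac{\Var\bigl(M_n(bt_n)\bigr)}{\epsilon^2}.
$$
Using $\Var(Z_1^n(bt_n))=X(n)\tfrac{r_1+d_1}{\lambda_1}n^{\frac{\lambda_1}{r}b}\bigl(n^{\frac{\lambda_1}{r}b}-1\bigr)$ from the list of known moments together with $e^{-2\lambda_1 bt_n}=n^{-2\lambda_1 b/r}$, a short computation gives
$$
\Var\bigl(M_n(bt_n)\bigr)=\frac{r_1+d_1}{\lambda_1 X(n)}\bigl(1-n^{-\lambda_1 b/r}\bigr)\le \frac{r_1+d_1}{\lambda_1 X(n)}.
$$
Since $X(n)\to\infty$ by hypothesis, the right-hand side tends to $0$, and combining the two displays yields the conclusion.

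The argument is short, and the only points requiring care are (i) recalling/citing that $e^{-\lambda_1 t}Z_1^n(t)$ is genuinely a martingale for the continuous-time Markov branching process (standard; see \cite{bp}), and (ii) checking the $L^2$ hypothesis of Doob's inequality, for which it suffices to note that $M_n^2$ is a submartingale so $\sup_{s\le bt_n}\EE M_n(s)^2\le \EE M_n(bt_n)^2<\infty$ by the explicit variance formula. No delicate estimate is needed; the decay is driven entirely by $X(n)\to\infty$, which is precisely why this lemma requires that hypothesis, in contrast to Lemma~\ref{z_2_limit}.
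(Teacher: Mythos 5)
Your proposal is correct and follows essentially the same route as the paper: it identifies $e^{-\lambda_1 t}Z_1^n(t)/X(n)$ as a martingale, rewrites the rescaled deviation exactly as $|M_n(ut_n)-1|$, applies Doob's submartingale maximal inequality to $(M_n-1)^2$, and evaluates the resulting $L^2$ bound using the explicit variance formula for $Z_1^n$, which vanishes as $n\to\infty$ precisely because $X(n)\to\infty$. The only cosmetic difference is that you introduce $M_n$ as a named object and add a remark justifying the $L^2$ hypothesis; the substance is identical.
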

\begin{proof}\\
First observe that $\frac{1}{X\left(n\right)}n^{-\frac{\lambda_1}{r}u}Z_1^n\left(ut_n\right)-1$ is a martingale with respect to $u$. We have that as $n\rightarrow \infty$,
\begin{align*}
\EE [\frac{1}{X\left(n\right)}n^{-\frac{\lambda_1}{r}b}Z_1^n\left(bt_n\right)-1]^2&=\left(\frac{1}{X\left(n\right)}n^{-\frac{\lambda_1}{r}b}\right)^2\EE[Z_1^n\left(bt_n\right)]^2-1\\
&=\left(\frac{1}{X\left(n\right)}n^{-\frac{\lambda_1}{r}b}\right)^2 \text{Var}Z_1^n\left(bt_n\right)\\
&=\left(\frac{1}{X\left(n\right)}n^{-\frac{\lambda_1}{r}b}\right)^2X\left(n\right)\frac{r_1+d_1}{\lambda_1}n^{\frac{\lambda_1}{r}b}\left(n^{\frac{\lambda_1}{r}b}-1\right)\\
&=\frac{1}{X\left(n\right)}\frac{r_1+d_1}{\lambda_1}\left(1-n^{-\frac{\lambda_1}{r}b}\right)\\
&\rightarrow 0, \text{ as }n\to \infty.
\end{align*}
Hence, by Doob's martingale inequality:
\begin{align*}
& \quad \limsup\limits_{n\rightarrow \infty}\PP \left(\sup\limits_{u\in [0,b]}\frac{1}{X\left(n\right)}n^{-\frac{\lambda_1}{r}u}|Z_1^n\left(ut_n\right)-z_1^n\left(ut_n\right)|>\epsilon\right)\\
&=\limsup\limits_{n\rightarrow \infty}\PP \left(\sup\limits_{u\in [0,b]}|\frac{1}{X\left(n\right)}n^{-\frac{\lambda_1}{r}u}Z_1^n\left(ut_n\right)-1|>\epsilon\right)\\
&\le \limsup\limits_{n\rightarrow \infty}\frac{\EE [\frac{1}{X\left(n\right)}n^{-\frac{\lambda_1}{r}a}Z_1^n\left(at_n\right)-1]^2}{{\epsilon}^2}\\
&= 0.
\end{align*} \qed
\end{proof}

We now begin the proof of Theorem \ref{cip_recurrence}.\\
\begin{proof}\\
We will only prove the setting where $X(n)\to\infty$ as $n\to\infty$, if $\sup_nX(n)<\infty$ the proof is similar and in fact easier since $Z_1^n$ is dominated by $Z_2^n$. For $\epsilon>0$, define
\begin{center}
	$\hat{u}_n^-\left(\epsilon\right)=\frac{\zeta_n\left(a\right)-\epsilon}{t_n}$ and $\hat{u}_n^+\left(\epsilon\right)=\frac{\zeta_n\left(a\right)+\epsilon}{t_n}$.
\end{center}
Notice that
\begin{align*}
\PP \left(\gamma_n\left(a\right)<\zeta_n\left(a\right)-\epsilon\right)&=\PP \left(\frac{\gamma_n\left(a\right)}{t_n}<\hat{u}_n^-\left(\epsilon\right)\right)\\
&=\PP \left(\sup\limits_{u\le \hat{u}_n^-\left(\epsilon\right)}\left(Z_1^n\left(ut_n\right)+Z_2^n\left(ut_n\right)-an>0\right)\right)\\
&\le \PP \left(\hat{A}_1\left(n,\epsilon\right)+\hat{A}_2\left(n,\epsilon\right)+\hat{A}_3\left(n,\epsilon\right)>0\right),
\end{align*}
where
\begin{align*}
& \hat{A}_1\left(n,\epsilon\right)= \sup\limits_{u\le \hat{u}_n^-\left(\epsilon\right)} \min\{\frac{\lambda_1+r}{\mu}n^{-\frac{\lambda_1u}{r}-1+\alpha}, \frac{1}{X\left(n\right)}n^{-\frac{\lambda_1}{r}u},\frac{1}{an}\}\left(Z_1^n\left(ut_n\right)-z_1^n\left(ut_n\right)\right)\\
& \hat{A}_2\left(n,\epsilon\right)= \sup\limits_{u\le \hat{u}_n^-\left(\epsilon\right)}\min\{\frac{\lambda_1+r}{\mu}n^{-\frac{\lambda_1u}{r}-1+\alpha}, \frac{1}{X\left(n\right)}n^{-\frac{\lambda_1}{r}u},\frac{1}{an}\}\left(Z_2^n\left(ut_n\right)-z_2^n\left(ut_n\right)\right)\\
& \hat{A}_3\left(n,\epsilon\right)= \sup\limits_{u\le \hat{u}_n^-\left(\epsilon\right)} \min\{\frac{\lambda_1+r}{\mu}n^{-\frac{\lambda_1u}{r}-1+\alpha}, \frac{1}{X\left(n\right)}n^{-\frac{\lambda_1}{r}u},\frac{1}{an}\}\left(z_1^n\left(ut_n\right)+z_2^n\left(ut_n\right)-an\right).
\end{align*}
Notice that when $u\le \hat{u}_n^-\left(\epsilon\right)$, we have $\min\{\frac{\lambda_1+r}{\mu}n^{-\frac{\lambda_1u}{r}-1+\alpha}, \frac{1}{X\left(n\right)}n^{-\frac{\lambda_1}{r}u},\frac{1}{an}\}=\frac{1}{an}$, since we assume that $X(n)<n$. Lemma \ref{cip_crossover_mean} shows that $\hat{A}_1\left(n,\epsilon\right)\rightarrow 0$ in probability. Lemma \ref{z_2_limit} shows that $\hat{A}_2\left(n,\epsilon\right)\rightarrow 0$ in probability. It now remains to establish that $\hat{A}_3\left(n,\epsilon\right)$ is a negative number bounded away from zero. First note that via monotonicity, $\hat{A}_3\left(n,\epsilon\right)$ can be simplified to 
\begin{align*}
\hat{A}_3\left(n,\epsilon\right)&=\frac{1}{an}\left(z_1^n\left(\hat{u}_n^-\left(\epsilon\right)t_n\right)+z_2^n\left(\hat{u}_n^-\left(\epsilon\right)t_n\right)-an\right)\\
& =\frac{1}{an}\left(z_1^n\left(\hat{u}_n^-\left(\epsilon\right)t_n\right)+z_2^n\left(\hat{u}_n^-\left(\epsilon\right)t_n\right)\right)-1\\
& \le e^{-\lambda_1\epsilon}-1
\end{align*}
and the desired result follows. It now remains to study
\begin{align*}
\PP \left(\gamma_n\left(a\right)>\zeta_n\left(a\right)+\epsilon\right)&=\PP \left(\sup\limits_{u\le \hat{u}_n^+\left(\epsilon\right)}\left(Z_1^n\left(ut_n\right)+Z_2^n\left(ut_n\right)-an\right)<0\right)\\
&\le \PP \left(Z_1^n\left(\hat{u}_n^+\left(\epsilon\right)t_n\right)+Z_2^n\left(\hat{u}_n^+\left(\epsilon\right)t_n\right)-an<0\right)\\
&=\PP \left(B_1\left(n,\epsilon\right)+B_2\left(n,\epsilon\right)+B_3\left(n,\epsilon\right)<0\right),
\end{align*} 
where
\begin{align*}
& B_1\left(n,\epsilon\right)=\min\{\frac{\lambda_1+r}{\mu}n^{-\frac{\lambda_1\hat{u}_n^+}{r}-1+\alpha}, \frac{1}{X\left(n\right)}n^{-\frac{\lambda_1}{r}\hat{u}_n^+},\frac{1}{an}\} \left(Z_1^n\left(\hat{u}_n^+\left(\epsilon\right)t_n\right)-z_1^n\left(\hat{u}_n^+\left(\epsilon\right)t_n\right)\right)\\
& B_2\left(n,\epsilon\right)=\min\{\frac{\lambda_1+r}{\mu}n^{-\frac{\lambda_1\hat{u}_n^+}{r}-1+\alpha}, \frac{1}{X\left(n\right)}n^{-\frac{\lambda_1}{r}\hat{u}_n^+},\frac{1}{an}\}\left(Z_2^n\left(\hat{u}_n^+\left(\epsilon\right)t_n\right)-z_2^n\left(\hat{u}_n^+\left(\epsilon\right)t_n\right)\right)\\
& B_3\left(n,\epsilon\right)=\min\{\frac{\lambda_1+r}{\mu}n^{-\frac{\lambda_1\hat{u}_n^+}{r}-1+\alpha}, \frac{1}{X\left(n\right)}n^{-\frac{\lambda_1}{r}\hat{u}_n^+},\frac{1}{an}\}\left(z_1^n\left(\hat{u}_n^+\left(\epsilon\right)t_n\right)+z_2^n\left(\hat{u}_n^+\left(\epsilon\right)t_n\right)-an\right).
\end{align*}
Again using Lemma \ref{cip_crossover_mean} and Lemma \ref{z_2_limit}, we can show that as $n\rightarrow \infty$, $B_1\left(n,\epsilon\right)\rightarrow 0$, and $B_2\left(n,\epsilon\right)\rightarrow 0$. When  $\min\{\frac{\lambda_1+r}{\mu}n^{-\frac{\lambda_1\hat{u}_n^+}{r}-1+\alpha}, \frac{1}{X\left(n\right)}n^{-\frac{\lambda_1}{r}\hat{u}_n^+},\frac{1}{an}\}=\frac{1}{an}$,
\begin{align*}
& \quad \frac{1}{an}\left(z_1^n\left(\hat{u}_n^+\left(\epsilon\right)t_n\right)+z_2^n\left(\hat{u}_n^+\left(\epsilon\right)t_n\right)-an\right)\\
&=\frac{1}{an}\left(z_1^n\left(\hat{u}_n^+\left(\epsilon\right)t_n\right)+z_2^n\left(\hat{u}_n^+\left(\epsilon\right)t_n\right)\right)-1\\
&\ge e^{\lambda_1\epsilon}-1.
\end{align*}
When  $\min\{\frac{\lambda_1+r}{\mu}n^{-\frac{\lambda_1\hat{u}_n^+}{r}-1+\alpha}, \frac{1}{X\left(n\right)}n^{-\frac{\lambda_1}{r}\hat{u}_n^+},\frac{1}{an}\}=n^{-\frac{\lambda_1\hat{u}_n^+}{r}-1+\alpha}$, and $n$ large enough,
\begin{align*}
& \quad \frac{\lambda_1+r}{\mu}n^{-\frac{\lambda_1\hat{u}_n^+}{r}-1+\alpha}\left(z_1^n\left(\hat{u}_n^+\left(\epsilon\right)t_n\right)+z_2^n\left(\hat{u}_n^+\left(\epsilon\right)t_n\right)-an\right)\\
&=\frac{\lambda_1+r}{\mu}n^{-\frac{\lambda_1\hat{u}_n^+}{r}-1+\alpha} an \left(\frac{1}{an}\left(z_1^n\left(\hat{u}_n^+\left(\epsilon\right)t_n\right)+z_2^n\left(\hat{u}_n^+\left(\epsilon\right)t_n\right)\right)-1\right)\\
&\ge \frac{1-n^{-\beta}}{1-n^{-\beta}e^{\frac{\lambda_0-\lambda_1}{r}\epsilon}}e^{-\lambda_1 \epsilon}\left(e^{\lambda_1\epsilon}-1\right) \quad \text{ for some }0<\beta<\alpha\\
&\ge ke^{-\lambda_1 \epsilon}\left(e^{\lambda_1\epsilon}-1\right)\quad \text{ for some }0<k<1.
\end{align*}
When  $\min\{\frac{\lambda_1+r}{\mu}n^{-\frac{\lambda_1\hat{u}_n^+}{r}-1+\alpha}, \frac{1}{X\left(n\right)}n^{-\frac{\lambda_1}{r}\hat{u}_n^+},\frac{1}{an}\}=\frac{1}{X\left(n\right)}n^{-\frac{\lambda_1}{r}\hat{u}_n^+}$,
\begin{align*}
& \quad \frac{1}{X\left(n\right)}n^{-\frac{\lambda_1}{r}\hat{u}_n^+}\left(z_1^n\left(\hat{u}_n^+\left(\epsilon\right)t_n\right)+z_2^n\left(\hat{u}_n^+\left(\epsilon\right)t_n\right)-an\right)\\
&=\frac{1}{X\left(n\right)}n^{-\frac{\lambda_1}{r}\hat{u}_n^+} an \left(\frac{1}{an}\left(z_1^n\left(\hat{u}_n^+\left(\epsilon\right)t_n\right)+z_2^n\left(\hat{u}_n^+\left(\epsilon\right)t_n\right)\right)-1\right)\\
&\ge e^{-\lambda_1 \epsilon}\left(e^{\lambda_1\epsilon}-1\right)
\end{align*}
Hence, the desired result follows. \qed\end{proof}
\\


\subsection{Lemmas and Propositions for Proof of Theorem \ref{recurrence_LD}}\label{recurrence_proof}
Recall that, we define $t_n=\frac{1}{r}\log \left(n\right)$, and $u_n\left(y\right)=\left(\zeta_n\left(a\right)-y\right)/t_n$. Recall the definition of the moment-generating function $\phi_t$ from \eqref{generating_function}, and the upper bound on the domain of finiteness $\bar\theta_t$. For $\theta\in (0,1)$ and $b>0$, we have defined in \ref{eq:vdef} that 
\begin{align*}
& v_{n,\theta,b}=\frac{\lambda_1\theta}{r_1e^{\lambda_1 bt_n}}.
\end{align*}

\begin{remark}\label{finite_mgf}
	In order for the moment $\phi_{bt_n-s}\left(v_{n,\theta,b}\right)$ to be defined for all $s\in \left(0, bt_n\right)$, we require
	\begin{equation*}
	v_{n,\theta,b}\le \log\left(1+\frac{\lambda_1}{r_1\left(e^{\lambda_1 bt_n}-1\right)}\right).
	\end{equation*}
	Using the inequality $\log\left(1+x\right)\ge x/\left(x+1\right)$, which holds for $x\ge 0$, we obtain that	\begin{align*}
 \log\left(1+\frac{\lambda_1}{r_1\left(e^{\lambda_1 bt_n}-1\right)}\right)\geq\frac{\lambda_1}{r_1e^{\lambda_1 bt_n}-d_1}.
	\end{align*}
Hence, for the exponential moment to exist for all $s\in \left(0, bt_n\right)$, it will be sufficient for the purpose of our analysis to require that
	\begin{equation*}
	\frac{\theta \lambda_1}{r_1 e^{\lambda_1 b t_n}}<\frac{\lambda_1}{r_1e^{\lambda_1 bt_n}-d_1},
	\end{equation*}
which holds for $\theta< 1$.
\end{remark} 

\subsubsection{Results for Upper Bound Proof}\label{proof_upper}
We first establish that the moment generating function of $Z$ is uniformly bounded if we choose the argument appropriately.
\begin{lemma}\label{phi_property}
For $\theta\in \left(-\infty,1\right)$ and $b>0$, 
$$
\sup\left\{|\phi_{bt_n-s}\left(v_{n,\theta,b}\right)-1|; s\in (0,bt_n), n\geq 1\right\}<\infty.
$$
\end{lemma}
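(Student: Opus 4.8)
The plan is to reduce the statement to a one–variable continuity‑plus‑limit argument via the explicit formula \eqref{generating_function} for $\phi_t$. First I would dispose of the trivial cases. When $n=1$ we have $t_n=0$, so $(0,bt_n)=\emptyset$ and there is nothing to bound; thus assume $n\geq 2$. When $\theta\leq 0$ the argument $v_{n,\theta,b}=\lambda_1\theta/(r_1e^{\lambda_1 bt_n})$ is nonpositive, and since $\phi_t$ is the moment generating function of the nonnegative random variable $Z(t)$ it is nondecreasing and satisfies $\phi_t(v)\in[\PP(Z(t)=0),\phi_t(0)]=[\PP(Z(t)=0),1]\subseteq[0,1]$ for $v\leq 0$; hence $|\phi_{bt_n-s}(v_{n,\theta,b})-1|\leq 1$ and we are done. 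So the real work is for $\theta\in(0,1)$.

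For $\theta\in(0,1)$ I would compute directly from \eqref{generating_function}: writing $v=v_{n,\theta,b}$, $w=e^{v}-1>0$, $\rho_n=e^{-\lambda_1 bt_n}\in(0,1)$, and $x=e^{-\lambda_1(bt_n-s)}$ — which ranges over $(\rho_n,1)$ as $s$ ranges over $(0,bt_n)$ — one gets after simplification
\begin{equation*}
\phi_{bt_n-s}(v)-1=\frac{-\lambda_1 w}{\,r_1 w-(r_1w+\lambda_1)x\,}.
\end{equation*}
By Remark \ref{finite_mgf} we have $v<\bar\theta_{bt_n}$, and since $e^{\bar\theta_{bt_n}}-1=\lambda_1\rho_n/(r_1(1-\rho_n))$ this yields the key inequality $r_1w(1-\rho_n)<\lambda_1\rho_n$; combined with $x<1$ it shows the denominator above is strictly negative (as it must be, $\phi$ being a genuine MGF value). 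Since $-\big(r_1w-(r_1w+\lambda_1)x\big)=(r_1w+\lambda_1)x-r_1w$ is increasing in $x$, for $x>\rho_n$ it stays above its value at $\rho_n$, namely $\lambda_1\rho_n-r_1w(1-\rho_n)>0$. Therefore
\begin{equation*}
|\phi_{bt_n-s}(v)-1|\;=\;\frac{\lambda_1 w}{(r_1w+\lambda_1)x-r_1w}\;<\;\frac{\lambda_1 w}{\lambda_1\rho_n-r_1w(1-\rho_n)}\;=:\;g(\rho_n),
\end{equation*}
a bound that no longer involves $s$, where $g(\rho)=\lambda_1\big(e^{\lambda_1\theta\rho/r_1}-1\big)\big/\big(\lambda_1\rho-r_1(e^{\lambda_1\theta\rho/r_1}-1)(1-\rho)\big)$.

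It then remains to show $g$ is bounded on $\{\rho_n:n\geq 2\}$, which lies in $(0,\rho_2]$ with $\rho_2=2^{-\lambda_1 b/r}<1$. The same computation as above is valid for every $\rho\in(0,1)$ (the relevant instance of $v<\bar\theta$ again coming from $\theta<1$), so the denominator of $g$ is strictly positive on $(0,\rho_2]$ and $g$ is continuous there; and using $(e^{\lambda_1\theta\rho/r_1}-1)/\rho\to\lambda_1\theta/r_1$ as $\rho\downarrow 0$ one finds $\lim_{\rho\downarrow 0}g(\rho)=\lambda_1\theta/\big(r_1(1-\theta)\big)<\infty$. Hence $g$ extends continuously to the compact interval $[0,\rho_2]$ and is bounded on it, and taking the supremum over $n$ and $s$ finishes the proof.

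The part requiring the most care — the ``hard part'' — is the nondegeneracy of the denominator: both the numerator $\lambda_1 w$ and the denominator $\lambda_1\rho_n-r_1w(1-\rho_n)$ vanish linearly in $\rho_n$ as $n\to\infty$, so any crude estimate fails; one must divide out the factor $\rho_n$ and observe that the limiting ratio equals $\lambda_1\theta/(r_1(1-\theta))$, which is finite precisely because $\theta<1$. The clean input making this work is the inequality $r_1(e^{v_{n,\theta,b}}-1)(1-\rho_n)<\lambda_1\rho_n$ furnished by Remark \ref{finite_mgf}, which simultaneously fixes the sign of the denominator and controls its leading‑order size.
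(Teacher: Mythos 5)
Your proof is correct, and it reaches the same two intermediate facts as the paper's proof: for $\theta\leq 0$ the bound is trivially at most $1$, and for $\theta\in(0,1)$ the supremum over $s\in(0,bt_n)$ is attained (as a limit) at $s=0$, after which one only needs $\sup_n\phi_{bt_n}(v_{n,\theta,b})<\infty$, established via the limit $\lambda_1\theta/(r_1(1-\theta))$ as $n\to\infty$. The difference is in how you obtain the monotonicity in $s$: the paper notes that, since $Z$ is a submartingale (being supercritical) and $x\mapsto e^{vx}$ is convex and increasing for $v>0$, the process $e^{vZ(t)}$ is a submartingale, so $\phi_t(v)$ is nondecreasing in $t$; you instead write $\phi_t(v)-1=-\lambda_1 w/(r_1w-(r_1w+\lambda_1)x)$ with $x=e^{-\lambda_1 t}$ and check by hand that the (positive) denominator is increasing in $x$. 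These are the same fact proved two ways; the submartingale route is shorter and conceptual, while your algebraic route is more elementary and has the benefit of making the nondegeneracy of the denominator (via the Remark~\ref{finite_mgf} inequality $r_1(e^v-1)(1-\rho_n)<\lambda_1\rho_n$) completely explicit. Your final step — extending $g$ continuously to $[0,\rho_2]$ after computing $g(\rho)\to\lambda_1\theta/(r_1(1-\theta))$ — is the same Taylor-expansion limit the paper computes, packaged as a compactness argument, and it correctly pins down why $\theta<1$ is exactly the condition needed.
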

\begin{proof}\\
If $v_{n,\theta,b}\le 0$, then $|\phi_{bt_n-s}\left(v_{n,\theta,b}\right)-1|\le 1$. If $v_{n,\theta,b}>0$ then we know that $\phi_t\left(v_{n,\theta,b}\right)\geq 1$ for all $t$. Thus it suffices to establish an upper bound on $\phi_{bt_n-s}(v_{n,\theta,b})$.

First fix $n\geq 1$, and observe that $Z=\{Z(t);t\geq 0\}$ is a submartingale; therefore, $\{\exp[v_{n,\theta,b}Z(t)];0\leq t\leq bt_n\}$ is also a submartingale. In particular
$$
\sup\left\{\phi_{bt_n-s}\left(v_{n,\theta,b}\right); s\in (0,bt_n), n\geq 1\right\}=\sup\left\{\phi_{bt_n}\left(v_{n,\theta,b}\right); n\geq 1\right\}.
$$
Using the formula for the moment generating function \eqref{generating_function} and the result $e^x=1+x+O(x^2)$ as $x\to 0$, we can see that
$$
\lim_{n\to\infty}\phi_{bt_n}\left(v_{n,\theta,b}\right)=\frac{1-d_1\theta/r_1}{1-\theta}.
$$\qed
\end{proof} 

Next we define the function
\begin{align*}
\psi_s\left(\theta\right)=\EE\left[\exp\left(\theta\left(Z_0\left(s\right)-e^{-rs}\right)\right)\right],
\end{align*}
where $Z_0$ evolves according to a sub-critical binary branching process starting from a single individual, and each individual has offspring distribution $\{p_n\}_{n\geq 0\}}$. In addition we assume that $p_0+p_1<1$. If we define the generating function $f(s)=\sum_{n}s^np_n$, we further assume that there exists $s_0>1$ such that $f(s)<\infty$ for $s\in [0,s_0)$. 
We will show that the second derivative of $\psi$ is bounded under these assumptions.

\begin{lemma}\label{second_derivative}
There exists $z_0>0$ such that 
$$
\sup\left\{\psi_t''(z);z\in [-z_0,z_0], t>0\right\}=k_1<\infty.
$$
\end{lemma}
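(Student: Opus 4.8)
The plan is to reduce the statement to a bound on an exponential moment of $Z_0(t)$ that is uniform in $t$, and then to obtain that bound from the branching structure of $Z_0$.

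First I would set up the reduction. Write $m(t)=e^{-rt}=\EE Z_0(t)\in(0,1]$ for $t\ge 0$, so that by definition $\psi_t(\theta)=\EE\bigl[e^{\theta(Z_0(t)-m(t))}\bigr]$. On the interior of the interval of $\theta$ for which this expectation is finite one may differentiate twice under the expectation, which gives
\[
\psi_t''(\theta)=\EE\Bigl[(Z_0(t)-m(t))^2\,e^{\theta(Z_0(t)-m(t))}\Bigr].
\]
Since $0\le m(t)\le 1$ and $Z_0(t)\ge 0$ we have $|Z_0(t)-m(t)|\le Z_0(t)+1$, hence $(Z_0(t)-m(t))^2\le (Z_0(t)+1)^2$ and $e^{\theta(Z_0(t)-m(t))}\le e^{|\theta|}e^{|\theta|Z_0(t)}$, while for every $\epsilon>0$ there is a constant $C_\epsilon<\infty$ with $(x+1)^2\le C_\epsilon e^{\epsilon x}$ for all $x\ge 0$. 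Combining these estimates,
\[
0\le \psi_t''(\theta)\le C_\epsilon\,e^{z_0}\,\EE\bigl[e^{(z_0+\epsilon)Z_0(t)}\bigr]\qquad\text{for }|\theta|\le z_0.
\]
Hence it is enough to exhibit $c>0$ such that $M:=\sup_{t\ge 0}\EE[e^{cZ_0(t)}]<\infty$; taking $z_0=\epsilon=c/2$ then proves the lemma with $k_1=C_{c/2}e^{c/2}M$ and this value of $z_0$.

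Next I would prove the exponential moment bound. Let $G(t,s)=\EE[s^{Z_0(t)}]$. Because $Z_0$ is a subcritical Markov branching process, $G$ solves the backward equation $\partial_t G=\varrho\bigl(f(G)-G\bigr)$ with $G(0,s)=s$, where $\varrho>0$ is the per-individual event rate (see \cite{bp}). The map $s\mapsto f(s)-s$ is convex, vanishes at $s=1$ with slope $f'(1)-1<0$ by subcriticality, and is finite on $[0,s_0)$ with $s_0>1$ by hypothesis; therefore $f(s)-s<0$ on an interval $(1,s^*)$ with $s^*>1$, where $s^*$ is the smallest root larger than $1$ of $f(s)=s$, or $s_0$ if no such root exists. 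Fix $c\in(0,\log s^*)$ and set $s_1:=e^c\in(1,s^*)$. Then $y(t):=G(t,s_1)$ solves $\dot y=\varrho\bigl(f(y)-y\bigr)$ with $y(0)=s_1$, and since the drift is strictly negative on $(1,s^*)$ and vanishes at the equilibrium $1$, the trajectory is decreasing and confined to $(1,s_1]$ for all $t\ge 0$ (it cannot reach $1$ in finite time, by uniqueness of solutions). Thus $\sup_{t\ge 0}\EE[e^{cZ_0(t)}]=\sup_{t\ge 0}G(t,s_1)=s_1<\infty$. The same monotonicity argument shows $\psi_t(\theta)<\infty$ for every $\theta<\log s^*$ and every $t$, so $\psi_t$ is real-analytic there and the differentiation under the expectation used in the reduction is legitimate, since we only need $|\theta|\le c/2<\log s^*$. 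In the birth--death case of the main model one can avoid the ODE entirely: writing $x=e^{rt}$, the explicit p.g.f.\ is $G(t,s)=(A+Bx)/(C+Bx)$ with $A=d_0(s-1)$, $C=r_0(s-1)$, $B=d_0-r_0 s$, and for $1<s<d_0/r_0$ all three are positive with $A>C$, so $G(t,s)$ decreases in $x$ and $\sup_{t\ge 0}G(t,s)=G(0,s)=s$.

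The hard part is the uniform-in-$t$ control in the previous paragraph. One must choose $c$ small enough that $e^{c}$ lies in the window $(1,s^*)$ on which the branching nonlinearity $f(s)-s$ is negative, and it is precisely the exponential-moment hypothesis on the offspring distribution that guarantees $s^*>1$, so that such a $c$ is available; without that hypothesis the nonlinearity need not even be defined to the right of $1$. The remaining ingredients---the change of variables to $Z_0(t)-m(t)$, the elementary dominations, and the justification of differentiating under the expectation---are routine.
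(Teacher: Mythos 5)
Your proof is correct, but it reaches the key uniform-in-$t$ exponential moment bound $\sup_{t\ge 0}\EE\bigl[e^{cZ_0(t)}\bigr]<\infty$ by a genuinely different route than the paper. The paper dominates $Z_0(t)$ pathwise by the total progeny $K$ of the branching process and then invokes Theorem 2.1 of Nakayama et al.\ for the existence of $z_0>0$ with $\EE\exp\left(z_0 K\right)<\infty$; the bounds $\EE\exp\left(z_0Z_0(t)\right)\le\EE\exp\left(z_0K\right)$ and $\EE\bigl[Z_0(t)^2e^{z_0Z_0(t)}\bigr]\le\EE\bigl[K^2e^{z_0K}\bigr]$ then give uniformity in $t$ for free. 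You instead work with the backward Kolmogorov ODE $\partial_t G=\varrho\left(f(G)-G\right)$ for the probability generating function and argue from convexity and subcriticality that there is a window $(1,s^*)$ on which the drift $f(s)-s$ is strictly negative, so that $t\mapsto G(t,s_1)$ is nonincreasing and hence bounded by $G(0,s_1)=s_1$ for $s_1\in(1,s^*)$. Your approach is more self-contained (no external citation needed, only the Kolmogorov equation and elementary phase-line analysis) and makes explicit exactly where the hypothesis $s_0>1$ on the offspring generating function is used; the paper's approach is shorter once one accepts the cited result, and the pathwise domination $Z_0(t)\le K$ trivializes the uniformity. Your reduction step, replacing $(Z_0(t)-m(t))^2$ by $(Z_0(t)+1)^2\le C_\epsilon e^{\epsilon Z_0(t)}$ so that only the single exponential moment $\sup_t\EE\bigl[e^{cZ_0(t)}\bigr]$ is needed, is also a slightly more economical packaging than the paper's, which bounds the first and second derivative terms separately. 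Both arguments are sound.
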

\begin{proof}\\
Denote by $K$ the total number of individuals that ever live in the branching process $Z_0$.
Theorem 2.1 of \cite{Nakayama} states that under our assumptions on $Z_0$ there exists $z_0>0$ such that $z\leq z_0$ implies $\EE\exp\left(zK\right)<\infty$. Since $Z_0(t)\leq K$ for any $t\geq 0$ we know that
$\EE\exp\left(zZ_0(t)\right)<\infty$ for $z\leq z_0$.

For $z\in [-z_0,z_0]$ we may differentiate the moment generating function of $Z_0\left(t\right)$ to obtain that
\begin{align*}
\psi_t''\left(z\right)&=e^{-2rt}\exp\left(-ze^{-rt}\right)\EE \exp\left(zZ_0\left(t\right)\right)+\exp\left(-ze^{-rt}\right)\frac{d^2}{dz^2}\EE \exp\left(zZ_0\left(t\right)\right)\\
&\quad \quad -2e^{-rt}\exp\left(-ze^{-rt}\right)\frac{d}{dz}\EE \exp\left(zZ_0\left(t\right)\right)\\
&\leq
\exp\left(z_0\right)\EE \exp\left(zZ_0\left(t\right)\right)+\exp\left(z_0\right)\frac{d^2}{dz^2}\EE \exp\left(zZ_0\left(t\right)\right)\\
&\leq
\exp\left(z_0\right)\EE \exp\left(z_0Z_0\left(t\right)\right)+\exp\left(z_0\right)\frac{d^2}{dz^2}\EE \exp\left(z_0Z_0\left(t\right)\right),
\end{align*}
where we used the fact that $\frac{d^k}{dz^k}\EE\exp\left(zZ_0(t)\right)>0$ for any positive integer $k$ in the two inequalities. Since $\EE\exp\left(z_0Z_0(t)\right)\leq \EE\exp\left(z_0K\right)$ and $\EE\left[Z_0(t)^2e^{z_0Z_0(t)}\right]\leq\EE\left[K^2e^{z_0K}\right]$ for all $t>0$ the result follows.
 \end{proof} 
\qed
\\

We can use the previous two Lemmas to obtain an upper bound on a functional of the subcritical process $Z_0^n$. We restate Proposition \ref{proposition_exp} here and provide the proof.
\\

\noindent \textbf{Proposition \ref{proposition_exp}} For $\theta\in \left(-\infty,1 \right)$, there exists $k_1>0$ such that
	\begin{equation*}
	\EE\exp\left(\frac{\mu}{n^{\alpha}}\int_{0}^{bt_n}\left(\phi_{bt_n-s}\left(v_{n,\theta,b}\right)-1\right)\left(Z_0^n\left(s\right)-ne^{-rs}\right)ds\right)\le \exp\left(k_1\left(\log n\right)^2n^{1-2\alpha}\right)
	\end{equation*}
for all $b>0$.

\begin{proof}\\
If we define $E_U$ as the expectation operator with respect to the Uniform($0,bt_n$) probability measure, we have that
\begin{align*}
& \quad \EE\exp\left(\frac{\mu}{n^{\alpha}}\int_{0}^{bt_n}\left(\phi_{bt_n-s}\left(v_{n,\theta,b}\right)-1\right)\left(Z_0^n\left(s\right)-ne^{-rs}\right)ds\right)\\
&=\EE\exp\left(\frac{\mu}{n^{\alpha}}bt_nE_U\left[\left(\phi_{bt_n-s}\left(v_{n,\theta,b}\right)-1\right)\left(Z_0^n\left(s\right)-ne^{-rs}\right)\right]\right),
\end{align*}
and we may use Jensen's Inequality to see that
\begin{align*}
& \quad \EE\exp\left(\frac{\mu}{n^{\alpha}}bt_nE_U\left[\left(\phi_{bt_n-s}\left(v_{n,\theta,b}\right)-1\right)\left(Z_0^n\left(s\right)-ne^{-rs}\right)\right]\right)\\
&\le \frac{1}{bt_n}\int_{0}^{bt_n}\EE \exp\left(\frac{\mu bt_n}{n^{\alpha}}\left(\phi_{bt_n-s}\left(v_{n,\theta,b}\right)-1\right)\left(Z_0^n\left(s\right)-ne^{-rs}\right)\right)ds.
\end{align*}
If we define
\begin{equation}\label{w_n}
w_n\left(s,bt_n\right)=\frac{\mu bt_n}{n^{\alpha}}\left(\phi_{bt_n-s}\left(v_{n,\theta,b}\right)-1\right),
\end{equation} 
then from the independence of the $n$ initial cells we have
\begin{equation}\label{upper_1}
\EE \exp\left(\frac{\mu bt_n}{n^{\alpha}}\left(\phi_{bt_n-s}\left(v_{n,\theta,b}\right)-1\right)\left(Z_0^n\left(s\right)-ne^{-rs}\right)\right)=\left(\psi_s\left(w_n\left(s,bt_n\right)\right)\right)^n.
\end{equation}

We will develop an upper bound from (\ref{upper_1}) through a Taylor Expansion. Applying Lemma \ref{phi_property} to (\ref{w_n}), we obtain for $n$ sufficiently large that
\begin{equation*}
|w_n\left(s,bt_n\right)|\le \frac{M_1bt_n}{n^{\alpha}},
\end{equation*}
where $M_1$ is a sufficiently large positive constant. This leads to the following inequality
\begin{equation}\label{inequality_k}
|w_n\left(s,bt_n\right)|\le k_0n^{-\alpha}\log \left(n\right),
\end{equation}
where $k_0$ is a sufficiently large positive constant.

From Lemma \ref{second_derivative} we know that for $\theta\in [-z_0,z_0]$, $\psi_s(\theta)<\infty$ for all $s>0$.
We also know from (\ref{inequality_k}), that $|w_n\left(s,bt_n\right)|\rightarrow 0$ as $n\rightarrow \infty$ uniformly for all $s\in \left(0, bt_n\right)$. Therefore we may conclude that for $n$ sufficiently large $|w_n\left(s,bt_n\right)|$ will be sufficiently small so that $\psi_s$ is infinitely differentiable on $\left(-|w_n\left(s,bt_n\right)|,|w_n\left(s,bt_n\right)| \right)$ for all $s\in \left(0, bt_n\right)$. Hence if we take $n$ sufficiently large and $z\in \left(0\wedge w_n\left(s,bt_n\right), 0 \vee w_n\left(s,bt_n\right)\right)$, we may perform the following Taylor expansion of $\psi_s$ around $0$:
\begin{align*}
\psi_s\left(w_n\left(s,bt_n\right)\right)&=1+\frac{\left(w_n\left(s,bt_n\right)\right)^2}{2}\psi_s''\left(z\right)\\
&\leq
1+\frac{k_1\left(\log n\right)^2}{n^{2\alpha}}\le \exp\left(k_1\left(\log n\right)^2n^{-2\alpha}\right),
\end{align*}
where the first inequality follows from Lemma \ref{second_derivative}, and $k_1$ is a sufficiently large positive constant. We can apply this bound along with (\ref{upper_1}) to conclude that
\begin{equation*}
\EE\exp\left(\frac{\mu}{n^{\alpha}}\int_{0}^{bt_n}\left(\phi_{bt_n-s}\left(v_{n,\theta,b}\right)-1\right)\left(Z_0^n\left(s\right)-ne^{-rs}\right)ds\right)\le \exp\left(k_1\left(\log n\right)^2n^{1-2\alpha}\right).
\end{equation*}\end{proof} \qed 
\\

Our next result concerns the exponential moments of $Z_1$. 
\begin{lemma}\label{z1_moment_generate}
If $\theta\in \left(-\infty,1\right)$, then
$$\lim\limits_{n\rightarrow \infty} \frac{1}{X(n)}\log\left(\EE\left[\exp \left(v_{n,\theta,b} Z_1^n\left(b t_n\right)\right)\right]\right)=\log\left(1-\frac{\lambda_1}{r_1} \frac{\theta}{\theta-1}\right).
$$
\end{lemma}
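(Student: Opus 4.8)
The plan is to use the branching property of $Z_1^n$ to reduce the statement to a limit of the single-cell moment generating function $\phi_{bt_n}$. Since $Z_1^n(0)=X(n)$ and the $X(n)$ founding cells evolve, together with all of their progeny, as independent binary birth--death processes, $Z_1^n(bt_n)$ has the law of a sum of $X(n)$ i.i.d.\ copies of $Z(bt_n)$. Hence the moment generating function factorizes as
$$
\EE\left[\exp\left(v_{n,\theta,b}Z_1^n(bt_n)\right)\right]=\Big(\phi_{bt_n}\left(v_{n,\theta,b}\right)\Big)^{X(n)},
$$
so that $\frac{1}{X(n)}\log\EE[\exp(v_{n,\theta,b}Z_1^n(bt_n))]=\log\phi_{bt_n}(v_{n,\theta,b})$ exactly, for every $n$. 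It therefore suffices to compute $\lim_{n\to\infty}\phi_{bt_n}(v_{n,\theta,b})$ and to observe that, once this limit is shown to be strictly positive, continuity of $\log$ yields the claim.

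First I would record that $\phi_{bt_n}(v_{n,\theta,b})$ is finite for all sufficiently large $n$ when $\theta<1$: for $\theta\le 0$ this is immediate since $v_{n,\theta,b}\le 0$, while for $\theta\in(0,1)$ the estimate of Remark \ref{finite_mgf} gives $v_{n,\theta,b}=\lambda_1\theta/(r_1e^{\lambda_1 bt_n})<\lambda_1/(r_1e^{\lambda_1 bt_n}-d_1)\le \bar{\theta}_{bt_n}$. Next, setting $x=e^{-\lambda_1 bt_n}\to 0$, so that $v_{n,\theta,b}=(\lambda_1\theta/r_1)x$ and $e^{v_{n,\theta,b}}=1+(\lambda_1\theta/r_1)x+O(x^2)$, I would substitute into the explicit formula \eqref{generating_function}. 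A direct expansion shows the numerator equals $x\lambda_1\big(d_1\theta/r_1-1\big)+O(x^2)$ and the denominator equals $x\lambda_1(\theta-1)+O(x^2)$; dividing and letting $n\to\infty$ gives
$$
\lim_{n\to\infty}\phi_{bt_n}\left(v_{n,\theta,b}\right)=\frac{d_1\theta/r_1-1}{\theta-1}=1-\frac{\lambda_1}{r_1}\frac{\theta}{\theta-1},
$$
where the last identity uses $r_1-\lambda_1=d_1$, equivalently $(\theta-1)-\frac{\lambda_1}{r_1}\theta=\frac{d_1}{r_1}\theta-1$. This limit is in fact exactly the one already computed inside the proof of Lemma \ref{phi_property}, and could be cited from there.

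Finally I would check positivity of the limiting value so that its logarithm is well defined: for $\theta\in[0,1)$ one has $\theta/(\theta-1)\le 0$, so the limit is at least $1$, and for $\theta<0$ one has $\theta/(\theta-1)\in(0,1)$, so the limit lies in $(d_1/r_1,1)$; in every case it is strictly positive. Combining this with the factorization and continuity of $\log$ completes the argument. There is no serious obstacle here: the only points requiring care are the finiteness of the moment generating function at finite $n$ (handled by Remark \ref{finite_mgf}) and the bookkeeping of the leading-order terms in $x$ in the numerator and denominator of \eqref{generating_function}, both of which vanish as $n\to\infty$ so that their ratio must be read off at first order.
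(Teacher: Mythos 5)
Your proof is correct and follows essentially the same approach as the paper: factorize the moment generating function into $X(n)$ i.i.d.\ copies of $\phi_{bt_n}(v_{n,\theta,b})$, then compute $\lim_{n\to\infty}\phi_{bt_n}(v_{n,\theta,b})$ by expanding around $x=e^{-\lambda_1 bt_n}\to 0$. You are somewhat more explicit than the paper in verifying finiteness (via Remark~\ref{finite_mgf}) and positivity of the limit, but these are minor presentational additions to an otherwise identical argument.
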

\begin{proof}\\
Recall that $v_{n,\theta,b}=\frac{\theta \lambda_1}{r_1 e^{\lambda_1 b t_n}}$, and note that
\begin{align*}
& \quad \log\left(\EE\left[\exp \left(\frac{\theta \lambda_1}{r_1}n^{-b\lambda_1/r}Z_1^n\left(b t_n\right)\right)\right]\right)\\
& =\log\left(\left[\frac{d_1\left(e^{v_{n,\theta,b}}-1\right)-e^{-\lambda_1 b t_n}\left(r_1e^{v_{n,\theta,b}}-d_1\right)}{r_1\left(e^{v_{n,\theta,b}}-1\right)-e^{-\lambda_1 b t_n}\left(r_1e^{v_{n,\theta,b}}-d_1\right)}\right]^{X\left(n\right)}\right)\\
& =X\left(n\right)\log\left(\left[\frac{d_1\left(e^{v_{n,\theta,b}}-1\right)-e^{-\lambda_1 b t_n}\left(r_1e^{v_{n,\theta,b}}-d_1\right)}{r_1\left(e^{v_{n,\theta,b}}-1\right)-e^{-\lambda_1 b t_n}\left(r_1e^{v_{n,\theta,b}}-d_1\right)}\right]\right).
\end{align*}

If we define $M\left(n\right)=e^{-\lambda_1 b t_n}$, then
\begin{align*}
& \quad \log\left(\left[\frac{d_1\left(e^{v_{n,\theta,b}}-1\right)-e^{-\lambda_1 b t_n}\left(r_1e^{v_{n,\theta,b}}-d_1\right)}{r_1\left(e^{v_{n,\theta,b}}-1\right)-e^{-\lambda_1 b t_n}\left(r_1e^{v_{n,\theta,b}}-d_1\right)}\right]\right)\\
&=\log\left(\left[\frac{d_1\left(e^{\frac{\theta \lambda_1}{r_1}M\left(n\right)}-1\right)-{M\left(n\right)}\left(r_1e^{\frac{\theta \lambda_1}{r_1}M\left(n\right)}-d_1\right)}{r_1\left(e^{\frac{\theta \lambda_1}{r_1}M\left(n\right)}-1\right)-{M\left(n\right)}\left(r_1e^{\frac{\theta \lambda_1}{r_1}M\left(n\right)}-d_1\right)}\right]\right)\\
& \rightarrow \log\left(1-\frac{\lambda_1}{r_1}\frac{\theta}{\theta-1}\right) \text{ as } n\rightarrow \infty.
\end{align*} \qed
\end{proof}\\ 


We now use the previous four results to show that recurrence does not occur too early. We restate Proposition \ref{prop_recurrence_1} here and provide the proof.
\\

\noindent \textbf{Proposition \ref{prop_recurrence_1}} If $b\in (0,\frac{\alpha r}{\lambda_1})$ for case (1) and (2), and $b\in (0,\frac{\beta r}{\lambda_1})$ for case (3), then there exists $C>0$ such that 
$$
\PP\left(\sup\limits_{u\in\left[0,b\right]}\left(Z_1^n\left(ut_n\right)+Z_2^n\left(ut_n\right)\right)-an>0\right)=O\left(e^{-Cn^{1-b\left(\lambda_1/r\right)}}\right).
$$

\begin{proof}\\
First we observe that
\begin{align*}
& \quad \PP\left(\sup\limits_{u\in\left[0,b\right]}\left(Z_1^n\left(ut_n\right)+Z_2^n\left(ut_n\right)\right)-an>0\right)\\
& \le \PP\left(\sup\limits_{u\in\left[0,b\right]}n^{-u\lambda_1 /r}\left(Z_1^n\left(ut_n\right)+Z_2^n\left(ut_n\right)\right)>an^{1-b\lambda_1/r}\right)\\
& =\PP\left(\sup\limits_{u\in\left[0,b\right]}\exp\left(\frac{\theta \lambda_1}{r_1}n^{-u\lambda_1 /r}\left(Z_1^n\left(ut_n\right)+Z_2^n\left(ut_n\right)\right)\right)>\exp\left(\frac{\theta \lambda_1}{r_1}an^{1-b\lambda_1/r}\right)\right),
\end{align*}
where $\theta>0$. It's not hard to show that $n^{-u\lambda_1/r}\left(Z_1^n\left(ut_n\right)+Z_2^n\left(ut_n\right)\right)$ is a submartingale in $u$. Therefore by Doob's  Inequality,
\begin{align}
\label{eq:DoobInequality1}
& \quad \PP\left(\sup\limits_{u\in\left[0,b\right]}\exp \left(\frac{\theta \lambda_1}{r_1}n^{-u\lambda_1/r}\left(Z_1^n\left(ut_n\right)+Z_2^n\left(ut_n\right)\right)\right)>\exp\left(\frac{\theta \lambda_1}{r_1}an^{1-b\lambda_1/r}\right)\right)\nonumber\\
& \le \EE\left[\exp \left(\frac{\theta \lambda_1}{r_1}n^{-b\lambda_1/r}\left(Z_1^n\left(bt_n\right)+Z_2^n\left(bt_n\right)\right)\right)\right]\exp\left(-\frac{\theta \lambda_1}{r_1}an^{1-b\lambda_1/r}\right).
\end{align}
Recall that we define $v_{n,\theta,b}=\frac{\lambda_1\theta}{r_1}e^{-\lambda_1bt_n}$. We state the following result (similar results can be found in\cite{Durrett} and \cite{Keller2015}): for $\theta< 1$,
\begin{equation}\label{Durrett_result}
	\EE\left[\exp\left(v_{n,\theta,b} Z^n_2\left(bt_n\right)\right)\right]=\EE\exp\left(\frac{\mu}{n^{\alpha}}\int_{0}^{bt_n}Z_0^n\left(s\right)\left(\phi_{bt_n-s}\left(v_{n,\theta,b}\right)-1\right)ds\right).
\end{equation}

Hence, from \eqref{Durrett_result} we have that
\begin{align*}
& \quad \EE\left[\exp \left(\frac{\theta \lambda_1}{r_1}n^{-b\lambda_1/r}\left(Z_1^n\left(bt_n\right)+Z_2^n\left(bt_n\right)\right)\right)\right]\\
& =\EE\exp\left(\frac{\mu}{n^{\alpha}}\int_{0}^{bt_n}Z_0^n\left(s\right)\left(\phi_{bt_n-s}\left(v_{n,\theta,b}\right)-1\right)ds\right) \EE\left[\exp \left(\frac{\theta \lambda_1}{r_1}n^{-b\lambda_1/r}Z_1^n\left(bt_n\right)\right)\right].
\end{align*}
For the first term on the right hand side, we decompose it into a mean behavior term and a fluctuation term. We then apply Proposition \ref{proposition_exp} to the fluctuation term and obtain
\begin{align*}
& \quad \EE\exp\left(\frac{\mu}{n^{\alpha}}\int_{0}^{bt_n}Z_0^n\left(s\right)\left(\phi_{bt_n-s}\left(v_{n,\theta,b}\right)-1\right)ds\right)\\
& =\exp\left(\frac{\mu}{n^{\alpha-1}}\int_{0}^{bt_n}e^{-rs}\left(\phi_{bt_n-s}\left(v_{n,\theta,b}\right)-1\right)ds\right)\\
& \quad \times \EE \exp \left(\frac{\mu}{n^{\alpha}}\int_{0}^{bt_n}\left(\phi_{bt_n-s}\left(v_{n,\theta,b}\right)-1\right)\left(Z_0^n\left(s\right)-ne^{-rs}\right)ds\right)\\
&\leq
\exp\left(\frac{\mu}{n^{\alpha-1}}\int_{0}^{bt_n}e^{-rs}\left(\phi_{bt_n-s}\left(v_{n,\theta,b}\right)-1\right)ds+k_1\left(\log n\right)^2n^{-2\alpha}\right).
\end{align*}
We can combine the previous display with \eqref{eq:DoobInequality1} to conclude that
\begin{align}
& \quad \PP\left(\sup\limits_{u\in\left[0,b\right]}\left(Z_1^n\left(ut_n\right)+Z_2^n\left(ut_n\right)\right)-an>0\right)\nonumber\\
& \le
 \EE\left[\exp \left(\frac{\theta \lambda_1}{r_1}n^{-b\lambda_1/r}\left(Z_1^n\left(bt_n\right)+Z_2^n\left(bt_n\right)\right)\right)\right]\exp\left(-\frac{\theta \lambda_1}{r_1}an^{1-b\lambda_1/r}\right)\nonumber\\
& \le
\exp\left(k_1n^{1-2\alpha}\log \left(n\right)^2\right)\exp\left(\frac{\mu}{n^{\alpha-1}}\int_{0}^{bt_n}e^{-rs}\left(\phi_{bt_n-s}\left(v_{n,\theta,b}\right)-1\right)ds\right)\EE\left[\exp \left(\frac{\theta \lambda_1}{r_1}n^{-b\lambda_1/r}Z_1^n\left(bt_n\right)\right)\right]\nonumber\\
& \quad \times \exp \left(-\frac{\theta \lambda_1}{r_1} an^{1-b\left(\lambda_1/r\right)}\right).\label{exp_num}
\end{align}
For the integral in \eqref{exp_num}, we have the following Lemma.
\begin{lemma} \label{lim_int}
For $\theta\in \left(-\infty,1\right)$,
	\begin{equation*}
	\lim\limits_{n\rightarrow \infty}\int_{0}^{bt_n}e^{-rs}\left(\phi_{bt_n-s}\left(v_{n,\theta,b}\right)-1\right)ds=\frac{\lambda_1}{r_1}\int_{0}^{\infty}\frac{\theta e^{-rs}}{e^{\lambda_1 s}-\theta}ds.
	\end{equation*}
\end{lemma}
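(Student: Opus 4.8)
The plan is to reduce Lemma \ref{lim_int} to a single application of the dominated convergence theorem on a fixed domain. First I would rewrite the integral over the growing interval $(0,bt_n)$ as an integral over $(0,\infty)$ by inserting the indicator $\mathbf{1}_{\{s<bt_n\}}$, so that
$$
\int_0^{bt_n} e^{-rs}\bigl(\phi_{bt_n-s}(v_{n,\theta,b})-1\bigr)\,ds
=\int_0^\infty \mathbf{1}_{\{s<bt_n\}}\,e^{-rs}\bigl(\phi_{bt_n-s}(v_{n,\theta,b})-1\bigr)\,ds ,
$$
and the task becomes justifying the interchange of limit and integral.

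Second, I would compute the pointwise limit of the integrand for each fixed $s>0$. Writing $\eta_n=e^{-\lambda_1 bt_n}$, we have $v_{n,\theta,b}=\tfrac{\lambda_1\theta}{r_1}\eta_n\to 0$ and $e^{-\lambda_1(bt_n-s)}=\eta_n e^{\lambda_1 s}$. Using the identity $\phi_t(\vartheta)-1=\dfrac{-\lambda_1(e^{\vartheta}-1)}{r_1(e^{\vartheta}-1)-e^{-\lambda_1 t}(r_1 e^{\vartheta}-d_1)}$ obtained from \eqref{generating_function}, together with $e^{v_{n,\theta,b}}-1=v_{n,\theta,b}+O(\eta_n^2)$, one sees that both numerator and denominator are of exact order $\eta_n$; cancelling the $\eta_n$ factors and letting $n\to\infty$ gives
$$
\phi_{bt_n-s}(v_{n,\theta,b})-1\;\longrightarrow\;\frac{-\lambda_1\cdot\frac{\lambda_1\theta}{r_1}}{\lambda_1(\theta-e^{\lambda_1 s})}=\frac{\lambda_1}{r_1}\cdot\frac{\theta}{e^{\lambda_1 s}-\theta},
$$
so the integrand converges pointwise to $\tfrac{\lambda_1}{r_1}\,\tfrac{\theta e^{-rs}}{e^{\lambda_1 s}-\theta}$, which is exactly the claimed limiting integrand.

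Third, I would supply a dominating function. By Lemma \ref{phi_property} there is a finite constant $M$ with $|\phi_{bt_n-s}(v_{n,\theta,b})-1|\le M$ for all $n\ge 1$ and all $s\in(0,bt_n)$, so the integrand is bounded in absolute value by $M e^{-rs}$, which is integrable on $(0,\infty)$ because $r=-\lambda_0>0$. The dominated convergence theorem then yields
$$
\lim_{n\to\infty}\int_0^{bt_n} e^{-rs}\bigl(\phi_{bt_n-s}(v_{n,\theta,b})-1\bigr)\,ds=\frac{\lambda_1}{r_1}\int_0^\infty\frac{\theta e^{-rs}}{e^{\lambda_1 s}-\theta}\,ds,
$$
and the right-hand side is finite since for $s\ge 0$ and $\theta<1$ we have $e^{\lambda_1 s}-\theta\ge 1-\theta>0$ (when $\theta\in[0,1)$) or $e^{\lambda_1 s}-\theta\ge e^{\lambda_1 s}$ (when $\theta\le 0$), while $e^{-rs}$ is integrable.

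The computations are elementary, so I do not expect a real obstacle. The only points requiring care are (i) carrying out the leading-order cancellation in the formula for $\phi$ so that the $\eta_n$ factors truly cancel rather than collapse the expression to zero, and (ii) invoking the uniform bound of Lemma \ref{phi_property}, which is what controls the integrand near $s\approx bt_n$ (i.e.\ for small arguments $t=bt_n-s$ of $\phi$, where the naive pointwise analysis breaks down) and thereby legitimizes the use of dominated convergence.
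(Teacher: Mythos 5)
Your proposal is correct and takes essentially the same approach as the paper: the paper also invokes Lemma \ref{phi_property} to produce the dominating function $M e^{-rs}\mathbf{1}_{(0,\infty)}(s)$, computes the same pointwise limit $\frac{\lambda_1\theta}{r_1(e^{\lambda_1 s}-\theta)}$ (which the paper summarizes as ``some algebra yields''), and applies the Dominated Convergence Theorem. The only difference is that you spell out the cancellation of the $e^{-\lambda_1 bt_n}$ factors explicitly, which the paper omits.
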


\begin{proof}\\
From Lemma \ref{phi_property}, it follows that for any $\theta\in \left(0,1\right)$ we have that
\begin{equation*}
\sup\left\{|\phi_{bt_n-s}\left(v_{n,\theta,b}\right)-1|; s\in (0,bt_n), n\geq 1\right\}<\infty.
\end{equation*}
Hence,
\begin{equation*}
e^{-rs}|\phi_{bt_n-s}\left(v_{n,\theta,b}\right)-1| I_{\left(0, bt_n\right)}\left(s\right)\le Me^{-rs}I_{\left(0,\infty\right)}\left(s\right),
\end{equation*}
where $M$ is a sufficiently large positive constant. Since $\int_{0}^{\infty} Me^{-rs}ds<\infty$, and some algebra yields that

\begin{equation*}
\lim\limits_{n\rightarrow \infty}\left(\phi_{bt_n-s}\left(v_{n,\theta,b}\right)-1\right)I_{\left(0, bt_n\right)}=\frac{\lambda_1 \theta}{r_1 \left(e^{\lambda_1 s}-\theta\right)}I_{\left(0,\infty\right)}.
\end{equation*}
We may apply the Dominated Convergence Theorem to conclude that 
\begin{equation*}
\lim\limits_{n\rightarrow \infty}\int_{0}^{bt_n}e^{-rs}\left(\phi_{bt_n-s}\left(v_{n,\theta,b}\right)-1\right)ds=\frac{\lambda_1}{r_1}\int_{0}^{\infty}\frac{\theta e^{-rs}}{e^{\lambda_1 s}-\theta}ds.
\end{equation*} \qed 
\end{proof} 

Using the fact that for case (1) and (2), $b\left(\lambda_1/r\right)<\alpha$, and for case (3), $b\left(\lambda_1/r\right)<\beta<\alpha$, Lemma \ref{z1_moment_generate}, and Lemma \ref{lim_int}, the term inside the exponential of \eqref{exp_num} can be written as
\begin{equation*}
-n^{1-b\left(\lambda_1/r\right)}\left(\frac{a \theta  \lambda_1}{r_1}-o\left(1\right)\right).
\end{equation*}
We then have that 
\begin{equation*}
\PP\left(\sup\limits_{u\in\left[0,b\right]}\left(Z_1^n\left(ut_n\right)+Z_2^n\left(ut_n\right)\right)-an>0\right)\leq \exp\left[-n^{1-b\left(\lambda_1/r\right)}\left(\frac{a\theta\lambda_1}{r_1}-o(1)\right)\right],
\end{equation*}
for $\theta\in (0,1)$.\qed\end{proof}\\ 

We next state some auxiliary results for analyzing the probability of recurrence in the interval $[bt_n, \zeta_n(a)-y]$. We restate Proposition \ref{proposition_recurrence_1} here which develops an upper bound on the probability of early recurrence.
\\

\noindent \textbf{Proposition \ref{proposition_recurrence_1}} For $\theta\in (0,1)$, 
	\begin{align*}
	& \quad \PP\left(\sup\limits_{u\in\left[b,u_n\left(y\right)\right]}n^{\alpha-1-\lambda_1 u/r}\left(Z_1^n\left(ut_n\right)+Z_2^n\left(ut_n\right)\right)> an^{\alpha-\lambda_1 u_n\left(y\right)/r}\right)\\
	& \le \EE \exp\left(\frac{\mu}{n^{\alpha}}\int_{0}^{u_n\left(y\right)t_n}Z_0^n\left(s\right)\left(\phi_{u_n\left(y\right)t_n-s}\left(v_{n,\theta,u_n\left(y\right)}\right)-1\right)ds\right)\EE\exp\left(v_{n,\theta,u_n\left(y\right)}Z_1^n\left(u_n\left(y\right)t_n\right)\right)\\
	& \quad \times \exp\left(-anv_{n,\theta,u_n\left(y\right)}\right).
	\end{align*}

\begin{proof}\\
First we observe that
\begin{align*}
& \quad \PP\left(\sup\limits_{u\in\left[b,u_n\left(y\right)\right]}n^{\alpha-1-\lambda_1 u/r}\left(Z_1^n\left(ut_n\right)+Z_2^n\left(ut_n\right)\right) > an^{\alpha-\lambda_1 u_n\left(y\right)/r}\right)\\
& =\PP\left(\sup\limits_{u\in\left[b,u_n\left(y\right)\right]} \exp\left(\frac{\lambda_1 \theta}{r_1}n^{-\lambda_1 u/r}\left(Z_1^n\left(ut_n\right)+Z_2^n\left(ut_n\right)\right)\right) >\exp\left(anv_{n,\theta,u_n\left(y\right)}\right)\right).
\end{align*}
Observe that $\exp\left(\frac{\lambda_1 \theta}{r_1}n^{-\lambda_1 u/r}\left(Z_1^n\left(ut_n\right)+Z_2^n\left(ut_n\right)\right)\right)$ is a submartingale in $u$. Therefore, by Doob's martingale inequality,
\begin{align*}
& \quad \PP\left(\sup\limits_{u\in\left[b,u_n\left(y\right)\right]} \exp\left(\frac{\lambda_1 \theta}{r_1}n^{-\lambda_1 u/r}\left(Z_1^n\left(ut_n\right)+Z_2^n\left(ut_n\right)\right)\right) >\exp\left(anv_{n,\theta,u_n\left(y\right)}\right)\right)\\
&\le \EE\left[\exp\left(v_{n,\theta,u_n\left(y\right)}\left(Z_1^n\left(u_n\left(y\right)t_n\right)+Z_2^n\left(u_n\left(y\right)t_n\right)\right)\right)\right]\exp\left(-anv_{n,\theta,u_n\left(y\right)}\right).
\end{align*}
Applying \eqref{Durrett_result}, and the definition of $v_{n,\theta,u_n\left(y\right)}$ gives us the result. \qed\end{proof} 
\\

We next identify the pointwise limit as $n\to\infty$ of the sequence of $an^{\alpha-\lambda_1 u_n\left(y\right)/r}$.
\begin{lemma}\label{c_2_recurrence_1}
For $y>0$,
\begin{align*}
& \text{in case (1), }\lim\limits_{n\rightarrow \infty}an^{\alpha-\lambda_1 u_n\left(y\right)/r}=e^{\lambda_1 y}\frac{\mu}{\lambda_1+r};\\
& \text{in case (2), }\lim\limits_{n\rightarrow \infty}an^{\alpha-\lambda_1 u_n\left(y\right)/r}=e^{\lambda_1 y}\left(1+\frac{\mu}{\lambda_1+r}\right);\\
& \text{in case (3), }\lim\limits_{n\rightarrow \infty}n^{\beta-\alpha}an^{\alpha-\lambda_1 u_n\left(y\right)/r}=e^{\lambda_1 y}.
\end{align*}
\end{lemma}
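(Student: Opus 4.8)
The plan is to reduce the lemma to the asymptotics of $e^{\lambda_1\zeta_n(a)}$, which is already controlled by the explicit sandwich $\bar u_n^{lower}\le\zeta_n(a)\le\bar u_n^{upper}$ recorded in Section~\ref{model}. Since $t_n=\tfrac1r\log n$ and $u_n(y)=(\zeta_n(a)-y)/t_n$, we have $n^{\lambda_1 u_n(y)/r}=e^{\lambda_1 u_n(y)t_n}=e^{\lambda_1(\zeta_n(a)-y)}$, whence
\[
a\,n^{\alpha-\lambda_1 u_n(y)/r}=\frac{a\,n^{\alpha}e^{\lambda_1 y}}{e^{\lambda_1\zeta_n(a)}}.
\]
So the whole statement is equivalent to understanding the limit of $n^{\alpha}/e^{\lambda_1\zeta_n(a)}$ (and of $n^{\beta}/e^{\lambda_1\zeta_n(a)}$ for the scaled quantity in case (3)).

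Next I would use the explicit bounds. From the defining equations \eqref{lower_bound_equation} and \eqref{upper_bound_equation} for $\bar u_n^{lower}$ and $\bar u_n^{upper}$ one reads off directly
\[
e^{\lambda_1\bar u_n^{lower}}=\frac{an}{X(n)+\tfrac{\mu}{\lambda_1+r}n^{1-\alpha}},
\qquad
e^{\lambda_1\bar u_n^{upper}}=\frac{an+\tfrac{\mu}{\lambda_1+r}n^{1-\alpha}}{X(n)+\tfrac{\mu}{\lambda_1+r}n^{1-\alpha}}.
\]
Their ratio is $1+\tfrac{\mu}{a(\lambda_1+r)}n^{-\alpha}\to 1$, so the sandwich forces $e^{\lambda_1\zeta_n(a)}\sim an\big/\big(X(n)+\tfrac{\mu}{\lambda_1+r}n^{1-\alpha}\big)$. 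Substituting into the display above yields
\[
a\,n^{\alpha-\lambda_1 u_n(y)/r}\;\sim\; e^{\lambda_1 y}\,\frac{X(n)+\tfrac{\mu}{\lambda_1+r}n^{1-\alpha}}{n^{1-\alpha}}
\;=\; e^{\lambda_1 y}\Big(\tfrac{X(n)}{n^{1-\alpha}}+\tfrac{\mu}{\lambda_1+r}\Big).
\]

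Finally I would plug in the three hypotheses. In case (1), $X(n)=o(n^{1-\alpha})$ kills the first term and the limit is $e^{\lambda_1 y}\tfrac{\mu}{\lambda_1+r}$; in case (2), $X(n)\sim n^{1-\alpha}$ makes the bracket tend to $1+\tfrac{\mu}{\lambda_1+r}$, giving $e^{\lambda_1 y}\big(1+\tfrac{\mu}{\lambda_1+r}\big)$; in case (3) one multiplies through by $n^{\beta-\alpha}$, turning the bracket into $\tfrac{X(n)}{n^{1-\beta}}+\tfrac{\mu}{\lambda_1+r}n^{\beta-\alpha}$, which tends to $1$ since $X(n)\sim n^{1-\beta}$ and $\beta<\alpha$, yielding $e^{\lambda_1 y}$. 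The only point requiring any care is the tightness of the sandwich, i.e.\ that $e^{\lambda_1\bar u_n^{upper}}/e^{\lambda_1\bar u_n^{lower}}\to 1$; this is immediate from the displayed formulas, so there is no genuine obstacle and the remainder is just bookkeeping with the asymptotic assumptions on $X(n)$.
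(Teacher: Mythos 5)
Your proof is correct and uses essentially the same argument as the paper: sandwich $\zeta_n(a)$ between $\bar u_n^{lower}$ and $\bar u_n^{upper}$, substitute the explicit closed forms, and evaluate the limit under the three hypotheses on $X(n)$. The only cosmetic difference is that you phrase the squeeze as $e^{\lambda_1\zeta_n(a)}\sim an/\bigl(X(n)+\tfrac{\mu}{\lambda_1+r}n^{1-\alpha}\bigr)$ and then substitute once, whereas the paper takes limits of the two bounding sequences directly; these are the same computation.
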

\begin{proof}\\
Since the proofs are similar, we only provide the proof for case (1) here. 
Recall that we have defined $\bar{u}_n^{upper}$ and $\bar{u}_n^{lower}$ to be the unique solutions to Equation \ref{upper_bound_equation} and \ref{lower_bound_equation} respectively. We then have that
\begin{equation*}
an^{\alpha}e^{\lambda_1 y}e^{-\lambda_1 \bar{u}_n^{upper}}\le an^{\alpha-\lambda_1 u_n\left(y\right)/r} \le an^{\alpha}e^{\lambda_1 y}e^{-\lambda_1 \bar{u}_n^{lower}}.
\end{equation*}

If we take the limits of the lower bound and the upper bound, we obtain that
\begin{align*}
\lim\limits_{n\rightarrow \infty}an^{\alpha}e^{\lambda_1 y}e^{-\lambda_1 \bar{u}_n^{upper}}&=\lim\limits_{n\rightarrow \infty}an^{\alpha}e^{\lambda_1 y} \frac{X\left(n\right)+\frac{\mu}{\lambda_1+r}n^{1-\alpha}}{an+\frac{\mu}{\lambda_1+r}n^{1-\alpha}}\\
&=e^{\lambda_1 y}\frac{\mu}{\lambda_1+r},
\end{align*}
and
\begin{align*}
\lim\limits_{n\rightarrow \infty}an^{\alpha}e^{\lambda_1 y}e^{-\lambda_1 \bar{u}_n^{lower}}&=\lim\limits_{n\rightarrow \infty}an^{\alpha}e^{\lambda_1 y}\frac{X\left(n\right)+\frac{\mu}{\lambda_1+r}n^{1-\alpha}}{an}\\
&=e^{\lambda_1 y}\frac{\mu}{\lambda_1+r}.
\end{align*}
The desired result then follows. \qed\end{proof} 

\subsubsection{Results for Lower Bound Proof}\label{proof_lower}

We restate Proposition \ref{GE_cond} here and provide the proof. Recall that the conditions of the Gartner-Ellis theorem are
\begin{enumerate}
\item There exists a function $\Lambda\left(\theta\right)\in \left[-\infty, \infty\right]$ such that $\lim\limits_{n\rightarrow \infty}n^{\alpha-1}\Lambda_n\left(\theta n^{1-\alpha}\right)=\Lambda\left(\theta\right)$ for all $\theta \in \RR$,
\item $0\in int\left(D_{\Lambda}\right)$ where $int\left(D_{\Lambda}\right)=\{\theta\in \RR: \Lambda\left(\theta\right)<\infty \}$,
\item $\Lambda$ is lower semi-continuous on $\RR$,
\item $\Lambda$ is differentiable on $int\left(D_{\Lambda}\right)$,
\item	$\Lambda$ is steep at $\partial D_{\Lambda}$.
\end{enumerate}

\noindent \textbf{Proposition \ref{GE_cond}} The sequence of random variables $\{Z_n\}_{n\ge 1}$ as defined in (\ref{rv_recurrence_1}) satisfy the conditions of the Gartner-Ellis Theorem with 
$$
\Lambda(\theta)=\begin{cases}\frac{\lambda_1\mu\theta}{r_1}\int_0^\infty\frac{e^{-rs}}{e^{\lambda_1s}-\theta}ds,&\enskip \theta<1\\
\infty,&\enskip \theta\geq 1.
\end{cases}
$$

\begin{proof}\\
\textit{Proof of Condition 1}. Recall that we define
\begin{equation*}
v_{n,\theta,u_n\left(y\right)}=\frac{\lambda_1\theta n^{-\lambda_1u_n\left(y\right)/r}}{r_1}
\end{equation*}
for all $\theta\in \RR$. We have that
\begin{align*}
& \quad n^{\alpha-1}\Lambda_n\left(\theta n^{1-\alpha}\right)\\
&=n^{\alpha-1}\left(\log \EE \exp\left(v_{n,\theta,u_n\left(y\right)}\left(Z_1^n\left(u_n\left(y\right)t_n\right)+Z_2^n\left(u_n\left(y\right)t_n\right)\right)\right)+\theta n^{1-\alpha}c\left(y;n\right)\right)\\
&=n^{\alpha-1}\log \EE \exp\left(v_{n,\theta,u_n\left(y\right)}\left(Z_1^n\left(u_n\left(y\right)t_n\right)+Z_2^n\left(u_n\left(y\right)t_n\right)\right)\right)+\theta c\left(y;n\right).
\end{align*}

Using the expression for $c\left(y;n\right)$ given in \ref{c_recurrence_1}, and Lemma \ref{c_2_recurrence_1}, we find that the second term goes to zero as $n\rightarrow \infty$. Hence, we will focus on the limit of the first term.
\begin{align}
& \quad n^{\alpha-1}\log \EE \exp\left(v_{n,\theta,u_n\left(y\right)}\left(Z_1^n\left(u_n\left(y\right)t_n\right)+Z_2^n\left(u_n\left(y\right)t_n\right)\right)\right)\nonumber\\
&=n^{\alpha-1}\log \EE \exp\left(v_{n,\theta,u_n\left(y\right)}Z_1^n\left(u_n\left(y\right)t_n\right)\right)+n^{\alpha-1}\log \EE \exp\left(v_{n,\theta,u_n\left(y\right)} Z_2^n\left(u_n\left(y\right)t_n\right)\right). \label{term_1}
\end{align}

From Remark \ref{finite_mgf} and Lemma \ref{z1_moment_generate} we know that for $\theta< 1$,

\begin{align*}
& \quad n^{\alpha-1}\log \EE \exp\left(v_{n,\theta,u_n\left(y\right)}Z_1^n\left(u_n\left(y\right)t_n\right)\right)\\
&= n^{\alpha-1}X\left(n\right)\log\left(\frac{d_1\left(e^{v_{n,\theta,u_n\left(y\right)}}-1\right)-e^{-\lambda_1u_n\left(y\right)t_n}\left(r_1e^{v_{n,\theta,u_n\left(y\right)}}-d_1\right)}{r_1\left(e^{v_{n,\theta,u_n\left(y\right)}}-1\right)-e^{-\lambda_1u_n\left(y\right)t_n}\left(r_1e^{v_{n,\theta,u_n\left(y\right)}}-d_1\right)}\right)\\
& \rightarrow 0 \text{ as } n\rightarrow \infty.
\end{align*}

Hence, for $\theta<1$, we only need to consider the second term of (\ref{term_1}). Recall from \eqref{Durrett_result} that
\begin{equation*}
\EE\left[\exp\left(v_{n,\theta,u_n\left(y\right)} Z^n_2\left(u_n\left(y\right)t_n\right)\right)\right]=\EE\exp\left(\frac{\mu}{n^{\alpha}}\int_{0}^{u_n\left(y\right)t_n}Z_0^n\left(s\right)\left(\phi_{u_n\left(y\right)t_n-s}\left(v_{n,\theta,u_n\left(y\right)}\right)-1\right)ds\right).
\end{equation*}
Therefore the second term satisfies the following relation
\begin{align*}
& \quad n^{\alpha-1}\log \EE\left[\exp\left(v_{n,\theta,u_n\left(y\right)} Z^n_2\left(u_n\left(y\right)t_n\right)\right)\right]\\
&= n^{\alpha-1}\log \exp\left(\frac{\mu}{n^{\alpha-1}}\int_{0}^{u_n\left(y\right)t_n}e^{-rs}\left(\phi_{u_n\left(y\right)t_n-s}\left(v_{n,\theta,u_n\left(y\right)}\right)-1\right)ds\right)\\
& \quad + n^{\alpha-1}\log\EE \exp\left(\frac{\mu}{n^{\alpha}}\int_{0}^{u_n\left(y\right)t_n}\left(\phi_{u_n\left(y\right)t_n-s}\left(v_{n,\theta,u_n\left(y\right)}\right)-1\right)\left(Z_0^n\left(s\right)-ne^{-rs}\right)ds\right).
\end{align*}

When $\theta<1$, we may ignore the second term  by Proposition \ref{proposition_exp} and focus on the first term. We see that
\begin{align*}
& \quad n^{\alpha-1}\log \exp\left(\frac{\mu}{n^{\alpha-1}}\int_{0}^{u_n\left(y\right)t_n}e^{-rs}\left(\phi_{u_n\left(y\right)t_n-s}\left(v_{n,\theta,u_n\left(y\right)}\right)-1\right)ds\right)\\
&= \mu \int_{0}^{u_n\left(y\right)t_n}e^{-rs}\left(\phi_{u_n\left(y\right)t_n-s}\left(v_{n,\theta,u_n\left(y\right)}\right)-1\right)ds.
\end{align*}

By Lemma \ref{lim_int}, we know that 

\begin{equation*}
\mu \int_{0}^{u_n\left(y\right)t_n}e^{-rs}\left(\phi_{u_n\left(y\right)t_n-s}\left(v_{n,\theta,u_n\left(y\right)}\right)-1\right)ds\rightarrow \frac{\lambda_1 \mu \theta}{r_1}\int_{0}^{\infty}\frac{ e^{-rs}}{e^{\lambda_1 s}-\theta}ds.
\end{equation*}


We now consider the case that $\theta\ge 1$. First we note that for $\theta\ge 1$ and a sequence of numbers $\{\tilde{\theta}_1, \tilde{\theta}_2,...\}$ such that $\tilde{\theta}_i<1$, and $\lim\limits_{i\to \infty}\tilde{\theta}_i=1$, 
\begin{equation*}
\frac{\lambda_1 \mu \tilde{\theta}_i}{r_1}\int_{0}^{\infty}\frac{ e^{-rs}}{e^{\lambda_1 s}-\tilde{\theta}_i}ds=\liminf\limits_{n\rightarrow \infty}n^{\alpha-1}\Lambda_n\left(\tilde{\theta}_in^{1-\alpha}\right)\le \liminf\limits_{n\rightarrow \infty}n^{\alpha-1}\Lambda_n\left(\theta n^{1-\alpha}\right).
\end{equation*}

As $i\rightarrow \infty$, $\frac{\lambda_1 \mu \tilde{\theta}_i}{r_1}\int_{0}^{\infty}\frac{ e^{-rs}}{e^{\lambda_1 s}-\tilde{\theta}_i}ds \rightarrow \infty$. Therefore, $\liminf\limits_{n\rightarrow \infty}n^{\alpha-1}\Lambda_n\left(\theta n^{1-\alpha}\right)=\infty$ and consequently $\lim\limits_{n\rightarrow \infty}n^{\alpha-1}\Lambda_n\left(\theta n^{1-\alpha}\right)=\infty$. Hence if we define 

\begin{align*}
\Lambda\left(\theta\right)=\left\{ \begin{array}{cc} 
\frac{\lambda_1 \mu \theta}{r_1}\int_{0}^{\infty}\frac{ e^{-rs}}{e^{\lambda_1 s}-\theta}ds, & \hspace{5mm} \theta<1 \\
\infty & \hspace{5mm} \theta\ge 1, \\
\end{array} \right.
\end{align*}

then $n^{\alpha-1}\Lambda_n\left(\theta n^{1-\alpha}\right)\rightarrow \Lambda \left(\theta\right)$ for all $\theta\in \RR$. \\

\textit{Proof of Condition 2} For $\theta <1$, 
\begin{equation*}
\int_{0}^{\infty}|\frac{\lambda_1 \mu \theta}{r_1}\frac{ e^{-rs}}{e^{\lambda_1 s}-\theta}|ds\le \frac{\lambda_1 \mu |\theta|}{r_1\left(1-\theta\right)}\int_{0}^{\infty}e^{-rs}ds<\infty,
\end{equation*}
and we may conclude that $\Lambda\left(\theta\right)<\infty$ for $\theta <1$. We have proved in the proof of Condition (1) that $\Lambda\left(\theta\right)=\infty$ for $\theta \ge 1$. Therefore $D_{\Lambda}=\left(-\infty, 1\right)=\text{int}\left(D_{\Lambda}\right)$, which includes $0$.\\

\textit{Proof of Condition 3}. It's easy to show that $\Lambda\left(\theta\right)$ is differentiable for $\theta <1$. It is therefore continuous and hence lower semi-continuous on $\left(-\infty, 1\right)$. We now consider the case $\theta_i\rightarrow \theta^*$ where $\theta^*\ge 1$. We have shown in the proof of Condition (1) that $\lim\limits_{\theta \uparrow 1}\Lambda\left(\theta\right)=\infty$ and that $\Lambda\left(\theta\right)=\infty$ for $\theta \ge 1$. Therefore for any $\theta^*\ge 1$, $\lim\limits_{i\rightarrow \infty}\Lambda\left(\theta_i\right)=\Lambda\left(\theta^*\right)=\infty$. We may conclude that $\Lambda\left(\theta\right)$ is lower semi-continuous on the entire real line. 
\\

\textit{Proof of Condition 4}. We have shown in the proof of Condition (2) that $\textbf{int}\left(D_{\Lambda}\right)=\left(-\infty, 1\right)$. It's easy to show that $\Lambda\left(\theta\right)$ is differentiable for $\theta <1$. 
\\

\textit{Proof of Condition 5}. We can show that for $\theta <1$.
\begin{equation*}
|\nabla \Lambda\left(\theta\right)|=\frac{\lambda_1 \mu}{r_1}\int_{0}^{\infty}\frac{e^{\left(\lambda_1-r\right)s}}{\left(e^{\lambda_1 s}-\theta\right)^2}ds.
\end{equation*}

By inspection, $\lim\limits_{\theta \rightarrow \partial D_{\Lambda}: \theta \in D_{\Lambda}}|\nabla \Lambda\left(\theta\right)|=\lim\limits_{\theta \uparrow 1}\Lambda' \left(\theta\right)=\infty$. \qed
\end{proof}

We next state a result about the second derivative of $\Lambda$.
\begin{lemma}\label{lambda_second_derivative}
	The function $\Lambda\left(\theta\right)=\frac{\lambda_1 \mu \theta}{r_1}\int_{0}^{\infty}\frac{e^{-rs}}{e^{\lambda_1 s}-\theta}ds$ is twice differentiable for $\theta \in \left(0,1\right)$, and
	\begin{equation*}
	\Lambda''\left(\theta\right)=\frac{2\lambda_1 \mu}{r_1}\int_{0}^{\infty}\frac{e^{\left(\lambda_1-r\right)s}}{\left(e^{\lambda_1 s}-\theta\right)^3}ds.
	\end{equation*}
\end{lemma}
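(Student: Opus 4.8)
The plan is to justify differentiation under the integral sign twice and then perform the two elementary $\theta$-derivatives of the integrand. Write $\Lambda(\theta)=\frac{\lambda_1\mu}{r_1}\int_0^\infty g(s,\theta)\,ds$ with $g(s,\theta)=\theta e^{-rs}/(e^{\lambda_1 s}-\theta)$, and fix an arbitrary compact subinterval $[\theta_1,\theta_2]\subset(0,1)$; since $\theta_1,\theta_2$ are arbitrary it suffices to prove the claim for $\theta\in[\theta_1,\theta_2]$.

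The first step is a uniform lower bound on the denominator: for $s\ge 0$ and $\theta\le\theta_2$ we have $e^{\lambda_1 s}-\theta\ge e^{\lambda_1 s}-\theta_2\ge(1-\theta_2)e^{\lambda_1 s}$, using $e^{\lambda_1 s}\ge 1$. A one-line quotient-rule computation gives $\partial_\theta g(s,\theta)=e^{(\lambda_1-r)s}/(e^{\lambda_1 s}-\theta)^2$ and $\partial_\theta^2 g(s,\theta)=2e^{(\lambda_1-r)s}/(e^{\lambda_1 s}-\theta)^3$, so the bound above yields
\[
|\partial_\theta g(s,\theta)|\le\frac{e^{-(\lambda_1+r)s}}{(1-\theta_2)^2},\qquad |\partial_\theta^2 g(s,\theta)|\le\frac{2\,e^{-(2\lambda_1+r)s}}{(1-\theta_2)^3}
\]
uniformly in $\theta\in[\theta_1,\theta_2]$, and both right-hand sides are integrable on $(0,\infty)$ because $\lambda_1+r>0$ and $2\lambda_1+r>0$.

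With these dominating functions in hand, the standard theorem on differentiation under the integral sign (via dominated convergence) applies twice and gives
\[
\Lambda'(\theta)=\frac{\lambda_1\mu}{r_1}\int_0^\infty\frac{e^{(\lambda_1-r)s}}{(e^{\lambda_1 s}-\theta)^2}\,ds,\qquad \Lambda''(\theta)=\frac{2\lambda_1\mu}{r_1}\int_0^\infty\frac{e^{(\lambda_1-r)s}}{(e^{\lambda_1 s}-\theta)^3}\,ds,
\]
which is exactly the claimed formula; the expression for $\Lambda'$ is also consistent with the formula for $|\nabla\Lambda|$ used in the proof of Proposition \ref{GE_cond}.

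There is essentially no obstacle here. The only point requiring care is the choice of an $s$-integrable dominating function, which is handled by the inequality $e^{\lambda_1 s}-\theta\ge(1-\theta_2)e^{\lambda_1 s}$; everything else is the Leibniz rule together with the routine differentiation of $g$ in $\theta$.
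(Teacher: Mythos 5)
Your proof is correct and follows the same strategy as the paper, which simply states that the derivative of the integrand exists and is dominated by an integrable function and cites a standard reference for differentiation under the integral sign. You have merely filled in the routine details (the explicit $\theta$-derivatives and the dominating functions via $e^{\lambda_1 s}-\theta\ge(1-\theta_2)e^{\lambda_1 s}$) that the paper leaves to the reader.
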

\begin{proof}\\
It suffices to check that the derivative of the integrand exists and is dominated by an integrable function, which is straightforward. See Proposition 9.2.1 of \cite{Rosenthal}. \qed
\end{proof}

Next we restate Proposition \ref{proposition_recurrence_3} and provide the proof.
\\

\noindent \textbf{Proposition \ref{proposition_recurrence_3}} For any $x\in \left(\frac{\mu\lambda_1e^{y\lambda_1}}{r_1\left(\lambda_1+r\right)},\infty \right)$, there exists $\theta^*(x)\in \left(0,1\right)$, such that $x\theta^*(x)-\Lambda\left(\theta^*(x)\right)=\sup\limits_{\theta\in \RR}\left[\theta x-\Lambda\left(\theta\right)\right]$.

Furthermore, 
$$
\theta^*(x)=\Lambda^{\prime-1}(x),
$$
and in particular $\theta^*$ is a continuous function.

\begin{proof}\\
Note that for any $x\in \left(\frac{\lambda_1}{r_1}\frac{\mu e^{\lambda_1 y}}{\lambda_1+r},\infty\right)$ and $\theta\ge 1$, $\theta x-\Lambda\left(\theta\right)=-\infty$ since $\Lambda \left(\theta\right)=\infty$ for $\theta \ge 1$. Therefore for $x\in \left(\frac{\lambda_1}{r_1}\frac{\mu e^{\lambda_1 y}}{\lambda_1+r},\infty\right)$,

\begin{equation*}
\sup\limits_{\theta\in \RR}\left[\theta x-\frac{\lambda_1 \mu \theta}{r_1}\int_{0}^{\infty}\frac{e^{-rs}}{e^{\lambda_1 s}-\theta}ds\right]=\sup\limits_{\theta\in \left(-\infty, 1\right)}\left[\theta x-\frac{\lambda_1 \mu \theta}{r_1}\int_{0}^{\infty}\frac{e^{-rs}}{e^{\lambda_1 s}-\theta}ds\right].
\end{equation*}

From Lemma \ref{lambda_second_derivative} we see that $\Lambda''\left(\theta\right)>0$ for $\theta <1$. Therefore, $\Lambda\left(\theta\right)$ is a convex function on $\left(-\infty, 1\right)$ and if we fix $x\in \left(\frac{\lambda_1}{r_1}\frac{\mu e^{\lambda_1 y}}{\lambda_1+r},\infty\right)$, $\theta x-\Lambda\left(\theta\right)$ is a concave function in $\theta$ on $\left(-\infty, 1\right)$. A sufficient condition for $\theta \in \left(-\infty, 1\right)$ to be a global maximizer for the concave function in the square brackets is $\Lambda'\left(\theta\right)=x$ or equivalent $\frac{\lambda_1 \mu}{r_1}\int_{0}^{\infty}\frac{e^{\left(\lambda_1-r\right)s}}{\left(e^{\lambda_1 s}-\theta\right)^2}ds=x$. Now if we evaluate $\Lambda'\left(\theta\right)$ at zero, we find that $\Lambda'\left(0\right)=\frac{\lambda_1 \mu}{r_1\left(\lambda_1+r\right)}$. We also know that $\lim\limits_{\theta \uparrow 1}\Lambda'\left(\theta\right)=\infty$. Since $\Lambda'\left(\theta\right)$ is differentiable on $\left(-\infty, 1\right)$, it is also continuous on $\left(-\infty, 1\right)$ and we may conclude that it obtains all values greater than $\frac{\lambda_1 \mu}{r_1\left(\lambda_1+r\right)}$ on $\left(0,1\right)$. Therefore, for all $x\in \left(\frac{\lambda_1}{r_1}\frac{\mu e^{\lambda_1 y}}{\lambda_1+r},\infty\right)$ there exists $\theta\in \left(0,1\right)$ such that $\Lambda'\left(\theta\right)=x$. This leads to the conclusion that the supremum is always attained by $\theta\in \left(0,1\right)$. In addition we identify $\theta^*(x)$ as the inverse of $\Lambda^\prime$.

To see that $\theta^*$ is continuous, observe from Lemma \ref{lambda_second_derivative}, that $\Lambda''\left(\theta\right)>0$ for all $\theta\in \left(0,1\right)$, and it follows that $\Lambda'$ is a strictly increasing and continuous function on $\left(0,1\right)$. Therefore, $\Lambda'$ has a strictly increasing, continuous inverse. \qed\\




\subsection{Lemmas and Propositions for Proof of Theorem \ref{crossover_LD}}
Define the quantities
\begin{align}
& A_1\left(u,n\right)=Z_1^n\left(ut_n\right)-z_1^n\left(ut_n\right) \label{a1_crossover_1}\\
& A_2\left(u,n\right)=Z_2^n\left(ut_n\right)-z_2^n\left(ut_n\right) \label{a2_crossover_1}\\
& A_3\left(u,n\right)=z_0^n\left(ut_n\right)-Z_0^n\left(ut_n\right) \label{a3_crossover_1}\\
& A_4\left(u,n\right)=z_1^n\left(ut_n\right)+z_2^n\left(ut_n\right)-z_0^n\left(ut_n\right). \label{a4_crossover_1}
\end{align}
Then for $\delta\in \left(0,1\right)$, we have the following results (Proposition \ref{proposition_crossover_1}, \ref{proposition_crossover_2}, and \ref{proposition_crossover_3}).
\begin{proposition}\label{proposition_crossover_1}
	There exists $C>0$ such that
	\begin{equation*}
	    \PP\left(\sup\limits_{u\in\left[0,a\right]}\left(A_1\left(u,n\right)+A_2\left(u,n\right)+\delta A_4\left(u,n\right)\right)>0\right)=O\left(e^{-Cn^{1-a\left(1+\lambda_1/r\right)}}\right).
	\end{equation*}
\end{proposition}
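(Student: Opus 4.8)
The plan is to push through the same argument as in Proposition~\ref{prop_recurrence_1}; the only genuinely new point is that the comparison level $(1-\delta)(z_1^n+z_2^n)+\delta z_0^n$ is now $u$-dependent, so it has to be replaced by a convenient lower bound. Throughout I will assume, as in the context where this proposition is invoked (the proof of Proposition~\ref{crossover_LDUB}), that $a\in\left(0,\tfrac{\alpha r}{\lambda_1+r}\right)$; in particular $1-a(1+\lambda_1/r)>1-\alpha>0$.

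First I would reduce the event. Since $A_1(u,n)+A_2(u,n)+\delta A_4(u,n)=(Z_1^n+Z_2^n)(ut_n)-(1-\delta)(z_1^n+z_2^n)(ut_n)-\delta z_0^n(ut_n)$, and $z_1^n,z_2^n\ge 0$ with $1-\delta\ge 0$, the event $\{\sup_{u\in[0,a]}(A_1+A_2+\delta A_4)>0\}$ is contained in the event $\{(Z_1^n+Z_2^n)(ut_n)>\delta z_0^n(ut_n)=\delta n^{1-u}\text{ for some }u\in[0,a]\}$. Multiplying by $n^{-u\lambda_1/r}$ and using that $1-u(1+\lambda_1/r)$ is decreasing in $u$ (so $n^{1-u(1+\lambda_1/r)}\ge n^{1-a(1+\lambda_1/r)}$ for $u\in[0,a]$), this is in turn contained in
$$\Big\{\sup_{u\in[0,a]}n^{-u\lambda_1/r}\big(Z_1^n(ut_n)+Z_2^n(ut_n)\big)>\delta n^{1-a(1+\lambda_1/r)}\Big\},$$
so it suffices to bound the probability of this last event.

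Next comes the Chernoff/Doob step. Fix any $\theta\in(0,1)$. As in Proposition~\ref{prop_recurrence_1}, $u\mapsto n^{-u\lambda_1/r}(Z_1^n(ut_n)+Z_2^n(ut_n))$ is a submartingale, hence so is $\exp\big(\tfrac{\lambda_1\theta}{r_1}n^{-u\lambda_1/r}(Z_1^n(ut_n)+Z_2^n(ut_n))\big)$, all the relevant exponential moments being finite for $\theta<1$ by Remark~\ref{finite_mgf} and \eqref{Durrett_result}. Doob's maximal inequality then bounds the probability above by
$$\EE\Big[\exp\big(v_{n,\theta,a}(Z_1^n(at_n)+Z_2^n(at_n))\big)\Big]\,\exp\!\Big(-\tfrac{\lambda_1\theta\delta}{r_1}n^{1-a(1+\lambda_1/r)}\Big),\qquad v_{n,\theta,a}=\tfrac{\lambda_1\theta}{r_1}n^{-a\lambda_1/r}.$$
The expectation here is exactly the object estimated inside the proof of Proposition~\ref{prop_recurrence_1} with $b=a$: by independence of $Z_1^n$ and $Z_2^n$, the identity \eqref{Durrett_result}, the mean/fluctuation split together with Proposition~\ref{proposition_exp}, and Lemmas~\ref{lim_int} and \ref{z1_moment_generate}, it is at most
$$\exp\!\Big(\mu n^{1-\alpha}\big(\tfrac{\lambda_1}{r_1}\!\int_0^\infty\!\tfrac{\theta e^{-rs}}{e^{\lambda_1 s}-\theta}\,ds+o(1)\big)+X(n)\big(\log(1-\tfrac{\lambda_1}{r_1}\tfrac{\theta}{\theta-1})+o(1)\big)+k_1(\log n)^2 n^{1-2\alpha}\Big).$$
In case~(1), $X(n)=o(n^{1-\alpha})$, so this exponent is $O(n^{1-\alpha})$.

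Finally, combining the last two displays and using $1-a(1+\lambda_1/r)>1-\alpha$, the penalty $-\tfrac{\lambda_1\theta\delta}{r_1}n^{1-a(1+\lambda_1/r)}$ swamps the $O(n^{1-\alpha})$ term, giving a bound of the form $\exp\big(-n^{1-a(1+\lambda_1/r)}(\tfrac{\lambda_1\theta\delta}{r_1}-o(1))\big)=O\!\big(e^{-Cn^{1-a(1+\lambda_1/r)}}\big)$ for any fixed $C\in\big(0,\tfrac{\lambda_1\theta\delta}{r_1}\big)$ (for instance $\theta=\tfrac12$). I do not expect a real obstacle here: the single thing that must be watched is the exponent comparison $1-a(1+\lambda_1/r)>1-\alpha$, i.e.\ $a<\tfrac{\alpha r}{\lambda_1+r}$, which is precisely the range in which this proposition is used and which guarantees that the $\exp(O(n^{1-\alpha}))$ contribution of the immigration and pre-existing mutant terms is absorbed by the Chernoff penalty.
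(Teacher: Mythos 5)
Your proposal is correct and follows essentially the same route as the paper: reduce to a tail event for $\sup_u n^{-u\lambda_1/r}\left(Z_1^n(ut_n)+Z_2^n(ut_n)\right)$, then run the same submartingale/Doob Chernoff bound used in Proposition~\ref{prop_recurrence_1}. The only cosmetic difference is in the initial reduction---you discard the nonnegative term $(1-\delta)(z_1^n+z_2^n)$ immediately, whereas the paper tracks the full deterministic level $\delta\,n^{a-1}\left(z_0^n-z_1^n-z_2^n\right)$ (which tends to $\delta$) until the Doob step; both lead to the same asymptotic threshold.
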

\begin{proof}\\
Observe that
\begin{align*}
& \quad \PP\left(\sup\limits_{u\in\left[0,a\right]}\left(A_1\left(u,n\right)+A_2\left(u,n\right)+\delta A_4\left(u,n\right)>0\right)\right)\\
& \le \PP\left(\sup\limits_{u\in\left[0,a\right]}n^{u-1}\left(A_1\left(u,n\right)+A_2\left(u,n\right)\right)+\sup\limits_{u\in\left[0,a\right]}\delta n^{u-1}A_4\left(u,n\right)>0\right)\\
& =\PP\left(\sup\limits_{u\in\left[0,a\right]}n^{u-1}\left(A_1\left(u,n\right)+A_2\left(u,n\right)\right)>\inf\limits_{u\in\left[0,a\right]}-\delta n^{u-1}A_4\left(u,n\right)\right).
\end{align*}
We may compute that
\begin{align*}
& \quad -n^{u-1}A_4\left(u,n\right)\\
&=n^{u-1}\left(z_0^n\left(ut_n\right)-z_1^n\left(ut_n\right)-z_2^n\left(ut_n\right)\right)\\
&=n^{u-1}\left(n^{1-u}-X\left(n\right)n^{\frac{\lambda_1}{r}u}-\frac{\mu}{\lambda_1+r}n^{1-\alpha}e^{\lambda_1 u t_n}\left(1-e^{\left(\lambda_0-\lambda_1\right)u t_n}\right)\right)\\
&=1-X\left(n\right)n^{\left(1+\frac{\lambda_1}{r}\right)u-1}+\frac{\mu n^{-\alpha}}{\lambda_1+r}-\frac{\mu n^{u\left(1+\frac{\lambda_1}{r}\right)-\alpha}}{\lambda_1+r}
\end{align*}
is a monotone decreasing function in $u$ and consequently obtain that
\begin{align*}
& \quad \PP\left(\sup\limits_{u\in\left[0,a\right]}n^{u-1}\left(A_1\left(u,n\right)+A_2\left(u,n\right)\right)>\inf\limits_{u\in\left[0,a\right]}-\delta n^{u-1}A_4\left(u,n\right)\right)\\
& = \PP\left(\sup\limits_{u\in\left[0,a\right]}n^{u-1}\left(A_1\left(u,n\right)+A_2\left(u,n\right)\right)>\delta n^{a-1}\left(z_0^n\left(at_n\right)-z_1^n\left(at_n\right)-z_2^n\left(at_n\right)\right)\right).
\end{align*}
Now if we define $C\left(a,n\right)=n^{a-1}\left(z_0^n\left(at_n\right)-z_1^n\left(at_n\right)-z_2^n\left(at_n\right)\right)$ then
\begin{align*}
& \quad \PP\left(\sup\limits_{u\in\left[0,a\right]}n^{u-1}\left(A_1\left(u,n\right)+A_2\left(u,n\right)\right)>\delta C\left(a,n\right)\right)\\
& \le \PP\left(\sup\limits_{u\in\left[0,a\right]}n^{1-u\left(1+\lambda_1/r\right)}n^{u-1}\left(A_1\left(u,n\right)+A_2\left(u,n\right)\right)>\delta n^{1-a\left(1+\lambda_1/r\right)} C\left(a,n\right)\right)\\
& \le \PP\Bigg(\sup\limits_{u\in\left[0,a\right]}n^{-u\lambda_1/r}\left(Z_1^n\left(ut_n\right)+Z_2^n\left(ut_n\right)\right)+\sup\limits_{u\in\left[0,a\right]}\big(n^{-a\lambda_1/r}z_1^n\left(at_n\right)+n^{-a\lambda_1/r}z_2^n\left(at_n\right) \\
& \quad-n^{-u\lambda_1/r}z_1^n\left(ut_n\right)-n^{-u\lambda_1/r}z_2^n\left(ut_n\right)\big)> \delta n^{1-a\left(1+\lambda_1/r\right)} C\left(a,n\right)+  n^{-a\lambda_1/r}z_1^n\left(at_n\right)+n^{-a\lambda_1/r}z_2^n\left(at_n\right)\Bigg).
\end{align*}
We note that $n^{-u\lambda_1/r}z_1^n\left(ut_n\right)+n^{-u\lambda_1/r}z_2^n\left(ut_n\right)$ is monotone increasing in $u$. Therefore,
\begin{align*}
& \quad \sup\limits_{u\in\left[0,a\right]}\left(n^{-a\lambda_1/r}z_1^n\left(at_n\right)+n^{-a\lambda_1/r}z_2^n\left(at_n\right)-n^{-u\lambda_1/r}z_1^n\left(ut_n\right)-n^{-u\lambda_1/r}z_2^n\left(ut_n\right)\right)\\
& =n^{-a\lambda_1/r}z_1^n\left(at_n\right)+n^{-a\lambda_1/r}z_2^n\left(at_n\right)-X\left(n\right).
\end{align*}
We now arrive at 
\begin{align*}
& \quad \PP\left(\sup\limits_{u\in\left[0,a\right]}n^{u-1}\left(A_1\left(u,n\right)+A_2\left(u,n\right)\right)>\delta C\left(a,n\right)\right)\\
& \le \PP\left(\sup\limits_{u\in\left[0,a\right]}n^{-u\lambda_1/r}\left(Z_1^n\left(ut_n\right)+Z_2^n\left(ut_n\right)\right)>\delta n^{1-a\left(1+\lambda_1/r\right)} C\left(a,n\right)\right)\\
& =\PP\left(\sup\limits_{u\in\left[0,a\right]}\exp \left(\frac{\theta \lambda_1}{r_1}n^{-u\lambda_1/r}\left(Z_1^n\left(ut_n\right)+Z_2^n\left(ut_n\right)\right)\right)>\exp \left(\frac{\theta \lambda_1}{r_1}\delta n^{1-a\left(1+\lambda_1/r\right)} C\left(a,n\right)\right)\right),
\end{align*}
where $\theta \in \left(0,1\right)$. The rest of the proof directly follows that of Proposition \ref{prop_recurrence_1}. \qed
\end{proof}
\begin{proposition}\label{proposition_crossover_2}
	There exists $C>0$ such that
	\begin{equation*}
	   \PP\left(\sup\limits_{u\in\left[0,a\right]}\left(A_3\left(u,n\right)+\left(1-\delta\right) A_4\left(u,n\right)\right)>0\right)=O\left(e^{-Cn^{1-a}}\right). 
	\end{equation*}
\end{proposition}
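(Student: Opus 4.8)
My plan is to turn the event into a lower-deviation event for the subcritical process $Z_0^n$ and then control it with the exponential submartingale $e^{-\theta e^{rt}Z_0^n(t)/n}$, exactly dual to the way Proposition~\ref{prop_recurrence_1} used the submartingale $\exp(\tfrac{\lambda_1\theta}{r_1}n^{-u\lambda_1/r}(Z_1^n+Z_2^n))$. Expanding \eqref{a3_crossover_1}--\eqref{a4_crossover_1},
\begin{equation*}
A_3(u,n)+(1-\delta)A_4(u,n)=\delta\,z_0^n(ut_n)+(1-\delta)(z_1^n(ut_n)+z_2^n(ut_n))-Z_0^n(ut_n),
\end{equation*}
so the event in question is $\{\exists u\in[0,a]:Z_0^n(ut_n)<\delta z_0^n(ut_n)+(1-\delta)(z_1^n+z_2^n)(ut_n)\}$. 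Since $z_1^n,z_2^n$ are nondecreasing and $z_0^n(ut_n)=n^{1-u}$ is nonincreasing in $u$, and since $a<\alpha r/(\lambda_1+r)$ (the regime in which this proposition is applied) gives $z_1^n(at_n)+z_2^n(at_n)=O(n^{1-\alpha+\lambda_1 a/r})=o(n^{1-a})$ while $z_0^n(ut_n)\ge n^{1-a}$ on $[0,a]$, one gets $(z_1^n+z_2^n)(ut_n)\le\frac12 z_0^n(ut_n)$ uniformly on $[0,a]$ for $n$ large; hence the right-hand threshold is at most $c\,n^{1-u}$ with $c:=\tfrac{1+\delta}{2}\in(0,1)$. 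Writing $Z_0^n(t)=\sum_{i=1}^n Z_0^{(i)}(t)$ for i.i.d.\ single-cell subcritical birth--death processes and $Y_n(u):=n^{u-1}Z_0^n(ut_n)=\frac1n\sum_i e^{rut_n}Z_0^{(i)}(ut_n)$, it then suffices to show $\PP(\inf_{u\in[0,a]}Y_n(u)<c)=O(e^{-Cn^{1-a}})$.

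\textbf{Step 2 (exponential maximal inequality).}
Because $e^{rt}Z_0^{(i)}(t)$ is a nonnegative martingale (the normalized subcritical process), $Y_n$ is a nonnegative right-continuous martingale with $Y_n(0)=1$, so for $\theta>0$ the family $\{e^{-\theta Y_n(u)}: u\in[0,a]\}$ is a bounded submartingale. Doob's maximal inequality yields
\begin{equation*}
\PP\left(\inf_{u\in[0,a]}Y_n(u)<c\right)\le e^{\theta c}\,\EE\left[e^{-\theta Y_n(a)}\right]=e^{\theta c}\left(\EE\left[e^{-(\theta/n)\,e^{rat_n}Z_0^{(1)}(at_n)}\right]\right)^n.
\end{equation*}

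\textbf{Step 3 (scaling and optimization).}
The crucial choice is $\theta=\beta n^{1-a}$ with $\beta>0$ a small constant, so that $(\theta/n)e^{rat_n}=\beta n^{-a}n^a=\beta$ and the single-cell factor becomes $\EE[e^{-\beta Z_0^{(1)}(at_n)}]=G_{at_n}(e^{-\beta})$, the subcritical generating function (cf.\ \cite{bp}) with no residual $n$-dependence. A direct computation with the linear birth--death generating function gives $1-G_{at_n}(e^{-\beta})=q_\beta\,n^{-a}(1+o(1))$ with $q_\beta=\dfrac{r\,(1-e^{-\beta})}{d_0-r_0 e^{-\beta}}$, hence $(\EE[e^{-\beta Z_0^{(1)}(at_n)}])^n=\exp(-q_\beta n^{1-a}(1+o(1)))$ and
\begin{equation*}
\PP\left(\inf_{u\in[0,a]}Y_n(u)<c\right)\le\exp\left(n^{1-a}(c\beta-q_\beta+o(1))\right).
\end{equation*}
Since $q_\beta/\beta\to1$ as $\beta\downarrow0$ while $c<1$, one fixes $\beta$ small enough that $q_\beta>c\beta$ and takes $C:=\tfrac12(q_\beta-c\beta)>0$ (small $n$ are absorbed into the $O$-constant).

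\textbf{Main obstacle.}
The only delicate point is the constant $q_\beta$ in Step~3: the crude bound $Z_0^{(1)}(at_n)\ge\mathbf{1}\{Z_0^{(1)}(at_n)>0\}$ only gives $q_\beta/\beta\to r/d_0$, which may fail to exceed $c$, so one must use the exact law of $Z_0^{(1)}(at_n)$ (geometric conditional on survival) to get $q_\beta/\beta\to1$ --- matching the martingale normalization $\EE[e^{rat_n}Z_0^{(1)}(at_n)]=1$. Everything else --- the reduction in Step~1 and the Doob estimate in Step~2 --- parallels arguments already in the paper.
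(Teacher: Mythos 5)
Your proof is correct and takes essentially the same approach as the paper: both reduce the event to a lower-deviation estimate for the normalized sensitive population, apply Doob's inequality to an exponential submartingale (your $e^{-\theta Y_n(u)}$ equals $e^{-\theta}\exp(\theta n^{u-1}A_3(u,n))$, which is exactly the paper's submartingale), scale $\theta=\beta n^{1-a}$, and compute the exact linear birth--death Laplace transform to obtain a decay constant $q_\beta$ with $q_\beta/\beta\to1$ (which is precisely the paper's Lemma~\ref{z0_decay} with $k(\theta)$). The only cosmetic difference is that in Step~1 you bound $(z_1^n+z_2^n)(ut_n)\le\tfrac12 z_0^n(ut_n)$ uniformly, whereas the paper identifies $\inf_u\bigl(-(1-\delta)n^{u-1}A_4(u,n)\bigr)$ via monotonicity before invoking Doob.
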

\begin{proof}\\
Observe that
\begin{align*}
& \quad \PP\left(\sup\limits_{u\in\left[0,a\right]}\left(A_3\left(u,n\right)+\left(1-\delta\right) A_4\left(u,n\right)>0\right)\right)\\
& \le \PP\left(\sup\limits_{u\in\left[0,a\right]}\left(n^{u-1}A_3\left(u,n\right)+\sup\limits_{u\in\left[0,a\right]}\left(1-\delta\right) n^{u-1}A_4\left(u,n\right)>0\right)\right)\\
& =\PP\left(\sup\limits_{u\in\left[0,a\right]}\left(n^{u-1}A_3\left(u,n\right)>\inf\limits_{u\in\left[0,a\right]}-\left(1-\delta\right) n^{u-1}A_4\left(u,n\right)\right)\right).
\end{align*}
We may compute that
\begin{align*}
& \quad -n^{u-1}A_4\left(u,n\right)\\
&=n^{u-1}\left(z_0^n\left(ut_n\right)-z_1^n\left(ut_n\right)-z_2^n\left(ut_n\right)\right)\\
&=n^{u-1}\left(n^{1-u}-X\left(n\right)n^{\frac{\lambda_1}{r}u}-\frac{\mu}{\lambda_1+r}n^{1-\alpha}e^{\lambda_1 u t_n}\left(1-e^{\left(\lambda_0-\lambda_1\right)u t_n}\right)\right)\\
&=1-X\left(n\right)n^{\left(1+\frac{\lambda_1}{r}\right)u-1}+\frac{\mu n^{-\alpha}}{\lambda_1+r}-\frac{\mu n^{u\left(1+\frac{\lambda_1}{r}\right)-\alpha}}{\lambda_1+r},
\end{align*}
which is a monotone decreasing function in $u$ and consequently obtain that
\begin{align*}
& \quad \PP\left(\sup\limits_{u\in\left[0,a\right]}\left(n^{u-1}A_3\left(u,n\right)>\inf\limits_{u\in\left[0,a\right]}-\left(1-\delta\right) n^{u-1}A_4\left(u,n\right)\right)\right)\\
& = \PP\left(\sup\limits_{u\in\left[0,a\right]}\left(n^{u-1}A_3\left(u,n\right)>\left(1-\delta\right) n^{a-1}\left(z_0^n\left(at_n\right)-z_1^n\left(at_n\right)-z_2^n\left(at_n\right)\right)\right)\right).
\end{align*}
If we define $\theta_n=\theta n^{1-a}>0$ then
\begin{align*}
& \quad \PP\left(\sup\limits_{u\in\left[0,a\right]}\left(n^{u-1}A_3\left(u,n\right)>\left(1-\delta\right) n^{a-1}\left(z_0^n\left(at_n\right)-z_1^n\left(at_n\right)-z_2^n\left(at_n\right)\right)\right)\right)\\
& =  \PP\left(\sup\limits_{u\in\left[0,a\right]}\exp\left(\theta_n n^{u-1}A_3\left(u,n\right)\right)>\exp\left(\left(1-\delta\right) \theta_n n^{a-1}\left(z_0^n\left(at_n\right)-z_1^n\left(at_n\right)-z_2^n\left(at_n\right)\right)\right)\right).
\end{align*}
It's easy to show that $\exp\left(\theta_n n^{u-1}A_3\left(u,n\right)\right)$ is a submartingale in $u$. From Doob's Inequality we obtain that
\begin{align*}
& \quad \PP\left(\sup\limits_{u\in\left[0,a\right]}n^{u-1}A_3\left(u,n\right)>\left(1-\delta\right) n^{a-1}\left(z_0^n\left(at_n\right)-z_1^n\left(at_n\right)-z_2^n\left(at_n\right)\right)\right)\\
& \le \exp\left(- \theta_n\left(1-\delta\right) n^{a-1}\left(z_0^n\left(at_n\right)-z_1^n\left(at_n\right)-z_2^n\left(at_n\right)\right)\right)\EE\exp\left[\theta_n n^{a-1}A_3\left(a,n\right)\right]\\
& =\exp \left[\delta \theta_n+\theta\left(1-\delta\right)X\left(n\right)n^{\frac{\lambda_1 a}{r}}+\frac{\left(1-\delta\right)\theta \mu n^{1-\alpha+\frac{\lambda_1 a}{r}}}{\left(\lambda_1+r\right)}\left(1-n^{-a\left(1+\frac{\lambda_1}{r}\right)}\right)\right]\EE\exp\left[-\theta Z_0^n\left(at_n\right)\right]\\
& \le \exp \left[\delta \theta_n+\theta\left(1-\delta\right)X\left(n\right)n^{\frac{\lambda_1 a}{r}}+\frac{\theta \mu n^{1-\alpha+\frac{\lambda_1 a}{r}}}{\left(\lambda_1+r\right)}\right]\EE\exp\left[-\theta Z_0^n\left(at_n\right)\right],
\end{align*}
where the last inequality holds for $n$ sufficiently large. To continue our analysis, we will state and prove the following lemma:
\begin{lemma}\label{z0_decay}
For $\theta>0$, $\EE \exp \left[-\theta Z_0^n\left(at_n\right)\right]\le \exp\left[-k\left(\theta\right) n^{1-a}\left(1+o\left(1\right)\right)\right]$, where $k\left(\theta\right)$ is a positive number depending on $\theta$. Moreover, $\lim\limits_{\theta \downarrow 0}\frac{k\left(\theta\right)}{\theta}=1$.
\end{lemma}
Proof. Let $\{Z_0\left(t\right), t\ge 0\}$ be a subcritical branching process where $Z_0\left(0\right)=1$. If we define the function $g_t\left(\theta\right)=\EE \left[\exp\left(-\theta Z_0\left(t\right)\right)\right]$, then
\begin{equation*}
g_t\left(\theta\right)=\frac{e^{rt}\left(r_0e^{-\theta}-d_0\right)-d_0\left(e^{-\theta}-1\right)}{e^{rt}\left(r_0e^{-\theta}-d_0\right)-r_0\left(e^{-\theta}-1\right)}.
\end{equation*}
Let $c=r_0e^{-\theta}-d_0$, $e=d_0\left(e^{-\theta}-1\right)$, and $f=r_0\left(e^{-\theta}-1\right)$. Then
\begin{align*}
g_{at_n}\left(\theta\right)& =\frac{cn^a-e}{cn^a-f}\\
&=1+\frac{f}{cn^a-f}-\frac{e}{cn^a-f}\\
&=1+\frac{f}{cn^a}+\frac{f^2}{c^2n^{2a}-cfn^a}-\frac{e}{cn^a}-\frac{ef}{c^2n^{2a}-cfn^a}\\
& =1-\frac{e-f}{cn^a}-\frac{e-f}{cn^a}\left[\frac{f}{cn^a-f}\right]
\end{align*}
and note that
\begin{equation*}
g_{at_n}\left(\theta\right)=1-k\left(\theta\right)n^{-a}\left(1+o\left(1\right)\right).
\end{equation*}
where $k\left(\theta\right)=\frac{e-f}{c}>0$. From the independence of the branching process we know that
\begin{equation*}
\EE \exp \left[-\theta Z_0^n\left(at_n\right)\right]=\left(g_{at_n}\left(\theta\right)\right)^n\le \exp\left[-kn^{1-a}\left(1+o\left(1\right)\right)\right].
\end{equation*}
Moreover, 
\begin{align*}
\lim\limits_{\theta \downarrow 0}\frac{k\left(\theta\right)}{\theta}&=\lim\limits_{\theta \downarrow 0}\frac{d_0\left(e^{-\theta}-1\right)-r_0\left(e^{-\theta}-1\right)}{\left(r_0e^{-\theta}-d_0\right)\theta}\\
& =\lim\limits_{\theta \downarrow 0}\frac{-d_0e^{-\theta}+r_0e^{-\theta}}{-r_0e^{-\theta}\theta+r_0e^{-\theta}-d_0}\\
& =1.
\end{align*} \qed 
\\
We may now see from Lemma \ref{z0_decay} that
\begin{align*}
& \quad \PP\left(\sup\limits_{u\in\left[0,a\right]}\left(n^{u-1}A_3\left(u,n\right)>\left(1-\delta\right) n^{a-1}\left(z_0^n\left(at_n\right)-z_1^n\left(at_n\right)-z_2^n\left(at_n\right)\right)\right)\right)\\
& \le \exp \left[\delta \theta n^{1-a}+\theta\left(1-\delta\right)X\left(n\right)n^{\frac{\lambda_1 a}{r}}+\frac{\theta \mu n^{1-\alpha+\frac{\lambda_1 a}{r}}}{\left(\lambda_1+r\right)}-k\left(\theta\right) n^{1-a}\left(1+o\left(1\right)\right)\right]\\
& \le \exp \left[-n^{1-a}\left(k\left(\theta\right)\left(1+o\left(1\right)\right)-\delta \theta -\theta\left(1-\delta\right)X\left(n\right)n^{\frac{\lambda_1 a}{r}+a-1}-\frac{\theta \mu n^{a\left(1+\frac{\lambda_1}{r}\right)-\alpha}}{\left(\lambda_1+r\right)}\right)\right].
\end{align*}
Using the fact that $a\left(1+\frac{\lambda_1}{r}\right)-\alpha<0$, $X\left(n\right)=o\left(n^{1-\alpha}\right)$, and taking $\theta$ sufficiently small, we may define $C$ such that $0<C<k\left(\theta\right)-\delta \theta$ and obtain
\begin{equation*}
\exp \left[-n^{1-a}\left(k\left(\theta\right)\left(1+o\left(1\right)\right)-\delta \theta -\theta\left(1-\delta\right)X\left(n\right)n^{\frac{\lambda_1 a}{r}+a-1}-\frac{\theta \mu n^{a\left(1+\frac{\lambda_1}{r}\right)-\alpha}}{\left(\lambda_1+r\right)}\right)\right]=O\left(e^{-Cn^{1-a}}\right),
\end{equation*}
which completes the proof. \qed
\end{proof}
\\

\begin{proposition} \label{proposition_crossover_3}
There exists $C>0$ such that
\begin{equation*}
    \PP\left(\sup\limits_{u\in\left[a,u_n\left(y\right)\right]}\left(A_3\left(u,n\right)+\delta A_4\left(u,n\right)\right)>0\right)=O\left(\exp\left(-Cn^{1-\alpha r/\left(\lambda_1+r\right)}\right)\right).
\end{equation*}
\end{proposition}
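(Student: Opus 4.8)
The plan is to follow the scheme of Proposition \ref{proposition_crossover_2}, the new features being that the sub-interval has a moving right endpoint $u_n\left(y\right)\to\alpha r/\left(\lambda_1+r\right)$ and that the subcritical process has to be tilted on the right scale; I argue, as throughout this section, in case $(1)$, so $X\left(n\right)=o\left(n^{1-\alpha}\right)$. First I would reduce to a lower-deviation estimate for a martingale. Put $M_n\left(u\right)=n^{u-1}Z_0^n\left(ut_n\right)$; since $e^{rt}Z_0^n\left(t\right)$ is a martingale and $Z_0^n\left(0\right)=n$, $M_n$ is a nonnegative right-continuous martingale in $u$ with $M_n\left(0\right)=1$, and $n^{u-1}z_0^n\left(ut_n\right)=1$ gives $n^{u-1}A_3\left(u,n\right)=1-M_n\left(u\right)$. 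As in Proposition \ref{proposition_crossover_2}, $-n^{u-1}A_4\left(u,n\right)=n^{u-1}\left(z_0^n-z_1^n-z_2^n\right)\left(ut_n\right)$ is decreasing in $u$, so with $\tilde C\left(y,n\right):=n^{u_n\left(y\right)-1}\left(z_0^n-z_1^n-z_2^n\right)\left(u_n\left(y\right)t_n\right)$ (which is $>0$ because $u_n\left(y\right)t_n=\xi_n-y<\xi_n$) I get
\begin{align*}
\PP\left(\sup\limits_{u\in\left[a,u_n\left(y\right)\right]}\left(A_3\left(u,n\right)+\delta A_4\left(u,n\right)\right)>0\right)&\le\PP\left(\sup\limits_{u\in\left[a,u_n\left(y\right)\right]}n^{u-1}A_3\left(u,n\right)>\delta\tilde C\left(y,n\right)\right)\\&=\PP\left(\inf\limits_{u\in\left[a,u_n\left(y\right)\right]}M_n\left(u\right)<1-\delta\tilde C\left(y,n\right)\right).
\end{align*}
Because $u_n\left(y\right)t_n=\xi_n-y$ lies a \emph{fixed} distance $y$ before the crossover, using $z_1^n\left(\xi_n\right)+z_2^n\left(\xi_n\right)=z_0^n\left(\xi_n\right)$, $n^{u_n\left(y\right)-1}z_0^n\left(u_n\left(y\right)t_n\right)=1$, and $n^{u_n\left(y\right)-1}z_1^n\left(u_n\left(y\right)t_n\right)\to0$, one checks $\tilde C\left(y,n\right)\to1-e^{-\left(\lambda_1+r\right)y}=:\tilde C_\infty\in\left(0,1\right)$, so $1-\delta\tilde C\left(y,n\right)\to1-\delta\tilde C_\infty<1$.

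Next I would run a Chernoff estimate at the endpoint $u_n\left(y\right)$. For $\theta>0$ set $\theta_n=\theta n^{1-u_n\left(y\right)}$; as $e^{-\theta_n M_n\left(u\right)}$ is a nonnegative submartingale, Doob's maximal inequality yields
\begin{equation*}
\PP\left(\inf\limits_{u\in\left[a,u_n\left(y\right)\right]}M_n\left(u\right)<1-\delta\tilde C\left(y,n\right)\right)\le e^{\theta_n\left(1-\delta\tilde C\left(y,n\right)\right)}\,\EE\exp\left(-\theta_n M_n\left(u_n\left(y\right)\right)\right)=e^{\theta_n\left(1-\delta\tilde C\left(y,n\right)\right)}\,\EE\exp\left(-\theta Z_0^n\left(u_n\left(y\right)t_n\right)\right),
\end{equation*}
since $\theta_n M_n\left(u_n\left(y\right)\right)=\theta Z_0^n\left(u_n\left(y\right)t_n\right)$. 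The computation in the proof of Lemma \ref{z0_decay}, carried out at time $u_n\left(y\right)t_n$ in place of a fixed time $at_n$ (it uses only $e^{ru_n\left(y\right)t_n}=n^{u_n\left(y\right)}\to\infty$), gives $\EE\exp\left(-\theta Z_0^n\left(u_n\left(y\right)t_n\right)\right)\le\exp\left(-k\left(\theta\right)n^{1-u_n\left(y\right)}\left(1+o\left(1\right)\right)\right)$ with $k\left(\theta\right)>0$ and $k\left(\theta\right)/\theta\to1$ as $\theta\downarrow0$. Thus the probability in the proposition is at most $\exp\left(n^{1-u_n\left(y\right)}\left[\theta\left(1-\delta\tilde C\left(y,n\right)\right)-k\left(\theta\right)\left(1+o\left(1\right)\right)\right]\right)$.

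Finally, since $1-\delta\tilde C\left(y,n\right)\to1-\delta\tilde C_\infty<1$ I may fix $\bar{c}<1$ with $1-\delta\tilde C\left(y,n\right)\le\bar{c}$ for all large $n$; as $k\left(\theta\right)/\theta\to1>\bar{c}$, choose $\theta>0$ small so that $k\left(\theta\right)>\bar{c}\theta$, whence the bracket above is at most $-C_0:=\tfrac{1}{2}\left(\theta\bar{c}-k\left(\theta\right)\right)<0$ for large $n$ and the probability is $\le e^{-C_0 n^{1-u_n\left(y\right)}}$. Since $n^{1-u_n\left(y\right)}=\exp\left(rt_n-ru_n\left(y\right)t_n\right)=n\,e^{-r\left(\xi_n-y\right)}=\Theta\left(n^{1-\alpha r/\left(\lambda_1+r\right)}\right)$ by the asymptotics $\xi_n=\frac{\alpha}{\lambda_1+r}\log n+O\left(1\right)$, this is $O\left(\exp\left(-Cn^{1-\alpha r/\left(\lambda_1+r\right)}\right)\right)$. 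The main obstacle is the middle step: at the relevant time $u_n\left(y\right)t_n\sim\frac{\alpha}{\lambda_1+r}\log n$ the process $Z_0^n\left(u_n\left(y\right)t_n\right)$ is a sum of $n$ mostly-extinct subcritical clones, so a useful exponential tilt of $M_n$ forces the scaling $\theta_n=\theta n^{1-u_n\left(y\right)}$, and the estimate closes only because Lemma \ref{z0_decay} supplies the sharp rate $k\left(\theta\right)/\theta\to1$, which is exactly what lets us beat the constant $1-\delta\tilde C_\infty<1$.
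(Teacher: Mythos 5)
Your proof is correct and follows essentially the same route as the paper's: reduce to a lower-deviation bound for the martingale $n^{u-1}Z_0^n\left(ut_n\right)$ using the monotonicity of $-n^{u-1}A_4\left(u,n\right)$, apply Doob's maximal inequality to the exponential submartingale with the scaling $\theta_n=\theta n^{1-u_n\left(y\right)}$, invoke Lemma \ref{z0_decay} at the moving time $u_n\left(y\right)t_n$, and close by choosing $\theta$ small using $k\left(\theta\right)/\theta\to 1$. Your presentation is a clean reorganization (explicitly identifying the limiting constant $1-e^{-\left(\lambda_1+r\right)y}$ and phrasing things via $M_n\left(u\right)$), but the underlying argument and all key estimates are the same as in the paper.
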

\begin{proof}\\
Observe that
\begin{align*}
& \quad \PP\left(\sup\limits_{u\in\left[a,u_n\left(y\right)\right]}\left(A_3\left(u,n\right)+\delta A_4\left(u,n\right)\right)>0\right)\\
&\le \PP\left(\sup\limits_{u\in\left[a,u_n\left(y\right)\right]}n^{u-1}A_3\left(u,n\right)+\sup\limits_{u\in\left[a,u_n\left(y\right)\right]} \delta n^{u-1} A_4\left(u,n\right)>0\right)\\
& = \PP\left(\sup\limits_{u\in\left[a,u_n\left(y\right)\right]}n^{u-1}A_3\left(u,n\right)> \delta n^{u_n\left(y\right)-1} \left(z_0^n\left(u_n\left(y\right)t_n\right)-z_1^n\left(u_n\left(y\right)t_n\right)-z_2^n\left(u_n\left(y\right)t_n\right)\right)\right).
\end{align*}
It's easy to show that $n^{u-1}A_3\left(u,n\right)$ is a martingale in $u$. Set
\begin{align*}
& C\left(u_n\left(y\right),n\right)=n^{u_n\left(y\right)-1} \left(z_0^n\left(u_n\left(y\right)t_n\right)-z_1^n\left(u_n\left(y\right)t_n\right)-z_2^n\left(u_n\left(y\right)t_n\right)\right),
\end{align*}
and $\theta_n=\theta n^{1-u_n\left(y\right)}$, where $\theta>0$, then
\begin{align*}
& \quad \PP\left(\sup\limits_{u\in\left[a,u_n\left(y\right)\right]}n^{u-1}A_3\left(u,n\right)> \delta C\left(u_n\left(y\right),n\right)\right)\\
& =\PP\left(\sup\limits_{u\in\left[a,u_n\left(y\right)\right]}\exp \left(\theta_n n^{u-1}A_3\left(u,n\right)\right)> \exp\left(\theta_n \delta C\left(u_n\left(y\right),n\right)\right)\right)\\
& \le \exp\left(-\theta_n \delta C\left(u_n\left(y\right),n\right)\right)\EE \exp \left[\theta_n n^{u_n\left(y\right)-1}A_3\left(u_n\left(y\right),n\right)\right]\\
& = \exp\left[-\theta_n \delta \left(1-X\left(n\right)n^{\left(1+\frac{\lambda_1}{r}\right)u_n\left(y\right)-1}+\frac{\mu n^{-\alpha}}{\lambda_1+r}-\frac{\mu n^{u_n\left(y\right)\left(1+\frac{\lambda_1}{r}\right)-\alpha}}{\lambda_1+r}\right)\right]\EE \exp \left[\theta A_3\left(u_n\left(y\right),n\right)\right]\\
&=  \exp\left[-\theta_n \delta \left(1-X\left(n\right)n^{\left(1+\frac{\lambda_1}{r}\right)u_n\left(y\right)-1}+\frac{\mu n^{-\alpha}}{\lambda_1+r}-\frac{\mu n^{u_n\left(y\right)\left(1+\frac{\lambda_1}{r}\right)-\alpha}}{\lambda_1+r}\right)\right]\exp\left(\theta_n\right)\EE \exp \left[-\theta Z_0^n\left(u_n\left(y\right)t_n\right)\right]\\
&= \exp\left[\theta_n n^{u_n\left(y\right)-1}\left(\left(1-\delta\right)n^{1-u_n\left(y\right)}+\frac{\delta \mu n^{1-\alpha}}{\lambda_1+r}\left[n^{\lambda_1 u_n\left(y\right)/r}-n^{-u_n\left(y\right)}\right]+\delta X\left(n\right)n^{\lambda_1 u_n\left(y\right)/r}\right)\right]\\
& \quad \times \EE \exp \left[-\theta Z_0^n\left(u_n\left(y\right)t_n\right)\right]\\
& \le \exp\left[\theta_n \left(\left(1-\delta\right)+\frac{\delta \mu }{\lambda_1+r}n^{u_n\left(y\right)\left(1+\lambda_1/r\right)-\alpha}+\delta X\left(n\right)n^{u_n\left(y\right)\left(1+\lambda_1/r\right)-1}\right)\right]\EE \exp \left[-\theta Z_0^n\left(u_n\left(y\right)t_n\right)\right]\\
& = \exp\left[\theta \left(1-\delta\right) n^{1-u_n\left(y\right)}+\frac{\theta \delta \mu }{\lambda_1+r}n^{1+u_n\left(y\right)\lambda_1/r-\alpha}+\theta \delta X\left(n\right)n^{u_n\left(y\right)\lambda_1/r}\right]\EE \exp \left[-\theta Z_0^n\left(u_n\left(y\right)t_n\right)\right].
\end{align*}
We may apply the proof of Lemma \ref{z0_decay} to show that
\begin{align*}
\EE \exp\left[-\theta Z_0^n\left(u_n\left(y\right)t_n\right)\right]\le \exp\left(-k\left(\theta\right)n^{1-u_n\left(y\right)}\left(1+o\left(1\right)\right)\right).
\end{align*}
Hence,
\begin{align*}
& \quad \PP\left(\sup\limits_{u\in\left[a,u_n\left(y\right)\right]}n^{u-1}A_3\left(u,n\right)> \delta C\left(u_n\left(y\right),n\right)\right)\\
& \le \exp\left(n^{1-u_n\left(y\right)}\left[\theta \left(1-\delta\right)-k\left(\theta\right)\left(1+o\left(1\right)\right)\right]+\frac{\theta \delta \mu }{\lambda_1+r}n^{1+u_n\left(y\right)\lambda_1/r-\alpha}+\theta \delta X\left(n\right)n^{u_n\left(y\right)\lambda_1/r}\right)\\
& =\exp\left[-n^{1-u_n\left(y\right)} \left(\left[k\left(\theta\right)\left(1+o\left(1\right)\right)-\theta \left(1-\delta\right)\right]-\frac{\theta \delta \mu }{\lambda_1+r}n^{u_n\left(y\right)\left(1+\lambda_1/r\right)-\alpha}-\theta \delta X\left(n\right)n^{u_n\left(y\right)\left(1+\lambda_1/r\right)-1}\right)\right]\\
& =\exp\left[-n^{1-u_n\left(y\right)} \left(\left[k\left(\theta\right)\left(1+o\left(1\right)\right)-\theta\right]+\theta \delta\left[1-\frac{\mu }{\lambda_1+r}n^{u_n\left(y\right)\left(1+\lambda_1/r\right)-\alpha}-X\left(n\right)n^{u_n\left(y\right)\left(1+\lambda_1/r\right)-1}\right]\right)\right].
\end{align*}
We can show that
\begin{align*}
& \quad n^{u_n\left(y\right)\left(1+\lambda_1/r\right)-\alpha}\rightarrow \exp\left(-y\left(r+\lambda_1\right)\right)\frac{\lambda_1+r}{\mu} \text{ as } n\rightarrow \infty, \text{ and }\\
& \quad X\left(n\right) n^{u_n\left(y\right)\left(1+\lambda_1/r\right)-1}\rightarrow 0 \text{ as } n\rightarrow \infty.
\end{align*}
Hence if $\theta$ is chosen to be small enough, then by Lemma \ref{z0_decay}, for $n$ sufficiently large the quantity
\begin{align*}
& \left[k\left(\theta\right)\left(1+o\left(1\right)\right)-\theta\right]+\theta \delta\left[1-\frac{\mu }{\lambda_1+r}n^{u_n\left(y\right)\left(1+\lambda_1/r\right)-\alpha}-X\left(n\right)n^{u_n\left(y\right)\left(1+\lambda_1/r\right)-1}\right]
\end{align*}
is strictly positive. In fact, for $n$ sufficiently large, the quantity is bounded below by some positive constant $K$. Then we have that
\begin{equation*}
\PP\left(\sup\limits_{u\in\left[a,u_n\left(y\right)\right]}n^{u-1}A_3\left(u,n\right)> \delta C\left(u_n\left(y\right),n\right)\right)\le \exp\left(-Kn^{1-u_n\left(y\right)}\right).
\end{equation*}
We can show that for $n$ sufficiently large
\begin{align*}
& n^{1-u_n\left(y\right)}\ge  n \exp\left(ry\right)\left(2n^{\alpha}\left(\frac{\lambda_1+r}{\mu}\right)\right)^{-r/\left(\lambda_1+r\right)}.
\end{align*}
Hence, for $n$ sufficiently large,
\begin{align*}
& \quad \PP\left(\sup\limits_{u\in\left[a,u_n\left(y\right)\right]}\left(A_3\left(u,n\right)+\delta A_4\left(u,n\right)>0\right)\right)\\
& \le \PP\left(\sup\limits_{u\in\left[a,u_n\left(y\right)\right]}n^{u-1}A_3\left(u,n\right)> \delta C\left(u_n\left(y\right),n\right)\right)\\
& \le \exp\left(-Kn \exp\left(ry\right)\left(2n^{\alpha}\left(\frac{\lambda_1+r}{\mu}\right)\right)^{-r/\left(\lambda_1+r\right)}\right)\\
& =O\left(\exp\left(-Cn^{1-\alpha r/\left(\lambda_1+r\right)}\right)\right)
\end{align*}
where $C=K\exp\left(ry\right)\left(2\left(\frac{\lambda_1+r}{\mu}\right)\right)^{-r/\left(\lambda_1+r\right)}$, completing the proof. \qed
\end{proof}
\\

\begin{proposition}\label{proposition_crossover_4}
There exists $C>0$ such that
\begin{equation*}\label{sub_1}
\PP\left(Z_0^n\left(u_n\left(y\right)t_n\right)> \left(1+a_n\right)z_0^n\left(u_n\left(y\right)t_n\right)\right)=O\left(\exp\left(-Cn^{1-\alpha\left(2r+\lambda_1\right)/2\left(\lambda_1+r\right)}\right)\right).
\end{equation*}
\end{proposition}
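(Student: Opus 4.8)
The plan is to apply an exponential Markov (Chernoff) inequality to $Z_0^n(u_n(y)t_n)$, regarded as a sum of $n$ i.i.d.\ copies of a single-ancestor subcritical birth--death process, and to tune the exponential tilt as a slowly vanishing function of $a_n$. Write $s_n=u_n(y)t_n=\xi_n-y$, let $Z_0$ be the single-particle process with rates $r_0,d_0$ and $\lambda_0=-r$, and let $\phi_t^{(0)}(\theta)=\EE e^{\theta Z_0(t)}$, which is the analogue of \eqref{generating_function} with $(\lambda_1,r_1,d_1)$ replaced by $(\lambda_0,r_0,d_0)$ and is finite for $\theta$ below the analogue of \eqref{bar_theta}; that threshold tends to $\log(d_0/r_0)>0$ as $t\to\infty$, so fix $\theta_0\in(0,\log(d_0/r_0))$, which is admissible once $n$ (hence $s_n$) is large. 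Independence of the $n$ initial cells then gives, for every $\theta\in(0,\theta_0]$,
\[
\PP\bigl(Z_0^n(s_n)>(1+a_n)z_0^n(s_n)\bigr)\le e^{-\theta(1+a_n)z_0^n(s_n)}\bigl(\phi_{s_n}^{(0)}(\theta)\bigr)^n.
\]

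The crucial step is a second-order expansion of $\phi_t^{(0)}$ that is uniform in $t$. From the explicit formula one gets
\[
\phi_t^{(0)}(\theta)-1=\frac{r\,(e^\theta-1)}{e^{rt}\,(d_0-r_0e^\theta)+r_0(e^\theta-1)},
\]
and since $d_0-r_0e^\theta\ge d_0-r_0e^{\theta_0}>0$ on $[0,\theta_0]$, it follows that there are constants $c_\ast>0$ and $s_0$ with $\phi_t^{(0)}(\theta)\le 1+\theta e^{-rt}(1+c_\ast\theta)$ for all $t\ge s_0$ and $\theta\in[0,\theta_0]$ (equivalently, $(\phi_t^{(0)})'(0)=e^{-rt}$ together with a uniform bound $(\phi_t^{(0)})''(\xi)=\EE[Z_0(t)^2e^{\xi Z_0(t)}]=O(e^{-rt})$ for $\xi\in[0,\theta_0]$, obtained by conditioning on $\{Z_0(t)\ge1\}$, an event of probability $\Theta(e^{-rt})$, whose conditional law is dominated uniformly in $t$ by a geometric law with ratio bounded away from $1$). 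I expect this uniform $e^{-rt}$ decay of the quadratic term to be the main obstacle: the merely bounded estimate from Lemma \ref{second_derivative} would yield only the weaker exponent $1-2\alpha r/(\lambda_1+r)$, whereas the refined decay is exactly what produces the exponent claimed in the statement.

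Raising the bound to the $n$-th power and using $ne^{-rs_n}=z_0^n(s_n)$ turns the Chernoff inequality into
\[
\PP\bigl(Z_0^n(s_n)>(1+a_n)z_0^n(s_n)\bigr)\le\exp\!\bigl(z_0^n(s_n)\,(c_\ast\theta^2-a_n\theta)\bigr),
\]
and choosing $\theta=a_n/(2c_\ast)$ --- which lies in $(0,\theta_0]$ for $n$ large since $a_n\downarrow0$ --- gives the bound $\exp\bigl(-a_n^2 z_0^n(s_n)/(4c_\ast)\bigr)$. Finally, since $X(n)=0$ here, $\xi_n$ solves $z_2^n(t)=z_0^n(t)$, i.e.\ $e^{(\lambda_1+r)\xi_n}=1+(\lambda_1+r)n^\alpha/\mu$, so as in Lemma \ref{c_2_recurrence_1} one has $z_0^n(s_n)=ne^{-r(\xi_n-y)}\sim e^{ry}\bigl(\mu/(\lambda_1+r)\bigr)^{r/(\lambda_1+r)}n^{1-\alpha r/(\lambda_1+r)}$. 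Taking $a_n=n^{-\alpha\lambda_1/(4(\lambda_1+r))}$ (the sequence used in Proposition \ref{crossover_LDLB}) makes $a_n^2z_0^n(s_n)=\Theta\bigl(n^{1-\alpha(2r+\lambda_1)/(2(\lambda_1+r))}\bigr)$, so the bound is $O\bigl(\exp(-Cn^{1-\alpha(2r+\lambda_1)/(2(\lambda_1+r))})\bigr)$ for any $C$ strictly below $\tfrac1{4c_\ast}e^{ry}(\mu/(\lambda_1+r))^{r/(\lambda_1+r)}$, which is the assertion.
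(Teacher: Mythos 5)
Your proof is correct and lands on the same target exponent, but the route differs in a way worth noting. The paper's proof of Proposition~\ref{proposition_crossover_4} works directly with the explicit moment generating function of $Z_0(t)$ (as written out in the proof itself), bounds $e^{b_n}-1$ by $b_n/(1-b_n)$, raises to the $n$-th power via independence, and then fixes $b_n=a_n^2$ (a tilt that is \emph{quadratic} in $a_n$) with the choice $a_n=n^{-\alpha\lambda_1/(6(\lambda_1+r))}$, which makes the exponent scale like $a_n^3 n^{1-u_n(y)}$. You instead prove the uniform-in-$t$ second-order estimate $\phi_t^{(0)}(\theta)\le 1+\theta e^{-rt}(1+c_*\theta)$ --- justified correctly by conditioning on $\{Z_0(t)\ge 1\}$, which has probability $\Theta(e^{-rt})$ and conditional law geometric with ratio $q_t\le r_0/d_0<1$ uniformly in $t$ --- and then optimize the Chernoff tilt to $\theta=a_n/(2c_*)$, which is \emph{linear} in $a_n$ and gives the exponent $a_n^2 n^{1-u_n(y)}$. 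Both land on $n^{1-\alpha(2r+\lambda_1)/2(\lambda_1+r)}$ after substituting the respective $a_n$, but your tilt is the optimal one, so your $a_n=n^{-\alpha\lambda_1/(4(\lambda_1+r))}$ decays more slowly than the paper's $n^{-\alpha\lambda_1/(6(\lambda_1+r))}$; this is harmless since the proof of Proposition~\ref{crossover_LDLB} only requires $a_n\downarrow 0$. Your diagnosis that the essential point is the $e^{-rt}$ decay (not mere boundedness, as in Lemma~\ref{second_derivative}) of the quadratic remainder is exactly right and is precisely what the paper's explicit-MGF calculation exploits implicitly through the factor $e^{ru_n(y)t_n}=n^{u_n(y)}$ in the denominator. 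One small inaccuracy: the sequence $a_n$ is not fixed in Proposition~\ref{crossover_LDLB}; the paper fixes it in the proof of Proposition~\ref{proposition_crossover_4} itself, and its choice of $a_n$ is not yours, though both are admissible.
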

\begin{proof}\\
Recall that the moment generating function of $Z_0\left(t\right)$ is defined as
\[ \EE\exp\left(\theta Z_0\left(t\right)\right)=\EE\exp\left(\theta \tilde{Z}\left(t\right)\right)=\begin{cases} 
\frac{d_0\left(e^{\theta}-1\right)-e^{rt}\left(r_0e^{\theta}-d_0\right)}{r_0\left(e^{\theta}-1\right)-e^{rt}\left(r_0e^{\theta}-d_0\right)}, & \theta<\bar{\theta}_t \\
\infty & \theta\ge\bar{\theta}_t,
\end{cases}
\]
where
\begin{equation*}
\bar{\theta}_t\doteq\log\left(1+\frac{r}{r_0\left(1-e^{-rt}\right)}\right).
\end{equation*}
From the fact $u_n\left(y\right)t_n\rightarrow \infty$ as $n\rightarrow \infty$, we see that for $n$ sufficiently large, $\bar{\theta}_{u_n\left(y\right)t_n}$ is bounded below by $\log\left(1+\frac{r}{r_0}\right)>0$. Therefore, if we define a positive sequence $\{b_n\}_{n\in \NN}$ such that $b_n\downarrow 0$ then there exists $N\in \NN$ such that for $n>N$, $0<b_n<\log\left(1+\frac{r}{r_0}\right)$, and $\bar{\theta}_{u_n\left(y\right)t_n}\ge \log\left(1+\frac{r}{r_0}\right)$. It follows for $n\ge N$ that
\begin{align*}
& \quad \PP\left(Z_0^n\left(u_n\left(y\right)t_n\right)> \left(1+a_n\right)z_0^n\left(u_n\left(y\right)t_n\right)\right)\\
&\le \left(\EE\left[\exp\left(b_nZ_0\left(u_n\left(y\right)t_n\right)\right)\right]\right)^n\exp\left(-b_n\left(1+a_n\right)z_0^n\left(u_n\left(y\right)t_n\right)\right)\\
&=\left(1+\frac{\left(d_0-r_0\right)\left(e^{b_n}-1\right)}{r_0\left(e^{b_n}-1\right)+e^{ru_n\left(y\right)t_n}\left(d_0-r_0e^{b_n}\right)}\right)^n\exp\left(-b_n\left(1+a_n\right)z_0^n\left(u_n\left(y\right)t_n\right)\right)\\
&\le \exp\left(\frac{n\left(d_0-r_0\right)\left(e^{b_n}-1\right)}{r_0\left(e^{b_n}-1\right)+n^{u_n\left(y\right)}\left(d_0-r_0e^{b_n}\right)}\right)\exp\left(-b_n\left(1+a_n\right)z_0^n\left(u_n\left(y\right)t_n\right)\right)\\
&\le \exp\left(\frac{n\left(d_0-r_0\right)\left(e^{b_n}-1\right)}{n^{u_n\left(y\right)}\left(d_0-r_0e^{b_n}\right)}\right)\exp\left(-b_n\left(1+a_n\right)z_0^n\left(u_n\left(y\right)t_n\right)\right)\\
&=\exp\left(\frac{n^{1-u_n\left(y\right)}\left(d_0-r_0\right)\left(e^{b_n}-1\right)}{d_0-r_0-r_0\left(e^{b_n}-1\right)}\right)\exp\left(-b_n\left(1+a_n\right)z_0^n\left(u_n\left(y\right)t_n\right)\right).
\end{align*}
Since $b_n\downarrow 0$ there exists $N_1\in \NN$ such that $0<b_n<\min\left(\log\left(1+\frac{r}{r_0}\right),1\right)$ and $d_0-r_0\left(\frac{1}{1-b_n}\right)>0$ for $n\ge N_1$, we may apply the inequality $e^{b_n}-1<\frac{b_n}{1-b_n}$ to obtain that
\begin{equation*}
\exp\left(\frac{n^{1-u_n\left(y\right)}\left(d_0-r_0\right)\left(e^{b_n}-1\right)}{d_0-r_0-r_0\left(e^{b_n}-1\right)}\right)\le \exp\left(\frac{n^{1-u_n\left(y\right)}\left(d_0-r_0\right)\frac{b_n}{1-b_n}}{d_0-r_0\left(\frac{1}{1-b_n}\right)}\right)=\exp\left(\frac{b_nn^{1-u_n\left(y\right)}\left(d_0-r_0\right)}{d_0-r_0-b_nd_0}\right).
\end{equation*}
Using the fact $z_0^n\left(u_n\left(y\right)t_n\right)=n^{1-u_n\left(y\right)}$ we now have that
\begin{align*}
&\quad \PP\left(Z_0^n\left(u_n\left(y\right)t_n\right)>\left(1+a_n\right)z_0^n\left(u_n\left(y\right)t_n\right)\right)\\
&\le \exp\left(-b_nn^{1-u_n\left(y\right)}\left[\left(1+a_n\right)-\frac{d_0-r_0}{d_0-r_0-b_nd_0}\right]\right)\\
&=\exp\left(-b_nn^{1-u_n\left(y\right)}\left[a_n-\frac{b_nd_0}{d_0-r_0-b_nd_0}\right]\right)\\
&=\exp\left(-b_na_nn^{1-u_n\left(y\right)}\left[1-\frac{b_nd_0}{a_n\left(d_0-r_0-b_nd_0\right)}\right]\right).
\end{align*}
It's easy to show that $n^{1-u_n\left(y\right)}=\Theta\left(n^{1-\alpha r/\left(\lambda_1+r\right)}\right)$. Now if we set $b_n=a_n^2$ and $a_n=n^{-\frac{\alpha \lambda_1}{6\left(\lambda_1+r\right)}}$ then
\begin{equation*}
\exp\left(-b_na_nn^{1-u_n\left(y\right)}\left[1-\frac{b_nd_0}{a_n\left(d_0-r_0-b_nd_0\right)}\right]\right)=\exp\left(-{a_n}^3n^{1-u_n\left(y\right)}\left[1-\frac{a_nd_0}{\left(d_0-r_0-b_nd_0\right)}\right]\right).
\end{equation*}
It follows that ${a_n}^3n^{1-u_n\left(y\right)}=\Theta \left(n^{1-\alpha\left(2r+\lambda_1\right)/2\left(\lambda_1+r\right)}\right)$, and the desired result follows. \qed
\end{proof}
\\

\begin{proposition}\label{G_E}
The sequence of random variables $\{\tilde{Z}_n\}_{n\ge 1}$ as defined in (\ref{rv}) satisfy the conditions of the Gartner-Ellis Theorem with $\Lambda\left(\theta\right)=\frac{\lambda_1\mu \theta}{r_1}\int_{0}^{\infty}\frac{e^{-rs}}{e^{\lambda_1s}-\theta}$ for all $\theta\in \RR$.
\end{proposition}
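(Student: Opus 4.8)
The plan is to verify the five Gartner--Ellis conditions by mirroring the proof of Proposition \ref{GE_cond}; the limiting cumulant generating function turns out to be the same $\Lambda$, so the only genuinely new task is to dispose of the two affine corrections in \eqref{rv} — the term $-a_n z_0^n(u_n(y)t_n)$ sitting inside $\tilde Z_n$ and the shift $c_3(y,n)$ sitting outside it — after the Gartner--Ellis rescaling.

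First I would attack Condition 1. Writing $\tilde{\Lambda}_n(\theta)=\log\EE\exp(\theta\tilde Z_n)$ and unwinding \eqref{rv}, using the identity $\theta n^{1-\alpha}\cdot\frac{\lambda_1}{r_1}n^{\alpha-1-\lambda_1 u_n(y)/r}=v_{n,\theta,u_n(y)}$ together with $z_0^n(u_n(y)t_n)=n^{1-u_n(y)}$, one gets
\[
n^{\alpha-1}\tilde{\Lambda}_n(\theta n^{1-\alpha})=\theta c_3(y,n)-n^{\alpha-1}a_n v_{n,\theta,u_n(y)}z_0^n(u_n(y)t_n)+n^{\alpha-1}\log\EE\exp\bigl(v_{n,\theta,u_n(y)}\bigl(Z_1^n(u_n(y)t_n)+Z_2^n(u_n(y)t_n)\bigr)\bigr).
\]
I would show the first term vanishes: from \eqref{c_3}, its first summand is $O(n^{-\alpha})$ and its second summand is a constant multiple of $X(n)n^{\alpha-1}=o(1)$ since $X(n)=o(n^{1-\alpha})$. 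For the second term I would use that in case (1) the relation $z_1^n(\xi_n)+z_2^n(\xi_n)=z_0^n(\xi_n)$ forces $n^{\alpha-1}\,n\,e^{-(\lambda_1+r)\xi_n}\to\mu/(\lambda_1+r)$, whence $n^{\alpha-1}v_{n,\theta,u_n(y)}z_0^n(u_n(y)t_n)\to\frac{\lambda_1\mu\theta}{r_1(\lambda_1+r)}e^{(\lambda_1+r)y}$, a finite limit that is annihilated by the factor $a_n\downarrow 0$. The third term I would handle exactly as in Proposition \ref{GE_cond}: by independence of $Z_1^n$ from $(Z_0^n,Z_2^n)$ the expectation factors; the $Z_1^n$ factor is $n^{\alpha-1}X(n)\bigl(\log(1-\frac{\lambda_1}{r_1}\frac{\theta}{\theta-1})+o(1)\bigr)=o(1)$ by Lemma \ref{z1_moment_generate} and $X(n)=o(n^{1-\alpha})$; the $Z_2^n$ factor, rewritten via \eqref{Durrett_result}, splits into the deterministic integral $\mu\int_0^{u_n(y)t_n}e^{-rs}(\phi_{u_n(y)t_n-s}(v_{n,\theta,u_n(y)})-1)ds\to\frac{\lambda_1\mu\theta}{r_1}\int_0^\infty\frac{e^{-rs}}{e^{\lambda_1 s}-\theta}ds$ by Lemma \ref{lim_int}, plus a fluctuation factor that is $\geq 1$ by Jensen (the integrand has zero mean for each $s$) and $\leq\exp(k_1(\log n)^2 n^{1-2\alpha})$ by Proposition \ref{proposition_exp}, hence contributes $o(1)$. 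This yields $n^{\alpha-1}\tilde{\Lambda}_n(\theta n^{1-\alpha})\to\Lambda(\theta)$ for $\theta<1$. For $\theta\geq 1$ I would observe that $n^{\alpha-1}\log\EE\exp(v_{n,\theta,u_n(y)}(Z_1^n+Z_2^n))$ is nondecreasing in $\theta>0$ (since $Z_1^n+Z_2^n\geq 0$ and $v_{n,\theta,u_n(y)}$ increases in $\theta$) while the two affine corrections are $o(1)$ uniformly for $\theta$ near $1$; taking $\tilde\theta_i\uparrow 1$ with $\tilde\theta_i<1$ gives $\liminf_n n^{\alpha-1}\tilde{\Lambda}_n(\theta n^{1-\alpha})\geq\Lambda(\tilde\theta_i)\to\infty$, so the limit is $+\infty=\Lambda(\theta)$.

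Since the resulting $\Lambda$ is literally the function of Proposition \ref{GE_cond}, Conditions 2--5 would then be quoted verbatim: $D_\Lambda=(-\infty,1)$ is open and contains $0$; $\Lambda$ is smooth on $(-\infty,1)$, equals $+\infty$ on $[1,\infty)$, and $\Lambda(\theta)\uparrow\infty$ as $\theta\uparrow 1$, so $\Lambda$ is lower semicontinuous on $\RR$ and differentiable on $\mathrm{int}(D_\Lambda)$; and $\Lambda'(\theta)=\frac{\lambda_1\mu}{r_1}\int_0^\infty\frac{e^{(\lambda_1-r)s}}{(e^{\lambda_1 s}-\theta)^2}ds\to\infty$ as $\theta\uparrow 1$, giving steepness at $\partial D_\Lambda$.

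The main obstacle — and really the only part that is not a transcription of the proof of Proposition \ref{GE_cond} — is the bookkeeping in the first step showing $\theta c_3(y,n)$ and $n^{\alpha-1}a_n v_{n,\theta,u_n(y)}z_0^n(u_n(y)t_n)$ are $o(1)$; this comes down to the case (1) crossover asymptotics $n^{\alpha-1}ne^{-(\lambda_1+r)\xi_n}\to\mu/(\lambda_1+r)$ together with $X(n)=o(n^{1-\alpha})$ and $a_n\downarrow 0$, after which every remaining estimate is already available in the preceding lemmas and propositions.
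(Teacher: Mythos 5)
Your proof is correct and takes essentially the same approach as the paper, whose entire proof of Proposition~\ref{G_E} is the one-line remark that it "directly follows that of Proposition~\ref{GE_cond}." You supply the bookkeeping the paper leaves implicit --- verifying via the case-(1) crossover asymptotic $n^{\alpha}e^{-(\lambda_1+r)\xi_n}\to \mu/(\lambda_1+r)$ and $X(n)=o(n^{1-\alpha})$ that the deterministic corrections $\theta c_3(y,n)$ and $n^{\alpha-1}a_n v_{n,\theta,u_n(y)}z_0^n(u_n(y)t_n)$ vanish after the rescaling --- and then the remaining steps are indeed verbatim from Proposition~\ref{GE_cond}.
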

\begin{proof}\\
The proof directly follows that of Proposition \ref{GE_cond}. \qed
\end{proof}


\subsection{Proof of Proposition \ref{LD_condition}}

The proof is very similar to the proof of Theorem \ref{recurrence_LD}. We first establish the large deviations upper bound. Let $\delta\in \left(0,1\right)$, we observe that
\begin{align}
& \quad \PP_{A_{n,a}}\left(\sup\limits_{t\in \left[0,\zeta_n-y\right]}\hat{Z}^n_S\left(t\right)+\hat{Z}^n_E\left(t\right)-n>0\right) \nonumber\\
& \le \PP_{A_{n,a}}\left(\sup\limits_{t\in \left[0,\zeta_n-y\right]}\hat{Z}^n_E\left(t\right)-\delta n>0\right)+\PP_{A_{n,a}}\left(\sup\limits_{t\in \left[0,\zeta_n-y\right]}\hat{Z}^n_S\left(t\right)-\left(1-\delta\right) n>0\right). \label{CD_1}
\end{align}
To complete the proof, we need to further denote by $C_n$ the number of clones generated in the time period $\left(0,\zeta_n-y\right)$, which will extinct eventually, and let $\hat{Z}_C^n\left(t\right)$ be the number of mutants at time $t$, which belong to those clones. Fix $0<\beta<\alpha$, we observe that for the first term in $\ref{CD_1}$
\begin{align*}
& \quad \PP_{A_{n,a}}\left(\sup\limits_{t\in \left[0,\zeta_n-y\right]}\hat{Z}^n_E\left(t\right)-\delta n>0\right)\\
& = \PP\left(\sup\limits_{t\in \left[0,\zeta_n-y\right]}\hat{Z}^n_E\left(t\right)-\delta n>0\right)\\
& \le \PP\left(\sup\limits_{t\in \left[0,\zeta_n-y\right]}\hat{Z}^n_C\left(t\right)-\delta n>0\right)\\
& = \PP\left(\sup\limits_{t\in \left[0,\zeta_n-y\right]}\hat{Z}^n_C\left(t\right)-\delta n>0, C_n\ge n^{1-\beta}\right)+\PP\left(\sup\limits_{t\in \left[0,\zeta_n-y\right]}\hat{Z}^n_C\left(t\right)-\delta n>0, C_n< n^{1-\beta}\right),
\end{align*}
where the first equality follows from the independence between $\hat{Z}_E^n$ and $\hat{Z}_S^n$. We know that $C_n$ follows Poisson distribution with mean of order $O\left(n^{1-\alpha}\right)$. If $\beta\in \left(0,\alpha\right)$, then
\begin{align}
\PP\left(\sup\limits_{t\in \left[0,\zeta_n-y\right]}\hat{Z}^n_C\left(t\right)-\delta n>0, C_n\ge n^{1-\beta}\right)& \le \PP\left(C_n\ge n^{1-\beta}\right) \nonumber \\
& = O\left(e^{-n^{1-\beta}}\right) \nonumber \\
& = o\left(e^{-n^{1-\alpha}}\right). \label{dom_1}
\end{align}
By a Gambler's Ruin argument, we obtain that
\begin{align}
\PP\left(\sup\limits_{t\in \left[0,\zeta_n-y\right]}\hat{Z}^n_C\left(t\right)-\delta n>0, C_n< n^{1-\beta}\right)=O\left(e^{-cn}\right), \label{dom_2}
\end{align}
for some positive constant $c$. For the second term in \ref{CD_1}, we have the following lemma.

\begin{lemma}\label{condition_ineq}
\begin{align*}
& \quad \PP_{A_{n,a}}\left(\sup\limits_{t\in \left[0,\zeta_n-y\right]}\hat{Z}^n_S\left(t\right)-\left(1-\delta\right) n>0\right)\\
& \le \EE_{A_{n,a}}\left[\exp\left(\theta e^{-\lambda_1 \left(\zeta_n-y\right)}\hat{Z}_S^n\left(\zeta_n-y\right)\right)\right]\exp\left(-\theta e^{-\lambda_1 \left(\zeta_n-y\right)}\left(1-\delta\right)n\right).
\end{align*}
\end{lemma}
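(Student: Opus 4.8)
The plan is to establish Lemma \ref{condition_ineq} by the same Chernoff/Doob exponential martingale argument used throughout the paper for the recurrence and crossover upper bounds. First I would observe that the event in question can be rewritten in terms of a running supremum of a nonnegative submartingale: the process $\{\hat Z_S^n(t)\}_{t\ge 0}$, being the sum over surviving clones of supercritical birth–death subprocesses with random immigration times, is a submartingale, and for any fixed $\theta>0$ (in fact $\theta<\lambda_1/r_1$ so that the relevant moment generating functions are finite) the exponential functional $\{\exp(\theta e^{-\lambda_1(\zeta_n-y)}\hat Z_S^n(t))\}_{0\le t\le \zeta_n-y}$ is a submartingale as well, since $x\mapsto e^{cx}$ is convex and increasing. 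This is the direct analogue of the ``$\exp(\frac{\lambda_1\theta}{r_1}n^{-\lambda_1 u/r}(Z_1^n+Z_2^n))$ is a submartingale'' step used in the proofs of Propositions \ref{prop_recurrence_1} and \ref{proposition_recurrence_1}.

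Next I would apply Doob's submartingale maximal inequality on the time interval $[0,\zeta_n-y]$. Concretely,
\[
\PP_{A_{n,a}}\!\left(\sup_{t\in[0,\zeta_n-y]}\hat Z_S^n(t)-(1-\delta)n>0\right)
=\PP_{A_{n,a}}\!\left(\sup_{t\in[0,\zeta_n-y]}\exp\!\left(\theta e^{-\lambda_1(\zeta_n-y)}\hat Z_S^n(t)\right)>\exp\!\left(\theta e^{-\lambda_1(\zeta_n-y)}(1-\delta)n\right)\right),
\]
and then Doob's inequality bounds the right-hand side by $\EE_{A_{n,a}}[\exp(\theta e^{-\lambda_1(\zeta_n-y)}\hat Z_S^n(\zeta_n-y))]$ divided by $\exp(\theta e^{-\lambda_1(\zeta_n-y)}(1-\delta)n)$, which is exactly the claimed bound. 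I should be slightly careful that the conditional measure $\PP_{A_{n,a}}$ (conditioning on $S_n=\lfloor a\frac{\lambda_1\mu}{rr_1}n^{1-\alpha}\rfloor$) does not destroy the submartingale property: because $S_n$ is measurable with respect to the initial configuration of clone-founding times/survival indicators and $\hat Z_S^n$ evolves independently thereafter as a sum of $S_n$ independent supercritical birth–death processes, conditioning on $\{S_n=K\}$ simply fixes the number of summands, and the conditional process remains a submartingale (indeed, conditionally on $A_{n,a}$ it is a sum of $K$ i.i.d. birth–death submartingales started at the appropriate mixed initial law). Doob's inequality applies verbatim under $\PP_{A_{n,a}}$.

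The main obstacle — though it is a mild one — is justifying the submartingale/Doob step under the conditional law while keeping the stopping argument clean, since $\hat Z_S^n$ is not a simple branching process but a branching process with immigration restricted to surviving clones, and the conditioning on $S_n=K$ couples the immigration structure. The cleanest route is to note that, conditionally on $A_{n,a}$, one may represent $\hat Z_S^n(t)=\sum_{i=1}^{K}\tilde Z^{(i)}(t)$ where the $\tilde Z^{(i)}$ are i.i.d. copies of a single surviving clone's contribution (with its random birth time drawn from the size-biased/conditioned density appearing in the displayed conditional moment generating function), each of which is a submartingale; sums of independent submartingales are submartingales, convex increasing functions preserve the submartingale property, and Doob applies. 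After establishing Lemma \ref{condition_ineq}, the expectation on the right-hand side is exactly the conditional moment generating function computed in the excerpt, whose $n\to\infty$ asymptotics (a $K$-th power of an integral ratio converging to $r\int_0^\infty \frac{\lambda_1 e^{\lambda_1 s}}{\lambda_1 e^{\lambda_1 s}-r_1\theta}e^{-rs}\,ds$, with $K\sim a\frac{\lambda_1\mu}{rr_1}n^{1-\alpha}$) will feed into the subsequent optimization over $\theta\in(0,\lambda_1/r_1)$ and, combined with \eqref{dom_1}, \eqref{dom_2}, and letting $\delta\to 0$, yield the upper bound half of Proposition \ref{LD_condition}.
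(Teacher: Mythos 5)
Your overall plan — a Chernoff bound via Doob's maximal inequality applied to an exponential submartingale, after decomposing $\hat Z_S^n$ conditionally on $A_{n,a}$ into $K$ i.i.d.\ contributions from surviving clones — is exactly the paper's strategy. The chain of displayed inequalities you propose matches the paper's proof line for line.

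However, you assert as an afterthought the one fact that the paper treats as the crux: that each surviving clone's contribution ``is a submartingale.'' A surviving clone is a supercritical birth--death process \emph{conditioned on $Z(\zeta_n-y)>0$}, and two nontrivial things must be established before Doob can be invoked. First, $\hat Z_S^n(t)$ (equivalently $e^{-\lambda_1 t}\hat Z_S^n(t)$) is \emph{not adapted} to the natural filtration of $(Z_0^n,Z_1^n,Z_2^n)$ because deciding which clones count at time $t$ requires looking forward to $\zeta_n-y$; the paper explicitly flags this and constructs a new filtration $\bar{\mathcal F}_t=\{Z(T)>0\}\cap\mathcal F_t$, checks that the conditioned process is still Markov with respect to it, and only then builds the conditional probability space generated by the $K$ constructed clones. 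Second, the submartingale property of $e^{-\lambda_1 t}\bar Z_T(t)$ under this filtration is not immediate from convexity or monotonicity: it requires decomposing $\EE[e^{-\lambda_1 t_2}Z(t_2)\mid Z(t_1)]$ into the pieces on $\{Z(T)=0\}$ and $\{Z(T)>0\}$ and proving by an explicit computation with the birth--death p.m.f.\ that
\[
\EE\!\left[e^{-\lambda_1 t_2}Z(t_2)\mid Z(t_1),\,Z(T)=0\right]\;\le\; e^{-\lambda_1 t_1}Z(t_1),
\]
from which the submartingale inequality \eqref{supmartingale} follows since the unconditioned $e^{-\lambda_1 t}Z(t)$ is a martingale. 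Your sketch does not attempt either step; you gesture at ``keeping the stopping argument clean'' and at the i.i.d.\ clone representation, but you do not verify that the conditioning-on-survival actually produces a submartingale in the appropriate filtration. That verification is the entire mathematical content of the paper's proof of this lemma, so as written your proposal has a genuine gap at precisely the point where work is required.
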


\begin{proof}\\
We notice that conditioned on the event of $A_{n,a}$, $e^{-\lambda_1 t}\hat{Z}_S^n\left(t\right)$ is not adapted to the natural filtration generated by $\left(Z_0^n, Z_1^n, Z_2^n\right)$. Therefore, we need to construct an appropriate filtration.

For $0\le t_1 \le t_2 \le T$, and $t\in \left[0, T\right]$, denote by $\bar{Z}_T\left(t\right)$ the number of cells of $Z$ conditioned on the event that $Z\left(T\right)>0$. Let $\mathcal{\bar{F}}_t=\{Z(T)>0\} \cap \mathcal{F}_t$ for $t\in \left[0,T\right]$. We can observe that for $k\in \NN$, and $t\in \left[0,T\right]$,
\begin{align*}
\{\omega: \bar{Z}(t)=k\}&=\{\omega: Z(t)=k, Z(T)>0\}\\
&\in \{Z(T)>0\} \cap \mathcal{F}_t.
\end{align*}
Hence, $\bar{Z}_T\left(t\right)\in \mathcal{\bar{F}}_t$. We then obtain that for $i,j, z\left(u\right)\in \NN$, $0\le u\le t_1$,
\begin{align*}
& \quad \PP\left(Z(t_2)=j|Z\left(t_1\right)=k; Z\left(u\right)=z\left(u\right), 0\le u\le t_1; Z\left(T\right)>0\right)\\
& =\frac{\PP\left(Z(t_2)=j;Z\left(t_1\right)=k; Z\left(u\right)=z\left(u\right), 0\le u\le t_1; Z\left(T\right)>0\right)}{\PP\left(Z\left(t_1\right)=k; Z\left(u\right)=z\left(u\right), 0\le u\le t_1; Z\left(T\right)>0\right)}\\
& =\frac{\PP\left(Z\left(T\right)>0|Z(t_2)=j;Z\left(t_1\right)=k; Z\left(u\right)=z\left(u\right), 0\le u\le t_1\right)}{\PP\left(Z\left(t_1\right)=k; Z\left(u\right)=z\left(u\right), 0\le u\le t_1; Z\left(T\right)>0\right)}\\
& \quad \quad \times \PP\left(Z(t_2)=j;Z\left(t_1\right)=k; Z\left(u\right)=z\left(u\right), 0\le u\le t_1\right)\\
&=\frac{\PP\left(Z\left(T\right)>0|Z(t_2)=j;Z\left(t_1\right)=k\right)}{\PP\left(Z\left(T\right)>0|Z\left(t_1\right)=k\right)}\PP\left(Z(t_2)=j|Z\left(t_1\right)=k\right)\\
& =\PP\left(Z(t_2)=j|Z\left(t_1\right)=k; Z\left(T\right)>0\right).
\end{align*}
Therefore, $\bar{Z}_T\left(t\right)$ is still a Markov process, and we have
\begin{align}
\EE\left[e^{-\lambda_1 t_2}\bar{Z}_T\left(t_2\right)|\mathcal{\bar{F}}_{t_1}\right]&=\EE\left[e^{-\lambda_1 t_2}\bar{Z}_T\left(t_2\right)|\bar{Z}_T\left(t_1\right)\right] \nonumber\\
&=\EE\left[e^{-\lambda_1 t_2}Z\left(t_2\right)| Z\left(t_1\right) = \bar{Z}_T\left(t_1\right); Z\left(T\right)>0 \right] \nonumber\\
& = \frac{\EE\left[e^{-\lambda_1 t_2}Z\left(t_2\right)|Z\left(t_1\right)=\bar{Z}_T\left(t_1\right)\right]}{\PP\left( Z\left(T\right)>0|Z\left(t_1\right)=\bar{Z}_T\left(t_1\right)\right)} \nonumber \\
& \quad \quad -\frac{\EE\left[e^{-\lambda_1 t_2}Z\left(t_2\right)|Z\left(t_1\right)=\bar{Z}_T\left(t_1\right); Z\left(T\right)=0\right]\PP\left(Z\left(T\right)=0|  Z\left(t_1\right)=\bar{Z}_T\left(t_1\right)\right)}{\PP\left( Z\left(T\right)>0|Z\left(t_1\right)=\bar{Z}_T\left(t_1\right)\right)}\nonumber \\
& \ge e^{-\lambda_1 t_1}\bar{Z}_T\left(t_1\right).\label{supmartingale}
\end{align}
The last inequality follows from the following calculation:
\begin{align*}
& \quad \EE\left[e^{-\lambda_1 t_2}Z\left(t_2\right)|Z\left(t_1\right)=\bar{Z}_T\left(t_1\right); Z\left(T\right)=0\right]\\
& =\bar{Z}_T\left(t_1\right)\EE\left[e^{-\lambda_1 t_2}Z\left(t_2-t_1\right)| Z\left(T-t_1\right)=0;  Z\left(0\right)=1\right]\\
& =\sum_{n=1}^{\infty}n \left(\frac{\lambda_1}{r_1-d_1e^{-\lambda_1 \left(t_2-t_1\right)}}\right)\left(\frac{r_1\left(1-e^{-\lambda_1 \left(t_2-t_1\right)}\right)}{r_1-d_1 e^{-\lambda_1 \left(t_2-t_1\right)}}\right)^{n-1}\left(\frac{\lambda_1 e^{-\lambda_1 \left(t_2-t_1\right)}}{r_1-d_1 e^{-\lambda_1 \left(t_2-t_1\right)}}\right)\left(\frac{d_1\left(1-e^{-\lambda_1\left(T-t_2+t_1\right)}\right)}{r_1-d_1e^{-\lambda_1\left(T-t_2+t_1\right)}}\right)^n\\
& \quad \quad \times \bar{Z}_T\left(t_1\right)\\
& \le e^{-\lambda_1 t_1}\bar{Z}_T\left(t_1\right).
\end{align*}
Conditioned on the event $A_{n,a}$, 
we can obtain the finite dimensional distributions of $\hat{Z}_S^n$ by considering the summation of $\lfloor a\frac{\lambda_1 \mu}{r r_1}n^{1-\alpha} \rfloor$ independent processes $Z_i, 1\le i\le \lfloor a\frac{\lambda_1 \mu}{r r_1}n^{1-\alpha} \rfloor$. $Z_i$ is constructed such that $s\stackrel{\mathrm{def}}{=} \inf\{t: t\in \left[0, \zeta-y\right]|Z_i\left(t\right)\ge 1\}$ has pdf $\frac{\frac{\lambda_1}{r_1-d_1e^{-\lambda_1\left(\zeta_n-y-s\right)}}e^{-rs}}{\int_{0}^{\zeta_n-y}\frac{\lambda_1}{r_1-d_1e^{-\lambda_1\left(\zeta_n-y-s\right)}}e^{-rs}ds}$, $Z_i\left(t\right)=0$ for $t\in \left[0,s\right)$, and is stochastically equivalent to $\bar{Z}_{\zeta_n-y-s}$ for $t\in \left[s,\zeta_n-y\right]$. We then construct the probability space $\left(\Omega, \mathcal{A}, P_{A_{n,a}}\right)$ with the natural filtration $\left(\mathcal{F}_t^{A_{n,a}}; t\in \left[0, \zeta_n-y\right]\right)$ generated by $\left(Z_1,...,Z_{\lfloor a\frac{\lambda_1 \mu}{r r_1}n^{1-\alpha} \rfloor}\right)$. From \ref{supmartingale}, we can obtain that for $0\le t_1 \le t_2 \le \zeta_n-y$,
\begin{align}\label{Z_S_supermartingale}
\quad \EE_{A_{n,a}}\left[e^{-\lambda_1 t_2}\hat{Z}_S^n\left(t_2\right)|\mathcal{F}^{A_{n,a}}_{t_1}\right]\ge e^{-\lambda_1 t_1}\hat{Z}_S^n\left(t_1\right).
\end{align}
With \ref{Z_S_supermartingale}, we have
\begin{align*}
& \quad \PP_{A_{n,a}}\left(\sup\limits_{t\in \left[0,\zeta_n-y\right]}\hat{Z}^n_S\left(t\right)-\left(1-\delta\right) n>0\right)\\
& = \PP_{A_{n,a}}\left(\sup\limits_{t\in \left[0,\zeta_n-y\right]}e^{-\lambda_1 t}\left(\hat{Z}^n_S\left(t\right)-\left(1-\delta\right)n\right)>0 \right)\\
& \le \PP_{A_{n,a}}\left(\sup\limits_{t\in \left[0,\zeta_n-y\right]}e^{-\lambda_1 t}\hat{Z}^n_S\left(t\right)+\sup\limits_{t\in \left[0,\zeta_n-y\right]}-e^{-\lambda_1 t}\left(1-\delta\right)n>0 \right)\\
& \le \PP_{A_{n,a}}\left(\sup\limits_{t\in \left[0,\zeta_n-y\right]}e^{-\lambda_1 t}\hat{Z}^n_S\left(t\right)>e^{-\lambda_1 \left(\zeta_n-y\right)}\left(1-\delta\right)n \right)\\
& = \PP_{A_{n,a}}\left(\sup\limits_{t\in \left[0,\zeta_n-y\right]}\exp\left(e^{-\lambda_1 t}\hat{Z}_S^n\left(t\right)\right)>\exp\left(e^{-\lambda_1 \left(\zeta_n-y\right)}\left(1-\delta\right)n \right)\right)\\
& \le \EE_{A_{n,a}}\left[\exp\left(\theta e^{-\lambda_1 \left(\zeta_n-y\right)}\hat{Z}_S^n\left(\zeta_n-y\right)\right)\right]\exp\left(-\theta e^{-\lambda_1 \left(\zeta_n-y\right)}\left(1-\delta\right)n\right),
\end{align*}
where we apply Doob's inequality to obtain the last inequality.
\end{proof} \qed
\\

By Lemma \ref{condition_ineq}, we can obtain that for $\theta<\frac{\lambda_1}{r_1}$,
\begin{align*}
& \quad \limsup\limits_{n\rightarrow \infty}\frac{1}{n^{1-\alpha}}\log \PP_{A_{n,a}}\left(\sup\limits_{t\in \left[0,\zeta_n-y\right]}\hat{Z}^n_S\left(t\right)-\left(1-\delta\right) n>0\right)\\
& \le a\frac{\lambda_1 \mu}{r r_1}\log\left( r\int_{0}^{\infty}\frac{\lambda_1 e^{\lambda_1 s}}{\lambda_1 e^{\lambda_1 s}-r_1 \theta}e^{-rs}ds\right)-\left(1-\delta\right)\frac{\theta \mu e^{\lambda_1 y}}{\lambda_1+r}.
\end{align*}
Therefore, let $\delta\rightarrow 0$, and from \ref{dom_1}, \ref{dom_2}, we obtain that 
\begin{align*}
& \quad \limsup\limits_{n\rightarrow \infty}\frac{1}{n^{1-\alpha}}\log \PP_{A_{n,a}}\left(\sup\limits_{t\in \left[0,\zeta_n-y\right]}Z^n_2\left(t\right)-n>0\right)\\
& \le -\sup\limits_{\theta\in \left(0, \frac{\lambda_1}{r_1}\right)}\left(\frac{\theta \mu e^{\lambda_1 y}}{\lambda_1+r}-a\frac{\lambda_1 \mu}{r r_1}\log\left( r\int_{0}^{\infty}\frac{\lambda_1 e^{\lambda_1 s}}{\lambda_1 e^{\lambda_1 s}-r_1 \theta}e^{-rs}ds\right)\right).
\end{align*}
For the large deviations lower bound, 
\begin{align*}
& \quad \PP_{A_{n,a}}\left(\sup\limits_{t\in \left[0,\zeta_n-y\right]}Z_2^n\left(t\right)-n>0 \right)\\
& \ge \PP_{A_{n,a}}\left(\sup\limits_{t\in \left[0,\zeta_n-y\right]}\hat{Z}_S^n\left(t\right)-n>0 \right)\\
& \ge \PP_{A_{n,a}}\left(\hat{Z}_S^n\left(\zeta_n-y\right)-n>0 \right)\\
& = \PP_{A_{n,a}}\left(n^{\alpha-1}e^{-\lambda_1\left(\zeta_n-y\right)}\hat{Z}_S^n\left(\zeta_n-y\right)-n^{\alpha-1}e^{-\lambda_1\left(\zeta_n-y\right)}n>0 \right)\\
& = \PP_{A_{n,a}}\left(n^{\alpha-1}e^{-\lambda_1\left(\zeta_n-y\right)}\hat{Z}_S^n\left(\zeta_n-y\right)+\frac{\mu e^{\lambda_1 y}}{\lambda_1+r}-n^{\alpha-1}e^{-\lambda_1\left(\zeta_n-y\right)}n>\frac{\mu e^{\lambda_1 y}}{\lambda_1+r} \right).\\
\end{align*}
We next show that the sequence of random variables
\begin{align*}
\bar{Z}_n=n^{\alpha-1}e^{-\lambda_1\left(\zeta_n-y\right)}\hat{Z}_S^n\left(\zeta_n-y\right)+\frac{\mu e^{\lambda_1 y}}{\lambda_1+r}-n^{\alpha-1}e^{-\lambda_1\left(\zeta_n-y\right)}n
\end{align*}  
satisfy the conditions of the Gartner-Ellis Theorem with
\[\bar{\Lambda}\left(\theta\right)= \begin{cases} 
a\frac{\lambda_1 \mu}{r r_1}\log\left( r\int_{0}^{\infty}\frac{\lambda_1 e^{\lambda_1 s}}{\lambda_1 e^{\lambda_1 s}-r_1 \theta}e^{-rs}ds\right) & \theta< \frac{\lambda_1}{r_1} \\
\infty & \theta\ge  \frac{\lambda_1}{r_1}.
\end{cases}
\]
In pursuit of this goal let us define the sequence of functions
\begin{equation*}
\bar{\Lambda}_n\left(\theta\right)=\log\EE\exp\left(\theta \bar{Z}_n\right),
\end{equation*}
and then show that all the five conditions for Gartner-ellis Theorem are satisfied.

Condition 1. We first notice that 
\begin{align*}
\lim\limits_{n\rightarrow \infty}n^{1-\alpha}\left(\frac{\mu e^{\lambda_1 y}}{\lambda_1+r}-n^{\alpha-1}e^{-\lambda_1\left(\zeta_n-y\right)}n\right)=0.
\end{align*} 
For $\theta<\frac{\lambda_1}{r_1}$,
\begin{align*}
& \quad n^{\alpha-1}\log \EE \exp\left(\theta e^{-\lambda_1 \left(\zeta_n-y\right)}\hat{Z}_S^n\left(\zeta_n-y\right)\right)\\
& \rightarrow a\frac{\lambda_1 \mu}{r r_1}\log\left( r\int_{0}^{\infty}\frac{\lambda_1 e^{\lambda_1 s}}{\lambda_1 e^{\lambda_1 s}-r_1 \theta}e^{-rs}ds\right), \text{ as } n \rightarrow \infty.
\end{align*}
We consider the case that $\theta \ge \frac{\lambda_1}{r_1}$. First we note that for $\theta\ge \frac{\lambda_1}{r_1}$ and a sequence of numbers $\{\tilde{\theta}_1, \tilde{\theta}_2,...\}$ such that $\tilde{\theta}_i<\frac{\lambda_1}{r_1}$, and $\lim\limits_{i\to \infty}\tilde{\theta}_i=\frac{\lambda_1}{r_1}$,
\begin{align*}
\int_{0}^{\infty}\frac{\lambda_1 e^{\lambda_1 s}}{\lambda_1 e^{\lambda_1 s}-r_1 \tilde{\theta}_i}e^{-rs}ds\le \liminf\limits_{n\rightarrow \infty}n^{\alpha-1}\bar{\Lambda}_n\left(\theta n^{1-\alpha}\right).
\end{align*}
As $i\rightarrow \infty$, $\int_{0}^{\infty}\frac{\lambda_1 e^{\lambda_1 s}}{\lambda_1 e^{\lambda_1 s}-r_1 \tilde{\theta}}e^{-rs}ds\rightarrow \infty. $. Therefore, $\liminf\limits_{n\rightarrow \infty}n^{\alpha-1}\bar{\Lambda}_n\left(\theta n^{1-\alpha}\right)=\infty$ and consequently $\lim\limits_{n\rightarrow \infty}n^{\alpha-1}\bar{\Lambda}_n\left(\theta n^{1-\alpha}\right)=\infty$.





Condition 2-5 are easy to verify and thus we omit the proofs here. By Gartner-Ellis Theorem, we obtain that
\begin{align*}
& \quad \liminf\limits_{n\rightarrow \infty}\frac{1}{n^{1-\alpha}}\log\PP_{A_{n,a}}\left(\sup\limits_{t\in \left[0,\zeta_n-y\right]}\hat{Z}_S^n\left(t\right)-n>0\right)\\
& \ge -\inf\limits_{x\in \left(\frac{\mu e^{\lambda_1 y}}{\lambda_1+r},\infty \right)}\sup\limits_{\theta\in \RR}\left[\theta x-\bar{\Lambda}\left(\theta\right)\right].
\end{align*}
We first examine the derivative of $f\left(\theta\right)=\theta x-\bar{\Lambda}\left(\theta\right)$ for $\theta<\frac{\lambda_1}{r_1}$, which is
\begin{align*}
f'\left(\theta\right)=x-a\frac{\lambda_1 \mu}{r r_1}\frac{\int_{0}^{\infty}\frac{r_1 \lambda_1 e^{\lambda_1 s}}{\left(\lambda_1 e^{\lambda_1 s}-r_1 \theta\right)^2}e^{-rs}ds}{\int_{0}^{\infty}\frac{\lambda_1 e^{\lambda_1 s}}{\lambda_1 e^{\lambda_1 s}-r_1 \theta}e^{-rs}ds}.
\end{align*}
We can obtain that 
\begin{align*}
f'\left(0\right)=x-\frac{a \mu}{ \lambda_1+r}.
\end{align*}
Hence, when $a< e^{\lambda_1 y}$, and $x\in \left(\frac{\mu e^{\lambda_1 y}}{\lambda_1+r},\infty \right)$, $f'\left(0\right)>0$. We notice that $f\left(0\right)=0$. By Jensen's inequality and the logarithm inequality ($\log$$\left(1+x\right)\ge\frac{x}{1+x}$ for $x>-1$), we can also obtain that when $\theta<0$,
\begin{align*}
f\left(\theta\right)& =\theta x-a\frac{\lambda_1 \mu}{r r_1}\log\left( r\int_{0}^{\infty}\frac{\lambda_1 e^{\lambda_1 s}}{\lambda_1 e^{\lambda_1 s}-r_1 \theta}e^{-rs}ds\right)\\
& \le \theta x-a\frac{\lambda_1 \mu}{r r_1}r\int_{0}^{\infty}\log\left(\frac{\lambda_1 e^{\lambda_1 s}}{\lambda_1 e^{\lambda_1 s}-r_1 \theta}\right)e^{-rs}ds\\
& \le \theta x -a\frac{\lambda_1 \mu}{r r_1}r\int_{0}^{\infty}\frac{\frac{r_1 \theta}{\lambda_1 e^{\lambda_1 s}-r_1\theta}}{\frac{\lambda_1 e^{\lambda_1 s}}{\lambda_1 e^{\lambda_1 \theta}-r_1 \theta}}e^{-rs}ds\\
& =\theta x-a\frac{\mu \theta}{\lambda_1+r}\\
& \le 0.
\end{align*}
Therefore, we conclude that $f\left(\theta\right)$ achieves its maximal value in $\left(0,\frac{\lambda_1}{r_1}\right)$. We then obtain that
\begin{align*}
& \quad \inf\limits_{x\in \left(\frac{\mu e^{\lambda_1 y}}{\lambda_1+r},\infty \right)}\sup\limits_{\theta\in \RR}\left[\theta x-\bar{\Lambda}\left(\theta\right)\right]= \inf\limits_{x\in \left(\frac{\mu e^{\lambda_1 y}}{\lambda_1+r},\infty \right)}\sup\limits_{\theta\in \left(0,\frac{\lambda_1}{r_1}\right)}\left[\theta x-\bar{\Lambda}\left(\theta\right)\right].
\end{align*}
Applying the same argument in the proof of Proposition \ref{recurrence_LDLB}, we can obtain that
$$
\inf\limits_{x\in \left(\frac{\mu e^{\lambda_1 y}}{\lambda_1+r},\infty \right)}\sup\limits_{\theta\in \left(0,\frac{\lambda_1}{r_1}\right)}\left[\theta x-\bar{\Lambda}\left(\theta\right)\right]=\sup_{\theta\in (0,\frac{\lambda_1}{r_1})}\left(\frac{\theta \mu e^{\lambda_1 y}}{\lambda_1+r}-\bar{\Lambda}\left(\theta\right)\right).
$$ \qed

\subsection*{Acknowledgements}
The authors would like to thank Einar Gunnarsson for helpful comments on the draft. Also KL and ZW were supported by NSF grants CMMI-1552764.

\newpage

\bibliographystyle{plain}
\bibliography{main}

\newpage

\vspace{0.5in}

\noindent Pranav Hanagal \\
PNC Financial Services Group, Inc\\
New York, NY, USA \\
phanagal@gmail.com
\\

\noindent Kevin Leder and Zicheng Wang \\
Department of Industrial and Systems Engineering\\
University of Minnesota\\
Minneapolis, MN 55455, USA \\
kevin.leder@isye.umn.edu, wang2569@umn.edu


\end{document}